\documentclass[12pt,letterpaper, twoside, openany]{article}
\usepackage[top=1in, left =1in, right=1in, bottom=1in]{geometry}

\usepackage{lineno}
\usepackage{lipsum}
\usepackage{amsmath, amssymb}
\usepackage{bm}
\usepackage{natbib, amsfonts, xr, enumerate, graphicx}
\usepackage{amsthm}
\usepackage{xcolor}
\usepackage{csquotes}
\usepackage{float}
\usepackage{rotating}
\usepackage{url}
\usepackage{titlesec}
\usepackage{bbm}
\usepackage{stmaryrd}
\usepackage{authblk}
\usepackage{float}

\usepackage{subcaption}

\DeclareMathOperator*{\Var}{Var}

\DeclareMathOperator*{\Cov}{Cov}

\theoremstyle{plain}

\newtheorem{theorem}{Theorem}[section]
\newtheorem{lemma}[theorem]{Lemma}

\newtheorem{corollary}[theorem]{Corollary}
\newtheorem{remark}[theorem]{Remark}

\newtheorem*{bootstrap}{Bootstrap Procedure}


\usepackage{xr}
\makeatletter

\newcommand*{\addFileDependency}[1]{
\typeout{(#1)}
%
%
\@addtofilelist{#1}
%
\IfFileExists{#1}{}{\typeout{No file #1.}}
}\makeatother


\author[1]{Kwun Chuen Gary Chan}
\author[2]{Hok Kan Ling}
\author[3]{Chuan-Fa Tang}
\author[4]{Sheung Chi Phillip Yam}
\affil[1]{Department of Biostatistics, University of Washington}
\affil[2]{Department of Mathematics and Statistics,
Queen's University}
\affil[3]{Mathematical Sciences, The University of Texas at Dallas}
\affil[4]{Department of Statistics and Data Science, The Chinese University of Hong Kong}

\begin{document}
	\baselineskip=28pt 
	
	\newgeometry{top=1in,  left =1in, right=1in, bottom=1in}
	\title{\Large Likelihood-based Spacings Goodness-of-Fit Statistics for Univariate Shape-constrained Densities}

	\date{\vspace{-5ex}}
	
	\maketitle

\begin{abstract}
A variety of statistics based on sample spacings have been studied for testing goodness-of-fit to parametric distributions.  To test the goodness-of-fit to a nonparametric class of univariate shape-constrained densities, including widely studied classes such as $k$-monotone and log-concave densities, a likelihood ratio test with a working alternative density estimate based on the spacings of the observations is considered, and is shown to be asymptotically normal and distribution-free under the null, consistent under fixed and certain local alternatives, and admits bootstrap calibration.  The distribution-freeness under the null comes from the fact that the asymptotic dominant term depends only on a function of the spacings of transformed outcomes that are uniformly distributed.  Applications and extensions of theoretical results in the literature of shape-constrained estimation are required to show that the average log-density ratio converges to zero at a faster rate than the sample spacing term under the null, and diverges under the alternatives.  Numerical studies are conducted to demonstrate that the test is applicable to various classes of shape-constrained densities and has a good balance between type-I error control under the null and power under alternative distributions.
\end{abstract}

\medskip
\noindent\textbf{Keywords:}
bootstrap consistency; complete monotonicity; distribution-free tests;
Grenander estimator; $k$-monotonicity; log-concavity;
shape-constrained densities; spacings.


\section{Introduction}\label{sect:intro}
Scientific knowledge and theoretical understanding of a problem can often provide hypotheses about the underlying data-generating mechanism, which could take the form of a completely known parametric distribution, a parametric family of distributions with unknown finite-dimensional parameters, or flexible nonparametric constraints on the shape of the density, \emph{e.g.}, monotonicity or log-concavity, which  have motivated many recent theoretical and methodological developments.

Goodness-of-fit tests of parametric distributions using sums of functions of sample spacings, which are the lengths of the intervals between adjacent order statistics from a sample, have been studied since the 1940s.  Early developments include \cite{greenwood1946statistical, kimball1947some, moran1947random}, and \cite{darling1953class}.  Since one can use the distribution function to transform data with a known distribution to the uniform distribution on the unit interval, it suffices to study the theoretical properties of testing the goodness-of-fit to the uniform distribution. Asymptotic power of tests based on sample spacing was studied in \cite{chibisov1961tests} and \cite{rao1975weak}, who showed a theoretical limitation but motivated subsequent developments of goodness-of-fit tests based on higher-order spacings, or $\nu$-spacings ($\nu>1$), which are the lengths of the intervals between two order statistics whose order differs by $\nu$ \citep{cressie1979optimal, del1979asymptotic}, and have a power advantage over sample spacings $(\nu=1)$.  \cite{hall1986powerful} and \cite{jammalamadaka1989asymptotic} studied the power of $\nu$-spacings goodness-of-fit statistics when the order $\nu$ diverges with the sample size $n$.  

To test whether the data-generating mechanism fits a parametric model with unknown parameters, which is typically a composite null hypothesis, \cite{cheng1989goodness, wells1992tests, wells1993large} studied functions of sample spacings using an estimated distribution function transformed with a plug-in estimator of the unknown parameters, whose null distribution was shown to be asymptotically equivalent to the case where the parameters are indeed known.  However, it is unclear whether similar procedures can be used for testing the goodness-of-fit to a nonparametric class with an estimated density.

Nonparametric density estimation under shape constraints has received increasing attention in recent years;  see, e.g., \cite{groeneboom2014nonparametric} for various examples. A particularly valuable feature of  estimation under shape constraints is that nonparametric maximum likelihood estimators are often fully automatic, requiring no tuning parameters, in comparison to kernel smoothing methods. 

For univariate distributions, two general classes, $k$-monotonicity and log-concavity, have recently been studied in detail. A density function $f$ on $\mathbb{R}^+$ is $1$-monotone if it is nonincreasing.  \cite{grenander1956theory} first showed that the nonparametric maximum likelihood estimator (NPMLE) under the nonincreasing (1-monotone) density assumption can be characterized as the left derivative of the least concave majorant of the empirical distribution function. As a result, the NPMLE in this case is often referred to as the Grenander estimator. Its asymptotic distribution at a fixed interior point where the derivative is strictly negative was obtained by \cite{rao1969estimation} and \cite{Groeneboom1985}. {Estimating a monotone density in a Bayesian setting has been considered in \cite{salomond2014concentration}, \cite{chakraborty2022rates}, and \cite{jongbloed2021bayesian}.}

A density function is $2$-monotone if it is nonincreasing and convex. The corresponding NPMLE was studied in \cite{groeneboom2001estimation}. Furthermore, a density function is $k$-monotone for $k \geq 3$ if  $(-1)^j f^{(j)}$ is nonnegative, nonincreasing, and convex for $j=0,\ldots,k-2$, with the NPMLE being studied in \cite{balabdaoui2007estimation} and \cite{balabdaoui2010estimation}.  A density on $\mathbb{R}^+$ is called completely monotone if $(-1)^j f^{(j)}(t) \geq 0$ for all nonnegative integers $j$ and all $t > 0$, which can be seen as the limit of a $k$-monotone density as $k$ goes to infinity and can be represented as a scale mixture of exponential distributions. \cite{Jewell1982mixtures}  established the unique existence of the NPMLE for a completely monotone density and the consistency of the mixing distribution function. 
To the best of our knowledge, there is no pointwise limit distribution theory or global rate of convergence for continuous completely monotone densities. A Bayesian approach to inference for $k$-monotone densities has been explored in \cite{wang2025bayesian}.

A density function on $\mathbb{R}$ is log-concave if its logarithm is concave. This class is a subset of  unimodal densities, contains many of the commonly used parametric distributions, and can be regarded as an infinite-dimensional generalization of the class of Gaussian densities.  Estimation and theoretical properties have been extensively studied in recent years; see, for example, \cite{dumbgen2009maximum, balabdaoui2009limit, walther2009inference, cule2010theoretical, saumard2014log, samworth2018recent, mariucci2020bayesian, cui2024martingale}. 

Apart from estimation, various hypothesis testing problems related to shape restrictions on monotone functions have been considered in the literature. Many of these tests make use of distances between two estimators, where one is valid only under the null hypothesis and the other is valid in general.  For example, \cite{proschan1967tests} tests the null hypothesis of a constant hazard rate against the alternative of an increasing hazard rate; \cite{hall2005testing} tests the null hypothesis that a hazard rate is monotone nondecreasing;   \cite{groeneboom2012isotonic} tests for local monotonicity of a hazard function;
\cite{durot2010goodness} tests a parametric null hypothesis that respects a monotonicity constraint; and \cite{kulikov2004testing} tests for monotone density.
These tests typically have complicated null distributions, and additional regularity conditions on the true underlying distribution are often imposed. Bayesian tests for monotone functions or densities have also been investigated \citep{salomond2018testing, chakraborty2021convergence, chakraborty2022rates}.
For testing whether a multivariate density is log-concave, see \cite{chen2013smoothed} and \cite{dunn2021universal}. Hypothesis testing for a log-concave density versus a mixture of log-concave densities has been studied by \cite{walther2002detecting} and \cite{balabdaoui2018inference}.

In this article, we study a unified nonparametric likelihood ratio test (NPLRT) for testing whether the underlying univariate density belongs to a particular hypothesis class of functions, focusing on $k$-monotone, completely monotone, and log-concave densities. An attractive feature of our test is that the limiting null distribution remains the same across different hypothesis classes and does not depend on the unknown density or its derivatives.   In a conventional likelihood ratio test, we have to maximize the likelihood over the union of the null and alternative hypotheses. In such cases, the union is the collection of all densities on $\mathbb{R}$, where it is well known that the corresponding maximum likelihood is infinite, resulting in an ill-posed likelihood ratio statistic. 
Instead, we use a histogram-type estimator that depends on the $\nu$-spacings. It is worth noting that different types of likelihood ratio-based inference, distinct from the one considered in this work, have been studied in other shape-constrained inference problems, including testing for equality at fixed points, forming confidence intervals \citep{banerjee2001likelihood, banerjee2005likelihood, banerjee2007likelihood, nane2015likelihood, groeneboom2015nonparametric, doss2019inference, doss2019concave}, and conducting two-sample tests \citep{groeneboom2012likelihood}. Additionally, studies on local shape-constrained inference not based on likelihood ratios have been conducted \citep{deng2021confidence, deng2023inference}.

Our proposed likelihood ratio test statistic depends only on the NPMLE under the corresponding shape constraint and the spacings of the observations, and can be computed easily. Moreover, the asymptotic null distribution of the test statistic is distribution-free, which follows from the convergence of a sum of functions of uniform $\nu$-spacings.  
An arithmetic mean of the individual log-likelihood ratios, each evaluated at the observation using the NPMLE and the true density, is required to converge at a faster rate than the spacings term under the null and to diverge under fixed and certain local alternatives.  We verify these properties for the $k$-monotone, completely monotone, and log-concave hypothesis classes, by applying and extending existing results in the shape-constrained estimation literature.

We also consider bootstrapping for both theoretical and practical reasons.  Although it is known that the nonparametric bootstrap and bootstrapping from the NPMLE for 1-monotone densities do not work for finding the pointwise limiting distribution of the NPMLE at an interior point \citep{kosorok2008bootstrapping, sen2010inconsistency}, we show that bootstrapping from the NPMLE is valid for approximating the distribution of the NPLRT.  The main reason bootstrapping from the NPMLE works in this context is that the statistic is a global measure rather than a local one, and the bootstrap distribution only needs to be continuous, without further requirements on differentiability.
However, the nonparametric bootstrap remains inapplicable because of the lack of continuity of the bootstrap distribution. 
For practical reasons, given the relatively slow rate of convergence of the log-density ratio and the remainder term, bootstrap calibration of the critical value can provide better control of the type-I error and improve the power under alternatives.

The following sections are organized as follows. In Section \ref{sect:main_result}, we introduce our NPLRT and establish the asymptotic distribution of the NPLRT statistic under the null hypothesis, as well as the consistency of the test under the alternative hypothesis, given general conditions. In Section \ref{sect:shape}, we specifically discuss the average log-density ratio for $k$-monotone (including $1$-monotone and completely monotone) and log-concave densities under the null and alternative hypotheses, where we also  establish a rate of convergence for the maximum likelihood estimator for $k$-monotone densities under relaxed conditions. 
Section \ref{sect:bootstrap} discusses a valid bootstrap procedure. In Section \ref{sect:simulation}, simulation studies are performed to evaluate the performance of the test in various situations. 
Conclusion and additional remarks are provided in Section \ref{sect:discussion}.
All proofs of the theoretical results, along with some real data illustrations of our tests, are presented in the appendix.

\section{Nonparametric goodness-of-fit test and main results}\label{sect:main_result}

\subsection{Definition and main results}\label{sect:prelim}
Let $X_1,\ldots,X_n$ be a random sample from a univariate distribution $F_0$ with density $f_0$. The likelihood function is 
\begin{equation*}
	L_n(f) := \prod^n_{i=1} f(X_i).
\end{equation*}
Our aim is to propose a nonparametric likelihood ratio test (NPLRT) for testing the null hypothesis $H_0: f_0 \in \mathcal{F}$, where $\mathcal{F}$ is a nonparametric class of densities for which the NPMLE, $\hat{f}_n := \text{argmax}_{f \in \mathcal{F}} L_n(f)$, exists, versus $H_1: f_0 \notin \mathcal{F}$. In particular, we focus on the hypothesis classes of shape-constrained densities, including the classes of (i) decreasing densities, (ii) $k$-monotone densities ($k \geq 2$), (iii) completely monotone densities, and (iv) log-concave densities, for which all the NPMLEs exist and are unique.  

To define a likelihood ratio test, we need an estimator that works under both the null and alternative hypotheses. To this end, we consider a histogram-type estimator, which leads to the desirable properties that the test statistic is asymptotically distribution-free under the null hypothesis. Additionally, it does not require strong assumptions, such as bounded support or upper-boundedness of the underlying density, to establish the asymptotic null distribution and the consistency of the test. Let $Z_1 <\ldots < Z_n$ be the order statistics of $X_1,\ldots,X_n$, and let $\nu \in \mathbb{N}$. 

Without loss of generality, and for simplicity of presentation, we assume throughout the article that $\frac{n-1}{\nu}$ is an integer, and we denote {it} by $n_{\nu}$. We define a piecewise constant density function with $n_{\nu}$ steps as follows:
for $j=0,\ldots, n_{\nu} - 1$, $x \in ( Z_{j\nu+1}, Z_{(j+1)\nu+1}]$, let
\begin{align*}
	f^H_{n,\nu}(x) := \frac{1}{n_{\nu}(Z_{(j+1)\nu+1} - Z_{j\nu+1})}.
\end{align*}
Define $f^H_{n,\nu}(Z_1) := f^H_{n,\nu}(Z_1+)$ and $f^H_{n,\nu}(x) = 0$ if $x \notin [Z_1, Z_n]$. Thus, $f^H_{n,\nu}$ is a function of $\nu$-spacings.  Modifications can be made accordingly when $n_{\nu}$ is not an integer by dividing the data into $\lfloor n_{\nu} \rfloor$ groups as evenly as possible, where for $x \in \mathbb{R}_+$, $\lfloor x \rfloor$ denotes the greatest integer less than or equal to $x$.

Our proposed likelihood ratio test statistic is
\begin{equation*}
T_n :=  -\frac{1}{n}\log \frac{\prod^n_{i=1} \hat{f}_n(X_i)}{\prod^n_{i=1}f^H_{n,\nu}(X_i)}.
\end{equation*}
To study its asymptotic properties, we can write
\begin{equation} \label{eq:decomp1}
T_n = - \frac{1}{n}\sum^n_{i=1} \log \frac{\hat{f}_n(X_i)}{f_0(X_i)} - \frac{1}{n}\sum^n_{i=1} \log \frac{f_0(X_i)}{f^H_{n,\nu}(X_i)}.
\end{equation}
Denote
\[
S_n := - \frac{1}{n}\sum^n_{i=1} \log \frac{\hat{f}_n(X_i)}{f_0(X_i)}
\]
which is the first term in the right-hand-side of (\ref{eq:decomp1}).  We further decompose the second term into two terms,
\begin{equation}\label{eq:decomposition_hist}
-\frac{1}{n}\sum^n_{i=1} \log \frac{f_0(X_i)}{f^H_{n,\nu}(X_i)} =M_n+R_n  ,    
\end{equation}
where
\begin{align*}
M_n &:= -  \frac{\nu}{n} \sum_{j=0}^{n_{\nu} - 1} 
\log \frac{F_0(Z_{(j+1)\nu+1}) - F_0(Z_{j\nu+1})}{\nu/(n-1)},\\
R_n&:=	- \frac{1}{n} \sum_{j=0}^{n_{\nu} - 1} 
\sum^\nu_{l=1} \log \frac{f_0(Z_{j\nu+l+1})(Z_{(j+1)\nu+1} - Z_{j\nu+1})}{F_0(Z_{(j+1)\nu+1}) - F_0(Z_{j\nu+1})} -\frac{1}{n} \log \frac{f_0(Z_1)}{f^H_{n,\nu}(Z_1)}.
\end{align*}

We first provide a high-level description of how the decomposition $T_n=S_n+M_n+R_n$ will be used in the theoretical development.  The term $S_n$ is the arithmetic mean of the individual log-likelihood ratios, each evaluated at the observation using the NPMLE and the true density, $M_n$ is an average log-spacings of transformed data, and $R_n$ is an asymptotically negligible remainder term under both the null and the alternative hypotheses. For simplicity, we refer to $S_n$ as the average log-density ratio throughout the article.
The NPMLE under a specific hypothesis class appears only in the first term, $S_n$, but not in the other two terms. The key observation is that the terms, $S_n$ and $M_n$, contribute differently to the behavior of the test statistic under the null and the alternative hypotheses.  Under the null, $M_n$ asymptotically dominates $S_n$, leading to an asymptotically normal and pivotal null distribution.  Under fixed and certain local alternatives, $S_n$ diverges, ensuring the test is consistent. 

However, the NPMLE for each hypothesis class, and therefore $S_n$, differs substantially. Thus, the above argument needs to be established on a case-by-case basis, as discussed in Section \ref{sect:shape}.  The convergence of $M_n$ is discussed in Section \ref{sect:spacings}.  Sufficient conditions for $R_n$ being asymptotically negligible are provided in Section \ref{sect:suff_cond}. 

Let $\Gamma(\cdot), \psi(\cdot)$ and $\psi_1(\cdot)$ denote the gamma, digamma, and trigamma functions, respectively. Specifically, $\Gamma(y) = \int^\infty_0 t^{y-1}e^{-t}\,dt$ for $y > 0$,  $\psi(y) = \frac{d}{dy}\log \Gamma(y)$, and $\psi_1(y) = \frac{d}{dy} \psi(y)$.  The following result provides the asymptotic null distribution of our goodness-of-fit test statistics.

\begin{theorem}[Asymptotic null distribution]\label{coro:main_result}
Suppose that $\nu = O(n^{1/3}(\log n)^{-1})$, $\sqrt{n \nu} S_n = o_p(1)$ and $R_n = O_p(\frac{\nu \log n}{n})$. Then,
\begin{equation}\label{eq:asy_dist_under_H0}
	\sqrt{ \frac{n\nu}{\nu^2\psi_1(\nu)-\nu}} \left( T_n -  \log \nu +  \psi(\nu) \right) \stackrel{d}{\rightarrow} N(0, 1).
\end{equation}
\end{theorem}

The following theorem establishes the consistency of our test under alternatives that may converge to the null, where the key condition is the existence of some sequence $L_n \uparrow \infty$ such that $\sqrt{n \nu}S_n > L_n$ with probability approaching $1$. In Theorems C.6 and E.3 in the appendix, we show that this will be satisfied when the minimum Hellinger distance between the true underlying density and any densities in the hypothesis class does not go to $0$ too quickly, where the Hellinger distance between two densities $f$ and $g$ is defined as
	\begin{align*}
		h(f,g) = \frac{1}{\sqrt{2}}\left[ \int_0^\infty \left\{ \sqrt{f(t)} - \sqrt{g(t)}\right\}^2\,dt\right]^{1/2}. 
	\end{align*} 

\begin{theorem}[Consistency]\label{corollary:consistency}
Suppose that $\nu = O(n^{1/3}(\log n)^{-1})$. Under $H_1$, suppose that there exists some sequence $L_n \uparrow \infty$ such that $\lim_{n \rightarrow \infty} \mathbb{P}(\sqrt{n \nu }S_n > L_n) = 1$  and $R_n = O_p(\frac{\nu \log n}{n})$. Then, the NPLRT is consistent. That is, for any $0 \leq c < \infty$,
\begin{equation*}
\lim_{n \rightarrow \infty} \mathbb{P}     \left(
\sqrt{ \frac{n\nu}{\nu^2\psi_1(\nu)-\nu}} \left( T_n -  \log \nu +  \psi(\nu) \right) > c\right) = 1.
\end{equation*}
\end{theorem}

\begin{remark}\label{remark:maximize_power}
Under the conditions of Theorem \ref{corollary:consistency}, the consistency of the test relies on the decomposition $T_n = S_n + M_n + R_n$, where $\sqrt{n \nu}\, S_n \to \infty$ with probability one, $\sqrt{n\nu / (\nu^2 \psi_1(\nu) - \nu)}\, (M_n - \log \nu + \psi(\nu)) \stackrel{d}{\rightarrow} N(0,1)$ (Theorem \ref{thm:main_hist_asy_dist}), and $\sqrt{n\nu / (\nu^2 \psi_1(\nu) - \nu)}\, R_n = o_p(1)$. Since $S_n$ is independent of $\nu$, increasing $\nu$ causes $\sqrt{n \nu}\, S_n$ to diverge faster under $H_1$. This suggests choosing $\nu \asymp n^{1/3} / \log n$ to maximize the power of the test.
\end{remark}

Note that Theorems \ref{coro:main_result} and \ref{corollary:consistency} hold for both fixed $\nu$ and when $\nu$ diverges to infinity.  When $\nu=1$, $\psi(1)=-\gamma$, where $\gamma=0.57721...$ is the Euler--Mascheroni constant, and $\psi_1(1)=\pi^2/6$.  As $\nu\to \infty$, 
\begin{equation*}
\psi_1(\nu) = \frac{1}{\nu} + \frac{1}{2\nu^2} + O(\nu^{-3}) \ ,
\end{equation*}
and therefore $\nu^2 \psi_1(\nu) - \nu \rightarrow 1/2$ as $\nu \rightarrow \infty$. Thus, in Theorems \ref{coro:main_result} and \ref{corollary:consistency}, $\nu^2 \psi_1(\nu) - \nu$ may be replaced  by $1/2$ for diverging $\nu$.

\subsection{Asymptotic distribution of $M_n$}\label{sect:spacings}
Since $F_0$ is assumed to be continuous, $(F_0(Z_1),\ldots,$ $F_0(Z_n))$ and $(U_{(1)},\ldots,U_{(n)})$ have the same distribution, where $0<U_{(1)}<\cdots<U_{(n)}<1$ are the order statistics from a random sample of size $n$ from a Uniform(0, 1) distribution. Therefore, $M_n$, as a function of $(F_0(Z_1), \ldots, F_0(Z_n))$, is distribution-free for any finite sample size $n$. 

It is well known that 
\begin{align*}
		&(U_{(1)}, U_{(2)} - U_{(1)},\ldots, U_{(n)} - U_{(n-1)} ) 
  \stackrel{d}{=}\bigg( \frac{E_1}{\sum^{n+1}_{h=1} E_h}, \ldots, \frac{E_n}{\sum^{n+1}_{h=1} E_h}\bigg), 
	\end{align*}
	where $E_h, h=1,\ldots,n+1$, are independent random variables, each following the standard exponential distribution with mean $1$; see, for example, Theorem 2.2 in \cite{Devroye1986}.  Therefore, 
 \begin{align*}
  & (U_{(\nu+1)} - U_{(1)}, U_{(2\nu+1)} - U_{(\nu+1)}, \ldots, U_{(n)} - U_{(n-\nu)})
    \stackrel{d}{=} \left(\frac{ \sum^{\nu+1}_{l=2}E_l}{\sum^{n+1}_{h=1}E_h}, \frac{\sum^{2\nu+1}_{l=\nu+2}E_l}{\sum^{n+1}_{h=1}E_h },\ldots, 
    \frac{\sum^{n}_{l=n-\nu+1}E_l}{\sum^{n+1}_{h=1}E_h } \right).
\end{align*}
The numerators are sums of $\nu$ independent standard exponential random variables, each of which follows a gamma distribution with shape parameter $\nu$ and scale parameter $1$.  Let $\tilde{E}_j$, $j=0,\ldots,n_{\nu}-1$ be independent and identically distributed gamma random variables with shape parameter $\nu$ and scale parameter $1$.  We show in the appendix that if $\nu=o(n)$, $M_n-\log \nu+\psi(\nu)$ is asymptotically equivalent to 
\[
\frac{1}{n}\sum_{j=0}^{n_{\nu} - 1} (\tilde{E}_j-
\nu\log(\tilde{E}_j)-\nu+\nu\psi(\nu)) \ .
\]
An interesting fact is that $\Var(\tilde{E}_1)=O(\nu)$ and $\Var(\nu \log(\tilde{E}_1))=O(\nu)$, but $\Var(\tilde{E}_1-\nu \log(\tilde{E}_1))=O(1)$.  We now state the following result.

\begin{theorem}\label{thm:main_hist_asy_dist}
If $\nu = o(n)$, then
\begin{equation*}
	\sqrt{ \frac{n\nu}{\nu^2\psi_1(\nu)-\nu}} \left( M_n -  \log \nu +  \psi(\nu) \right) \stackrel{d}{\rightarrow} N(0, 1).
\end{equation*}
\end{theorem}

Theorem~\ref{thm:main_hist_asy_dist} holds for both fixed $\nu$ and diverging $\nu$.  For fixed $\nu$, the result (in a slightly different form) was given in \cite{del1976spacings}. For diverging $\nu$, the result is given in \cite{jammalamadaka1989asymptotic}, with $\nu^2 \psi_1(\nu) - \nu$ replaced by its limit $1/2$, but that result does not hold for fixed $\nu$.

\subsection{Sufficient conditions for $R_n$ to be asymptotically negligible}\label{sect:suff_cond}
In this subsection, we provide some sufficient conditions for $R_n = O_p( \frac{\nu \log n}{n})$ in Theorem \ref{theorem:Rn_order}.  Here, $f_0$ may belong to either $H_0$ or $H_1$.  These conditions relate only to the true underlying density and not to the NPMLE.  Intuitively, it is expected that  $\log \{f_0(Z_{j\nu+l+1})(Z_{(j+1)\nu+1} - Z_{j\nu+1})\} \approx \log \{F_0(Z_{(j+1)\nu+1}) - F_0(Z_{j\nu+1})\}$,  so that $R_n$ approaches $0$ at a certain rate.  Given a density $f$, let $\tau_f$ and $\sigma_f$ denote the left endpoint and right endpoint of its support, respectively.
 
\noindent
\textbf{Conditions}:
\begin{enumerate}
    \item[(A)] There exists $x_0 > 0$ such that for all $|x| \geq x_0$, $f_0(x) \leq |x|^{-\gamma}$ for some $\gamma > 1$.
    
    \item[(B)] 
    For $a$ such that $f(a+) = \infty$ or $f(a-) = \infty$, there exists $\delta > 0$ such that for $x$ with $|x-a| < \delta$, $f(x) \leq (x-a)^{\gamma_2 - 1}$ for some $\gamma_2 \in (0, 1)$.
    
    \item[(C)] 
    \begin{enumerate}
	\item[(i)] $f_0$ is monotone (decreasing or increasing); or
	\item[(ii)] There exist $K_1 > \tau_{f_0} $ and $K_2 < \sigma_{f_0}$, such that $f_0$ is monotone on $(\tau_{f_0}, K_1]$ and $[K_2, \sigma_{f_0})$, and $\log f_0$ is Lipschitz continuous on $[K_1, K_2]$ with a Lipschitz constant $L$.
\end{enumerate}
\end{enumerate}

Condition (A) essentially requires the tail of $f_0$ to decay at a rate that is not too slow if its support is unbounded. In particular, a regularly varying random variable with any tail index $\alpha > 0$\footnote{A random variable is regularly varying with tail index $\alpha >0$ if its survival function $\mathbb{P}(X > x) = x^{-\alpha}l(x)$ for some slowly varying function $l$.} whose density is ultimately monotone\footnote{A function $f$ is said to be ultimately monotone if $f$ is monotone on $(x, \infty)$ for some $x > 0$. Thus, $f_0$ satisfying (C)(i) or (C)(ii) is ultimately monotone.} satisfies Condition (A) by the monotone density theorem (see Theorem 1.2.9 in \cite{mikosch1999regular}). When $\alpha \in (0, 1)$, such a regularly varying random variable has an infinite mean. 

Condition (B) essentially requires that $f_0$ does not grow to infinity too quickly if it is unbounded. We also allow for the possibility that the density grows to infinity at multiple points. 

\begin{theorem}\label{theorem:Rn_order}
Suppose that Conditions (A), (B), and one of Conditions (C) hold. 
Then, we have
\begin{equation*}
	R_n = O_p\left( \frac{\nu \log n}{n}\right).
\end{equation*}
\end{theorem}

 For log-concave densities, we have the following corollary of Theorem \ref{theorem:Rn_order}, as they satisfy	Conditions (A), (B) and (C) (ii).
	\begin{corollary}\label{col:lcb1b2}
 For any univariate log-concave density, $R_n = O_p\left( \frac{\nu \log n}{n}\right)$.
	\end{corollary}

\section{Average log-density ratio for shape-constrained densities}\label{sect:shape}
In this section, we study the behavior of the average log-density ratio $S_{n}$ under both the null and alternative hypotheses. Specifically, Section \ref{subsect:k_log_density_ratio} focuses on $k$-monotone densities; Section \ref{subsect:cm_log_density_ratio} on completely monotone densities; and Section \ref{subsect:lc_log_density_ratio} on log-concave densities. In each subsection, we develop the rate of convergence of $S_n$ under the null, which is generally faster than that of $M_n$. To show the divergence of the log-likelihood ratio under alternatives, one approach is to study the behavior of the NPMLE under misspecification.  However, this problem has only been studied for 1-monotone densities \citep{Patilea2001} and log-concave densities \citep{cule2010theoretical}. By using a probability inequality for the likelihood ratio from \cite{wong1995probability}, we are able to show the consistency of the test under the alternative without first investigating the limit of the NPMLE under misspecification of the hypothesis classes, under general regularity conditions.

\subsection{$k$-monotone densities}\label{subsect:k_log_density_ratio}
In this subsection, we study the behavior of the average log-density ratio for the class of $k$-monotone densities. Denote $\mathcal{F}_k$ as the class of all $k$-monotone densities with support being $[0,\infty)$ or its subsets, for any $k\in \mathbb{N}$. Let $\hat{f}_{n,k}$ be the NPMLE over the whole class $\mathcal{F}_k$.
Define
\begin{equation}\label{eq:def_Kn}
S_{n,k} := - \frac{1}{n} \sum^n_{i=1} \log \frac{\hat{f}_{n,k}(X_i)}{f_0(X_i)}.
\end{equation}
The main result in this subsection is to establish the rate of convergence of $S_{n,k}$ to $0$ under $H_0$, and the divergence of $S_{n,k}$ under $H_1$, when $f_0$ is allowed to be unbounded at $0$ and may have an unbounded support $[0,\infty)$, enabling us to obtain the asymptotic null distribution of $T_n$ and its consistency under local alternatives.

We first review existing results for bounded densities with bounded support. 
 Denote $\mathcal{F}_k([0, A])$ as the class of $k$-monotone densities with support contained in $[0, A]$, and $\mathcal{F}^B_k([0, A])$ as the subclass of densities in $\mathcal{F}_k([0,A])$ that are all bounded above by $B$.
When $f_0 \in \mathcal{F}^B_k([0,A])$, 
\cite{gao2009rate} obtained an upper bound on the bracketing entropy of the class $\mathcal{F}^B_k([0, A])$ under the Hellinger distance $h$:
\begin{equation}\label{eq:Gao_entropy}
    \log N_{[\cdot]}(\varepsilon, \mathcal{F}^B_k([0, A]), h) \leq C |\log A B |^{\frac{1}{2k}} \varepsilon^{-\frac{1}{k}},
\end{equation}
where $C$ is a constant that depends only on $k$, and $N_{[\cdot]}(\varepsilon, \mathcal{G}, \rho)$ is the bracketing number, which is the minimum number of $\varepsilon$-brackets, defined using the distance metric $\rho$, needed to cover a function class $\mathcal{G}$. Note that \cite{gao2009rate} considered $\mathcal{F}_k^B([0,A])$ instead of $\mathcal{F}_k$ because the latter is not totally bounded.
Using (\ref{eq:Gao_entropy}), for $f_0 \in \mathcal{F}^B_k([0, A])$, 
one can derive, for example, by following the argument in Corollary 7.5 of \cite{van2000empirical}, that
\begin{equation}\label{eq:K_n_order_existing_way}
    S_{n,k} = O_p\left(n^{-\frac{2k}{2k+1}}\right);
\end{equation}
see also Theorem \ref{thm:simple_k_monotone_logLRT_rate} (i).
	
	
	For the case where the density is unbounded and/or has an unbounded support, to the best of our knowledge, the corresponding literature on the rate of convergence of $S_{n,k}$ is lacking, since this class does not have a finite bracketing entropy in terms of the Hellinger distance \citep{gao2009rate}.  Novel extensions are needed to address this problem. The approach we use here relies on the following facts about the NPMLE: (i) it is bounded by $k$ times the  inverse of the minimum order statistic, and (ii) the right endpoint of its support is bounded by $k$ times the maximum order statistic; see Lemma \ref{lemma:k_monotone_bounded_in_prob} and Lemma \ref{lemma:bounds_on_kmonotone_support}, respectively.

	\begin{lemma}\label{lemma:k_monotone_bounded_in_prob}
		If $f_0 \in \mathcal{F}_k$, then $\hat{f}_{n,k}(0+) \leq k Z^{-1}_1$.	If $f_0$ is also bounded from above, then $\hat{f}_{n,k}(0+) = O_p(1)$.
	\end{lemma}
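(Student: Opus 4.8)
The plan is to exploit the mixture representation of $k$-monotone densities together with the characterization of the NPMLE as a maximizer of the log-likelihood over this mixture class. Recall that every $f \in \mathcal{F}_k$ can be written as $f(x) = \int_0^\infty \frac{k}{y}\bigl(1 - \tfrac{x}{y}\bigr)_+^{k-1} \, dG(y)$ for some distribution function $G$ on $(0,\infty)$, i.e.\ $f$ is a scale mixture of Beta$(1,k)$-type densities. First I would recall that the NPMLE $\hat f_{n,k}$ corresponds to a mixing distribution $\hat G_n$ supported on finitely many points, and — crucially — that $\hat G_n$ puts no mass on $(0, Z_1)$: any mixture component with scale parameter $y < Z_1$ assigns density $0$ to every observation $X_i \ge Z_1$ and hence cannot increase (and strictly decreases, relative to shifting that mass to $y \ge Z_1$) the likelihood. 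This forces the support of $\hat G_n$ into $[Z_1, \infty)$.

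The second step is to read off the bound at the origin. Since the kernel $y \mapsto \frac{k}{y}(1 - \tfrac{x}{y})_+^{k-1}$ evaluated at $x = 0+$ equals $k/y$, and this is decreasing in $y$, we get $\hat f_{n,k}(0+) = \int_0^\infty \frac{k}{y} \, d\hat G_n(y) \le \int_{Z_1}^\infty \frac{k}{Z_1} \, d\hat G_n(y) = k Z_1^{-1}$, using that $\hat G_n$ is supported in $[Z_1,\infty)$ and is a probability measure. That establishes the deterministic inequality $\hat f_{n,k}(0+) \le k Z_1^{-1}$. For the $O_p(1)$ claim, when $f_0$ is bounded from above, say by $M_0 < \infty$, with $\tau_{f_0} = 0$, one has $\mathbb{P}(Z_1 \le t) = 1 - (1 - F_0(t))^n \le 1 - (1 - M_0 t)^n$ for small $t$, so $\mathbb{P}(Z_1 \le \varepsilon / n) \to 1 - e^{-M_0 \varepsilon}$, which is small for small $\varepsilon$; equivalently $n Z_1 = O_p(1)$ from below is the wrong direction — rather, $Z_1$ is of order $n^{-1}$ and in particular $Z_1^{-1} = O_p(n)$, which is not yet $O_p(1)$. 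I would therefore instead argue directly that $\hat f_{n,k}(0+) = O_p(1)$ by noting $\hat f_{n,k}$ is itself a density in $\mathcal{F}_k$ with $\hat f_{n,k}(0+)$ as its maximum, and use that the NPMLE is consistent (e.g.\ Hellinger-consistent on compacta) so that $\hat f_{n,k}(0+)$ cannot blow up when $f_0(0+) \le M_0 < \infty$; more carefully, comparing the likelihood of $\hat f_{n,k}$ with that of a fixed bounded $k$-monotone density in $\mathcal{F}_k$ bounds $\hat f_{n,k}(0+)$ in probability.

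The main obstacle I anticipate is the second, $O_p(1)$, assertion rather than the deterministic inequality. The deterministic bound $k Z_1^{-1}$ is clean once the no-mass-below-$Z_1$ property of $\hat G_n$ is in hand, and that property is a short optimization argument. But showing $\hat f_{n,k}(0+) = O_p(1)$ when $f_0$ is bounded requires quantitative control: either a consistency statement for the NPMLE at the origin (which may itself need the entropy bound~(\ref{eq:Gao_entropy}) localized, or a monotonicity/Marshall-type lemma), or a likelihood-comparison bound showing that an NPMLE with $\hat f_{n,k}(0+)$ large is beaten by a fixed competitor with high probability. I would aim to package this as: fix $M > 2 M_0$; on the event $\{\hat f_{n,k}(0+) > M\}$ the NPMLE is forced, by $k$-monotonicity, to be very concentrated near $0$, which contradicts the law of large numbers for $\frac1n\sum \mathbf{1}\{X_i \le \delta\}$; hence $\mathbb{P}(\hat f_{n,k}(0+) > M) \to 0$. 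This uses only that $f_0$ is bounded and $\tau_{f_0} = 0$, consistent with the hypotheses of the lemma.
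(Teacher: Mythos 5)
Your argument for the deterministic bound $\hat f_{n,k}(0+)\le kZ_1^{-1}$ is correct and is essentially the paper's: the key observation in both is that the maximizing mixing distribution $\hat G_n$ puts no mass on $(0,Z_1)$, since a component with scale $\le Z_1$ contributes $0$ to $f(X_i)$ for every $i$ when $k\ge 2$ (and contributes $0$ for scales $<Z_1$ when $k=1$). Your conclusion step — integrating the kernel at $x=0+$, which equals $k/y\le k/Z_1$ on the support of $\hat G_n$ — is a clean, direct alternative to the paper's route, which instead invokes the intermediate inequality $\hat f_{n,k}(0+)\le k\,\mathbb{F}_n(\hat a_1)/\hat a_1$ from the proof of Proposition 6 of Gao and Wellner and then uses $\mathbb{F}_n(\hat a_1)\le 1$, $\hat a_1>Z_1$. (For $k=1$ the paper argues separately via $\hat f_{n,1}(0)Z_1=\int_0^{Z_1}\hat f_{n,1}\le 1$; your mixture argument covers that case too.)

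The genuine gap is in the $O_p(1)$ claim. Hellinger consistency cannot control the single boundary value $\hat f_{n,k}(0+)$, and a likelihood comparison with a fixed competitor cannot either, because $\hat f_{n,k}(0+)$ does not enter the likelihood at all (every observation lies in $[Z_1,\infty)$ with $Z_1>0$, and one can inflate $f(0+)$ arbitrarily by adding a mixture component of tiny weight and tiny scale without materially changing $\prod_i f(X_i)$). Your final sketch also fails at both steps: $k$-monotonicity together with $f(0+)>M$ does \emph{not} force mass concentration near $0$ (the same tiny-weight, tiny-scale component makes $f(0+)$ huge while $\int_0^\delta f$ stays essentially unchanged), and even if it did, the relevant excursions occur at scales $t$ as small as $Z_1\asymp n^{-1}$, where a law of large numbers for $\frac1n\sum\mathbf 1\{X_i\le\delta\}$ at fixed $\delta$ gives no control — one needs a maximal inequality uniform in $t\downarrow 0$. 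The correct mechanism, which the paper obtains by citing Proposition 6 of Gao and Wellner (noting only that its proof uses the characterization of the MLE and not boundedness of the support of $f_0$), is the chain
\begin{equation*}
\hat f_{n,k}(0+)\;\le\;\frac{k\,\mathbb{F}_n(\hat a_1)}{\hat a_1}\;\le\;k\,\sup_{t>0}\frac{\mathbb{F}_n(t)}{F_0(t)}\cdot\sup_{t>0}\frac{F_0(t)}{t}\;\le\;k\,\Big(\sup_{t>0}\frac{\mathbb{F}_n(t)}{F_0(t)}\Big)\,f_0(0+),
\end{equation*}
where the first inequality comes from the Fenchel (first-order) characterization of the NPMLE, the last uses that $f_0$ is decreasing and bounded, and $\sup_{t>0}\mathbb{F}_n(t)/F_0(t)=O_p(1)$ by Daniels' theorem. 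To complete your proof you would need to supply this characterization inequality (or an equivalent uniform bound on $\mathbb{F}_n/F_0$); nothing in your current sketch substitutes for it.
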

	
	\begin{remark}
		In \cite{Balabdaoui2011grenander}, the authors studied the asymptotic behavior of the Grenander estimator for a $1$-monotone density near zero. Specifically, they considered the situation when the true density is unbounded at zero and established the rate at which $\hat{f}_n(0+)$ diverges to infinity under certain regularity conditions. For example, if $f_0(x) = \gamma x^{\gamma -1}I(0 < x \leq 1)$ with $0<\gamma<1$, then Theorem 1.1 in \cite{Balabdaoui2011grenander} implies that $\hat{f}_{n,k}(0+) = O_p(n^{1/\gamma - 1})$. On the other hand, the bound $\hat{f}_{n,k}(0+) \leq k Z^{-1}_1$ implies that $\hat{f}_{n,k}(0+) = O_p(n^{1/\gamma})$. While our bound is weaker, it does not require any of the additional assumptions made in \cite{Balabdaoui2011grenander} and is sufficient for establishing that $\sqrt{n\nu} S_{n,k} = o_p(1)$; see Theorem \ref{thm:simple_k_monotone_logLRT_rate} and Corollary \ref{cor:simple_k_monotone_null}. Furthermore, to the best of our knowledge, there is no known limit theory for the NPMLE of a $k$-monotone density near zero when $k > 1$.
	\end{remark}

	\begin{lemma}\label{lemma:bounds_on_kmonotone_support}
		If $f_0 \in \mathcal{F}_k$, then   $\sigma_{\hat{f}_{n,k}} \leq k Z_n$.
		\label{lem:kZn}
	\end{lemma}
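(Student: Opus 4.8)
The plan is to use the integral (mixture) representation of $k$-monotone densities together with the Fenchel-type characterization of the NPMLE. Recall that a density $g$ is $k$-monotone on $[0,\infty)$ if and only if it can be written as a scale mixture of Beta-type kernels,
\begin{equation*}
	g(x) = \int_0^\infty \frac{k}{y}\left(1 - \frac{x}{y}\right)_+^{k-1} \, dP(y)
\end{equation*}
for some probability measure $P$ on $(0,\infty)$; equivalently $g(x) = \int_0^\infty y^{-k}(y-x)_+^{k-1}\,d\mu(y)$ for a suitable measure $\mu$. The key structural fact is that the mixing measure $\hat P$ of the NPMLE $\hat f_{n,k}$ is \emph{discrete with finitely many support points}, and—this is the crucial claim—none of those support points can exceed $Z_n$. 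Indeed, if $\hat P$ put mass at some $y^\ast > Z_n$, then the kernel $x \mapsto y^{\ast-k}(y^\ast - x)_+^{k-1}$ evaluated at all the data points $X_i \le Z_n < y^\ast$ could be strictly increased (pointwise on $[0,Z_n]$, hence at every $X_i$) by moving that mass down to $y = Z_n$, because $y \mapsto y^{-k}(y - x)^{k-1}$ is decreasing in $y$ for $y > x$ when... actually one must check the monotonicity of $\phi_x(y) := y^{-k}(y-x)^{k-1}$ for $y>x$; its logarithmic derivative is $-k/y + (k-1)/(y-x)$, which is negative precisely when $y > kx/(k-1)$ — \emph{not} for all $y>x$. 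So a naive "push the mass to $Z_n$" argument does not immediately work, and one needs to be more careful about where the relevant support point can be.

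Accordingly, the cleaner route is via the variational characterization of the NPMLE directly. Write $\hat f_{n,k}(x) = \sum_j \alpha_j \, y_j^{-k}(y_j - x)_+^{k-1}$ with $\alpha_j>0$, $\sum_j \alpha_j y_j^{-k}\cdot(\text{const}) $ normalizing. The standard Lagrangian/Fenchel condition for the mixture NPMLE states that the "directional derivative" function
\begin{equation*}
	D(y) := \frac{1}{n}\sum_{i=1}^n \frac{y^{-k}(y - X_i)_+^{k-1}}{\hat f_{n,k}(X_i)} - 1
\end{equation*}
satisfies $D(y) \le 0$ for all $y>0$, with equality at every support point $y_j$ of $\hat P$. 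Now suppose, for contradiction, that the largest support point $y_{\max}$ satisfies $y_{\max} > kZ_n$. On the interval $(kZ_n, \infty)$ the only data points contributing to the numerator are all of $X_1,\dots,X_n$ (since $X_i \le Z_n < y$ for all $i$), and on that interval one checks that $y \mapsto \sum_i y^{-k}(y-X_i)^{k-1}/\hat f_{n,k}(X_i)$ is \emph{strictly increasing}: termwise, $\phi_{X_i}'(y)/\phi_{X_i}(y) = -k/y + (k-1)/(y - X_i)$, and since $y > kZ_n \ge kX_i$ we get $(k-1)/(y-X_i) > (k-1)/y > \ldots$; more directly $y - X_i < y$ and $(k-1)(y) $ versus $k(y-X_i)$: the sign is $(k-1)y - k(y-X_i) = kX_i - y < kZ_n - y < 0$, so in fact $\phi_{X_i}'(y) < 0$ there. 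Hence $D$ is strictly \emph{decreasing} on $(kZ_n,\infty)$. But $D(y_{\max}) = 0$ while $D(y) \to -1$ as $y\to\infty$ is consistent with decrease; the contradiction must come from the other side: just below $y_{\max}$ we would need $D \ge$ its value at a point where...

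Let me restate the plan more economically. The robust argument is: (1) invoke the known representation of $\hat f_{n,k}$ as a finite mixture of the kernels $k y^{-k}(y-x)_+^{k-1}$ over support points $y_1 < \dots < y_m$ of $\hat P$, so that $\sigma_{\hat f_{n,k}} = y_m$; (2) show $y_m \le k Z_n$ by a local perturbation: replace the top atom at $y_m$ with an atom at $y_m - \delta$, keeping the mixture a valid $k$-monotone density (possibly re-normalizing), and show that for $y_m > kZ_n$ this \emph{strictly increases} $\prod_i \hat f_{n,k}(X_i)$, contradicting maximality. The increase follows because each factor $y_m^{-k}(y_m - X_i)^{k-1}$ with $X_i \le Z_n$ has $\partial_{y}\log\bigl[y^{-k}(y-X_i)^{k-1}\bigr] = -k/y + (k-1)/(y-X_i) = \bigl(kX_i - y\bigr)/\bigl(y(y-X_i)\bigr) < 0$ whenever $y > kX_i$, and $y_m > kZ_n \ge kX_i$ for all $i$; the normalization only helps (moving mass inward increases total mass, so rescaling down further raises nothing problematic — this needs a short check). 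Thus perturbing $y_m$ downward raises every data-likelihood factor, the contradiction. The main obstacle is handling the normalization/constraint bookkeeping cleanly — ensuring the perturbed object stays in $\mathcal F_k$ and that the renormalization does not wipe out the gain — and making precise the "finitely many atoms, top atom $= \sigma_{\hat f_{n,k}}$" claim, for which I would cite the structure theory of $\hat f_{n,k}$ from \cite{balabdaoui2007estimation, balabdaoui2010estimation}. I expect the monotonicity computation $kX_i - y < 0$ to be the conceptual heart, explaining precisely why the bound is $kZ_n$ and not $Z_n$.
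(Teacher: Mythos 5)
Your final plan is essentially the paper's proof: write $\hat f_{n,k}$ as a finite mixture of the normalized kernels $k\hat a_j^{-k}(\hat a_j - x)_+^{k-1}$ (Lemma 2 of \cite{balabdaoui2010estimation}), identify $\sigma_{\hat f_{n,k}}$ with the top support point $\hat a_m$, hold everything else fixed, and use the sign of $\partial_y \log\{y^{-k}(y-X_i)^{k-1}\} = (kX_i - y)/\{y(y-X_i)\}$ to conclude that any $\hat a_m > kZ_n$ could be profitably decreased, contradicting optimality. The one obstacle you flag, renormalization, is vacuous: each kernel integrates to one for every value of its support point, so perturbing $\hat a_m$ with the weights held fixed automatically yields another density in $\mathcal F_k$, which is exactly how the paper argues (your two abandoned false starts are not needed).
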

	
In general, to obtain a rate of convergence of the NPMLE, it suffices to consider a subspace where the NPMLE lies with probability approaching $1$. In view of Lemmas \ref{lemma:k_monotone_bounded_in_prob} and \ref{lemma:bounds_on_kmonotone_support}, we know that $\hat{f}_{n,k} \in \mathcal{F}^{k Z^{-1}_1}_k([0, kZ_n])$. Thus, while $f_0$ may not be bounded from above or may not have a bounded support, its NPMLE is bounded from above and has a bounded support. Then, the rate of convergence of $S_{n,k}$ to $0$ depends on how fast $Z_1$ approaches $0$ and/or how fast $Z_n$ approaches $\infty$. In the following theorem, we establish that as long as $f_0$ satisfies Conditions (A) and (B), there is only a difference in logarithmic order compared to the case where $f_0 \in \mathcal{F}_k^B([0,A])$.\footnote{For simplicity, we state the results that cover most of the common practical cases of interest. A more general statement is given in the appendix, where $f_0$ satisfying Conditions (A) and (B) is a special case.}

	\begin{theorem}\label{thm:simple_k_monotone_logLRT_rate}
		
  Suppose that $f_0 \in \mathcal{F}_k$.
  \begin{enumerate}
      \item[(i)] If $f_0 \in \mathcal{F}_k^B([0,A])$, then $S_{n,k} = O_p\left(n^{-\frac{2k}{2k+1}}\right)$, as stated in (\ref{eq:K_n_order_existing_way}).
      \item[(ii)] If $f_0$ has an unbounded support but satisfies Condition (A) and/or $f_0$ is not bounded from above but satisfies Condition (B), then
      \begin{equation*}
				S_{n,k} 
				= O_p\left(n^{-\frac{2k}{2k+1}} (\log (n) )^{\frac{1}{2k+1}} \right).
			\end{equation*} 
  \end{enumerate}
  
	\end{theorem}

\begin{remark}
    Similar to Theorem \ref{thm:simple_k_monotone_logLRT_rate}, if $f_0 \in \mathcal{F}_k$ satisfies Conditions (A) and (B), we can also obtain
    \begin{equation*}
        h(\hat{f}_{n,k}, f_0) = O_p\left(n^{-\frac{k}{2k+1}} (\log(n))^{\frac{1}{2(2k+1)}}\right).	
    \end{equation*}
    
\end{remark}

Following Theorem \ref{coro:main_result}, Theorems \ref{theorem:Rn_order} and  \ref{thm:simple_k_monotone_logLRT_rate}, we obtain the following Corollary \ref{cor:simple_k_monotone_null}, which establishes the asymptotic null distribution of the NPLRT for the class of $k$-monotone densities.
	
\begin{corollary}\label{cor:simple_k_monotone_null}
    Under Conditions in Theorem \ref{thm:simple_k_monotone_logLRT_rate} and 
    suppose that $\nu = O(n^{1/3}(\log n)^{-1})$, then (\ref{eq:asy_dist_under_H0}) holds.
\end{corollary}

Now consider a sequence of local alternatives, where the underlying density $f_n \notin \mathcal{F}_k$ satisfies  
$\varepsilon_{kn} := \inf_{f \in \mathcal{F}_k} h(f, f_n) > 0$, and assume that $\varepsilon_{kn} \to 0$ as $n \to \infty$.  
%
The following theorem shows that if $\varepsilon_{kn}$ is at least of order $n^{-1/3}$ up to a logarithmic factor and a constant, then the power of the test converges to $1$. The notation $d_n = \Omega(t_n)$ means that there exists a constant $c > 0$ such that $d_n / t_n \geq c$ for all sufficiently large $n$.

\begin{theorem}[Consistency under local alternatives]\label{coro:k_monotone_H1_local_alt_rate}
    Suppose that $\nu \asymp n^{1/3}(\log n)^{-1}$, $\log (a_n b_n) = O(\log n)$, where $\{a_n\}$ and $\{b_n\}$ satisfy (C.25) in the appendix, and $R_n = O_p(\frac{\nu \log n}{n})$. If $\varepsilon_{kn} = \Omega(n^{-1/3} \log n)$, then for any $0 \leq c < \infty$,
    \begin{equation*}
\lim_{n \rightarrow \infty} \mathbb{P}     \left(
\sqrt{ \frac{n\nu}{\nu^2\psi_1(\nu)-\nu}} \left( T_n -  \log \nu +  \psi(\nu) \right) > c\right) = 1.
\end{equation*}
\end{theorem}
\begin{remark}
The setting above is stated in general terms, without assuming a specific form for $f_n$.
 Suppose in addition that $f_n = f_0 + \varepsilon_n g \geq 0$, where $f_0 \in \mathcal{F}_k$, $\int g = 0$, and $\varepsilon_n \downarrow 0$. If $f_0$ satisfies Conditions (A) and (B), and $g$ is a step function with finitely many steps, then it can be shown that $R_n = O_p\left(\frac{\nu \log n}{n}\right)$ and $\log(a_n b_n) = O(\log n)$, following similar arguments as in the proof of Theorem \ref{theorem:Rn_order} and Lemma C.2 in the appendix.
\end{remark}

\subsection{Completely monotone densities}\label{subsect:cm_log_density_ratio}
For completely monotone densities, for which global rates of convergence have not been developed, we obtain a rough rate of convergence that is sufficient for the application of the proposed test.

 Denote $\mathcal{F}_\infty$ as the class of all completely monotone densities on $(0, \infty)$. By  Bernstein's theorem \citep{feller1971introduction},
    \begin{align*}
        \mathcal{F}_\infty &= \bigg\{f:(0,\infty) \rightarrow (0,\infty) : f(t) = \int^\infty_0 \lambda e^{-\lambda t}\, dM(\lambda),  \text{where $M$ is a distribution function} \bigg\}.
    \end{align*}
Note that a completely monotone density can be unbounded at $0$. For example, the gamma distribution with shape parameter in $(0, 1)$ is unbounded at $0$ and completely monotone. A completely monotone density can also have a heavy tail. For example, the density $f(t) = \beta (1+t)^{-(\beta+1)}$ for $\beta > 0$ has a tail that decays polynomially and is completely monotone.
    
    Similar to the class of $k$-monotone densities with unbounded support, we know from \cite{gao2009rate} that $\mathcal{F}_\infty$ is not totally bounded . Moreover, the NPMLE $\hat{f}_{n,\infty}$ for a completely monotone density always has unbounded support, as $\hat{f}_{n,\infty}(t) = \int^\infty_0 \lambda e^{-\lambda t} \, d\hat{M}_n(\lambda)$ for $t > 0$, where $\hat{M}_n$ is the NPMLE of the corresponding mixing distribution. Although completely monotone densities are often viewed as $k$-monotone with $k \in \infty$, the approach in Subsection \ref{subsect:k_log_density_ratio} is not applicable and we will derive a rate of convergence of the log-density ratio using a different method. 
    
     Here, we do not aim to derive a tight bound for the convergence rate of the log-likelihood ratio in the completely monotone case, as this is not the primary focus of the paper. Instead, we provide a loose bound in Lemma \ref{lem:new_rate_cm}, which is sufficient to show that $\sqrt{n \nu} S_{n,\infty}= o_p(1)$ for our NPLRT under the null. The slow rate is merely a consequence of our method of proof.

     The key idea is that we can still obtain an upper bound for the NPMLE. Then, we view the NPMLE as a sum of two decreasing functions with supports of the form $[0, c_{2n}]$ and $(c_{2n}, \infty)$ for some increasing sequence $c_{2n}$. Similar to the $1$-monotone case, we can obtain a finite bracketing entropy for the class of decreasing functions that are bounded above and have bounded support $[0, c_{2n}]$. The sequence $c_{2n}$ is chosen such that the tail of the NPMLE is bounded by $1/t^2$. Using the method in Lemma 7.10 of \cite{van2000empirical}, we can then obtain a finite bracketing entropy for the class of the decreasing functions with unbounded support that are bounded by $1/t^2$.
    
    

  \begin{lemma}\label{lem:new_rate_cm}
        Suppose that $f_0\in \mathcal{F}_\infty$ and satisfies Conditions (A) and (B). Then,
        \begin{equation}\label{eq:CM_LRT_rate}
            S_{n,\infty} = O_p\left( n^{-\frac{2}{3}} |\log n |^{\frac{1}{3}}\right).
        \end{equation}
    \end{lemma}

		\begin{corollary}\label{thm:simple_CM_null}
	Under the conditions in Lemma \ref{lem:new_rate_cm}.	
  Suppose that $\nu = O(n^{1/3}(\log n)^{-1})$, then (\ref{eq:asy_dist_under_H0}) holds.
		\end{corollary}

  For local alternatives in the completely monotone case, analogous results to those in the $k$-monotone case are established. The corresponding statements are provided in Subsection D.3.2 of the appendix.
		

\subsection{Log-concave densities}\label{subsect:lc_log_density_ratio}
		In this subsection, let $\mathcal{F}_{lc}$ denote the class of log-concave densities on $\mathbb{R}$, and let $\hat{f}_{n,lc}$ be the NPMLE over $\mathcal{F}_{lc}$. Define
		\begin{equation*}
			S_{n,lc} := - \frac{1}{n} \sum^n_{i=1} \log \frac{\hat{f}_{n,lc}(X_i)}{f_0(X_i)}.
		\end{equation*}
		Corollary 3.2 in \cite{doss2016global} shows that $S_{n,lc} = O_p(n^{-4/5})$. Additionally, $R_n = O_p\left(\frac{\nu \log n}{n}\right)$, as stated in Corollary \ref{col:lcb1b2}.  Therefore, following Theorem \ref{coro:main_result}, we can establish the asymptotic null distribution for the class of log-concave densities without imposing additional regularity conditions; see Corollary \ref{cor:null_dist_log_concave}.
		\begin{corollary}\label{cor:null_dist_log_concave}
			Suppose that $f_0$ is a log-concave density on $\mathbb{R}$ and $\nu = O(n^{1/3}(\log n)^{-1})$, then (\ref{eq:asy_dist_under_H0}) holds.
				

		\end{corollary}
		

The following corollary considers a sequence of local alternatives, similar to those in Corollary \ref{coro:k_monotone_H1_local_alt_rate}, and establishes the rate at which they can approach the null while still ensuring that the power converges to $1$.

\begin{theorem}[Consistency under local alternatives]\label{thm:lc_H1_local_alt_rate}
            Let $f_{n} \notin \mathcal{F}_{lc}$ be a sequence of local alternatives with common support satisfying $\varepsilon_n := \inf_{f \in \mathcal{F}_{lc}} h(f, f_n) > 0$,   $\sup_n \int |x|^4 f_n(x) dx < \infty$, and $\sup_n f_n(x) < \infty$. 
    Suppose that $\nu \asymp n^{1/3}(\log n)^{-1}$ and $R_n = O_p(\frac{\nu \log n}{n})$. If $\varepsilon_{n} = \Omega(n^{-1/3} \log n)$, then for any $0 \leq c < \infty$,
    \begin{equation*}
\lim_{n \rightarrow \infty} \mathbb{P}     \left(
\sqrt{ \frac{n\nu}{\nu^2\psi_1(\nu)-\nu}} \left( T_n -  \log \nu +  \psi(\nu) \right) > c\right) = 1.
\end{equation*}
\end{theorem}

\section{Bootstrap calibration of the test}\label{sect:bootstrap}
While the asymptotic distribution under the null hypothesis is distribution-free, and thus can be used to determine the critical value of the test, the smaller-order terms, particularly $\sqrt{n\nu} S_n$,  may converge to $0$ slowly. This can make the asymptotic approximation inaccurate in finite samples. One possible way to improve the approximation is to analyze the leading term in $S_n$ and debias accordingly. We first explore this idea in the context of 1-monotonicity to illustrate its limitations. The result is presented in Section F.1 of the appendix, and the proof relies on results from \cite{Kulikov2005} and \cite{Kulikov2008}.


  A more practical and generally applicable way to improve the accuracy of the test is to use a bootstrap procedure to determine appropriate critical values. 
Let $\{\tilde{f}_n\}$ be a sequence of estimated densities, which may be $k$-monotone, completely monotone, or log-concave, where $\tilde{f}_n$ depends on $X_1,\ldots,X_n$, and let $\tilde{F}_{n}$ be the corresponding distribution function. Let $X_{n1}^*,\ldots,X_{nn}^*$ be independent random  variables simulated from the density $\tilde{f}_{n}$, and denote $Z_{n1}^* < \ldots < Z_{nn}^*$ as their order statistics. Let $\hat{f}^*_n$ be the NPMLE based on $X_{n1}^*,\ldots,X_{nn}^*$ under the relevant hypothesis class (either $k$-monotonicity, complete monotonicity, or log-concavity) and let $f_{n,\nu}^{H,*}$ be the corresponding histogram-type estimator.
		Define
		\begin{equation*}
	T_n^* := -\frac{1}{n}\log \prod_{i=1}^n  \frac{ \hat{f}_n^{*}(X_{ni}^{*})}{ \hat{f}_{n,\nu}^{H, *}(X_{ni}^{*})}.
		\end{equation*}
		Let $\mathcal{S}$ denote the sample space, and let $\mathbb{P}^*$ represent the conditional probability given 
		the entire sequence $(X_1,X_2,\ldots)$.
		In this section, we also emphasize the probability measure in the $o_p$ notation by using $o_{\mathbb{P}^*_\omega}$ for $\omega \in \mathcal{S}$ when applicable. Let
  \begin{align*}
S_n^* &:= - \frac{1}{n}\sum^n_{i=1} \log \frac{\hat{f}_n^*(X_{ni}^*)}{\tilde{f}_n(X_{ni}^*)},\\
M_n^* &:= -  \frac{\nu}{n} \sum_{j=0}^{n_{\nu} - 1} 
\log \frac{\tilde{F}_n(Z_{n,(j+1)\nu+1}^*) - \tilde{F}_n(Z_{n,j\nu+1}^*)}{\nu/(n-1)},\\
R_n^*&:=	- \frac{1}{n} \sum_{j=0}^{n_{\nu} - 1} 
\sum^\nu_{l=1} \bigg[ \log \tilde{f}_n(Z_{n,j\nu+l+1}^*) + \log \frac{Z_{n,(j+1)\nu+1}^* - Z_{n,j\nu+1}^*}{\tilde{F}_n(Z_{n,(j+1)\nu+1}^*) - \tilde{F}_n(Z_{n,j\nu+1}^*)} \bigg] -\frac{1}{n} \log \frac{\tilde{f}_n(Z_{n1}^*)}{f^{H,*}_{n,\nu}(Z_{n1}^*)}.
\end{align*}
		
		
		
		\begin{theorem}\label{thm:bootstrap_general}
			Suppose that conditional on $X_1,X_2,\ldots$, every subsequence $\{n_k\}$ of $\{n\}$ has a further subsequence $\{n_{k_l}\}$ along which for almost all $\omega \in \mathcal{S}$,
			\begin{enumerate}
				\item [(i)]
				$\sqrt{n_{k_l} \nu} S^*_{n_{k_l}} = o_{\mathbb{P}^*_\omega}(1)$; and
				\item [(ii)] $\sqrt{n_{k_l}\nu} R^*_{n_{k_l}} =  o_{\mathbb{P}^*_\omega}(1)$.
			\end{enumerate} 
   Then,	\begin{align}\label{eq:bootstrap_consistencty_general}
	&			\sup_{x \in \mathbb{R}} \bigg| \mathbb{P}^*\left( \sqrt{ \frac{n\nu}{\nu^2\psi_1(\nu)-\nu}} \left( T_n^* -  \log \nu +  \psi(\nu) \right)  \leq x\right)  - \mathbb{P}\left( Z \leq x \right) \bigg| \stackrel{\mathbb{P}_{f_0}}{\rightarrow} 0,
			\end{align}
			where $Z\sim N\left(0, 1 \right)$.
		\end{theorem}
		
		In the appendix, we provide details showing that the conditions in Theorem \ref{thm:bootstrap_general} are satisfied for $k$-monotone, completely monotone, and log-concave densities. Specifically, for testing $k$-monotonicity and complete monotonicity, these conditions hold if the true underlying density satisfies Conditions (A) and (B); for testing log-concavity, it is sufficient for the true density to have a finite first absolute moment.  As a result of Theorem \ref{thm:bootstrap_general}, we have the following bootstrap procedure for determining the critical value of the test.

		\begin{bootstrap}\label{bootstrap_procedure}
			~
			\begin{enumerate}
				\item Simulate $X_{n1}^{(b)},\ldots,X_{nn}^{(b)}$ as independent random  variables from the NPMLE $\hat{f}_n$ of $X_1,\ldots,X_n$.
    
				\item Compute the NPMLE $\hat{f}^{(b)}_n$ and the histogram-type estimator $\hat{f}_{n,\nu}^{H,(b)}$ from $X_{n1}^{(b)},\ldots,X_{nn}^{(b)}$.
    
				\item Set
				\begin{equation*}
					T_n^{(b)} := -\frac{1}{n}\log \prod_{i=1}^n  \frac{ \hat{f}_n^{(b)}(X_{ni}^{(b)})}{ \hat{f}_{n,\nu}^{H, (b)}(X_{ni}^{(b)})}.
				\end{equation*}

				\item 
      For a test with significance level $\alpha \in (0, 1)$, 
                repeat Steps 1 to 3 $B$ times to obtain $T^{(1)}_n,\ldots,T^{(B)}_n$, and use their $1-\alpha$ quantile as the critical value for the test.
			\end{enumerate}
		\end{bootstrap}
		
		
		\begin{remark}
			Recall that an important fact used in establishing the asymptotic null distribution is that $(F_0(Z_1),\ldots,F_0(Z_n)) \stackrel{d}{=} (U_{(1)},\ldots,U_{(n)})$.  If we bootstrap from the distribution function $\hat{\mathbb{F}}_n$ corresponding to the NPMLE $\hat{f}_n$, then we still have		$(\hat{\mathbb{F}}_n(Z^*_{n1}),\ldots,\hat{\mathbb{F}}_n(Z^*_{nn})) \stackrel{d}{=} (U_{(1)},\ldots,U_{(n)})$ because $\hat{\mathbb{F}}_n$ is continuous. However, if we bootstrap from the empirical distribution, this property is lost because the empirical distribution is not continuous.  Therefore, to find the null distribution of the NPLRT, we cannot bootstrap from the empirical distribution. 
		\end{remark}
		\begin{remark}
			Although it is known that for 1-monotone densities, bootstrapping from the empirical distribution or the NPMLE does not provide a consistent estimator of the distribution of $n^{\frac{1}{3}}\{\hat{f}_{n,1}(t_0) -f_0(t_0)\}$, where $t_0 \in (0, \infty)$ is an interior point; see \cite{kosorok2008bootstrapping} and \cite{sen2010inconsistency}; our result does not contradict the above literature because the nature of the statistics of interest is different. Our statistic is a global measure of the discrepancy between the NPMLE and the true density, whereas the statistic considered in the above two papers concerns a local property of the NPMLE. 
   
   On a more technical level, bootstrapping from the NPMLE fails to capture the pointwise limiting distribution of the NPMLE at an interior point due to the lack of smoothness in the Grenander estimator, from which the bootstrap samples are generated, while the true underlying distribution function is assumed to have a differentiable density with a nonzero derivative at that interior point.  In contrast, we only require the true underlying distribution function to be continuous, and we do not need to assume the true underlying density has a nonzero derivative at any point when deriving the asymptotic null distribution.
		\end{remark}

\begin{remark}
    Regardless of whether $f_0$ belongs to the hypothesis class, the NPMLE of the density is always contained within the class. Therefore, bootstrap simulations from the NPMLE always approximate the null distribution.
\end{remark}

\section{Simulation studies}\label{sect:simulation}
In this section, we conduct simulation studies to evaluate the finite-sample performance of the proposed tests and compare them with several other tests. For our proposed NPLRT, we have to determine the value of $\nu$ in the histogram-type estimator. Following the standard way of choosing a tuning parameter in density estimation problems; see \cite{wasserman2006all},  we choose $\nu$ by minimizing the cross-validation error, defined as
\begin{equation*}
    CV(\nu) := \int_{\mathbb{R}} \left\{f^H_{n,\nu}(x)\right\}^2 dx - \frac{2}{n}\sum^n_{i=1} f^{H,-i}_{n-1,\nu}(X_i).
\end{equation*}
Here $f^{H,-i}_{n-1,\nu}$ denotes the histogram-type estimator without the $i$th observation. As discussed in Remark \ref{remark:maximize_power}, to maximize power, $\nu$ should be chosen as $\nu \asymp(n^{1/3} (\log n)^{-1})$. In our cross-validation procedure, we choose $\nu$ from $1$ to $\lfloor C n^{1/3}(\log n)^{-1} \rfloor$, where $C > 0$ is a constant. The upper bound is imposed to attain a balance between possible size distortion under the null and improvement of power under the alternatives.  Specifically, we use $C = 30$ for the monotone case, $C = 8$ for the log-concave, $2$-monotone, and completely monotone cases.  Further discussion and rationale on such choices is given in Section H of the appendix.

When $n_\nu = \frac{n - 1}{\nu}$ is not an integer, we define $f^H_{n,\nu}$ by dividing the data into $\lfloor n_\nu \rfloor$ groups as evenly as possible. For example, if $n = 10$ and $\nu = 2$, we have four groups of sizes $3, 2, 2, 2$, so that $f^H_{n,\nu}(x) = \frac{1}{4}(Z_4 - Z_1)^{-1}$ if $x \in [Z_1, Z_4]$, $\frac{1}{4}(Z_6 - Z_4)^{-1}$ if $x \in (Z_4, Z_6]$, $\frac{1}{4}(Z_8 - Z_6)^{-1}$ if $x \in (Z_6, Z_8]$, and $\frac{1}{4}(Z_{10} - Z_8)^{-1}$ if $x \in (Z_8, Z_{10}]$. If $n = 12$ and $\nu = 3$, we obtain three groups of sizes $4, 4, 3$.

The critical values are computed using the bootstrap procedure described in Section \ref{sect:bootstrap}, with $B = 200$ replicates. Based on the original data, we first apply the cross-validation procedure to select an appropriate value of $\nu$, searching over the range $1$ to $\lfloor Cn^{1/3}(\log n)^{-1} \rfloor$. This selected value of $\nu$ is then used consistently for computing each bootstrapped test statistic based on samples generated from the NPMLE.
To estimate the sizes and empirical powers, we conduct 1,000 independent repetitions. The significance level is set to 0.05 throughout, except in Subsection \ref{subsect:symmetric_lc}, where it is set to 0.1 to allow direct comparison with \cite{balabdaoui2018inference}.

The Grenander estimator, the MLE for the $1$-monotone case, can be computed by posing it as an isotonic regression problem. See, for instance, \cite{robertson1988order} for the pool-adjacent-violators algorithm for isotonic regression. Here, we make use of the R package \verb|fdrtool| \citep{fdrtoolR}. Using the nonparametric mixture representation, the computation of $k$-monotone and completely monotone MLEs is performed using the R package \verb|nspmix|; see \cite{wang2007fast}. For the computation of the log-concave MLE, we use the R package \verb|logcondens|; see \cite{dumbgen2011logcondens}.

\subsection{Decreasing densities}\label{subsect:decreasing}
We first consider testing whether a density is decreasing. The distributions considered are listed in Table \ref{table:monotone_result1}. Under $H_0$, we consider decreasing densities with an unbounded support, with a bounded support, and densities that are not bounded from above. Specifically, Exp$(1)$ denotes the exponential distribution with density $f(x) = e^{-x}I(x > 0)$, which has an unbounded support. Beta($a$, $b$) denotes the beta distribution with density $f(x) = \frac{\Gamma(a+b)}{\Gamma(a)\Gamma(b)}x^{a-1}(1-x)^{b-1} I(0 < x < 1)$, which has a bounded support. In particular, Beta$(1, 4)$ has a decreasing density. ``Unbounded'' refers to the decreasing density $f(x) = \frac{1}{2 x^{0.5}} I(0 < x < 1)$, which is not bounded from above. 
Under $H_1$, we consider different scenarios where a density deviates from a decreasing shape. Let $f_1(d, M)$ denote the density
\begin{equation*}
    f_1(x; d, M) \propto x^{-0.1}I(x < d) + M x I(d \leq x \leq 1)
\end{equation*}
for $d \in (0, 1)$ and $M > 0$. The density $f_1(x; d, M)$ is decreasing on $(0, d)$ and increasing on $(d, 1)$. Smaller $d$ and/or larger $M$ lead to more deviation from a decreasing density. The densities of Beta$(1.2,1.5)$ and Beta$(1.5,3)$ are both initially increasing and then decreasing. ``Mixture'' refers to the density $0.7 \frac{2e^{-2x}}{1-e^{-4}}I(0 \leq x \leq 2) + 0.3I(1 \leq x \leq 2)$, which is a mixture of truncated exponential and uniform distribution on $[1,2]$, having two distinct decreasing regions. Lastly, $f_2(c)$, for $c > 0$, denotes the density 
\begin{equation*}
    f_2(x;c) \propto x I(0 < x < c) + e^{-x} I(x \geq c),
\end{equation*}
where $f_2(x;c)$ is increasing on $(0, c)$ and decreasing on $(c, \infty)$. Larger values of $c$ result in greater deviation from a decreasing density. Figure \ref{fig:monotoneH1} shows the plots of these densities.
\begin{figure*}
    \centering
    \includegraphics[width = 12cm]{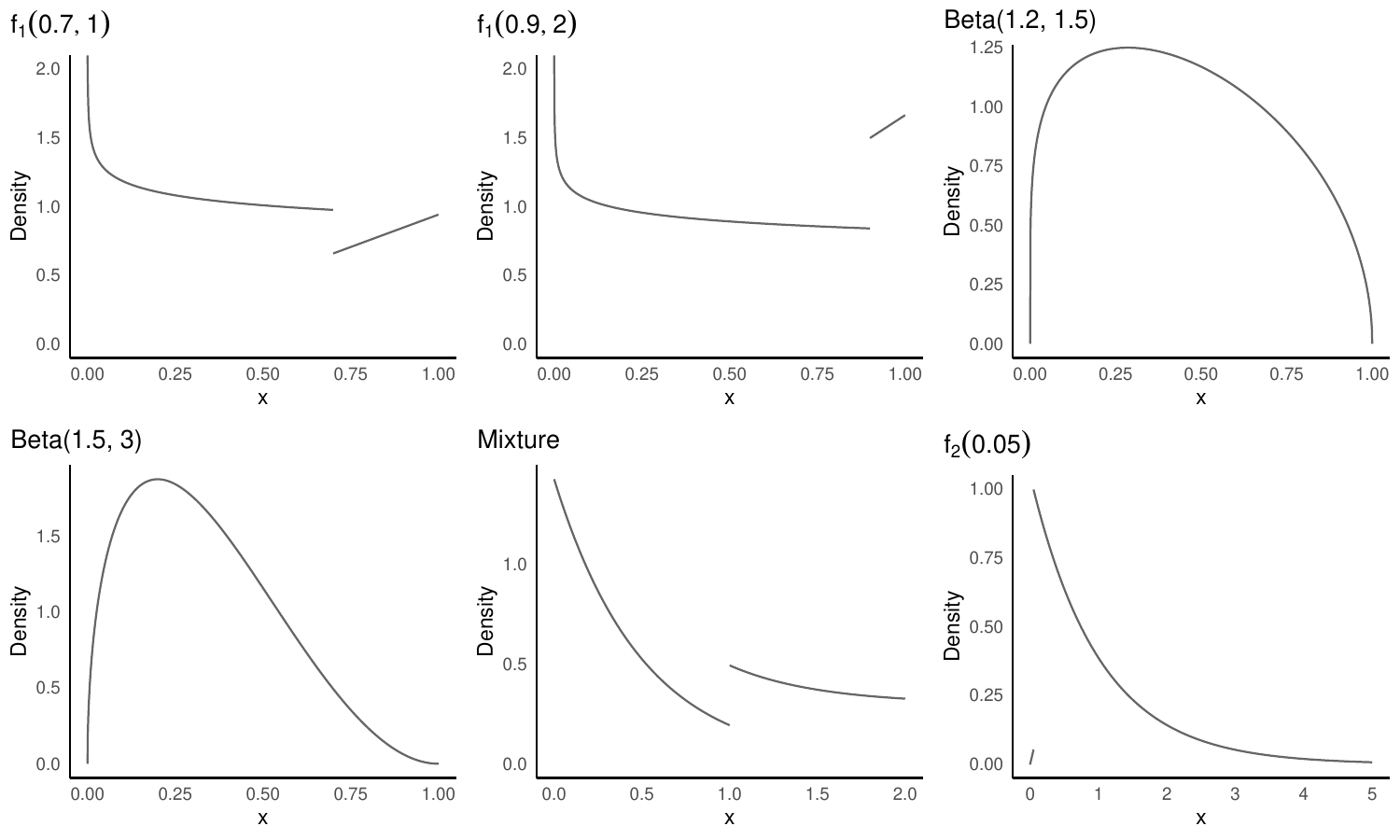}
    \caption{The density functions considered under $H_1$. The density of $f_2(0.2)$ is not shown as it is similar to that of $f_2(0.05)$.}
    \label{fig:monotoneH1}
\end{figure*}

Other natural test statistics for testing monotonicity are based on some distance measures between the empirical distribution function $\mathbb{F}_n$ and its least concave majorant $\hat{\mathbb{F}}_n$. In particular, \cite{kulikov2004testing} studied the $L_k$-distance between $\mathbb{F}_n$ and $\hat{\mathbb{F}}_n$ and derived the asymptotic distribution of the corresponding test statistic. However, their derivation of the asymptotic distribution requires additional assumptions about the underlying $f$. Specifically, they assumed $f$ is twice continuously differentiable, bounded away from $0$, bounded from above, and that its negative derivative is also bounded away from $0$. In addition, their method also requires estimating the derivative $f'$, for example, using kernel estimators. 

The simulation results reported in \cite{kulikov2004testing} show that using the critical value determined by their asymptotic distribution leads to inflated type I errors, making power comparison not meaningful. Because of these issues, we consider the $L_2$-test when the critical value is determined by a bootstrap procedure where we draw samples from the Grenander estimator. Specifically, the $L_2$ test statistics is
\begin{align*}
    L_2 &:= \int_{\mathbb{R}} (\hat{\mathbb{F}}_n(t) - \mathbb{F}_n(t))^2 \, dt.
\end{align*}
In \cite{kulikov2004testing}, two other test statistics were compared:
\begin{align*}
L_2' &:= \int_{\mathbb{R}} (\hat{\mathbb{F}}_n(t) - \mathbb{F}_n(t))^2 \, d \mathbb{F}_n(t),\\
    L_\infty  &:= \sup_{t \in \mathbb{R}} (\hat{\mathbb{F}}_n(t) - \mathbb{F}_n(t)).
\end{align*}
They showed that the uniform distribution is least favorable among all decreasing densities on $[0,1]$, ensuring that the type I error can be bounded by $\alpha$. In our comparison, the critical values of these two tests are also determined using the same bootstrap procedure as for the $L_2$-test.

\begin{table*}
\centering
\begin{tabular}{lllllll}
\hline
  \multicolumn{1}{c}{} && \multicolumn{5}{c}{$n=100$}  \\ \hline
 &Method & LRT & $L_2$ & $L_2'$ & $L_\infty$ & Bayesian\\
  \hline
$H_0$ &Exp(1) & 0.029 & 0.018 & 0.018 & 0.016  & 0.015 \\ 
 & Beta(1,4) & 0.047 & 0.013 & 0.013 & 0.021   & 0.005 \\ 
&  Unbound & 0.026 & 0.019 & 0.010 & 0.017   &  0.033 \\ \hline
$H_1$  & $f_1$(0.7,1) & 0.138 & 0.041 & 0.047 & 0.041   & 0.052 \\ 
  & $f_1$(0.9,2) & 0.407 & 0.252 & 0.233 & 0.222 & 0.135 \\ 
  &Beta(1.2,1.5) & 0.207 & 0.084 & 0.090 & 0.069  &  0.062\\ 
  &Beta(1.5,3) & 0.277 & 0.151 & 0.187 & 0.157  & 0.066\\ 
  &Mixture & 0.151 & 0.125 & 0.088 & 0.132   & 0.141\\ 
  &$f_2$(0.05) & 0.320 & 0.021 & 0.045 & 0.039   & 0.023 \\ 
  &$f_2$(0.2) & 0.592 & 0.429 & 0.505 & 0.685   & 0.108\\ 
   \hline
    \multicolumn{1}{c}{} &&  \multicolumn{5}{c}{$n=250$}  \\ \hline
 &Method & LRT & $L_2$ & $L_2'$ & $L_\infty$ & Bayesian \\ 
  \hline
$H_0$ &Exp(1) &  0.031  &0.003 & 0.010 & 0.018 & 0.016 \\ 
 & Beta(1,4) & 0.067 & 0.008 & 0.006 & 0.013   &0.044 \\ 
&  Unbound &  0.024 & 0.011 & 0.011 & 0.019   &0.041 \\ \hline
$H_1$  & $f_1$(0.7,1) & 0.235 & 0.042 & 0.041 & 0.044  & 0.120\\ 
  & $f_1$(0.9,2) &  0.729 & 0.485 & 0.478 & 0.475  & 0.424\\ 
  &Beta(1.2,1.5) &  0.334 & 0.110 & 0.147 & 0.127  & 0.125\\ 
  &Beta(1.5,3) &  0.456 & 0.370 & 0.392 & 0.353 & 0.298 \\ 
  &Mixture & 0.333 & 0.222 & 0.168 & 0.281   & 0.284 \\ 
  &$f_2$(0.05) & 0.647 & 0.056 & 0.118 & 0.152  &  0.029\\ 
  &$f_2$(0.2) & 0.916 & 0.967 & 0.966 & 0.999 & 0.703\\ 
   \hline
\end{tabular}
\caption{Test for 1-monotonicity: Comparison of the proposed LRT with $L_2$, $L_2'$, $L_\infty$ and the Bayesian test, at sample sizes $n = 100, 250$, under both $H_0$ and $H_1$. All significance levels are set to $0.05$. The table entries represent empirical rejection proportions based on $1{,}000$ independent replications.}
    \label{table:monotone_result1}
\end{table*}

\begin{table*}
\centering
\begin{tabular}{lllllll}
\hline
    \multicolumn{1}{c}{} && \multicolumn{5}{c}{$n=500$} \\ \hline
 &Method & LRT & $L_2$ & $L_2'$ & $L_\infty$ &  Bayesian\\ 
 \hline
$H_0$ &Exp(1) & 0.027 & 0.005 & 0.008 & 0.011 &  0.050 \\ 
 & Beta(1,4) & 0.068 & 0.000 & 0.015 & 0.019 &  0.026\\ 
  &Unbound & 0.035 & 0.011 & 0.007 & 0.011 & 0.010 \\  \hline
  $H_1$ & $f_1$(0.7,1) & 0.269 & 0.062 & 0.055 & 0.051   & 0.073\\ 
  &$f_1$(0.9,2) & 0.900 & 0.790 & 0.763 & 0.827 &  0.697\\ 
  &Beta(1.2,1.5) & 0.402 & 0.190 & 0.208 & 0.172  & 0.153 \\ 
  &Beta(1.5,3) & 0.692 & 0.737 & 0.745 & 0.701 &  0.487 \\ 
  &Mixture & 0.560 & 0.381 & 0.351 & 0.576 &  0.104\\ 
  &$f_2$(0.05) & 0.792 & 0.210 & 0.393 & 0.726 &  0.086\\ 
  &$f_2$(0.2) & 0.994 & 1.000 & 1.000 & 1.000 &  0.992\\ 
   \hline
     \multicolumn{1}{c}{} && \multicolumn{5}{c}{$n=1000$} \\ \hline
 &Method & LRT & $L_2$ & $L_2'$ & $L_\infty$ & Bayesian\\ 
 \hline
$H_0$ &Exp(1) &  0.020 & 0.004 & 0.009 & 0.018 & 0.024\\ 
 & Beta(1,4) &0.046 & 0.006 & 0.009 & 0.013 &0.044\\ 
  &Unbound &0.033 & 0.005 & 0.006 & 0.010 & 0.010\\  \hline
  $H_1$ & $f_1$(0.7,1) &  0.305 & 0.113 & 0.074 & 0.090  & 0.084\\ 
  &$f_1$(0.9,2) & 0.997 & 0.984 & 0.977 & 0.992 & 0.974\\ 
  &Beta(1.2,1.5) & 0.499 & 0.401 & 0.369 & 0.342  & 0.235\\ 
  &Beta(1.5,3) &  0.886 & 0.971 & 0.974 & 0.966&  0.917\\ 
  &Mixture & 0.775 & 0.680 & 0.640 & 0.894 & 0.209\\ 
  &$f_2$(0.05) &  0.879 & 0.999 & 0.878 & 0.999 & 0.196\\ 
  &$f_2$(0.2) &  1.000 & 1.000 & 1.000 & 1.000   &1.000\\ 
   \hline
\end{tabular}
\caption{Test for 1-monotonicity: Comparison of the proposed LRT with $L_2$, $L_2'$, $L_\infty$ and the Bayesian test, at sample sizes $n = 500, 1000$, under both $H_0$ and $H_1$. All significance levels are set to $0.05$. The table entries represent empirical rejection proportions based on $1{,}000$ independent replications.}
    \label{table:monotone_result2}
\end{table*}
{In addition, we compare our method with the Bayesian approach proposed in \cite{chakraborty2022rates}. We adapted the algorithm for testing monotone regression function \citep{chakraborty2021convergence}, available at \url{https://github.com/MoumitaChak/BayesNP-Shape-Inference},  to the setting for testing of monotone density. We used a non-informative Dirichlet prior and set hyperparameters following \cite{chakraborty2021convergence} and the associated software.  Specifically, for distributions with bounded support, we set $\gamma = 1/2$ and $J = \lfloor n^{1/3} \rfloor$.  For distributions with unbounded support, which require additional hyperparameters,  we set $a=r=1$ (satisfying the tail conditions of the distributions), $K = \lfloor \log n / 3 \rfloor$ and $J=\lfloor n^{1/3} (\log n)^{-1/3} \rfloor$. 
Regarding the threshold sequence, \cite{chakraborty2021convergence} considered $M_n = M_0 (\log n)^{\kappa}\, (M_0, \kappa > 0)$ and selected the optimal  of $(M_0,\kappa)$ by evaluating the test on simulated samples from a flat function, analogous to a standard uniform density in our context. While this strategy applies a fixed cutoff sequence, we found that it did not adapt well to the data distributions considered in our study.  It was overly conservative under the null, resulting in low power under the alternative.  Consequently, we implemented a distribution-specific calibration strategy. To determine $(M_0,\kappa)$, we simulated data from the specific distribution under investigation. For null distributions, we simulated directly from the distribution; for alternative distributions, we simulated from the distribution’s least concave majorant (\emph{i.e.} its projection onto the null space). {For each distribution, we generated $1000$ independent datasets with sample sizes $n=100,250,500,1000$.  We selected the combination  $(M_0,\kappa)$ that maximized the sum of empirical rejection rates across sample sizes, subject to the constraint that the empirical type I error did not exceed 
$0.05$ for all sample sizes.} The test statistics for each dataset were computed based on 
1000 posterior samples.

}

The results are presented in Tables \ref{table:monotone_result1}--\ref{table:monotone_result2}. We observe that our LRT, with bootstrap-calibrated critical values, achieves better size control than the other tests. Under the alternative hypothesis, the proposed LRT attains higher power than the  tests in most of the settings considered. 

\subsection{$2$-monotone}
In this subsection, we consider testing whether a density is $2$-monotone using the proposed NPLRT. Table \ref{table:k_monotone} lists the distributions considered under $H_0$ and $H_1$. The definitions of ``Unbounded'', $f_1$, ``Mixture'' and $f_2$ are the same as in Subsection \ref{subsect:decreasing}. In Table \ref{table:k_monotone},  we observe that the empirical rejection proportions are reasonably close to $0.05$ under $H_0$, and the power increases as the sample size grows. Compared to Tables \ref{table:monotone_result1}--\ref{table:monotone_result2}, the power under the null hypothesis of $2$-monotone is higher than that under $1$-monotone when the density is neither $1$-monotone nor $2$-monotone.

\begin{table*}
\centering
\begin{tabular}{lllllll|lllll}
  \hline
& & $n=100$ & $n=250$ & $n=500$ & $n=1000$\\ \hline
$H_0$ &Exp(1) & 0.040 & 0.029 & 0.035 & 0.029 \\ 
  &Beta(1, 2)& 0.051 & 0.043 & 0.052 & 0.058 \\ 
  & Beta(1, 3) & 0.038 & 0.035 & 0.034 & 0.043 \\ 
  \hline
$H_1$ &  $f_1$(0.7, 1) & 0.667 & 0.958 & 1.000 & 1.000 \\ 
  & $f_1$(0.9, 2) &  0.975 & 1.000 & 1.000 & 1.000 \\
  & Beta(1.2, 1.5) & 0.575 & 0.895 & 0.999 & 1.000 \\ 
  & Beta(1.2, 3) &0.368 & 0.582 & 0.882 & 0.991 \\  
  &Mixture & 0.637 & 0.937 & 0.999 & 1.000 \\ 
  & $f_2(0.05)$ & 0.304 & 0.606 & 0.867 & 0.992 \\   
    & $f_2(0.2)$ & 0.709 & 0.975 & 1.000 & 1.000 \\  
   \hline
\end{tabular}
\caption{Test for $2$-monotonicity at sample sizes $n=100, 250, 500$, and $1000$ under $H_0$ and $H_1$. All significance levels are set to $0.05$. The table entries represent empirical rejection proportions based on $1{,}000$ independent replications.}
    \label{table:k_monotone}
\end{table*}

\subsection{Completely monotone densities}
Next, we consider testing whether a density is completely monotone using the proposed NPLRT. Table  \ref{table:completely_monotone} lists the distributions considered under $H_0$ and $H_1$. Consider the mixture density $\sum^K_{k=1} p_k \lambda_k e^{-\lambda_k x}$, for $x > 0$. ``MixExp1'' corresponds to the case where $K=2$, $p = (0.3, 0.7), \lambda = (1, 5)$, while ``MixExp2'' corresponds to the case when $K=3$, $p = (0.3, 0.3, 0.4), \lambda = (1, 5, 10)$. The results are also presented in Table \ref{table:completely_monotone}, where we observe that the empirical rejection proportions are close to $0.05$ under $H_0$, and the power increases as the sample size grows. 

\begin{table*}
\centering
\begin{tabular}{lllllll|lllll}
\hline
& & $n=100$ & $n=250$ & $n=500$ & $n=1000$\\ \hline
$H_0$  & Exp & 0.050 & 0.043 & 0.059 & 0.058 \\ 
  &MixExp1 & 0.051 & 0.047 & 0.063 & 0.057 \\ 
  &MixExp2 & 0.059 & 0.060 & 0.056 & 0.051 \\    \hline
  $H_1$& Gamma(2, 1) & 0.892 & 0.999 & 1.000 & 1.000 \\ 
  & HalfNormal(1) & 0.439 & 0.755 & 0.973 & 1.000 \\ 
  & Halft(5) & 0.123 & 0.151 & 0.283 & 0.502 \\  
  & Halft(20) & 0.308 & 0.562 & 0.853 & 0.986 \\ 
   \hline
\end{tabular}
\caption{Test for complete monotonicity at sample sizes $n=100, 250, 500$, and $1000$ under $H_0$ and $H_1$. All significance levels are set to $0.05$. The table entries represent empirical rejection proportions based on $1{,}000$ independent replications.}
    \label{table:completely_monotone}
\end{table*}

\subsection{Log-concave densities}
Next, we consider testing for log-concave densities. The distributions considered are listed in Table \ref{table:logconcave_result}. Figure \ref{fig:log_density_plots} displays the log-density plots of the alternatives considered for the log-concave case. The density of Beta$(\alpha,\beta)$ is $\frac{x^{\alpha-1}(1-x)^{\beta-1}}{B(\alpha,\beta)}I(x \in (0, 1))$, where $B(\cdot,\cdot)$ is the beta function. LogNormal$(\mu,\sigma)$ refers to the distribution of a random variable whose logarithm is normally distributed with mean $\mu$ and standard deviation $\sigma$. Let $\phi(\cdot)$ denote the density of the standard normal distribution. MixNormal($\mu$) denotes the mixture of two normal distributions with density $\frac{1}{2}\phi(x) + \frac{1}{2}\phi(x-\mu)$, which is log-concave only when $\mu \leq 2$.  

We compare our test with the trace test proposed in \cite{chen2013smoothed} and the split LRT proposed in \cite{dunn2021universal}. 
 The specification of the split LRT requires an additional density estimator and we follow \cite{dunn2021universal} to use a kernel density estimator (KDE). In particular, we follow \cite{dunn2021universal} to use the \verb|kde1d| function from the \verb|kde1d| package in R, which allows users to restrict the support of the KDE. 
  We consider three choices for specifying the support in \verb|kde1d|: (i) the true support; (ii) without any restriction; and (iii) $[\min_i X_i, \max_i X_i]$, an estimated support. The resulting tests are labelled Split1, Split2, Split3, respectively, in Tables \ref{table:logconcave_result}. Note that for the mixtures of normal distributions, the support is the entire real line, so Split1 and Split2 are identical in that case.

The results are presented in Tables \ref{table:logconcave_result}. We found that the trace test can have inflated type I errors ranging from $0.2$ to $0.3$ for exponential and Laplace distributions, whereas the nominal level should be close to $0.05$. This may explain its high powers under the alternatives. On the other hand, while the split LRT controls the type I error level in finite samples via sample splitting, it tends to be overly conservative. Additionally, the performance of the split LRT with KDE relies on a good specification of the support. When the support is not provided in \verb|kde1d|, the powers drop significantly, possibly because of the well-known boundary issue of the KDE. Our proposed test is more powerful than the split LRT in most of the settings considered and does not suffer from boundary problems as in KDE, as it uses a histogram-type estimator for the alternatives. 

\begin{table*}
\centering
\resizebox{\textwidth}{!}{%
\begin{tabular}{lllllll|lllll}
\hline
  \multicolumn{1}{c}{} && \multicolumn{4}{c}{$n=100$} & \multicolumn{4}{c}{$n=250$} \\ \hline
 &Method & LRT & Trace & Split1 & Split2 & Split3 &LRT & Trace & Split1 & Split2 & Split3 \\ 
 \hline
$H_0$ &Exp(1) & 0.060 & 0.223 & 0.000 & 0.000 & 0.000 & 0.036 & 0.155 & 0.000 & 0.000 & 0.000 \\ 
 & Laplace(1) & 0.048 & 0.361 & 0.000 & 0.000 & 0.000 & 0.038 & 0.313 & 0.000 & 0.000 & 0.000 \\ 
  &Normal(0, 1) & 0.019 & 0.015 & 0.000 & 0.000 & 0.000 & 0.021 & 0.011 & 0.000 & 0.000 & 0.000 \\ 
  &MixNormal(2) & 0.026 & 0.010 & 0.000 & 0.000 & 0.000 & 0.023 & 0.013 & 0.000 & 0.000 & 0.000 \\  \hline
 $H_1$ &Beta(2, 0.5) & 0.857 & 0.870 & 0.678 & 0.000 & 0.514 & 0.999 & 0.992 & 0.999 & 0.000 & 0.996 \\ 
  &Beta(0.5, 10) & 0.954 & 0.973 & 0.784 & 0.000 & 0.867 & 1.000 & 0.999 & 0.999 & 0.000 & 0.999 \\ 
  &Beta(0.6, 10) & 0.698 & 0.791 & 0.277 & 0.000 & 0.365 & 0.963 & 0.971 & 0.862 & 0.000 & 0.938 \\ 
  &Beta(0.7, 10) & 0.380 & 0.551 & 0.036 & 0.000 & 0.043 & 0.611 & 0.712 & 0.279 & 0.000 & 0.401 \\ 
  &LogNormal(0, 1) & 0.321 & 0.865 & 0.001 & 0.000 & 0.000 & 0.550 & 0.984 & 0.090 & 0.000 & 0.025 \\ 
  &LogNormal(0, 1.1) & 0.490 & 0.951 & 0.029 & 0.002 & 0.010 & 0.760 & 0.997 & 0.403 & 0.002 & 0.223 \\ 
  &LogNormal(0, 1.2) & 0.671 & 0.981 & 0.125 & 0.002 & 0.072 & 0.941 & 1.000 & 0.754 & 0.007 & 0.607 \\ 
  &MixNormal(3.5) & 0.202 & 0.406 & 0.000 & 0.000 & 0.000 & 0.347 & 0.864 & 0.001 & 0.001 & 0.000 \\ 
  &MixNormal(4) & 0.449 & 0.723 & 0.000 & 0.000 & 0.000 & 0.812 & 0.995 & 0.085 & 0.085 & 0.000 \\ 
  &MixNormal(4.5) & 0.788 & 0.905 & 0.002 & 0.002 & 0.000 & 0.984 & 1.000 & 0.688 & 0.688 & 0.055 \\ 
   \hline 
   \multicolumn{1}{c}{} && \multicolumn{4}{c}{$n=500$} & \multicolumn{4}{c}{$n=1000$} \\ \hline
 &Method & LRT & Trace & Split1 & Split2 & Split3 &LRT & Trace & Split1 & Split2 & Split3 \\ 
   \hline
$H_0$ &Exp(1) & 0.057 & 0.129 & 0.000 & 0.000 & 0.000 & 0.052 & 0.115 & 0.000 & 0.000 & 0.000 \\ 
  &Laplace(1) & 0.037 & 0.264 & 0.000 & 0.000 & 0.000 & 0.038 & 0.241 & 0.000 & 0.000 & 0.000 \\ 
 &Normal(0, 1) & 0.030 & 0.011 & 0.000 & 0.000 & 0.000 & 0.025 & 0.005 & 0.000 & 0.000 & 0.000 \\ 
 & MixNormal(2) & 0.023 & 0.015 & 0.000 & 0.000 & 0.000 & 0.028 & 0.010 & 0.000 & 0.000 & 0.000 \\  \hline
$H_1$ & Beta(2, 0.5) & 1.000 & 1.000 & 1.000 & 0.000 & 1.000 & 1.000 & 1.000 & 1.000 & 0.000 & 1.000 \\ 
 & Beta(0.5, 10) & 1.000 & 1.000 & 1.000 & 0.000 & 1.000 & 1.000 & 1.000 & 1.000 & 0.000 & 1.000 \\ 
& Beta(0.6, 10) & 0.998 & 0.998 & 0.996 & 0.000 & 1.000 & 1.000 & 1.000 & 1.000 & 0.000 & 1.000 \\ 
  &Beta(0.7, 10) & 0.865 & 0.915 & 0.744 & 0.000 & 0.877 & 0.983 & 0.995 & 0.992 & 0.000 & 0.999 \\ 
 & LogNormal(0, 1) & 0.786 & 0.999 & 0.524 & 0.010 & 0.270 & 0.964 & 1.000 & 0.968 & 0.014 & 0.845 \\ 
 & LogNormal(0, 1.1) & 0.961 & 1.000 & 0.899 & 0.011 & 0.766 & 0.999 & 1.000 & 0.999 & 0.047 & 0.997 \\ 
 & LogNormal(0, 1.2) & 0.996 & 1.000 & 0.997 & 0.019 & 0.986 & 1.000 & 1.000 & 1.000 & 0.054 & 1.000 \\ 
 & MixNormal(3.5) & 0.621 & 0.997 & 0.059 & 0.059 & 0.000 & 0.893 & 1.000 & 0.713 & 0.713 & 0.100 \\ 
&  MixNormal(4) & 0.980 & 1.000 & 0.860 & 0.860 & 0.141 & 1.000 & 1.000 & 1.000 & 1.000 & 0.824 \\ 
 & MixNormal(4.5) & 1.000 & 1.000 & 1.000 & 1.000 & 0.772 & 1.000 & 1.000 & 1.000 & 1.000 & 0.847 \\ 
   \hline
\end{tabular}
}
\caption{Comparison of the proposed NPLRT testing for log-concavity with the trace test and split LRT at sample sizes $n=100, 250, 500, 1000$ under both $H_0$ and $H_1$. All significance levels are set to $0.05$. The table entries represent empirical rejection proportions based on $1{,}000$ independent replications.}
    \label{table:logconcave_result}
\end{table*}

\begin{figure*}
    \centering
    \includegraphics[width = 14cm]{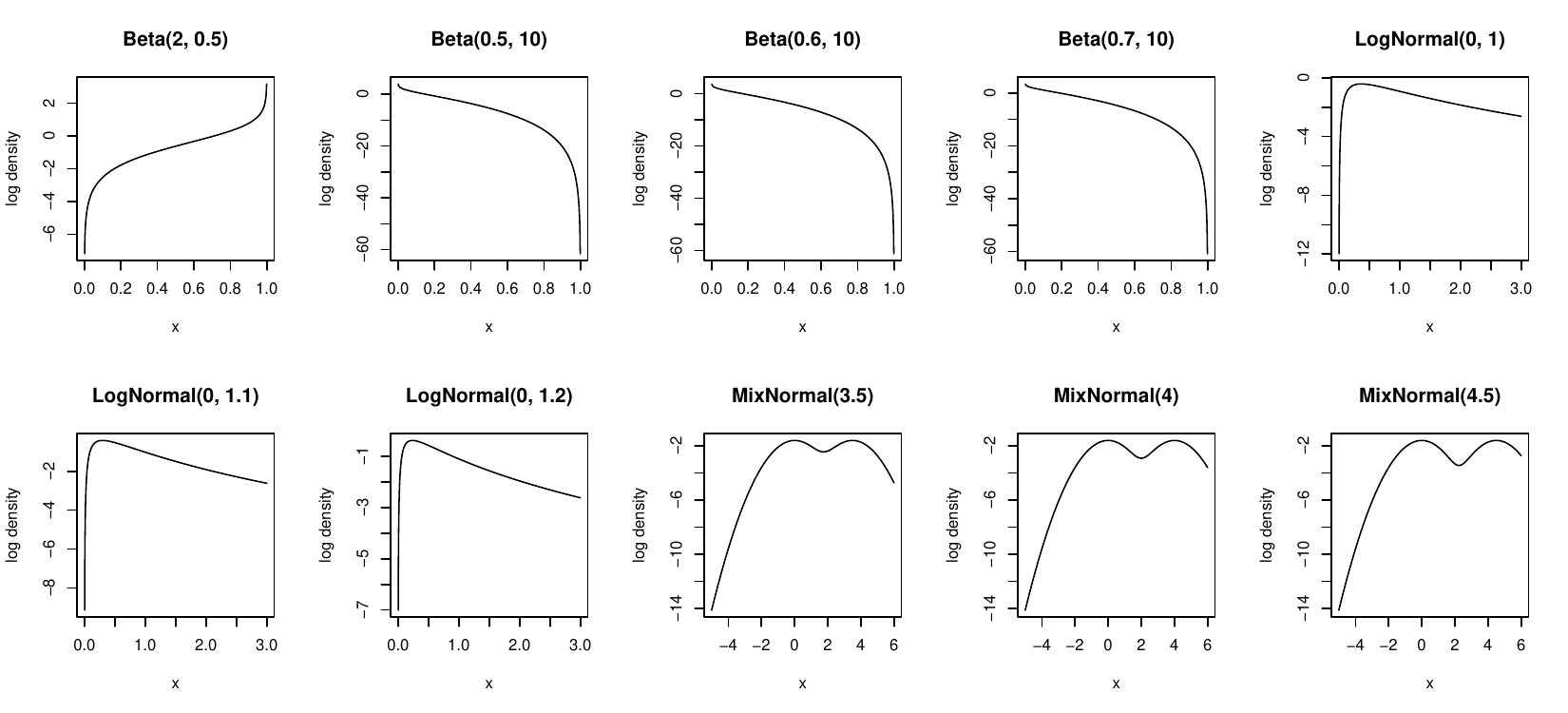}
    \caption{Log density of the distributions considered in the alternative for log-concave case.}
    \label{fig:log_density_plots}
\end{figure*}

\subsection{Symmetric log-concave densities}\label{subsect:symmetric_lc}
In \cite{balabdaoui2018inference}, the authors considered a mixture model of the form
\begin{equation*}
    g^0 = \pi^0 f^0(\cdot - u_1^0) + (1- \pi^0) f^0(\cdot - u_2^0),
\end{equation*}
where $f^0$ is a zero-symmetric log-concave density on $\mathbb{R}$, $\pi^0 \notin \{0, 1/2, 1\}$, and $u_1^0 < u_2^0$. Denote this class of densities by $\mathcal{F}_{lc,mix}$. Their goal was to test for the absence of mixing, i.e., to test the null hypothesis $H_0: u_1^0 = u_2^0$ against the alternative $H_1: u_1^0 \neq u_2^0$ and $\pi^0 \neq 1/2$, under the assumption that $f_0$ is a zero-symmetric log-concave density. They proposed a likelihood ratio statistic based on the symmetric log-concave MLE (centered at the sample median) and the MLE under the mixture model.

In Table~\ref{table:logconcave_sym}, we compare this LRT with our proposed NPLRT under two different null hypotheses: (1) $f_0 \in \mathcal{F}_{lc}$ and (2) $f_0 \in \mathcal{F}_{lc,sym}$, where $\mathcal{F}_{lc}$ denotes the class of log-concave densities, and $\mathcal{F}_{lc,sym}$ denotes the class of symmetric log-concave densities.
 Here, $\mathcal{L}(\mu)$ denotes the Laplace distribution with location parameter $\mu$, and $\pi \mathcal{L}(\mu_1) + (1 - \pi) \mathcal{L}(\mu_2)$ denotes a two-component mixture of Laplace distributions with mixing weights $(\pi, 1 - \pi)$. We adopt the same simulation setting as in \cite{balabdaoui2018inference}, with a sample size of $n = 250$ and a nominal level of $0.1$. Estimation is performed using the R packages \texttt{mixtools} \citep{tatiana2009mixtools} and \texttt{logcondens.mode} \citep{lcmode}. The parameter $\nu$ in our NPLRT is selected via cross-validation, as described at the beginning of this section, with candidate values ranging from $1$ to $\lfloor 8 n^{1/3}(\log n)^{-1} \rfloor$.

We observe that the LRT of \cite{balabdaoui2018inference} shows higher power in detecting deviations from $H_0$ for all four Laplace mixtures considered, for which these alternatives lie within their $H_1$. Note also that they are not log-concave. When we apply our NPLRT under $H_0: f_0 \in \mathcal{F}_{lc,sym}$, the power increases relative to the NPLRT under $H_0: f_0 \in \mathcal{F}_{lc}$, reflecting the effect of testing against a more restrictive null class.
In contrast, the log-normal distribution is neither symmetric, nor log-concave, nor a mixture of symmetric log-concave densities. The test of \cite{balabdaoui2018inference} rejects $H_0$ only about half the time in this case, whereas our NPLRT achieves substantially higher power, particularly when the null hypothesis is restricted to $\mathcal{F}_{lc,sym}$. Our result demonstrates that the relative performance of tests depends on the goal of the research.

\begin{table}
\centering
\resizebox{\textwidth}{!}{%
\begin{tabular}{llrrr}
  \hline
  \multicolumn{1}{c}{Distribution} & Class & \multicolumn{3}{c}{Method} \\ \hline
 & 
  & \cite{balabdaoui2018inference} 
  & Proposed LRT 
  & Proposed LRT \\ 
  & 
  & $H_0: f_0 \in \mathcal{F}_{lc, sym}$ 
  & $H_0: f_0 \in \mathcal{F}_{lc}$ 
  & $H_0: f_0 \in \mathcal{F}_{lc, sym}$ \\ 
  & 
  & $H_1: f_0 \in \mathcal{F}_{lc,mix}$ 
  & $H_1: f_0 \notin \mathcal{F}_{lc}$ 
  & $H_1: f_0 \notin \mathcal{F}_{lc, sym}$ \\ \hline
 $\mathcal{L}(0)$ & $\mathcal{F}_{lc}$, $\mathcal{F}_{lc,sym}$ & 0.110 & 0.095 & 0.073 \\ 
   $0.2 \mathcal{L}(0) + 0.8 \mathcal{L}(1)$ &  $\mathcal{F}_{lc,mix}$, $\mathcal{F}_{lc}^c$, $\mathcal{F}_{lc,sym}^c$ &0.220 & 0.067 & 0.071 \\ 
$0.2 \mathcal{L}(0) + 0.8 \mathcal{L}(3)$&  $\mathcal{F}_{lc,mix}$, $\mathcal{F}_{lc}^c$, $\mathcal{F}_{lc,sym}^c$ &0.990 & 0.238 & 0.736 \\ 
$0.4 \mathcal{L}(0) + 0.6 \mathcal{L}(1)$ &  $\mathcal{F}_{lc,mix}$, $\mathcal{F}_{lc}^c$, $\mathcal{F}_{lc,sym}^c$& 0.140 & 0.069 & 0.078 \\ 
 $0.4 \mathcal{L}(0) + 0.6 \mathcal{L}(3)$ &  $\mathcal{F}_{lc,mix}$, $\mathcal{F}_{lc}^c$, $\mathcal{F}_{lc,sym}^c$ & 0.890 & 0.435 & 0.787 \\  \hline
   LogNormal(0, 1) & $\mathcal{F}_{lc}^c$, $\mathcal{F}_{lc,sym}^c$ &0.610 & 0.633 & 1.000 \\ 
   LogNormal(0, 1.1) & $\mathcal{F}_{lc}^c$, $\mathcal{F}_{lc,sym}^c$& 0.508 & 0.832 & 1.000 \\ 
   LogNormal(0, 1.2) & $\mathcal{F}_{lc}^c$, $\mathcal{F}_{lc,sym}^c$ &0.456 & 0.959 & 1.000 \\ 
   LogNormal(0, 1.3) & $\mathcal{F}_{lc}^c$, $\mathcal{F}_{lc,sym}^c$ & 0.434 & 0.995 & 1.000 \\ 
   LogNormal(0, 1.4) & $\mathcal{F}_{lc}^c$, $\mathcal{F}_{lc,sym}^c$ & 0.456 & 0.999 & 1.000 \\ 
   \hline
\end{tabular}
}
\caption{Comparison of our proposed likelihood ratio tests under the null hypotheses $H_0: f_0 \in \mathcal{F}_{lc}$ and $H_0: f_0 \in \mathcal{F}_{lc,sym}$ with the test proposed by \cite{balabdaoui2018inference} for detecting the presence of mixing. Note that the null and alternative hypotheses differ across the three tests. The first column specifies the underlying distribution, and the second column indicates the corresponding constrained class. The significance level is set to $0.1$, and the sample size is $n = 250$, following \cite{balabdaoui2018inference}.}
    \label{table:logconcave_sym}
\end{table}

\section{Discussion}\label{sect:discussion}
In this paper, we studied a nonparametric likelihood ratio test for univariate shape-constrained densities, focusing on the classes of $k$-monotonicity, complete monotonicity, and log-concavity. A universal asymptotic null distribution is derived, and a bootstrap simulation procedure for determining the critical values of the null distribution is shown to be valid. Consistency of the test is also established. Our empirical findings suggest  that the proposed test is competitive when compared to  other methods and can be applied to a wide range of null and alternative hypotheses. One of the key advantages of our method is that it does not require strong assumptions about the underlying density in order to establish the asymptotic null distribution, the consistency of the test, and the validity of a bootstrap procedure. In contrast, $L_p$-distance-based tests often require additional assumptions on the underlying density in order to establish, for example, the asymptotic null distribution.    In the following, we discuss some additional remarks and related problems.

Recall that under $H_0$, the asymptotic distribution is normal, due to the slower convergence rate of the histogram-type estimator in the general model. This stands in sharp contrast to the classical likelihood ratio test in the parametric setting, where Wilks’ theorem implies a chi-squared asymptotic distribution. In a sense, it is as if the general parameter space has infinitely higher dimension than the restricted parameter space. Furthermore, the likelihood in the general model is not necessarily greater than the MLE under $H_0$, since we employ a sieve estimator under the alternative. One may, of course, redefine $T_n$ as
\begin{equation*}
    \tilde{T}_n := -\frac{1}{n} \log \frac{\prod_{i=1}^n \hat{f}_n(X_i)}{\max\left\{ \prod_{i=1}^n f^H_{n,\nu}(X_i), \prod_{i=1}^n \hat{f}_n(X_i) \right\}}.
\end{equation*}
Note that as $\tilde{T}_n = \max\{T_n, 1/n\}$,  it can be shown that $\tilde{T}_n$ and $T_n$ share the same asymptotic properties under both $H_0$ and $H_1$, provided that $\nu = o(n)$.

		
The NPLRT proposed in this article is not restricted to testing the classes of $k$-monotone, completely monotone, and log-concave densities. It is also valid for testing parametric classes of densities where we usually have $\frac{1}{n} \sum^n_{i=1} \log \frac{\hat{f}_n(X_i)}{f_0(X_i)} = O_p(n^{-1})$. It can also be applied to other nonparametric classes of densities that may not be shape-constrained, for instance, the class of $\alpha$-Holder densities:
		\begin{align*}
			\mathcal{P}_\alpha &:= \{f:[0, 1] \rightarrow \mathbb{R}, |f(x) - f(y)| \leq M |x  - y|^\alpha, x,y \in [0,1] \},
		\end{align*}
		where $M > 0$ and $\alpha > 0$. If $f_0$ is bounded away from $0$ and $\alpha > 1/2$, one can obtain the rate $h(\hat{f}_n, f_0) = O_p(n^{-\alpha/(2\alpha + 1)})$ and $\frac{1}{n} \sum^n_{i=1} \log \frac{\hat{f}_n(X_i)}{f_0(X_i)} = O_p(n^{-2\alpha/(2\alpha + 1)})$; see Example 7.4.6 in \cite{van2000empirical}. Thus, $\sqrt{n\nu} S_{n}=o_p(1)$ under the null hypothesis when $\nu$ does not grow too fast, and the asymptotic null distribution would be valid.
		On the other hand, when $\alpha < 1/2$, \cite{birge1993rates} showed that the rate of convergence of the MLE in Hellinger distance is not better than $(n(\log n))^{-\alpha/2}$. In this case, Theorem \ref{coro:main_result} does not apply because $S_{n}$ becomes the dominating term under $H_0$.

While our proposed likelihood ratio test makes use of a histogram-type estimator because the MLE for the class of all densities does not exist, such a modification is not necessary when a smaller class of alternatives is used in which the MLE exists. For instance, suppose we want to test the hypothesis that $H_0: f_0$ is $2$-monotone versus $H_1: f_0$ is $1$-monotone but not $2$-monotone. Denote $\hat{f}_{n,k}$ as the $k$-monotone MLE. Then, the log-density ratio
		\begin{equation*}
			\Lambda_{n} := - \frac{1}{n} \log \frac{ \prod^n_{i=1} \hat{f}_{n, 2}(X_i)}{\prod^n_{i=1} \hat{f}_{n, 1}(X_i) }
		\end{equation*}
		is well defined. We can expand $\Lambda_{n}$ as 
		\begin{equation*}
			\Lambda_{n} = \frac{1}{n} \sum^n_{i=1} \log \frac{\hat{f}_{n,1}(X_i)}{f_0(X_i)} - \frac{1}{n}\sum^n_{i=1} \log \frac{\hat{f}_{n,2}(X_i)}{f_0(X_i)}.
		\end{equation*}
    In Theorem \ref{thm:simple_k_monotone_logLRT_rate}, we already know that $\frac{1}{n} \sum^n_{i=1} \log \frac{\hat{f}_{n,1}(X_i)}{f_0(X_i)} = O_p(n^{-2/3})$ and $\frac{1}{n}\sum^n_{i=1} \log \frac{\hat{f}_{n,2}(X_i)}{f_0(X_i)} = O_p(n^{-4/5})$  under some mild regularity conditions. Therefore, to determine the asymptotic distribution of $\Lambda_{n}$,  it suffices to study that of $\frac{1}{n}\sum^n_{i=1} \log \frac{\hat{f}_{n,1}(X_i)}{f_0(X_i)}$. In  Theorem F.1 of the appendix, we show that
		\begin{equation*}
			\frac{1}{\sqrt{n}} \sum^n_{i=1} \log \frac{\hat{f}_{n,1}(X_i)}{f_0(X_i)} =  (\mu^2_{2, f_0} + \kappa_{f_0})n^{-1/6} + O_p(n^{-1/3 + \delta}).
		\end{equation*}
		A more detailed analysis is needed to establish its asymptotic distribution and to investigate whether a bootstrap procedure is valid.  

  Some multivariate extensions of spacings have been considered in the literature; see for example, \citep{deheuvels1983strong, zhou1993goodness, li2008multivariate}. Goodness-of-fit test for parametric multivariate distributions using multivariate spacings is an under-explored area, and extensions to multivariate shape-constraint distributions remain to be studied in the future.


\section*{Acknowledgements}
We thank the editor and the anonymous reviewers for their constructive
comments and suggestions, which have significantly improved the quality
of this manuscript.  Phillip
Yam also thanks The University of Texas at Dallas for
the kind invitation to be a Visiting Professor in Naveen Jindal School
of Management.

\section*{Funding}
Kwun Chuen Gary Chan and Chuan-Fa Tang were partially
supported by the US National Institutes of Health grant R01HL122212. Hok
Kan Ling acknowledges the financial supports from RGPIN-2021-03124. Phillip
Yam acknowledges the financial supports from the Research Grant Council of Hong Kong HKGRF-14300717 New kinds of Forward-backward Stochastic Systems with Applications, HKGRF-14300319 Shape-constrained Inference: Testing for Monotonicity, 
HKGRF-14301321 General Theory for Infinite Dimensional Stochastic Control: Mean Field and Some Classical Problems, HKGRF-14300123 Well-posedness of Some Poisson-driven Mean Field Learning Models and their Applications, and HKGRF-14300025 A Generic Theory for Stochastic Control against Fractional Brownian Motions. He was also supported by a grant from the Germany/Hong
Kong Joint Research Scheme sponsored by the Research Grants Council of
Hong Kong and the German Academic Exchange Service of Germany (Reference
No. G-CUHK411/23).

\clearpage

\appendix
\renewcommand{\thesection}{\Alph{section}}
\renewcommand{\thesubsection}{\Alph{section}.\arabic{subsection}}
\renewcommand{\theequation}{\thesection.\arabic{equation}}

\titleformat{\section}
  {\normalfont\Large\bfseries}
  {Appendix~\thesection}
  {1em}
  {}
\section{Real data examples}\label{sect:real}

In this section, we apply our NPLRT to several datasets, selecting the tuning parameter using the same approach as described in the simulation studies section.

\subsection{Coal-mining accidents}

\cite{maguire1952time} considered the time intervals between explosions in British mines that resulted in the loss of ten lives or more from 1875 to 1951. The dataset, comprising 109 observations, was also presented in the same paper. A histogram of the data suggests the density is decreasing; see Figure \ref{fig:coal_hist}. In particular, \cite{maguire1952time} fitted an exponential distribution to the data.  More recently, \cite{mahdavi2017new} proposed a method called the $\alpha$-power transformation to introduce an extra parameter to a family of parametric distributions and  applied their proposed method to this coal-mining accident data. 

\begin{figure}[H]
    \centering
    \includegraphics[width = 7.5cm]{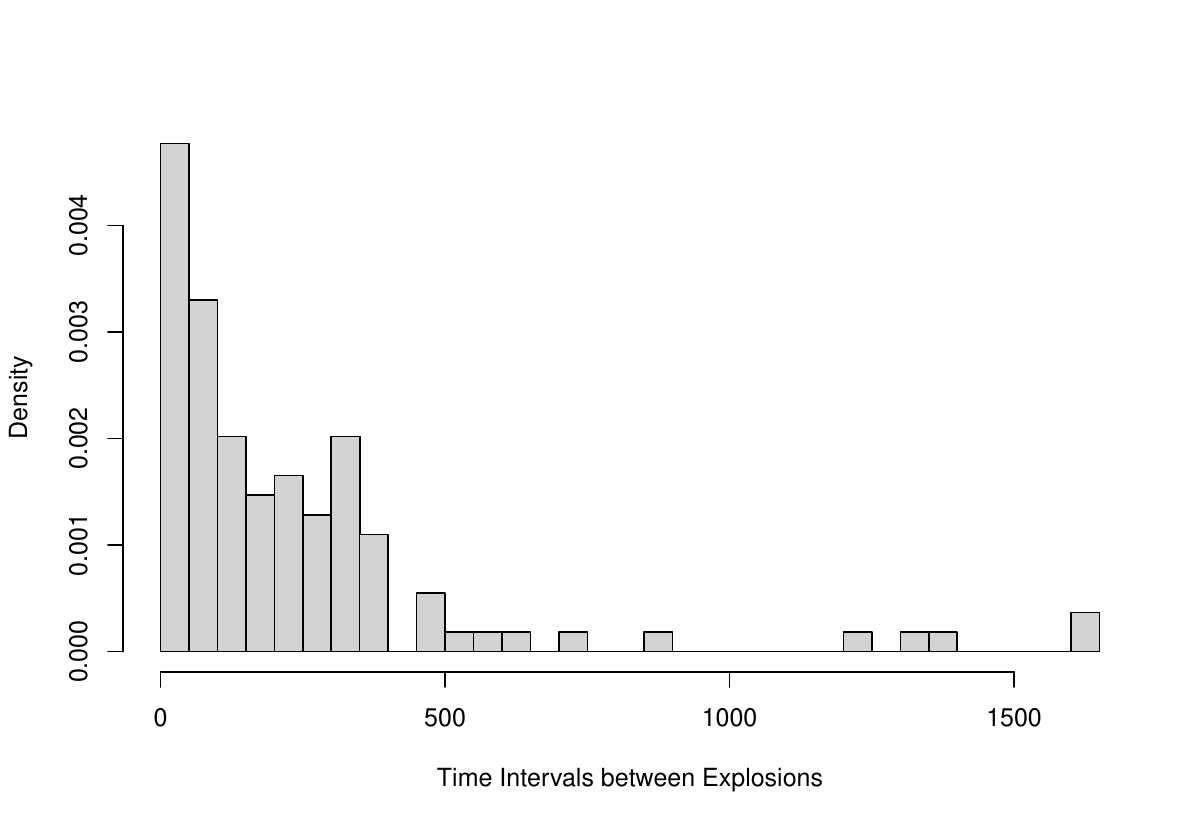}
    \caption{Histogram of the coal-mining accidents data ($n=109$)}
    \label{fig:coal_hist}
\end{figure}

We tested whether the underlying density is decreasing by applying our proposed LRT along with the $L_2, L_2', L_\infty$ tests for comparisons. The resulting $p$-values were $0.16$, $0.64$, $0.61$, and $0.70$, respectively. Since these $p$-values are not small, a decreasing density model is considered plausible. Thus, if a nonparametric method without any tuning parameter is desired, the Grenander estimator may be used. If one wishes to obtain a smoother nonparametric estimate and/or needs to address the inconsistency of the Grenander estimator at $0$, a smoothed Grenander estimator can be utilized; see \cite{groeneboom2014nonparametric}.

\subsection{Hospital length of stay}
Understanding and modeling hospital length of stay, which refers to the number of days a patient stays in the hospital, is important because it serves as a useful indicator of resource utilization and cost-efficiency. \cite{harrison1991balancing} empirically found that the length of stay of patients in departments of geriatric medicine fits extremely well with a mixture of two exponential distributions and proposed a compartmental model to explain this observation.  This implies that the density is completely monotone. More generally, the length of stay depends on patient diagnoses and characteristics \citep{dehouche2023hospital, stone2022systematic}. 

Here, we consider the hospital length of stay data from the Microsoft Machine Learning Server\footnote{https://github.com/Microsoft/ML-Server}. We consider two subgroups of patients that were flagged for renal disease and substance dependence during their encounters. Since there are a large number of tied values, we break the ties by assuming the length of stay is uniformly distributed within each day, adding a standard uniform random variable to each observation. Histograms for these two subgroups are shown in Figures \ref{fig:renal_hist} and \ref{fig:substance_hist}, respectively.

From Figure \ref{fig:renal_hist}, the density of the LoS for the renal group is unlikely to be decreasing. The $p$-values of the NPLRT test and the other three tests for decreasing density, as discussed in Section \ref{sect:simulation}, are all zero. Based on the figure, a log-concave density may be appropriate. The $p$-value from our NPLRT for log-concavity is $0.266$, indicating that there is no evidence against a log-concave fit.

For the substance dependence group, the $p$-value for testing whether the density is decreasing is $0.142$, and the $p$-values of the other three tests are also greater than $0.05$. Thus, we do not reject the null hypothesis that the density is decreasing. From Figure \ref{fig:substance_hist}, the density does not appear to be convex, and our NPLRT confirms this ($p$-value is zero).



\begin{figure}[htbp]
    \centering
    \begin{subfigure}[b]{0.45\textwidth}
        \includegraphics[width=\textwidth]{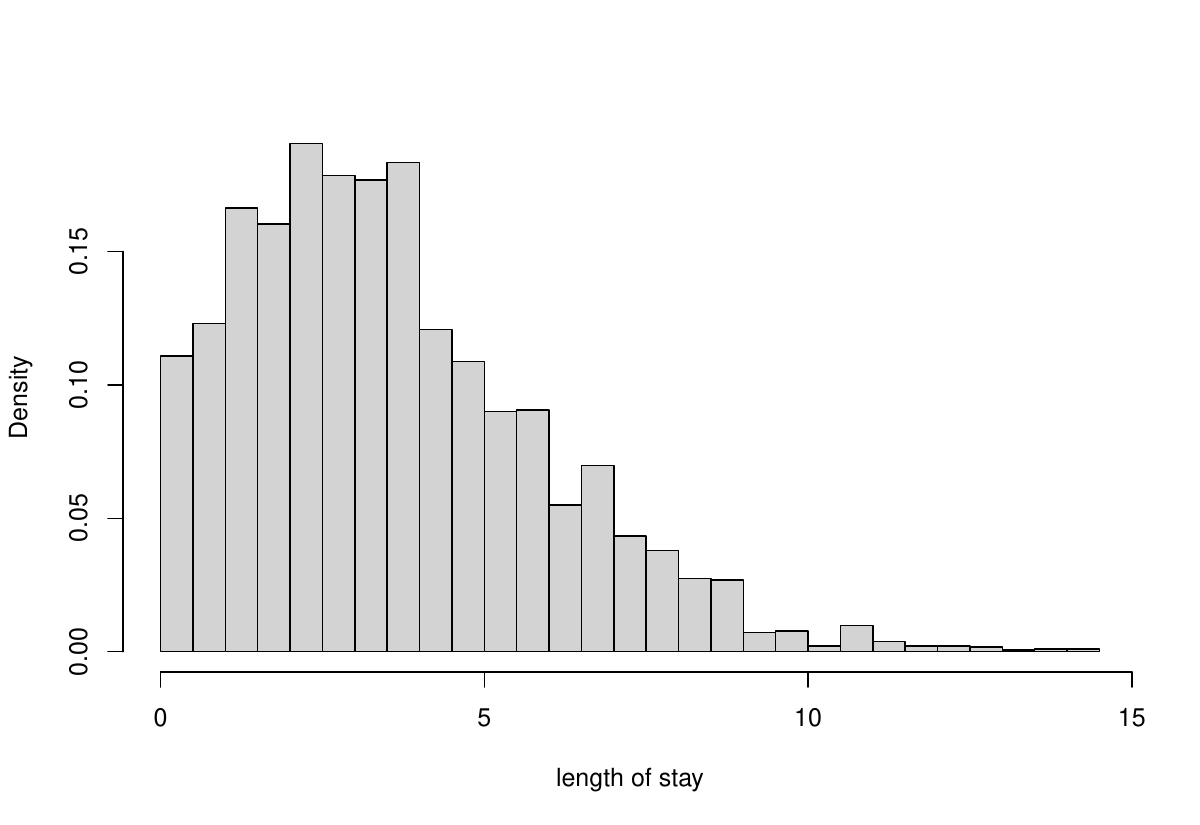}
        \caption{Renal group ($n=3642$)}
        \label{fig:renal_hist}
    \end{subfigure}
    \hfill
    \begin{subfigure}[b]{0.45\textwidth}
        \includegraphics[width=\textwidth]{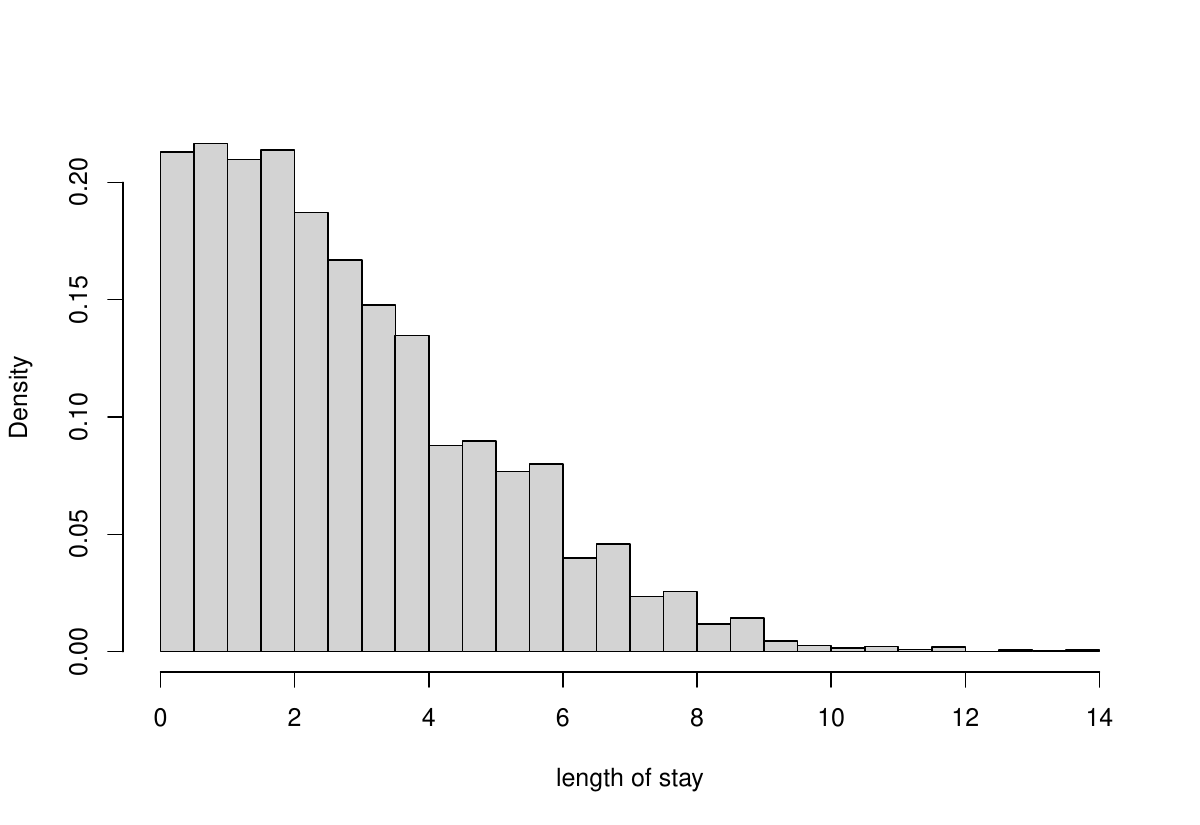}
        \caption{Substance dependence group ($n=6306$)}
        \label{fig:substance_hist}
    \end{subfigure}
    \caption{Histograms of Length of Stay (LoS) for Renal and Substance Dependence Groups}
    \label{fig:los}
\end{figure}

\subsection{Reliability}
\cite{dumbgen2009maximum}
considered modeling the reliability of a certain device. The data, available in the R package \verb|logcondens|, exhibit a skewed and non-Gaussian distribution. A kernel density estimator with a small bandwidth revealed a multimodal distribution, whereas a large bandwidth tended to overestimate the variance and place excessive emphasis on the tails.  \cite{dumbgen2009maximum} employed a slightly smoothed version of the log-concave MLE, which is also log-concave. We applied our LRT to this dataset to test for log-concavity. The resulting $p$-value is $0.995$, indicating that a log-concave density provides a good fit to the data, consistent with the findings in \cite{dumbgen2009maximum}.

\section{Proofs for Section \ref{sect:main_result}}
\subsection{Proofs for Sections \ref{sect:prelim} and \ref{sect:spacings}}
\begin{proof}[Proof of (\ref{eq:decomposition_hist})]
	Using the definition of $f^H_{n,\nu}$,
	\begin{align*}
		&	-\frac{1}{n}\sum^n_{i=1} \log \frac{f_0(X_i)}{f^H_{n,\nu}(X_i)} \\
		&= - \frac{1}{n} \sum_{j=0,\ldots,\frac{n-1}{\nu}-1} 
		\left( \sum^\nu_{l=1} \log \frac{f_0(Z_{j\nu+l+1})}{f^H_{n,\nu}(Z_{j\nu+l+1})}\right) -\frac{1}{n} \log \frac{f_0(Z_1)}{f^H_{n,\nu}(Z_1)}\\
		&= - \frac{1}{n} \sum_{j=0,\ldots, \frac{n-1}{\nu}-1}
		\left( \sum^\nu_{l=1} \log \frac{f_0(Z_{j\nu+l+1})(Z_{(j+1)\nu+1} - Z_{j\nu+1}) }{\nu/(n-1)}\right) -\frac{1}{n} \log \frac{f_0(Z_1)}{f^H_{n,\nu}(Z_1)}\\
		&= - \frac{1}{n} \sum_{j=0,\ldots,\frac{n-1}{\nu} - 1} 
		\left( \sum^\nu_{l=1} \log \frac{f_0(Z_{j\nu+l+1})(Z_{(j+1)\nu+1} - Z_{j\nu+1})}{F_0(Z_{(j+1)\nu+1}) - F_0(Z_{j\nu+1})} + 
		\nu \log \frac{F_0(Z_{(j+1)\nu+1}) - F_0(Z_{j\nu+1})}{\nu/(n-1)} \right) \\
		& \quad -\frac{1}{n} \log \frac{f_0(Z_1)}{f^H_{n,\nu}(Z_1)}\\
		&= M_n + R_n,
	\end{align*}	
where the last equality  follows from the definitions of $M_n$ and $R_n$.
\end{proof}

Before we prove Theorems \ref{coro:main_result}, \ref{corollary:consistency}, and \ref{thm:main_hist_asy_dist}, we first define some additional notations and establish several lemmas. 

Let $E_1,\ldots, E_{n+1}$ be i.i.d. standard exponential random variables. 
Denote $\tilde{E}_j := \sum^{(j+1)\nu+1}_{l=j\nu+2} E_l$ for $j=0,\ldots,\frac{n-1}{\nu}-1$. Then, $\tilde{E}_j$'s are i.i.d. random variables following the Gamma distribution with shape parameter $\nu$ and scale parameter $1$.
Define
\begin{equation*}
	Y_n := \frac{1}{n} \sum_{j=0,\ldots,\frac{n-1}{\nu}-1} (\tilde{E}_j - \nu) + \frac{1}{n-1} (E_1 + E_{n+1}).
\end{equation*}
The following Lemma \ref{lemma:dist_same} decomposes $M_n$ into a deterministic part, a term involving independent and identically distributed random variables ($M_{1n}$ below), and a remainder term, which is shown to be asymptotically negligible in Lemma \ref{lemma:M2n_order}. With the help of Lemma \ref{lemma:log_gamma_moments}, the asymptotic distribution of $M_{1n}$ is established in Lemma \ref{lemma:main_term_distribution_free}.
\begin{lemma}\label{lemma:dist_same}
	We have
	\begin{equation*}
		M_n - \frac{n-1}{n}\log \nu \stackrel{d}{=} M_{1n} + M_{2n},
	\end{equation*}
	where
	\begin{align*}
		M_{1n} &:= \frac{1}{n} \sum_{j=0,\ldots, \frac{n-1}{\nu}-1} \left( \tilde{E}_j - \nu - \nu \log \tilde{E}_j \right),\\
		M_{2n} &:= \frac{1}{n-1} (E_1+E_{n+1})- \frac{Y_n}{n} - (1-n^{-1})\frac{Y_n Y^*_n}{1+Y^*_n},
	\end{align*}
	and $Y^*_n$ lies between $0$ and $Y_n$.
\end{lemma}
\begin{proof}[Proof of Lemma \ref{lemma:dist_same}]
	Note that $(F_0(Z_1),\ldots,F_0(Z_n))$ and $(U_{(1)},\ldots,U_{(n)})$ have the same distribution, where $U_{(i)}$'s are the order statistics from a random sample from Uniform$(0,1)$ with  sample size $n$. Also,
	\begin{equation*}
		(U_{(1)}, U_{(2)} - U_{(1)},\ldots, U_{(n)} - U_{(n-1)} ) \stackrel{d}{=} \bigg( \frac{E_1}{\sum^{n+1}_{h=1} E_h}, \ldots, \frac{E_n}{\sum^{n+1}_{h=1} E_h}\bigg), 
	\end{equation*}
	where $E_h$'s are independent random variables each following the standard exponential distribution with mean $1$; see, for example, Theorem 2.2 in \cite{Devroye1986}. Therefore,
\begin{align*}
    &(F_0(Z_{\nu+1}) - F_0(Z_1), F_0(Z_{2\nu+1}) - F_0(Z_{\nu+1}), \ldots, F_0(Z_n) - F_0(Z_{n-\nu}))\\ &\stackrel{d}{=} (U_{(\nu+1)} - U_{(1)}, U_{(2\nu+1)} - U_{(\nu+1)}, \ldots, U_{(n)} - U_{(n-\nu)})\\
    &\stackrel{d}{=} \left(\frac{ \sum^{\nu+1}_{l=2}E_l}{\sum^{n+1}_{h=1}E_h}, \frac{\sum^{2\nu+1}_{l=\nu+2}E_l}{\sum^{n+1}_{h=1}E_h },\ldots, 
    \frac{\sum^{n}_{l=n-\nu+1}E_l}{\sum^{n+1}_{h=1}E_h } \right).
\end{align*}
Then, by the definition of $\tilde{E}_j$,
\begin{align*}
    M_n	& \stackrel{d}{=} - \frac{\nu}{n} \sum_{j=0,\ldots, \frac{n-1}{\nu}-1}
    \log \left( \frac{ \sum^{(j+1)\nu+1}_{l=j\nu+2} E_l}{\frac{\nu}{n-1}\sum^{n+1}_{h=1} E_h } \right) \\
    &= - \frac{\nu}{n} \sum_{j=0,\ldots, \frac{n-1}{\nu}-1} \log \left( \sum^{(j+1)\nu+1}_{l=j\nu+2} E_l \right) + \frac{n-1}{n} \log \left( \frac{\nu}{n-1}\sum^{n+1}_{h=1} E_h \right)\\
& =- \frac{\nu}{n} 	\sum_{j=0,\ldots,\frac{n-1}{\nu}-1}  \log \tilde{E}_j +  \frac{n-1}{n} \log \left( \frac{\nu}{n-1} \sum_{j=0,\ldots,\frac{n-1}{\nu}-1} \tilde{E}_j + \frac{\nu}{n-1} (E_1 + E_{n+1})\right).
\end{align*}
	As $\mathbb{E}(\tilde{E}_j) = \nu$, we write
	\begin{align*}
		M_n &\stackrel{d}{=} - \frac{\nu}{n} 	\sum_{j=0,\ldots,\frac{n-1}{\nu}-1}  \log \tilde{E}_j + 
		\frac{n-1}{n}	\log \left(\nu + \frac{\nu}{n-1} \sum_{j=0,\ldots,\frac{n-1}{\nu}-1} (\tilde{E}_j - \nu) + \frac{\nu}{n-1} (E_1 + E_{n+1})\right)\\
		&= - \frac{\nu}{n} 	\sum_{j=0,\ldots, \frac{n-1}{\nu}-1}  \log \tilde{E}_j + \frac{n-1}{n} \log \nu +  \frac{n-1}{n}\log 
		\left(1 + Y_n\right),
	\end{align*}
 where the last equality follows from the definition of $Y_n$.	By Taylor's theorem,
	\begin{align*}
		\log 
		\left(1 + Y_n \right)
		= \frac{Y_n}{1+Y^*_n} 
		= Y_n - \frac{Y_n Y^*_n}{1+Y^*_n},
	\end{align*}
	where $Y_n^*$ lies between $0$ and $Y_n$. Thus,
	\begin{align*}
		M_n &\stackrel{d}{=} 
		-\frac{\nu}{n} \sum_{j=0,\ldots, \frac{n-1}{\nu}-1}  \log \tilde{E}_j + \frac{n-1}{n}\log \nu + Y_n - \frac{Y_n}{n}  -  (1-n^{-1})\frac{Y_n Y^*_n}{1+Y^*_n}.
	\end{align*}
The claim in the lemma follows from the definitions of $Y_n, M_{1n}$ and $M_{2n}$.
	
\end{proof}

\begin{lemma}\label{lemma:M2n_order}
Suppose that $\nu = o(n)$. Then, $\sqrt{n}Y_n \stackrel{d}{\rightarrow} N(0, 1)$ and $M_{2n} = O_p(n^{-1})$.
\end{lemma}

\begin{proof}[Proof of Lemma \ref{lemma:M2n_order}]
First, note that $E_1$ and $E_{n+1}$ are $O_p(1)$. Thus,
\begin{equation*}
	Y_n = \frac{1}{n} \sum_{j=0,\ldots,\frac{n-1}{\nu}-1} (\tilde{E}_j - \nu) + O_p(n^{-1}).
\end{equation*}
Note that
\begin{equation*}
	\sum_{j=0,\ldots,\frac{n-1}{\nu}-1} \Var(\tilde{E}_j) = \frac{n-1}{\nu} \cdot \nu = n-1.
\end{equation*}
By the Lindeberg's central limit theorem, if for all $\varepsilon > 0$, we have
\begin{equation}\label{eq:Lindeberg_condition1}
	\lim_{n\rightarrow \infty} \frac{1}{n-1}\sum_{j=0,\ldots, \frac{n-1}{\nu}-1} \mathbb{E}((\tilde{E}_j - \nu)^2 \mathbbm{1}(|\tilde{E}_j - \nu| > \varepsilon \sqrt{n-1})) = 0,
\end{equation}
then 
\begin{equation*}
	\frac{1}{\sqrt{n-1}} \sum_{j=0,\ldots,\frac{n-1}{\nu}-1}(\tilde{E}_j - \nu) \stackrel{d}{\rightarrow} N(0, 1)
\end{equation*}
so that $\sqrt{n}Y_n \stackrel{d}{\rightarrow} N(0, 1)$ by Slutsky's theorem. Now, we verify the Lindeberg's condition (\ref{eq:Lindeberg_condition1}). Fix $\varepsilon  >0$.  Since $\tilde{E}_j$'s have the same distribution,
\begin{align*}
	 \frac{1}{n-1}\sum_{j=0,\ldots, \frac{n-1}{\nu}-1} \mathbb{E}((\tilde{E}_j - \nu)^2 \mathbbm{1}(|\tilde{E}_j - \nu| > \varepsilon \sqrt{n-1})) 
= \frac{1}{\nu}\mathbb{E}((\tilde{E}_1 - \nu)^2 \mathbbm{1}(|\tilde{E}_1 - \nu| > \varepsilon \sqrt{n-1})).
\end{align*}
Note that
\begin{equation*}
	\mathbb{E}\left( \frac{\tilde{E}_1-\nu}{\sqrt{n-1}}\right)^2 = \frac{\nu}{n-1} \rightarrow 0
\end{equation*}
as $n \rightarrow \infty$ since $\nu = o(n)$. Thus, $|\tilde{E}_1 - \nu|/\sqrt{n-1} \stackrel{\mathbb{P}}{\rightarrow} 0$. For any $\varepsilon' > 0$,
\begin{equation*}
	\mathbb{P}((\tilde{E}_j - \nu)^2\mathbbm{1}(|\tilde{E}_j - \nu| > \varepsilon \sqrt{n-1})) > \varepsilon') \leq \mathbb{P}(|\tilde{E}_1-\nu|/\sqrt{n-1} > \varepsilon) \rightarrow 0,
\end{equation*}
as $n \rightarrow \infty$. Thus, $(\tilde{E}_j - \nu)^2\mathbbm{1}(|\tilde{E}_j - \nu| > \varepsilon \sqrt{n-1}) \stackrel{\mathbb{P}}{\rightarrow} 0$. As
\begin{equation*}
	\frac{1}{\nu} (\tilde{E}_j - \nu)^2\mathbbm{1}(|\tilde{E}_j - \nu| > \varepsilon \sqrt{n-1}) \leq \frac{(\tilde{E}_j - \nu)^2}{\nu}
\end{equation*}
and 
\begin{equation*}
	\mathbb{E}\left(\frac{(\tilde{E}_j - \nu)^2}{\nu}\right) = 1,
\end{equation*}
we have, by the dominated convergence theorem,
\begin{equation*}
	\frac{1}{\nu}\mathbb{E}((\tilde{E}_1 - \nu)^2 \mathbbm{1}(|\tilde{E}_1 - \nu| > \varepsilon \sqrt{n-1})) \rightarrow 0
\end{equation*}
as $n \rightarrow \infty$, verifying (\ref{eq:Lindeberg_condition1}).
For the claim that $M_{2n} = O_p(n^{-1})$, observe that $|Y^*_n| \leq |Y_n| = O_p(n^{-1/2})$. Therefore,
\begin{equation*}
	M_{2n} = \frac{O_p(1)}{n} + \frac{O_p(n^{-1/2})}{n} + \frac{O_p(n^{-1/2})O_p(n^{-1/2})}{1 + O_p(n^{-1/2})} = O_p(n^{-1}).
\end{equation*}
\end{proof}

Recall that $\Gamma(\cdot), \psi(\cdot)$ and $\psi_1(\cdot)$ denote the gamma, digamma, and trigamma functions, respectively.
It is well known that the log-Gamma random variable $\log \tilde{E}_j$ has an expected value of $\psi(\nu)$ and variance of $\psi_1(\nu)$. For completeness, these results are provided in Lemma \ref{lemma:log_gamma_moments} (a) and (b).
\begin{lemma}\label{lemma:log_gamma_moments}
Let $Y \sim \text{Gamma}(\nu, 1)$. We have
\begin{enumerate}[(a)]
    \item For $m \in \mathbb{N}$, $\mathbb{E}( (\log (Y))^m) = \Gamma^{(m)}(\nu)/\Gamma(\nu)$, where $\Gamma^{(m)}$ is the $m$th derivative of $\Gamma$. In particular, $\mathbb{E}(\log (Y)) = \psi(\nu)$;
    \item $\Var(\log(Y)) = \psi_1(\nu)$;
    \item $\Cov(Y, \log (Y)) = 1$.
\end{enumerate}
\end{lemma}

\begin{proof}[Proof of Lemma \ref{lemma:log_gamma_moments}]
\begin{enumerate}[(a)]
\item First, the density of $\log Y$ is given by
\begin{equation*}
    f_{\log Y}(y) = \frac{1}{\Gamma(\nu)}e^{y \nu} e^{-e^y},\quad y \in \mathbb{R}.
\end{equation*}
Thus, for any $y \in \mathbb{R}$ and $m \in \mathbb{N}$,
\begin{equation*}
    \frac{d^m}{d \nu^m} e^{y \nu} e^{-e^y} = y^m e^{y\nu} e^{-e^y} = \Gamma(\nu) y^m f_{\log Y}(y).
\end{equation*}
Integrating both sides of the above equation with respect to $y$,
\begin{equation*}
\Gamma(\nu)		\mathbb{E}((\log Y)^m) 
=  \int^\infty_{-\infty} \frac{d^m}{d \nu^m} e^{y \nu} e^{-e^y} dy 
= \frac{d^m}{d\nu^m}  \int^\infty_{-\infty} e^{y \nu} e^{-e^y} dy 
= \Gamma^{(m)}(\nu).
\end{equation*}
In particular, $\mathbb{E}(\log(Y)) = \Gamma'(\nu)/\Gamma(\nu) = \frac{d}{d\nu} \log \Gamma(\nu) = \psi(\nu)$.
\item $\Var(\log (Y)) = \frac{\Gamma''(\nu)}{\Gamma(\nu)} - \psi^2(\nu) = \frac{d}{d\nu } \psi(\nu) = \psi_1(\nu)$.
\item First,
\begin{align*}
    \mathbb{E}(Y\log(Y)) &=  \int^\infty_{-\infty} \frac{1}{\Gamma(\nu)} e^y y e^{y\nu} e^{-e^y} dy 
    = \frac{1}{\Gamma(\nu)}  \int^\infty_{-\infty} \frac{d}{d\nu} e^{y(\nu+1)} e^{-e^y} dy \\
    &= \frac{1}{\Gamma(\nu)}  \frac{d}{d\nu} \int^\infty_{-\infty} e^{y(\nu+1)} e^{-e^y} dy
    = \frac{\Gamma'(\nu+1)}{\Gamma(\nu)} = \frac{\Gamma(\nu+1)}{\Gamma(\nu)} \cdot \frac{\Gamma'(\nu+1)}{\Gamma(\nu+1)}\\
    & = \nu\psi(\nu+1).
\end{align*}
Using the recurrence relation $\psi(\nu+1) = \psi(\nu) + 1/\nu$, we obtain
\begin{equation*}
    \Cov(Y,\log(Y)) = \nu\psi(\nu+1) - \nu\psi(\nu) = \nu (\psi(\nu+1) - \psi(\nu)) = \nu \cdot\frac{1}{\nu} = 1.
\end{equation*}
\end{enumerate}
\end{proof}

\begin{lemma}\label{lemma:main_term_distribution_free}
If $\nu = o(n)$, then
	\begin{equation}\label{eq:asy_dist_3}
		\frac{1}{\sqrt{\frac{n-1}{\nu} (\nu^2 \psi_1(\nu) - \nu)}} \sum_{j=0,\ldots, \frac{n-1}{\nu}-1}(\tilde{E}_j - \nu - \nu \log \tilde{E}_j + \nu\psi(\nu)) \stackrel{d}{\rightarrow} N(0, 1).
	\end{equation}
\end{lemma}

\begin{proof}[Proof of Lemma \ref{lemma:main_term_distribution_free}]
We shall apply the Lindeberg's central limit theorem. By Lemma \ref{lemma:log_gamma_moments} (a),
\begin{align*}
	\mathbb{E}(\tilde{E}_j - \nu - \nu \log \tilde{E}_j + \nu \psi(\nu))) = 0.
\end{align*}
In addition, by Lemma \ref{lemma:log_gamma_moments} (b) and (c), 
\begin{align*}
	\Var(\tilde{E}_j - \nu - \nu \log \tilde{E}_j) &= \Var(\tilde{E}_j) + \nu^2 \Var(\log \tilde{E}_j) -2\nu\Cov(\tilde{E}_j, \log \tilde{E}_j) \\
	&= \nu + \nu^2 \psi_1(\nu) - 2\nu = \nu^2 \psi_1(\nu) - \nu.
\end{align*}
If the Lindeberg's condition holds, we have (\ref{eq:asy_dist_3}).
Thus, it remains to verify the Lindeberg's condition  that for all $\varepsilon  >0$,
\begin{align*}
&	\lim_{n\rightarrow \infty}\frac{1}{\frac{n-1}{\nu}(\nu^2\psi_1(\nu) - \nu)} \sum_{j=0,\ldots,\frac{n-1}{\nu}-1}\mathbb{E} \bigg\{ (\tilde{E}_j - \nu - \nu \log \tilde{E}_j + \nu\psi(\nu))^2\\
&	\quad \quad \quad \quad  \cdot \mathbbm{1}\left((\tilde{E}_j - \nu - \nu \log \tilde{E}_j + \nu\psi(\nu)) > \varepsilon \sqrt{ \frac{n-1}{\nu}(\nu^2\psi_1(\nu)-\nu)}\right) \bigg\}= 0.
\end{align*}
Denote $F_\nu := \tilde{E}_1 -\nu-\nu\log \tilde{E}_1 +\nu\psi(\nu)$. The above condition is equivalent to
\begin{equation}\label{eq:Linderberg_cond2}
	\lim_{n\rightarrow \infty} \frac{1}{\nu^2\psi_1(\nu) - \nu} \mathbb{E}\left(F^2_\nu\cdot \mathbbm{1}\left(|F_\nu| > \varepsilon \sqrt{ \frac{n-1}{\nu}(\nu^2\psi_1(\nu)-\nu)}\right) \right)=0.
\end{equation}
As $\nu = o(n)$, we have
\begin{equation*}
	\mathbb{E} \left( \frac{\nu}{n-1} \cdot \frac{F^2_\nu}{\nu^2\psi_1(\nu)-\nu}\right) = \frac{\nu}{n-1} \rightarrow 0
\end{equation*}
as $n \rightarrow \infty$. This implies that $\sqrt{\frac{\nu}{n-1}} |F_\nu|/\sqrt{\nu^2\psi_1(\nu)-\nu} \stackrel{\mathbb{P}}{\rightarrow} 0 $ which in turn implies that
\begin{equation*}
	F^2_\nu\cdot \mathbbm{1}\left(|F_\nu| > \varepsilon \sqrt{ \frac{n-1}{\nu}(\nu^2\psi_1(\nu)-\nu)}\right) \stackrel{\mathbb{P}}{\rightarrow} 0.
\end{equation*}
As 
\begin{equation*}
		F^2_\nu\cdot \mathbbm{1}\left(|F_\nu| > \varepsilon \sqrt{ \frac{n-1}{\nu}(\nu^2\psi_1(\nu)-\nu)}\right) \leq F^2_\nu,
\end{equation*}
$\mathbb{E}(F^2_\nu) = \nu^2\psi_1(\nu) -\nu$ and $\lim_{n \rightarrow \infty}(\nu^2 \psi_1(\nu) - \nu) = 1/2$ if $\nu \rightarrow \infty$, (\ref{eq:Linderberg_cond2}) holds by the dominated convergence theorem.

\end{proof}

\begin{proof}[Proof of Theorem \ref{thm:main_hist_asy_dist}]
	By Lemma \ref{lemma:dist_same},
	\begin{equation}\label{eq:Mn_1}
		M_n - \frac{n-1}{n}\log \nu \stackrel{d}{=} M_{1n} + M_{2n}.
	\end{equation}
As $\nu = o(n)$, by Lemma \ref{lemma:M2n_order},
	\begin{equation}\label{eq:Mn_2}
		M_{2n} = O_p(n^{-1}).
	\end{equation}
	Note that
	\begin{align}
		&		\frac{n\sqrt{\nu}}{\sqrt{(n-1)(\nu^2 \psi_1(\nu) - \nu)}}\left( M_{1n} + \frac{n-1}{n} \psi(\nu) \right) \nonumber \\
		&=		\frac{1}{\sqrt{\frac{n-1}{\nu} (\nu^2 \psi_1(\nu) - \nu)}} \sum_{j=0,\ldots, \frac{n-1}{\nu}-1}(\tilde{E}_j - \nu - \nu \log \tilde{E}_j + \nu\psi(\nu)) \stackrel{d}{\rightarrow} N(0, 1), \label{eq:Mn_3}
	\end{align}
	where the last convergence follows from Lemma \ref{lemma:main_term_distribution_free} as $\nu = o(n)$.
	In view of (\ref{eq:Mn_1}), (\ref{eq:Mn_2}), (\ref{eq:Mn_3}) and Slutsky's theorem, the proof is completed.
\end{proof}

\begin{proof}[Proof of Theorem \ref{coro:main_result}]
By the decomposition of $T_n$, Theorem \ref{thm:main_hist_asy_dist}, the conditions on $\nu, S_n$, and $R_n$, and the Slutsky's theorem, we have
\begin{align*}
&\sqrt{ \frac{n\nu}{\nu^2\psi_1(\nu)-\nu}} \left( T_n -  \log \nu +  \psi(\nu) \right) \\
&=\sqrt{ \frac{n\nu}{\nu^2\psi_1(\nu)-\nu}} \left( M_n -  \log \nu +  \psi(\nu) \right) +  \sqrt{ \frac{n\nu}{\nu^2\psi_1(\nu)-\nu}}(S_n + R_n)\\
&= \sqrt{ \frac{n\nu}{\nu^2\psi_1(\nu)-\nu}} \left( M_n -  \log \nu +  \psi(\nu) \right) + o_p(1) + O(n^{2/3} (\log n)^{-1/2}) O_p(n^{1/3} / n)\\
& \stackrel{d}{\rightarrow} N(0, 1).
\end{align*}

\end{proof}

\begin{proof}[Proof of Theorem \ref{corollary:consistency}]
	Fix $c \geq 0, \varepsilon> 0$ and $\delta > 0$. From the condition $\nu = O(n^{1/3}/\log n)$ and $R_n = O_p\left(\frac{\nu \log n}{n}\right)$, we have as in the proof of Theorem \ref{coro:main_result} that $\sqrt{n\nu} R_n = o_p(1)$.
For all sufficiently large $n$, we have
\begin{equation}\label{eq:cons1}
    \mathbb{P}_{H_1}( |\sqrt{n\nu} R_n| < \delta) > 1 - \frac{\varepsilon}{3}.
\end{equation}
 From $\lim_{n\rightarrow \infty} \mathbb{P}(\sqrt{n \nu} S_{n} > L_n) = 1$, we have for all sufficiently large $n$ that
\begin{equation}\label{eq:cons2}
    \mathbb{P}_{H_1}( \sqrt{n \nu} S_n > L_n) > 1 - \frac{\varepsilon}{3}.
\end{equation}
 By Theorem \ref{thm:main_hist_asy_dist}, there exists $K > 0$ such that for all sufficiently large $n$,
\begin{equation}\label{eq:cons3}
    	\mathbb{P}_{H_1}\left( \left| \sqrt{ \frac{n\nu}{\nu^2\psi_1(\nu)-\nu}} \left( M_n -  \log \nu +  \psi(\nu) \right)\right| < K \right) > 1 - \frac{\varepsilon}{3}. 
\end{equation}
 	Hence, in view of (\ref{eq:cons1})-(\ref{eq:cons3}) and the fact that $L_n \uparrow \infty$, for all sufficiently large $n$, 
\begin{align*}
    	&\mathbb{P}_{H_1}\left(  \sqrt{ \frac{n\nu}{\nu^2\psi_1(\nu)-\nu}} \left( T_n -  \log \nu +  \psi(\nu) \right) > c \right)\\
     & \geq
     \mathbb{P}_{H_1}\left(  \sqrt{ \frac{n\nu}{\nu^2\psi_1(\nu)-\nu}} \left( T_n -  \log \nu +  \psi(\nu) \right)  > - K -\delta + L_n \right)
          > 1 - \varepsilon,\\
\end{align*}
and the claim follows as desired.
	
\end{proof}

\subsection{Proofs for Section \ref{sect:suff_cond}}
In this subsection, we will prove Theorem \ref{theorem:Rn_order} and Corollary 
\ref{col:lcb1b2}. We will first establish a few lemmas.

\begin{lemma}\label{lemma:Rn_terms}
Suppose one of Conditions (C) (i) or (ii) holds.
Then, we have
\begin{equation*}
    |R_n| \precsim \frac{\nu}{n}(C + |\log f_0(Z_1)| + |\log f_0(Z_n)|) +O_p\left(\frac{\log n}{n}\right),
\end{equation*}
for some constant $C$ that depends on $K_1,K_2, L$ if Condition (ii) holds and is a universal constant if Condition (i) holds.
\end{lemma}

\begin{proof}[Proof of Lemma \ref{lemma:Rn_terms}]
	Write
\begin{equation*}
	R_n = R_{1n} + R_{2n},
\end{equation*}
where
\begin{align*}
R_{1n} &:=	- \frac{1}{n} \sum_{j=0,\ldots,\frac{n-1}{\nu} - 1} 
	\sum^\nu_{l=1} \log \frac{f_0(Z_{j\nu+l+1})(Z_{(j+1)\nu+1} - Z_{j\nu+1})}{F_0(Z_{(j+1)\nu+1}) - F_0(Z_{j\nu+1})},\\
	R_{2n} &:= -\frac{1}{n}\log \frac{f_0(Z_1)}{f^H_{n,\nu}(Z_1)}.
\end{align*}

For $R_{1n}$, by the mean value theorem, there exists $Z^*_j$ lying between $Z_{j\nu+1}$ and $Z_{(j+1)\nu+1}$ such that $F_0(Z_{(j+1)\nu+1}) - F_0(Z_{j\nu+1}) = f_0(Z^*_j)(Z_{(j+1)\nu+1} - Z_{j\nu+1})$ and so
	\begin{equation*}
R_{1n}  = -\frac{1}{n}\sum_{j=0,\ldots, \frac{n-1}{\nu}-1} \sum^\nu_{l=1} \log \frac{f_0(Z_{j\nu+l+1})}{f_0(Z^*_j)}.
	\end{equation*}
 If Condition (i) holds and $f_0$ is monotone increasing, then
\begin{align}
    |n R_{1n}| & \leq \left|  \sum_{j=0,\ldots, \frac{n-1}{\nu}-1} \sum^\nu_{l=1} \log \frac{f_0(Z_{j\nu+1})}{f_0(Z_{(j+1)\nu+1})}\right| =\nu \left| \log f_0(Z_1) - \log f_0(Z_n)\right|\nonumber \\
    & \leq \nu (|\log f_0(Z_1)| + |\log f_0(Z_n)|). \label{eqnR_1n1}
\end{align}
We have the same inequality when $f_0$ is monotone decreasing.
If Condition (ii) holds, 
let 
\begin{equation*}
R^{(m)}_{1n} := \sum_{j=0,\ldots, \frac{n-1}{\nu}-1} \sum^\nu_{l=1} I^{(m)}_{n,jl} \log \frac{f_0(Z_{j\nu+l+1})}{f_0(Z^*_j)},
\end{equation*}
where 
\begin{align*}
    I^{(1)}_{n,jl} &:= \mathbbm{1}(Z_{j\nu+l+1}, Z^*_j \in [K_1,K_2]), \quad 
    I^{(2)}_{n,jl} := \mathbbm{1}( Z^*_j < K_2 < Z_{j\nu+l+1}),\\
    I^{(3)}_{n,jl} &:= \mathbbm{1}(  Z_{j\nu+l+1} < K_2 < Z^*_j), \quad 
    I^{(4)}_{n,jl}:=\mathbbm{1}( Z^*_j < K_1 < Z_{j\nu+l+1}),\\
    I^{(5)}_{n,jl} &:= \mathbbm{1}(  Z_{j\nu+l+1} < K_1 < Z^*_j), \quad 
    I^{(6)}_{n,jl} :=\mathbbm{1}(  Z_{j\nu+l+1}, Z^*_j > K_2),\\
    I^{(7)}_{n,jl} &:=\mathbbm{1}(  Z_{j\nu+l+1}, Z^*_j < K_1).
\end{align*}
Then,
\begin{equation*}
|nR_{1n}| \leq  \sum^7_{m=1}|R^{(m)}_{1n}|.
\end{equation*}
For $R^{(1)}_{1n}$, by the Lipschitz continuity of $\log f_0$ on $[K_1, K_2]$, 
\begin{align*}
    |R^{(1)}_{1n}|& \leq \sum_{j=0,\ldots, \frac{n-1}{\nu}-1} \sum^\nu_{l=1}  I^{(1)}_{n,jl} L \left| Z_{j\nu+l+1} - Z^*_j  \right|\\
    & \leq \sum_{j=0,\ldots, \frac{n-1}{\nu}-1} \sum^\nu_{l=1}  I^{(1)}_{n,jl} L(Z_{(j+1)\nu+1} - Z^*_j ) \\
    & \leq  \nu L(K_2 - K_1),
\end{align*}
where the second and third inequalities follow from the fact that $Z^*_j$ is between $Z_{j\nu+1}$ and $Z_{(j+1)\nu+1}$.

For $R^{(2)}_{1n}$, note that there will be at most one $j$, say, $\tilde{j}$, such that $\mathbbm{1}( Z^*_{\tilde{j}} < K_2 < Z_{\tilde{j}\nu+l+1})$ for some $l$. If $Z^*_{\tilde{j}} > K_1$, by telescoping and the triangle inequality,
\begin{align*}
    |R^{(2)}_{1n}|	 & \leq \sum^\nu_{l=1}\left( |\log f_0(Z_{\tilde{j} \nu+l+1}) - \log f_0(K_2)| + |\log f_0(K_2) - \log f_0(Z^*_j)|\right)\\
    &\leq \nu (|\log f_0(Z_n)| + |\log f_0(K_2)| + L(K_2 - K_1)),
\end{align*}
		where the last inequality follows from the monotonicity of $f_0$ beyond $K_2$ and the Lipschitz continuity of $\log f_0$ on $[K_1, K_2]$. 
		If $Z^*_{\tilde{j}} < K_1$, we have
		\begin{align*}
			|R^{(2)}_{1n}|	 & \leq \sum^\nu_{l=1}\bigg(  |\log f_0(Z_{\tilde{j} k+l+1}) - \log f_0(K_2)| + |\log f_0(K_2) - \log f_0(K_1)|\\
			& \quad  \quad  \quad  \quad + |\log f_0(K_1) - \log f_0(Z^*_j)|\bigg)\\
			&\leq \nu\bigg(|\log f_0(Z_n)| + |\log f_0(K_2)| + L(K_2-K_1) +  |\log f_0(K_1)| +  |\log f_0(Z_1)|\bigg).
		\end{align*}
		Thus, we always have
		\begin{align*}
			|R^{(2)}_{1n}| \leq \nu (C + |\log f_0(Z_1)|+|\log f_0(Z_n)|),
		\end{align*}
		where $C := L(K_2-K_1) + |\log f_0(K_2)| + |\log f_0(K_1)|$. It is straightforward to see the same bound holds for $|R^{(3)}_{1n}|, |R^{(4)}_{1n}|, |R^{(5)}_{1n}|$. The derivation of these bounds are similar to that for $|R^{(2)}_{1n}|$ and is therefore omitted.
		
		Now, we consider $R^{(6)}_n$. Let $j^*$ be the first $j$ such that $Z^*_j > K_2$. If $f_0$ is decreasing after $K_2$,
		\begin{align*}
			|R^{(6)}_{1n}|	&\leq \sum^\nu_{l=1} (\log f_0(K_2) - \log f_0(Z_{(j^*+1)\nu+1})) + \sum_{j=j^*+1,\ldots, \frac{n-1}{\nu}-1} \sum^\nu_{l=1}( \log f_0(Z_{j\nu+1}) - \log f_0(Z_{(j+1)\nu+1})) \\
			&\leq \nu(\log f_0(K_2) - \log f_0(Z_n) ).
		\end{align*}
		If $f_0$ is increasing after $K_2$, the bound becomes $\nu(\log f_0(Z_n) - \log f_0(K_2))$. In general, we have
		\begin{equation*}
			|R^{(6)}_{1n}| \leq \nu \left(|\log f_0(K_2)| + |\log f_0(Z_n)|\right).
		\end{equation*}
		Similarly, we also have
		\begin{equation*}
			|R^{(7)}_{1n}| \leq \nu\left(|\log f_0(K_1)| + |\log f_0(Z_1)|\right).
		\end{equation*}
		Combining the bounds for $|R^{(1)}_{1n}|,\ldots,|R^{(7)}_{1n}|$, we have
		\begin{equation}\label{eqnR_1n2}
			|nR_{1n}| \leq \nu(C_1 + 5|\log f_0(Z_1)| + 5|\log f_0(Z_n)|),
		\end{equation}
		where $C_1 := 5L(K_2-K_1) + 5|\log f_0(K_1)| + 5|\log f_0(K_2)|$. In view of (\ref{eqnR_1n1}) and (\ref{eqnR_1n2}), we have
  \begin{equation}\label{eq:R1n_order}
      |R_{1n}| \precsim \frac{v}{n}(C + |\log f_0(Z_1)| + |\log f_0(Z_n)|).
  \end{equation}

  For $R_{2n}$. Note that
\begin{align*}
	R_{2n} &= -\frac{1}{n} \log \frac{f_0(Z_1)(Z_{\nu+1} - Z_1)}{F_0(Z_{\nu+1})- F_0(Z_1)} - \frac{1}{n}\log \frac{F_0(Z_{\nu+1}) - F_0(Z_1))}{\nu/(n-1)}.
\end{align*}
By the mean value theorem, there exists $Z^*_1$ lying between $Z_{\nu+1}$ and $Z_1$ such that $F_0(Z_{\nu+1}) - F_0(Z_1) = f_0(Z^*_1)(Z_{\nu+1}-Z_1)$. Thus, using $(F_0(Z_1), F_0(Z_{\nu+1})) \stackrel{d}{=} (U_{(1)}, U_{(\nu+1)}) $ as in the proof of Lemma \ref{lemma:dist_same}, we have
\begin{equation}\label{eq:R2n1}
	R_{2n} \stackrel{d}{=}  -\frac{1}{n}\log \frac{f_0(Z_1)}{f_0(Z^*_1)} - \frac{1}{n} \log (U_{(\nu+1)} - U_{(1)}) + \frac{1}{n} \log \frac{n-1}{\nu}.
\end{equation}
Now, if Condition (i) holds, then 
\begin{equation}\label{eq:R2n2}
\left|	\log \frac{f_0(Z_1)}{f_0(Z^*_1)} \right| \leq \left|	\log \frac{f_0(Z_1)}{f_0(Z_n)} \right| \leq |\log f_0(Z_1)|+ |\log f_0(Z_n)|.
\end{equation}
If Condition (ii) holds, we have
\begin{align}
&	|\log f_0(Z^*_1)| \nonumber\\
 &= |(\log f_0(Z^*_1))\mathbbm{1}(Z^*_1 < K_1)  +
	(\log f_0(Z^*_1))\mathbbm{1}(K_1 < Z^*_1 < K_2) + (\log f_0(Z^*_1))\mathbbm{1}(Z^*_1 > K_2)| \nonumber \\
	& \leq (|\log f_0(Z_1)| + |\log f_0(K_1)|) + (L(K_2 - K_1) + |\log f_0(K_1)|) \nonumber \\
	& \quad \quad  + (|\log f_0(K_2)| + |\log f_0(Z_n)|)  \nonumber \\
	& =C_2 + |\log f_0(Z_1)| + |\log f_0(Z_n)|, \label{eq:R2n3}
\end{align}
where
\begin{equation*}
	C_2:= 2 |\log f_0(K_1)| + L(K_2-K_1) + |\log f_0(K_2)|.
\end{equation*}
Let $E_h$'s be i.i.d. standard exponential random variables the proof of Lemma \ref{lemma:dist_same}, then
\begin{align}
	|\log (U_{(\nu+1)} - U_{(1)})| & \leq |\log (U_{(2)} - U_{(1)})| \leq |\log E_2| + \left| \log \left( \sum^{n+1}_{h=1} E_h\right)\right|\nonumber\\
 &\leq |\log E_2| + \left| \log \left( \frac{1}{n+1}\sum^{n+1}_{h=1} E_h\right) \right|  + \log (n+1)= O_p(\log n).  \label{eq:R2n4}
\end{align}
In view of (\ref{eq:R2n1})-(\ref{eq:R2n4}), we have
\begin{equation}\label{eq:R2n_order}
\left|	R_{2n }\right| \leq \frac{2}{n} |\log f_0(Z_1)| + \frac{1}{n}|\log f_0(Z_n)| + O_p\left(\frac{\log n}{n}\right).
\end{equation}
The claim in the lemma follows in view of (\ref{eq:R1n_order}) and (\ref{eq:R2n_order}).
\end{proof}

The following lemma provides sufficient conditions for deriving the orders of $\log f_0(Z_1)$ and $\log f_0(Z_n)$.
\begin{lemma}\label{lemma:suff_cond_logn}
Let $X_1,X_2,\ldots$ be a sequence of random variables that are identically distributed with a common density $f$ but not necessarily independent of each other. 	
\begin{enumerate}
	\item [(i)]	Suppose that $\int_\mathbb{R} f(x)^{1- \alpha}\, dx  < \infty$ for some $\alpha > 0$. Then 
	\begin{equation}\label{eq:suff_cond_logn}
		\mathbb{P} \bigg(\limsup_{n \rightarrow \infty} \max_{i=1,\ldots,n} [ -  \{ \log f(X_i) \} (\log n)^{-1} ] \leq \frac{2}{\alpha} \bigg)=1.
	\end{equation}
	
	\item [(ii)] Suppose that $\int_\mathbb{R} f(x)^{1 + \alpha}\, dx  < \infty$ for some $\alpha > 0$. Then 
	\begin{equation}\label{eq:suff_cond_logn2}
		\mathbb{P} \bigg(\limsup_{n \rightarrow \infty} \max_{i=1,\ldots,n} [  \{\log f(X_i)\} (\log n)^{-1} ] \leq \frac{2}{\alpha} \bigg)=1.
	\end{equation}
\end{enumerate}	
\end{lemma}

\begin{proof}[Proof of Lemma \ref{lemma:suff_cond_logn}]
We only prove (\ref{eq:suff_cond_logn}) while (\ref{eq:suff_cond_logn2}) could be proven similarly and we omit it.	First, observe that
\begin{equation*}
	\mathbb{E}[ f(X_1)^{-\alpha}]  = \int_\mathbb{R} f(x)^{1- \alpha} \, dx < \infty.
\end{equation*}
By Markov's inequality, for any $m > 1$,
\begin{equation*}
	\mathbb{P}(f(X_n)^{-\alpha} > n^m) \leq \frac{ \mathbb{E}(f(X_n)^{-\alpha})}{n^m} = \frac{ \mathbb{E}(f(X_1)^{-\alpha})}{n^m}.
\end{equation*}
Hence,
\begin{equation*}
	\sum^\infty_{n=1} \mathbb{P}(f(X_n)^{-\alpha} > n^m) \leq \mathbb{E}(f(X_1)^{-\alpha}) \sum^\infty_{n=1} \frac{1}{n^m} < \infty.
\end{equation*}
By the first Borel-Cantelli lemma, with probability one, there exists an integer $N_1$ such that for all $n \geq N_1$,
\begin{equation*}
	f(X_n)^{-\alpha} \leq n^{m}.
\end{equation*}
There also exists another integer $N_2 \geq N_1$ such that for all $i=1,\ldots,N_2$,  $f(X_i)^{-\alpha} \leq N^{m}_2$. In other words, for all $n \geq N_2$ and  $i \leq n$, $f(X_i)^{-\alpha} \leq n^{m}$.	Hence, with probability one,
\begin{equation*}
	\limsup_{n \rightarrow \infty}  \max_{i=1,\ldots,n} [ - \{\log f(X_i)\} (\log n)^{-1}] \leq \frac{m}{\alpha}
\end{equation*}
and (\ref{eq:suff_cond_logn}) follows by taking $m = 2$.

\end{proof}

\begin{lemma}\label{lemma:tail_condition_imply_integral_alpha_finite}

    \begin{enumerate}[(a)]
        \item Suppose that Condition (A) holds, we have $\int_{\mathbb{R}}f_0(x)^{1- \alpha} dx < \infty$ for some $\alpha \in (0, 1)$.
       
        \item Suppose that Condition (B) holds, we have $\int_{\mathbb{R}}f_0(x)^{1+ \alpha} dx < \infty$ for some $\alpha \in (0, 1)$.
    \end{enumerate}
    
\end{lemma}

\begin{proof}[Proof of Lemma \ref{lemma:tail_condition_imply_integral_alpha_finite}]
\begin{enumerate}[(a)]
    \item Since $\gamma > 1$, there exists $\alpha \in (0, 1)$ such that $\gamma (1-\alpha) > 1$. Then,
\begin{align}\label{eq:tail_cond1}
    \int_{\mathbb{R}} f_0(x)^{1-\alpha} dx & \leq \int_{|x| \leq x_0} f_0(x)^{1-\alpha} dx + \int_{|x| > x_0} |x|^{-\gamma(1-\alpha)} dx.
\end{align}
Since $\gamma (1-\alpha) > 1$, $\int_{|x| > x_0} |x|^{-\gamma(1-\alpha)} dx < \infty$. For the first term on the RHS of (\ref{eq:tail_cond1}), we have
\begin{align*}
    \int_{|x| \leq x_0} f_0(x)^{1-\alpha} dx &= \int_{|x| \leq x_0, f_0(x) \leq 1} f_0(x)^{1-\alpha} dx +  \int_{|x| \leq x_0, f_0(x) > 1} f_0(x)^{1-\alpha}dx \\
    & \leq \int_{|x| \leq x_0} 1 dx + \int_{|x| \leq x_0} f_0(x) dx < \infty.
\end{align*}
\item Since $\gamma_2  > 0$, there exists $\alpha \in (0, 1)$ such that $(1+\alpha)(\gamma_2 - 1) > -1$.
Then
\begin{equation*}
    \int_{|x-a|< \delta} f_0(x)^{1+\alpha}dx \leq \int_{|x-a| < \delta} |x-a|^{(1+\alpha)(\gamma_2-1)} dx < \infty.
\end{equation*}
Now, $\sup_{|x-a| \geq \delta, f_0(x) \geq 1} f_0(x) < \infty$ and as $f_0$ is a density, the set such that $f_0(x) \geq 1$ must have finite Lebesgue measure. Thus,
\begin{align*}
    \int_{|x-a|\geq \delta} f_0(x)^{1+\alpha} dx &= 
    \int_{|x-a|\geq \delta, f_0(x) \geq 1} f_0(x)^{1+\alpha} dx + \int_{|x-a|\geq \delta, f_0(x) < 1} f_0(x)^{1+\alpha} dx \\
    &\leq \sup_{|x-a| \geq \delta, f_0(x) \geq 1} f_0(x)^{1+\alpha} \int_{f_0(x) \geq 1}dx  + \int_{|x-a| \geq \delta, f_0(x) < 1} f_0(x)dx  < \infty.
\end{align*}
\end{enumerate}

\end{proof}

\begin{proof}[Proof of Theorem \ref{theorem:Rn_order}]
Since one of Conditions (C) (i) or (ii) holds, by Lemma \ref{lemma:Rn_terms}, we have
\begin{equation*}
    |R_n| \precsim \frac{\nu}{n}(C + |\log f_0(Z_1)| + |\log f_0(Z_n)|) + O_p\left(\frac{\log n}{n}\right).
\end{equation*}
It remains to show that $\log f_0(Z_1) = O_p(\log n)$ and $\log f_0(Z_n) = O_p(\log n)$. Note that
\begin{equation*}
\log f_0(Z_1) \leq \max_{i=1,\ldots,n} f_0(X_i) \quad \text{and} \quad - \log f_0(Z_1) \leq \max_{i=1,\ldots,n} \{ -\log f_0(X_i) \}.
\end{equation*}
By Lemma \ref{lemma:tail_condition_imply_integral_alpha_finite}, since Conditions (A) and (B) hold, we have $\int f_0^{1\pm \alpha} < \infty$ for some $\alpha > 0$. Then, by Lemma \ref{lemma:suff_cond_logn}, we have with  probability one,
\begin{equation*}
\limsup_{n \rightarrow \infty} \left| \log f_0(Z_1) \right| (\log n)^{-1} \leq \frac{2}{\alpha}.
\end{equation*}
This implies that $\log f_0(Z_1) = O_p(\log n)$. 
The same argument also leads to $\log f_0(Z_n) = O_p(\log n)$. 
\end{proof}

\begin{proof}[Proof of Corollary \ref{col:lcb1b2}]
We shall show that the conditions in Theorem \ref{theorem:Rn_order} are satisfied so that the result follows. First, a log-concave density must be unimodal. It is either monotone over the support, or there exists $K_1 > \tau_f$ and $K_2 < \sigma_f$ such that $f_0$ is monotone on $(\tau_f, K_1]$ and $[K_2,\sigma_f)$. Furthermore, because $\log f_0$ is concave, $\log f_0$ is Lipschitz continuous on $[K_1, K_2]$. 

Note that any log-concave density has an exponential tail (see Lemma 1 in \cite{cule2010theoretical}), that is, there exist $a > 0 $ and $b \in \mathbb{R}$ such that $f_0(x) \leq e^{-a|x| + b}$ for all $x$. Thus, Conditions (A) and (B) are satisfied.
\end{proof}

\section{Additional results and proofs for Section \ref{subsect:k_log_density_ratio}: $k$-monotone densities}
\subsection{Upper bound and support of $k$-monotone MLE}
To prove Lemmas \ref{lemma:k_monotone_bounded_in_prob} and \ref{lemma:bounds_on_kmonotone_support}, we first recall that the MLE $\hat{f}_{n,k}$ is of the form: 
\begin{equation}\label{eq:k_monotone_form}
\hat{f}_{n,k}(x) = \sum^m_{j=1} \hat{w}_j \frac{k(\hat{a}_j - x)^{k-1}_+}{\hat{a}^k_j},
\end{equation}
where $m \in \mathbb{Z}^+$, and  $\{\hat{w}_1,\ldots,\hat{w}_m\}$ and $\{\hat{a}_1,\ldots,\hat{a}_m\}$ are respectively the family of weights and support points of the maximizing mixing distribution corresponding to $\hat{f}_{n,k}$. Here, $a_+$ denotes $\max(a, 0)$ for any $a \in \mathbb{R}$; see Lemma 2 in \cite{balabdaoui2010estimation}. Without loss of generality, we can assume that $\hat{a}_1 \leq \ldots \leq \hat{a}_m$. The likelihood function at $\hat{f}_{n,k}$ is:
\begin{equation*}
L_n(\hat{f}_{n,k}) = \prod^n_{i=1} \bigg\{ \sum^m_{j=1} \hat{w}_j \frac{k(\hat{a}_j - X_i)^{k-1}_+}{\hat{a}^k_j} \bigg\}.
\end{equation*} 

\begin{proof}[Proof of Lemma \ref{lemma:k_monotone_bounded_in_prob}]
We first show that $\hat{f}_{n,k}(0+) = O_p(1)$ if $f_0$ is bounded from above. Proposition 6 in \cite{gao2009rate} demonstrates that when $f_0 \in \mathcal{F}_k^B([0, A])$, we have $\hat{f}_{n,k}(0+) = O_p(1)$. A closer inspection of their proof reveals that the main step is to make use of the characterization of the NPMLE, which does not depend on the boundedness of the support of $f_0$. Therefore, the same proof remains valid in this case.

Now, we shall show that, in general, $\hat{f}_{n,k}(0+) \leq k Z^{-1}_1$. For $k=1$, $\hat{f}_{n,1}(0)Z_1 = \int^{Z_1}_0 \hat{f}_{n,k}(x)\, dx \leq \int^\infty_0 \hat{f}_n(x) \, dx = 1$. Therefore, $\hat{f}_n(0) \leq Z^{-1}_1$. For $k \geq 2$, we  use 
 the form in (\ref{eq:k_monotone_form}) and claim that $\hat{a}_1 > Z_1$. To see that, fix the values of $\hat{a}_2,\ldots,\hat{a}_{m}$ and $\hat{w}_1,\ldots,\hat{w}_m$. If $\hat{a}_1 \leq Z_1$, then $(\hat{a}_1 - X_i)^{k-1}_+ = 0$ for all $i$ while if $\hat{a}_1 > Z_1$, $(\hat{a} - X_i)_+^{k-1} > 0$ for some $i$. Hence, the maximum value of $L_n$ must be obtained when $\hat{a}_1 > Z_1$. Finally, from the proof of Proposition 6 in \cite{gao2009rate}, we have:
\begin{equation*}
	\hat{f}_{n,k}(0) \leq \frac{k \mathbb{F}_n(\hat{a}_1)}{\hat{a}_1} \leq \frac{k}{Z_1}.
\end{equation*}

\end{proof}

\begin{proof}[Proof of Lemma \ref{lemma:bounds_on_kmonotone_support}]
For $k=1$, it is well-known that $\sigma_{\hat{f}_{n,1}} = Z_n$. In the rest of this proof, we assume that $k\geq2$. In view of (\ref{eq:k_monotone_form}), $\hat{a}_m  = \sigma_{\hat{f}_{n,k}}$.
For fixed $\hat{a}_1,\ldots,\hat{a}_{m-1}$ and $\hat{w}_1,\ldots,\hat{w}_m$, we investigate how the change of $\hat{a}_m$ affects the value of the likelihood. By the definition of the MLE, we know that  $\hat{a}_m > Z_n$, since $k \geq2$ (see also Remark 2 in \cite{balabdaoui2010estimation}). Therefore, for each $i$, we shall consider the term
\begin{equation}\label{eq:k_monotone_ind}
	\hat{w}_m \frac{k (\hat{a}_m - X_i)^{k-1}_+}{\hat{a}^k_m}.
\end{equation}
Consider the function $g_i(y) = \frac{ (y- X_i)^{k-1}}{y^k}$ for $y > X_i$. Then,
\begin{equation*}
	g'_i(y) = \frac{(y- X_i)^{k-2}}{y^{k+1}}(k X_i - y) \text{ for } y > X_i,
\end{equation*} 
with which we can see that $g_i$ increases and then decreases in $y$, and attains its maximum at $k X_i$. Therefore, if $\hat{a}_m > k Z_n$, the term in (\ref{eq:k_monotone_ind}) becomes smaller for all $i=1,\ldots,n$ if $\hat{a}_m$ is set at a larger value. This shows that the optimality can be ensured only if $\hat{a}_m \leq k Z_n$.
\end{proof}

\subsection{General results on $S_{n,k}$ under $H_0$}\label{subsect:general_k_monotone_results}
In this subsection, we consider a more general version of Theorem \ref{thm:simple_k_monotone_logLRT_rate}; see Theorem \ref{thm:k_monotone_logLRT_rate}. 

Now, suppose that $\{a_n\}$ and $\{b_n\}$ are two sequences of real numbers such that $\log (a_n b_n) = o(n^{2k})$,
	\begin{equation}\label{eq:a_nb_n}
		\lim_{n \rightarrow \infty} \mathbb{P}(k Z_1^{-1} > a_n) = 0 \text{ and }		\lim_{n \rightarrow \infty} \mathbb{P}(k Z_n > b_n) = 0.
	\end{equation}
	Then, with a probability approaching $1$, we have $\hat{f}_n \in \mathcal{F}^{a_n}_k([0, b_n])$, which has a finite bracketing entropy for each $n$ (recall that \cite{gao2009rate} showed that $N_{[\cdot]}(\varepsilon, \mathcal{F}^B_k([0, A]), h) < \infty$ for each small enough $\varepsilon$ and any $A, B > 0$). The price of this relaxation is an extra factor, up to a difference in logarithm order, to be shown in Theorem \ref{thm:k_monotone_logLRT_rate}, that appears in the rate of convergence. If $f_0$ has a bounded support, we can simply take $b_n$ as $k \sigma_{f_0}$, which is finite; otherwise, given a sequence $d_n$ that converges to $0$, $b_n$ can be chosen to be $b_n = k F^{-1}_0((1-d_n)^{1/n})$. If $f_0$ is bounded from above, the condition $\lim_{n\rightarrow \infty}\mathbb{P}(k Z^{-1}_1 > a_n) = 0$ is not needed as we in that case have $\hat{f}_{n,k}(0+) = O_p(1)$; otherwise $a_n$ can be chosen to be $k/F_0^{-1}(d^{1/n}_n)$.

	\begin{theorem}\label{thm:k_monotone_logLRT_rate}
		Suppose that $f_0 \in \mathcal{F}_k$. Let $\{a_n\}$ and $\{b_n\}$ be two sequences that satisfy  (\ref{eq:a_nb_n}).
		\begin{enumerate}
			\item[(i)] If $f_0$ is unbounded at $0$ and has an unbounded support,
			\begin{equation*}
				S_{n,k} 
				= O_p\left(n^{-\frac{2k}{2k+1}} |\log (a_n b_n) |^{\frac{1}{2k+1}} \right).
			\end{equation*} 
			
			\item[(ii)] If $f_0$ is bounded from above and has an unbounded support,
			\begin{equation*}
				S_{n,k} 
				= O_p\left(n^{-\frac{2k}{2k+1}} |\log b_n |^{\frac{1}{2k+1}} \right).
			\end{equation*} 
			\item[(iii)] If $f_0$ is unbounded at $0$ and has a bounded support,
			\begin{equation*}
				S_{n,k} 
				= O_p\left(n^{-\frac{2k}{2k+1}} |\log a_n |^{\frac{1}{2k+1}} \right).
			\end{equation*} 
			
			\item[(iv)] If $f_0$ is bounded from above and has a bounded support, then $S_{n,k} = O_p\left(n^{-\frac{2k}{2k+1}}\right)$, as stated in (\ref{eq:K_n_order_existing_way}).
		\end{enumerate}
	\end{theorem}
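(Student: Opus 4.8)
Write $Pg:=\int g(x)f_0(x)\,dx$ and $\mathbb{P}_ng:=\tfrac1n\sum_{i=1}^ng(X_i)$. Since $f_0\in\mathcal F_k$ and $\hat{f}_{n,k}$ maximizes the likelihood over $\mathcal F_k$, we have $\mathbb{P}_n\log(\hat{f}_{n,k}/f_0)\ge0$ for every realization, so $S_{1n,k}\le0$ and it suffices to bound $-S_{1n,k}=\sqrt n\,\mathbb{P}_n\log(\hat{f}_{n,k}/f_0)$ from above. Using $\log x\le2(\sqrt x-1)$ together with $P\sqrt{\hat{f}_{n,k}/f_0}=1-h^2(\hat{f}_{n,k},f_0)$,
\[
0\ \le\ \mathbb{P}_n\log\frac{\hat{f}_{n,k}}{f_0}\ \le\ 2(\mathbb{P}_n-P)\Bigl(\sqrt{\hat{f}_{n,k}/f_0}-1\Bigr)-2h^2(\hat{f}_{n,k},f_0),
\]
which yields both the Hellinger basic inequality $h^2(\hat{f}_{n,k},f_0)\le(\mathbb{P}_n-P)(\sqrt{\hat{f}_{n,k}/f_0}-1)$ and the target bound $-S_{1n,k}\le2\sqrt n\,(\mathbb{P}_n-P)(\sqrt{\hat{f}_{n,k}/f_0}-1)$. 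The new ingredient relative to the bounded case is the localization of $\hat{f}_{n,k}$: by Lemmas~\ref{lemma:k_monotone_bounded_in_prob} and \ref{lemma:bounds_on_kmonotone_support} and the displayed choices of $a_n,b_n$ meeting (\ref{eq:a_nb_n}), the event $E_n:=\{\hat{f}_{n,k}\in\mathcal F_k^{a_n}([0,b_n])\}$ has probability tending to $1$; and on $E_n$ the class $\{\sqrt{f/f_0}-1:f\in\mathcal F_k^{a_n}([0,b_n])\}$ has, by the bracket transfer $N_{[\cdot]}(\varepsilon,\{\sqrt{f/f_0}\},L_2(P))\le N_{[\cdot]}(c\varepsilon,\mathcal F_k^{a_n}([0,b_n]),h)$ and (\ref{eq:Gao_entropy}) with $A=b_n,B=a_n$, bracketing entropy $H_B(\varepsilon)$ at most a $k$-dependent constant times $|\log(a_nb_n)|^{1/(2k)}\varepsilon^{-1/k}$.

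From here I would run the standard maximal-inequality-plus-peeling argument for the Hellinger rate of an MLE, exactly as in the proof of Corollary~7.5 of \cite{van2000empirical}, but carrying the $n$-dependent factor $A_n:=|\log(a_nb_n)|^{1/(2k)}$ through. With $J_B(\delta):=\int_0^\delta\sqrt{H_B(\varepsilon)}\,d\varepsilon\asymp\sqrt{A_n}\,\delta^{(2k-1)/(2k)}$, the rate $\delta_n$ solving $\sqrt n\,\delta_n^2\asymp J_B(\delta_n)$ is $\delta_n=(A_n/n)^{k/(2k+1)}$. Peeling the basic inequality over Hellinger shells and applying the bracketing maximal inequality on each shell (using $\|\sqrt{f/f_0}-1\|_{L_2(P)}=\sqrt2\,h(f,f_0)$ and that the class has $L_2(P)$-diameter $\le\sqrt2$) gives $h(\hat{f}_{n,k},f_0)=O_p(\delta_n)$; then, on $E_n\cap\{h(\hat{f}_{n,k},f_0)\le M\delta_n\}$ (probability $\to1$ for $M$ large), the same inequality bounds $(\mathbb{P}_n-P)(\sqrt{\hat{f}_{n,k}/f_0}-1)$ by a term of order $n^{-1/2}J_B(\delta_n)$, so $-S_{1n,k}=O_p(J_B(\delta_n))=O_p(\sqrt n\,\delta_n^2)$. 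Since $\sqrt n\,\delta_n^2=A_n^{2k/(2k+1)}n^{-(2k-1)/(2(2k+1))}\asymp|\log(a_nb_n)|^{1/(2k+1)}n^{-(2k-1)/(2(2k+1))}$, this is part~(i). Parts (ii)--(iv) then follow by specializing $a_n,b_n$ as described just before the theorem: in (ii), $\hat{f}_{n,k}(0+)=O_p(1)$ by Lemma~\ref{lemma:k_monotone_bounded_in_prob}, so $a_n$ may be taken bounded and $|\log(a_nb_n)|\asymp|\log b_n|$; in (iii), $\sigma_{\hat{f}_{n,k}}\le k\sigma_{f_0}$ by Lemma~\ref{lemma:bounds_on_kmonotone_support}, so $b_n$ may be taken constant and $|\log(a_nb_n)|\asymp|\log a_n|$; in (iv) both are constant and the logarithmic factor disappears, recovering (\ref{eq:K_n_order_existing_way}).

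Two points need care. First, $\hat{f}_{n,k}$ maximizes the likelihood over the whole class $\mathcal F_k$, not over $\mathcal F_k^{a_n}([0,b_n])$, and $f_0$ itself need not lie in $\mathcal F_k^{a_n}([0,b_n])$ (its support may be unbounded, or it may be unbounded at $0$); this is harmless, since the peeling argument uses only the basic inequality — valid because $f_0\in\mathcal F_k$ — together with the containment $\hat{f}_{n,k}\in\mathcal F_k^{a_n}([0,b_n])$ on $E_n$. The genuine obstacle is the second: $\sqrt{\hat{f}_{n,k}/f_0}$ is not uniformly bounded, because on $[0,b_n]$ the density $f_0$ can be arbitrarily small, so the bracketing maximal inequality with an $L^\infty$-envelope correction term is not directly applicable. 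This is precisely the difficulty resolved in the MLE-rate literature by passing to the mixture $\bar f_n:=(\hat{f}_{n,k}+f_0)/2$, for which $\log(\bar f_n/f_0)\ge-\log2$ is bounded below and a Hellinger-type exponential moment bound supplies the required Bernstein condition; I would import that device unchanged, so that the only genuinely new work is the localization of $\hat{f}_{n,k}$ via Lemmas~\ref{lemma:k_monotone_bounded_in_prob} and \ref{lemma:bounds_on_kmonotone_support} and the bookkeeping with $A_n=|\log(a_nb_n)|^{1/(2k)}$.
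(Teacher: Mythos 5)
Your proposal is correct and follows essentially the same route as the paper: localize $\hat{f}_{n,k}$ into $\mathcal F_k^{a_n}([0,b_n])$ via Lemmas \ref{lemma:k_monotone_bounded_in_prob} and \ref{lemma:bounds_on_kmonotone_support}, invoke the Gao--Wellner entropy bound (\ref{eq:Gao_entropy}) with $A=b_n$, $B=a_n$, and run the peeling/maximal-inequality argument with $\delta_n=n^{-k/(2k+1)}|\log(a_nb_n)|^{1/(2(2k+1))}$. The envelope issue you flag at the end is handled in the paper exactly as you anticipate — it starts directly from van de Geer's Lemmas 4.1--4.2, whose concavity step $\tfrac12\log(\hat f/f_0)\le\log\{(\hat f+f_0)/(2f_0)\}$ is what returns the bound from the mixture to the actual log-likelihood ratio $S_{1n,k}$.
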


\subsection{Proofs of Theorem \ref{thm:k_monotone_logLRT_rate},
Theorem \ref{thm:simple_k_monotone_logLRT_rate} and Corollary \ref{cor:simple_k_monotone_null}}

First, we shall introduce some notations. Given two functions $l$ and $u$, the bracket $\llbracket l, u \rrbracket$ is the set of all functions $g$ with $l \leq g \leq u$. Given a distance $\rho$, an $\varepsilon$-bracket is a bracket $\llbracket l, u \rrbracket$ for some $l$ and $u$ so that  $\rho(l, u) < \varepsilon$. For a class of functions $\mathcal{G}$, the bracketing number $N_{[\cdot]}(\varepsilon, \mathcal{G}, \rho)$ is the minimum number of $\varepsilon$-brackets needed to cover $\mathcal{G}$. The entropy with bracketing is the logarithm of the bracketing number $N_{[\cdot]}(\varepsilon, \mathcal{G}, \rho)$. The Hellinger distance between two densities $f$ and $g$ is defined as:
	\begin{align*}
		h(f,g) = \frac{1}{\sqrt{2}}\left[ \int_0^\infty \left\{ \sqrt{f(t)} - \sqrt{g(t)}\right\}^2\,dt\right]^{1/2}. 
	\end{align*} 
	Let $\tilde{J}_{[\cdot]}(\delta, \mathcal{G}, h) := \int^\delta_0 \sqrt{1 + \log N_{[\cdot]}(\varepsilon, \mathcal{G}, h)} d\varepsilon$. For more details of these definitions, see \cite{van1996weak}. 
	The notation $\lesssim$ is used to indicate the left side is bounded by the right side but up to a (universal) constant. 
	Finally, denote $\mathbb{F}_n$ to be the empirical distribution function induced by the random sample $X_1,\ldots,X_n$, and denote the empirical process by $\mathbb{G}_n(f) := \sqrt{n} \int f d (\mathbb{F}_n - F_0)$.

The method of proof of Theorem \ref{thm:k_monotone_logLRT_rate} follows the empirical process theory as described in, for example, \cite{van2000empirical}.
\begin{proof}[Proof of Theorem \ref{thm:k_monotone_logLRT_rate}]
We shall only prove (i). The other cases can be proven similarly. Fix $k \in \mathbb{N}$. For simplicity, we write $\hat{f}_n$ instead of $\hat{f}_{n,k}$. From the proof of Lemma 4.1 and Lemma 4.2 in \cite{van2000empirical}, we have
\begin{eqnarray*}
	0 & \leq  &- \frac{1}{4} S_{n, k} \leq \frac{1}{2} \int \log \frac{\hat{f}_{n} + f_0}{2f_0} d (\mathbb{F}_n - F_0) -  h^2 \left( \frac{\hat{f}_n + f_0}{2}, f_0 \right) \\
	& \leq & \frac{1}{2} \int \log \frac{\hat{f}_{n} + f_0}{2f_0} d (\mathbb{F}_n - F_0) - \frac{1}{16} h^2 \left(\hat{f}_n, f_0 \right).
\end{eqnarray*}
Let $m_{f}(t) := \log \frac{f(t) + f_0(t)}{2}$. Fix $M > 0$. Let $\delta_n := n^{-\frac{k}{2k+1}} | \log (a_n b_n)|^{\frac{1}{2(2k+1)}}$. Then, we have
\begin{eqnarray*}
	&& 	\mathbb{P} \left(  - \frac{\sqrt{n}}{2} S_{n, k} \geq \sqrt{n} 2^{2M} \delta^2_n \right) \\
	&\leq & \mathbb{P} \left( \mathbb{G}_n(m_{\hat{f}_n}) - \frac{\sqrt{n}}{8} h^2 ( \hat{f}_n, f_0 ) \geq \sqrt{n} 2^{2M} \delta^2_n, kZ^{-1}_1 \leq a_n, k Z_n \leq b_n \right)  \\
	&& \quad + \mathbb{P}(kZ^{-1}_1 > a_n) + \mathbb{P}(k Z_n > b_n),
\end{eqnarray*}
where the last two terms in the last display go to $0$ as $n$ goes to infinity by assumptions on $a_n$ and $b_n$. Denote
\begin{equation*}
    A_n	 := \mathbb{P} \left( \mathbb{G}_n(m_{\hat{f}_n}) - \frac{\sqrt{n}}{8} h^2 ( \hat{f}_n, f_0 ) \geq \sqrt{n} 2^{2M} \delta^2_n, k Z^{-1}_1 \leq a_n, k Z_n \leq b_n \right).
\end{equation*}
Using the peeling device, we obtain
\begin{eqnarray*}
	A_n	
	& \leq & \sum^{S_n}_{s=0} \mathbb{P}  \left( \sup_{f \in \mathcal{F}^{a_n}_k([0, b_n]): 2^{M+s} \delta_n \leq  h(f, f_0) < 2^{M + s + 1} \delta_n} \left( \mathbb{G}_n(m_f) - \frac{\sqrt{n}}{8} h^2(f, f_0) \right) \geq \sqrt{n} 2^{2M} \delta^2_n \right) \\
	&& \quad + \mathbb{P}  \left( \sup_{f \in \mathcal{F}^{a_n}_k([0, b_n]):   h(f, f_0) < 2^M \delta_n} \left( \mathbb{G}_n(m_f) - \frac{\sqrt{n}}{8} h^2(f, f_0) \right) \geq \sqrt{n} 2^{2M} \delta^2_n \right)\\
	& \leq & \sum^{S_n}_{s=0} \mathbb{P}  \left( \sup_{f \in \mathcal{F}^{a_n}_k([0, b_n]): 2^{M+s} \delta_n \leq  h(f, f_0) < 2^{M + s + 1} \delta_n}  \mathbb{G}_n(m_f)  \geq \sqrt{n} 2^{2M} (2^{2s} + 1) \delta^2_n \right) \\
	&& \quad + \mathbb{P}  \left( \sup_{f \in \mathcal{F}^{a_n}_k([0, b_n]):   h(f, f_0) < 2^M \delta_n}  \mathbb{G}_n(m_f) \geq \sqrt{n} 2^{2M} \delta^2_n \right)\\
	& \leq & \sum^{S_n}_{s=0} \mathbb{P}  \left( \sup_{f \in \mathcal{F}^{a_n}_k([0, b_n]): h(f, f_0) < 2^{M + s + 1} \delta_n}  \mathbb{G}_n(m_f)  \geq \sqrt{n} 2^{2M} 2^{2s}  \delta^2_n \right) \\
	&& \quad + \mathbb{P}  \left( \sup_{f \in \mathcal{F}^{a_n}_k([0, b_n]):   h(f, f_0) < 2^M \delta_n}  \mathbb{G}_n(m_f) \geq \sqrt{n} 2^{2M} \delta^2_n \right)\\
\end{eqnarray*}
where $S_n := \min\{s : 2^{M + s + 1}\delta_n \geq 1 \}$.
Define
\begin{equation*}
	\mathcal{M}_{n, \delta} := \left\{ m_f : f \in \mathcal{F}^{a_n}_k([0, b_n]), h(f, f_0) < \delta \right \}.
\end{equation*}
By Markov's inequality, 
\begin{eqnarray*}
	A_n & \leq & \sum^{S_n}_{s=0} \frac{ \mathbb{E}||\mathbb{G}_n||_{\mathcal{M}_{n, 2^{M + s+1} \delta_n}}}{ \sqrt{n} 2^{2M} 2^{2s}  \delta^2_n } + \frac{ \mathbb{E}||\mathbb{G}_n||_{\mathcal{M}_{n, 2^M \delta_n}}}{\sqrt{n} 2^{2M} \delta^2_n},
\end{eqnarray*}
where $\mathbb{E}||\mathbb{G}_n||_{\mathcal{M}_{n, \delta}} := \sup_{f \in \mathcal{M}_{n, \delta}} \mathbb{E}|\mathbb{G}_n(f)|$. By Theorem 3.4.4 in \cite{van1996weak}, 
\begin{equation*}
	\mathbb{E}||\mathbb{G}_n||_{\mathcal{M}_{n, \delta}} \lesssim \tilde{J}_{[\cdot]}(\delta, \mathcal{F}^{a_n}_k([0, b_n]), h) \left( 1+ \frac{\tilde{J}_{[\cdot]}(\delta, \mathcal{F}^{a_n}_k([0, b_n]), h)}{\delta^2 \sqrt{n}} \right).
\end{equation*}
From (\ref{eq:Gao_entropy}), we have
\begin{equation*}
	\log N_{[\cdot]}(\delta, \mathcal{F}^{a_n}_k([0, b_n]), h) \leq C_k | \log (a_n b_n)|^{\frac{1}{2k}} \delta^{-\frac{1}{k}}.
\end{equation*}
A direct calculation gives
\begin{align}
	\tilde{J}_{[\cdot]}\left(\delta, \mathcal{F}_k^{a_n}([0, b_n]), h\right)
	\lesssim C_k^{1/2}|\log (a_n b_n)|^{\frac{1}{4k}}\frac{\delta^{1-\frac{1}{2k}}}{1-\frac{1}{2k}}
	\lesssim 
	|\log (a_n b_n)|^{\frac{1}{4k}}\delta^{1-\frac{1}{2k}}
	\label{eq:JFH}
\end{align}
for all large enough $n$ as $C_k$ only depends on $k$. Define 
\begin{align*}
	\xi_{n}(\delta) := 
	|\log (a_n b_n)|^{\frac{1}{4k}} \delta^{1-\frac{1}{2k}} 
	\left(
	1+\frac{|\log (a_n b_n)|^{\frac{1}{4k}}\delta^{1-\frac{1}{2k}}
	}{\delta^2\sqrt{n}}
	\right). 
\end{align*}
Note that $\xi_{n}(\delta)/\delta$ is decreasing in $\delta$ for any fixed $n$. Thus, for any $j>0$, 
\begin{align*}
	\frac{\xi_{n}(2^j \delta_n)}{\delta^{2}_n} \leq \frac{2^j \xi_{n}(\delta_n)}{\delta^{2}_n} = 2^{j+1} \sqrt{n}.
\end{align*}
Hence,
\begin{eqnarray*}
	A_n & \lesssim & \sum^{S_n}_{s=0} \frac{ \xi_n(2^{M+s+1} \delta_n)}{ \sqrt{n}2^{2M}2^{2s} \delta^2_n} + \frac{\xi_n(2^M \delta_n)}{\sqrt{n} 2^{2M} \delta^2_n} 
	\leq  \sum^\infty_{s=0} \frac{2^{M+s+2} \sqrt{n}}{\sqrt{n}2^{2M}2^{2s}} + \frac{2^{M+1} \sqrt{n}}{\sqrt{n} 2^{2M}} = \frac{10}{2^M},
\end{eqnarray*}
which goes to $0$ as $M$ goes to $\infty$. This implies that
\begin{equation*}
    	S_{n,k} 
				= O_p\left(n^{-\frac{2k}{2k+1}} |\log (a_n b_n) |^{\frac{1}{2k+1}} \right).
\end{equation*}
%
\end{proof}

To prove Theorem \ref{thm:simple_k_monotone_logLRT_rate} and Corollary \ref{cor:simple_k_monotone_null}, we first establish the following lemma.

\begin{lemma}\label{lemma:anbn_logn_hall}
    Under Conditions (A) and (B), there exist $\{a_n\}$ and $\{b_n\}$ satisfying \eqref{eq:a_nb_n} and $\log (a_n b_n) = O(\log n)$.
\end{lemma}

\begin{proof}[Proof of Lemma \ref{lemma:anbn_logn_hall}]
Under Condition (A), for some $\gamma > 1, x_0 > 0$, we have for all $x \geq x_0$,
\begin{equation*}
    \mathbb{P}(X >x ) \leq \frac{x^{-\gamma+1}}{\gamma - 1}.
\end{equation*}
Let $b_n = n^{2/(\gamma-1)}$. Then for any $\varepsilon  > 0$,
\begin{equation*}
    n \mathbb{P}(X > \varepsilon b_n ) \leq \frac{n \varepsilon^{-\gamma+1}}{n^2(\gamma-1)}\rightarrow 0
\end{equation*}
as $n \rightarrow \infty$. By Theorem 3 in \cite{hall1979note}, we have $Z_n/b_n \stackrel{p}{\rightarrow} 0$. This implies that
\begin{equation*}
    \lim_{n \rightarrow \infty} \mathbb{P}(kZ_n > b_n) = 0.
\end{equation*}
Under Condition (B), for some $\delta > 0$, for all $x < \delta$, $F_0(x) \leq x^{\gamma_2}$ for some $\gamma_2 > 0$.  Let $a_n = n^{2/\gamma_2}$. 
\begin{equation*}
    n \mathbb{P}(X^{-1} > \varepsilon a_n) = n \mathbb{P}(X < (\varepsilon a_n)^{-1}) \leq \frac{n}{\varepsilon^{\gamma_2} n^2} \rightarrow 0.
\end{equation*}
Note that $Z_1$ is the maximum of $X^{-1}_1,\ldots,X^{-1}_n$. By Theorem 3 in \cite{hall1979note}, we have $Z_1/a_n \stackrel{p}{\rightarrow} 0$. This implies that
\begin{equation*}
    \lim_{n \rightarrow \infty} \mathbb{P}(kZ_1^{-1} > a_n) = 0.
\end{equation*}
Finally, we clearly have $\log (a_n b_n) = O(\log n)$.     
\end{proof}

\begin{proof}[Proof of Theorem \ref{thm:simple_k_monotone_logLRT_rate}]
This is a consequence of Lemma \ref{lemma:anbn_logn_hall} and Theorem \ref{thm:k_monotone_logLRT_rate}.
\end{proof}

\begin{proof}[Proof of Corollary \ref{cor:simple_k_monotone_null}]
We shall apply Theorem \ref{coro:main_result}  by verifying $\sqrt{n\nu} S_{n,k} = o_p(1)$ and $R_{n} = O_p(\frac{\nu \log n}{n})$. First, as $\nu = O(n^{1/3}/\log n)$, by Theorem \ref{thm:simple_k_monotone_logLRT_rate}, for any $k \in \mathbb{N}$,
\begin{align*}
    \sqrt{n\nu} S_{n,k} = O\left(n^{\frac{2}{3}} (\log n)^{-\frac{1}{2}}\right) O_p\left(n^{- \frac{2k}{2k+1}} (\log n)^{\frac{1}{2k+1}}\right) = o_p(1).
\end{align*}
As $f_0$ is decreasing, by Theorem \ref{theorem:Rn_order}, $R_{n} = O_p( \frac{\nu \log n}{n})$.
\end{proof}

\subsection{General results on $S_{n,k}$ under $H_1$}

\subsubsection{Fixed alternative}
We now turn to the study of the NPLRT under $H_1$. Theorem \ref{thm:k_monotone_H1} below provides mild sufficient conditions for the existence of a sequence $L_n \to \infty$ such that $\lim_{n \to \infty} \mathbb{P}(\sqrt{n\nu} S_{n,k} > L_n) = 1$ in the $k$-monotone density case. The proof of Theorem \ref{thm:k_monotone_H1} is almost identical to that of Theorem \ref{thm:k_monotone_H1_local_alt}, which is presented in Subsection \ref{subsect:proofs_app_kmonotone}; we therefore omit its proof.
\begin{theorem}\label{thm:k_monotone_H1}
		If $\inf_{f \in \mathcal{F}_k}h(f, f_0) > 0$ and $\log(a_n b_n) = o(n^{2k})$, then there exists some constant $c> 0$ such that
		\begin{equation*}
			\lim_{n\rightarrow \infty} \mathbb{P} ( S_{n,k} > c )  = 1. 
		\end{equation*}	
	\end{theorem}

As a result of Theorem \ref{thm:k_monotone_H1}, we have the following corollary where the conditions on $f_0$ are in terms of Conditions (A) and (B).
\begin{corollary}\label{thm:simple_k_monotone_H1}
    If $\inf_{f \in \mathcal{F}_k}h(f, f_0) > 0$ and $f_0$ satisfies Conditions (A) and (B), then there exists some constant $c> 0$ such that
    \begin{equation*}
        \lim_{n\rightarrow \infty} \mathbb{P} ( S_{n,k} > c )  = 1. 
    \end{equation*}	
\end{corollary}

\begin{proof}[Proof of Corollary \ref{thm:simple_k_monotone_H1}]
This is a consequence of Lemma \ref{lemma:anbn_logn_hall} and Theorem \ref{thm:k_monotone_H1}.
\end{proof}

\begin{corollary}[Consistency of the NPLRT for $k$-monotone densities]\label{cor:simple_k_monotone_alt}
    Consider $H_0: f_0 \in \mathcal{F}_k$ and $H_1: f_0 \notin \mathcal{F}_k$. Let $\nu = O(n^{1/3}(\log n)^{-1})$. Under $H_1$, suppose that  $\inf_{f \in \mathcal{F}_k}h(f, f_0) > 0$, $f_0$ satisfies Conditions (A), (B), and either (C)(i) or (C)(ii). Then, the NPLRT is consistent.
    
\end{corollary}

\begin{proof}[Proof of Corollary \ref{cor:simple_k_monotone_alt}]
By Corollary \ref{thm:simple_k_monotone_H1}, $\lim_{n \rightarrow \infty} \mathbb{P}(\sqrt{n \nu} S_{n} > \sqrt{n \nu} c) = 1$ for some $c > 0$.  By Theorem \ref{theorem:Rn_order}, $R_n = O_p( \frac{\nu \log n}{n})$.  The result then follows from Theorem \ref{corollary:consistency}.
\end{proof}

\subsubsection{Local alternatives}
Now consider a sequence of local alternatives where the underlying density $f_n \notin \mathcal{F}_k$ satisfies  
$\varepsilon_{kn} := \inf_{f \in \mathcal{F}_k} h(f, f_n) > 0$, and assume that $\varepsilon_{kn} \to 0$ as $n \to \infty$.  
Let $Z_{n1}$ and $Z_{nn}$ denote the minimum and maximum order statistics of a random sample $X_{n1}, \ldots, X_{nn}$ drawn from $f_n$.  
Let $\{a_n\}$ and $\{b_n\}$ be two sequences of real numbers such that
    \begin{equation}\label{eq:a_nb_n_local}
    \lim_{n \to \infty} \mathbb{P}(k Z_{n1}^{-1} > a_{n}) = 0 \quad \text{and} \quad \lim_{n \to \infty} \mathbb{P}(k Z_{nn} > b_{n}) = 0.
\end{equation}
With some abuse of notation, we continue to use $S_{n,k}$ and $R_n$ when $f_0$ is replaced by $f_n$ and when $\hat{f}_{n,k}$ is replaced by the NPMLE based on $X_{n1}, \ldots, X_{nn}$. 
The following theorem generalizes Corollary \ref{thm:simple_k_monotone_H1}, which corresponds to the special case where $f_n = f_0 \notin \mathcal{F}_k$ and $\inf_{f \in \mathcal{F}_k} h(f, f_0) > 0$.

\begin{theorem}\label{thm:k_monotone_H1_local_alt}
Suppose that $\varepsilon_{kn}^{-1-\frac{1}{2k}} = o(n^{1/2}(\log n)^{-1})$ and $\log(a_n b_n) = O(\log n)$, where $\{a_n\}$ and $\{b_n\}$ satisfy \eqref{eq:a_nb_n_local}, then there exists some constant $c> 0$ such that
\begin{equation*}
    \lim_{n\rightarrow \infty} \mathbb{P} ( S_{n,k} > c \varepsilon_{kn}^2 )  = 1. 
\end{equation*}	
\end{theorem}



\subsection{Proofs of Theorems \ref{thm:k_monotone_H1_local_alt}  and \ref{coro:k_monotone_H1_local_alt_rate}}\label{subsect:proofs_app_kmonotone}

\begin{proof}[Proof of Theorem \ref{thm:k_monotone_H1_local_alt}]
Let $c_j$, $j=1,\ldots,4$, be the positive constants in Theorem 1 in \cite{wong1995probability}. First, note that
\begin{eqnarray*}
	&&	\mathbb{P}\bigg( \prod^n_{i=1} \frac{\hat{f}_{n,k}(X_i)}{f_0(X_i)} \geq e^{-c_1 n\varepsilon_{kn}^2} \bigg)\\
	& \leq &
	\mathbb{P}\bigg( \prod^n_{i=1} \frac{\hat{f}_{n,k}(X_i)}{f_0(X_i)} \geq e^{-c_1 n\varepsilon_{kn}^2}, \hat{f}_{n,k} \in \mathcal{F}^{a_n}_k[0,b_n] \bigg) + \mathbb{P}( \hat{f}_{n,k} \notin \mathcal{F}^{a_n}_k[0,b_n] ) \\
	&\leq & \mathbb{P} \bigg( \sup_{f \in \mathcal{F}^{a_n}_k[0, b_n]} \prod^n_{i=1} \frac{f(X_i)}{f_0(X_i)} \geq e^{-c_1n\varepsilon_{kn}^2} \bigg) + \mathbb{P}( \hat{f}_{n,k} \notin \mathcal{F}^{a_n}_k[0,b_n] ). 
\end{eqnarray*}
To verify (3.1) in \cite{wong1995probability}, we have, using Theorem 3 in \cite{gao2009rate},
\begin{eqnarray*}
	\int^{\sqrt{2} \varepsilon_{kn}}_{\frac{\varepsilon_{kn}^2}{2^8} } \sqrt{  \log N_{[\cdot]} \bigg( \frac{u}{c_3}, \mathcal{F}^{a_n}_k([0,b_n]), h \bigg) }du 
	& \leq & \int^{\sqrt{2}\varepsilon_{kn}}_0 C^{\frac{1}{2}}_k | \log (a_n b_n)|^{\frac{1}{4k}} \bigg( \frac{u}{c_3} \bigg)^{-\frac{1}{2k}} du \\
	&\leq & C^{\frac{1}{2}}_k  c_3^{\frac{1}{2k}} 2^{\frac{1}{2} - \frac{1}{4k}} \varepsilon_{kn}^{1-\frac{1}{2k}} | \log (a_n b_n)|^{\frac{1}{4k}}.
\end{eqnarray*}
As $\log(a_n b_n) = O(\log n)$, the right hand side of the last inequality is smaller than $c_4 n^{1/2} \varepsilon_{kn}^2$ for all large enough $n$ as $\varepsilon_{kn}^{-1 - \frac{1}{2k}} = o(n^{1/2}(\log n)^{-1})$. Therefore, by Theorem 1 in \cite{wong1995probability}, for all large enough $n$,  
\begin{equation*}
	\mathbb{P} \bigg( \sup_{f \in \mathcal{F}^{a_n}_k[0, b_n]} \prod^n_{i=1} \frac{f(X_i)}{f_0(X_i)} \geq e^{-c_1n\varepsilon_{kn}^2} \bigg) \leq 4 \exp(-c_2 n \varepsilon_{kn}^2).
\end{equation*}
Hence,
\begin{eqnarray*}
	\limsup_{n \rightarrow \infty}\mathbb{P}\bigg( \prod^n_{i=1} \frac{\hat{f}_{n,k}(X_i)}{f_0(X_i)} \geq  e^{-c_1 n\varepsilon_{kn}^2} \bigg) &\leq& \limsup_{n \rightarrow \infty} 4 \exp(-c_2 n \varepsilon_{kn}^2) + \limsup_{n\rightarrow \infty} \mathbb{P}( \hat{f}_{n,k} \notin \mathcal{F}^{a_n}_k([0,b_n]) ) \\
	&=& \limsup_{n\rightarrow \infty} \mathbb{P}( \hat{f}_{n,k} \notin \mathcal{F}^{a_n}_k([0,b_n]) ) = 0,
\end{eqnarray*}
where the first equality follows as $\varepsilon_{kn}^{-1 - \frac{1}{2k}} = o(n^{1/2}(\log n)^{-1})$ implies that $n \varepsilon_{kn}^2 \rightarrow \infty$, and the second equality follows as $\hat{f}_{n,k} \in \mathcal{F}^{a_n}_k([0, b_n])$ with a probability going to $1$.
Finally, this implies that 
\begin{equation*}
	\lim_{n\rightarrow \infty} \mathbb{P} \bigg( \frac{1}{n} \sum^n_{i=1} \log \frac{f_0(X_i)}{\hat{f}_{n,k}(X_i)} > c_1 \varepsilon_{kn}^2 \bigg) = 
	\lim_{n\rightarrow \infty} \mathbb{P}\bigg( \prod^n_{i=1} \frac{\hat{f}_{n,k}(X_i)}{f_0(X_i)} <  e^{-c_1 n\varepsilon_{kn}^2} \bigg)  = 1. 
\end{equation*}

\end{proof}

\begin{proof}[Proof of Theorem \ref{coro:k_monotone_H1_local_alt_rate}]
    Recall that in Theorem \ref{corollary:consistency}, for the consistency of the test, we require the existence of a sequence $L_n \to \infty$ such that
\begin{equation*}
    \lim_{n \to \infty} \mathbb{P}( \sqrt{n \nu} \, S_{n,k} > L_n ) = 1.
\end{equation*}
By Theorem \ref{thm:k_monotone_H1_local_alt}, we obtain
\begin{equation*}
    \lim_{n \to \infty} \mathbb{P}( \sqrt{n \nu} \, S_{n,k} > c \sqrt{n \nu} \, \varepsilon_{kn}^2 ) = 1.
\end{equation*}
Setting $L_n = \sqrt{n \nu} \, \varepsilon_{kn}^2$, we then require $\varepsilon_{kn}^{-1} = o((n \nu)^{1/4})$ for the power to go to $1$. To determine the fastest rate at which $\varepsilon_{kn}$ can converge to zero, we must choose the largest possible order of $\nu$. Thus, if we take $\nu \asymp n^{1/3} (\log n)^{-1}$, we require $\varepsilon_{kn}^{-1} = o(n^{1/3}(\log n)^{-1/4})$. For example, $\varepsilon_{kn} = n^{-1/3} \log n$ satisfies this condition as well as the condition to apply Theorem \ref{thm:k_monotone_H1_local_alt} that
\begin{equation*}
    \varepsilon_{kn}^{-1-\frac{1}{2k}} = o(n^{1/2} (\log n)^{-1}), \quad \text{for each } k \in \mathbb{N}.
\end{equation*}
In other words, we allow $\varepsilon_{kn}$ to be of order $n^{-1/3}$ up to a logarithmic factor. 
\end{proof}

\section{Additional results and proofs for Section \ref{subsect:cm_log_density_ratio}: completely monotone densities}

\subsection{General results on $S_{n,\infty}$ under $H_0$}\label{subsect:cm_general}
In this subsection, we consider a more general version of Lemma \ref{lem:new_rate_cm}; see Lemma \ref{lem:rate_cm}.
    Let $\{c_{1n}\}$ and $\{c_{2n}\}$ be sequences such that $\log (c_{1n} c_{2n}) = o(n^2)$,
    \begin{equation}\label{eq:CM_cn1cn2}
        \lim_{n \rightarrow \infty} \mathbb{P}(Z^{-1}_1 > c_{1n}) =0 \text{ and } 	\lim_{n \rightarrow \infty} \mathbb{P}(Z^{1+ \beta}_n > c_{2n}) = 0,
    \end{equation}
    for some $\beta > 0$. 
    Denote 
    \begin{equation}\label{eq:def_Kn}
        S_{n, \infty} := - \frac{1}{n} \sum^n_{i=1} \log \frac{\hat{f}_{n,\infty}(X_i)}{f_0(X_i)}.
    \end{equation}
    \begin{lemma}\label{lem:rate_cm}
        Suppose that $f_0\in \mathcal{F}_\infty$, $\{c_{1n}\}$ and $\{c_{2n}\}$ satisfy (\ref{eq:CM_cn1cn2}). Then,
        \begin{equation}\label{eq:CM_LRT_rate}
            S_{n,\infty} = O_p\left( n^{-\frac{2}{3}} |\log (c_{1n} c_{2n}) |^{\frac{1}{3}}\right).
        \end{equation}
    \end{lemma}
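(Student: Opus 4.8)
The plan is to follow the overall template of the proof of Theorem~\ref{thm:k_monotone_logLRT_rate}(i), but specialised to the ``$k=1$'' regime, since a completely monotone density is in particular nonincreasing, i.e.\ $1$-monotone, so the $1$-monotone rate $n^{-1/6}|\log(\cdot)|^{1/3}$ (exactly the exponent of Theorem~\ref{thm:k_monotone_logLRT_rate}(i) at $k=1$) is the natural target. There are two genuinely new obstructions compared with the $k$-monotone case: $\hat f_{n,\infty}$ is the NPMLE over $\mathcal F_\infty$, not over the $1$-monotone class, and, more seriously, it has unbounded support, so Lemmas~\ref{lemma:k_monotone_bounded_in_prob}--\ref{lemma:bounds_on_kmonotone_support} are unavailable and the bound (\ref{eq:Gao_entropy}) degenerates as $k\to\infty$. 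I would handle this by a structural localisation of $\hat f_{n,\infty}$ through its exponential-mixture representation, followed by truncation. Concretely, write $\hat f_{n,\infty}(t)=\int_0^\infty\lambda e^{-\lambda t}\,d\hat M_n(\lambda)$ with $\hat M_n$ the finitely supported NPMLE of the mixing distribution. From the Fenchel/Karush--Kuhn--Tucker characterisation of $\hat M_n$ (the directional derivative $n^{-1}\sum_i\lambda e^{-\lambda X_i}/\hat f_{n,\infty}(X_i)$ is $\le1$ for all $\lambda>0$, with equality on the support of $\hat M_n$), together with the elementary facts that $\lambda\mapsto\lambda e^{-\lambda Z_1}$ peaks at $\lambda=Z_1^{-1}$ and decays exponentially thereafter while a component with $\lambda\ll Z_n^{-1}$ deposits essentially all its mass beyond the largest observation, I would show that the support of $\hat M_n$ lies in $[c_*/Z_n,\,C_*/Z_1]$ for absolute constants $0<c_*\le C_*<\infty$. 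This gives the two bounds playing the role of Lemmas~\ref{lemma:k_monotone_bounded_in_prob} and \ref{lemma:bounds_on_kmonotone_support}: $\|\hat f_{n,\infty}\|_\infty=\hat f_{n,\infty}(0+)=\int\lambda\,d\hat M_n(\lambda)\le C_*/Z_1$ and, for every $b>0$, $\int_b^\infty\hat f_{n,\infty}(t)\,dt=\int e^{-\lambda b}\,d\hat M_n(\lambda)\le e^{-c_*b/Z_n}$.

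Next, take $b_n:=c_{2n}$ and work on the event of probability tending to one, by (\ref{eq:CM_cn1cn2}), on which $Z_1^{-1}\le c_{1n}$ and $Z_n^{1+\beta}\le c_{2n}$; then every $X_i$ lies in $[0,b_n]$, $\|\hat f_{n,\infty}\|_\infty\le C_*c_{1n}$, and the truncated mass $\epsilon_n:=\int_{b_n}^\infty\hat f_{n,\infty}\le e^{-c_*b_n/Z_n}\le e^{-c_*Z_n^{\beta}}=o_p(1)$, the last step using that completely monotone densities have support $(0,\infty)$, so $Z_n\to\infty$ a.s.\ --- this is exactly why the exponent $1+\beta$ appears in (\ref{eq:CM_cn1cn2}), as it forces $b_n/Z_n\ge Z_n^{\beta}\to\infty$. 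Set $\widetilde f_n:=\hat f_{n,\infty}\mathbf{1}_{[0,b_n]}/(1-\epsilon_n)$. Once $\epsilon_n\le1/2$ this is a $1$-monotone density supported in $[0,b_n]$ and bounded above by $B_n:=2C_*c_{1n}$, so $\widetilde f_n\in\mathcal F_1^{B_n}([0,b_n])$, which has finite bracketing entropy by (\ref{eq:Gao_entropy}). Moreover $\widetilde f_n\ge\hat f_{n,\infty}$ on $[0,b_n]\supseteq\{X_1,\dots,X_n\}$, so $n^{-1}\sum_i\log\{\widetilde f_n(X_i)/f_0(X_i)\}\ge n^{-1}\sum_i\log\{\hat f_{n,\infty}(X_i)/f_0(X_i)\}\ge0$, the last inequality because $\hat f_{n,\infty}$ is the NPMLE over a class containing $f_0$. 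Hence $\widetilde f_n$ obeys the same basic likelihood inequality as the NPMLE, and since $-n^{-1/2}S_{1n,\infty}=n^{-1}\sum_i\log\{\hat f_{n,\infty}(X_i)/f_0(X_i)\}$ lies between $0$ and $n^{-1}\sum_i\log\{\widetilde f_n(X_i)/f_0(X_i)\}$, it suffices to bound the log-likelihood ratio of $\widetilde f_n$ from above.

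With the entropy bound $\log N_{[\cdot]}(\varepsilon,\mathcal F_1^{B_n}([0,b_n]),h)\lesssim|\log(c_{1n}b_n)|^{1/2}\varepsilon^{-1}$ from (\ref{eq:Gao_entropy}) at $k=1$, I would then run the empirical-process argument from the proof of Theorem~\ref{thm:k_monotone_logLRT_rate}(i) verbatim with $k=1$: using the basic inequality and $\log x\le2(\sqrt x-1)$, reduce to controlling the empirical process indexed by $\sqrt{\widetilde f_n/f_0}$ (whose $L_2(f_0)$-norm is $1$ since $\widetilde f_n$ is a density, so no integrability condition on $f_0$ is needed) against $h^2(\widetilde f_n,f_0)$ via a maximal inequality, and solve for the rate to obtain $h(\widetilde f_n,f_0)=O_p(n^{-1/3}|\log(c_{1n}b_n)|^{1/6})$ and $n^{-1}\sum_i\log\{\widetilde f_n(X_i)/f_0(X_i)\}=O_p(n^{-2/3}|\log(c_{1n}b_n)|^{1/3})$. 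Since $\log(c_{1n}b_n)=\log(c_{1n}c_{2n})$, multiplying by $n^{1/2}$ gives $|S_{1n,\infty}|\le n^{-1/2}\sum_i\log\{\widetilde f_n(X_i)/f_0(X_i)\}=O_p\!\left(n^{-1/6}|\log(c_{1n}c_{2n})|^{1/3}\right)$, which is (\ref{eq:CM_LRT_rate}).

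I expect the main obstacle to be the structural step: because $\hat f_{n,\infty}$ has genuinely unbounded support there is no shortcut analogous to Lemma~\ref{lemma:bounds_on_kmonotone_support}, and both the sup-norm bound $\lesssim Z_1^{-1}$ and, more delicately, the lower bound $c_*/Z_n$ on the mixing support (equivalently, exponentially small mass of $\hat f_{n,\infty}$ beyond any $b\gg Z_n$) must be extracted directly from the stationarity condition for $\hat M_n$; the lower bound is the hard part, since the directional-derivative inequality has to be exploited in a regime where $\hat f_{n,\infty}$ can be very small near $Z_n$. This is also why only the loose rate $n^{-1/6}$ (the $k=1$ rate) is claimed here, rather than the near-parametric rate one would expect from the polylogarithmic bracketing entropy of exponential mixtures over a bounded parameter range; sharpening it is not needed for the NPLRT and is left aside.
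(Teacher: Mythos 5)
Your proposal is correct in substance and follows the same backbone as the paper's proof: localize $\hat f_{n,\infty}$ via its exponential-mixture representation, reduce to the $k=1$ Gao--Wellner entropy bound \eqref{eq:Gao_entropy} on a class of bounded decreasing functions on $[0,c_{2n}]$ with sup-norm $\lesssim c_{1n}$, and run the peeling argument of Theorem~\ref{thm:k_monotone_logLRT_rate}. Two differences are worth noting. First, the step you flag as the ``main obstacle'' --- extracting the mixing-support localization $\hat\lambda_i\in[c_*/Z_n,\,C_*/Z_1]$ from the KKT conditions --- is not needed: the paper simply quotes \cite{Jewell1982mixtures} for the exact containment $\hat\lambda_i\in[Z_n^{-1},Z_1^{-1}]$, from which both $\hat f_{n,\infty}(0+)\le Z_1^{-1}$ and the tail bound $\hat f_{n,\infty}(t)\le Z_n^{-1}e^{-t/Z_n}\le t^{-2}$ for $t>c_{2n}\ge Z_n^{1+\beta}$ follow immediately, so your sketched derivation is a re-proof of a cited fact rather than a gap. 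Second, you handle the tail by truncating at $c_{2n}$ and renormalizing to get a genuine density $\tilde f_n\in\mathcal F_1^{B_n}([0,c_{2n}])$ dominating $\hat f_{n,\infty}$ on the sample, whereas the paper keeps the tail and works with the sum class $\tilde{\mathcal F}_n=\tilde{\mathcal F}_1^{c_{1n}}([0,c_{2n}])+\mathcal G_n$, controlling the entropy of the tail piece $\mathcal G_n=\{f\text{ decreasing on }(c_{2n},\infty),\,f\le t^{-2}\}$ via Lemma~7.10 of \cite{van2000empirical}. Your truncation trick is legitimate --- the basic inequality $0\le n^{-1}\sum_i\log\{\hat f_{n,\infty}(X_i)/f_0(X_i)\}\le n^{-1}\sum_i\log\{\tilde f_n(X_i)/f_0(X_i)\}$ transfers the problem to $\tilde f_n$, and the van de Geer machinery only needs $P_n\log(\tilde f_n/f_0)\ge0$ and $\int\tilde f_n\le1$ --- and it buys you a slightly cleaner entropy computation at the cost of the extra domination/renormalization bookkeeping; the two routes yield the same rate.
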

    
    The slow rate in Lemma \ref{lem:rate_cm} is a consequence of our method of proof but is sufficient for the application of the NPLRT under mild conditions on $c_{n1}$ and $c_{n2}$.


\begin{proof}[Proof of Lemma \ref{lem:rate_cm}]
For simplicity, we write $\hat{f}_n$ instead of $\hat{f}_{n,\infty}$. 	First, from \cite{Jewell1982mixtures}, we know that the MLE is of the form
\begin{equation*}
	\hat{f}_n(t) = \sum^{r_n}_{i=1} \hat{\lambda}_i \hat{p}_i e^{-\hat{\lambda}_i t},
\end{equation*}
for some $r_n \leq n$, with $\sum^{r_n}_{i=1}\hat{p}_i = 1$ and $\hat{\lambda}_i \in [Z_n^{-1}, Z_1^{-1}]$ for each $i=1,\ldots,r_n$.  Without loss of generality, we  assume that $\hat{\lambda}_1 \leq \ldots \leq \hat{\lambda}_{r_n}$. We shall first obtain an upper bound for $\hat{f}_n$ for a large $t$. Note that
\begin{equation*}
	\hat{f}_n(t) \leq \sum^{r_n}_{i=1} \hat{p}_i \max_{j=1,\ldots,r_n} \hat{\lambda}_j e^{-\hat{\lambda}_j t} = \max_{j=1,\ldots,r_n} \hat{\lambda}_j e^{-\hat{\lambda}_j t},
\end{equation*}
since $\sum^{r_n}_{i=1} \hat{p}_i = 1$. For any fixed $t$, the function $\lambda \mapsto \lambda e^{-\lambda t}$ is unimodal and achieves its maximum when $\lambda = 1/t$. Thus, if $t > Z_n$ so that $1/t < 1/Z_n \leq \hat{\lambda}_1$, we have
\begin{equation*}
	\hat{f}_n(t) \leq \hat{\lambda}_1 e^{- \hat{\lambda}_1 t} \leq \frac{1}{Z_n} e^{- \frac{t}{Z_n}}.
\end{equation*}
Since $f_0$ has an unbounded support, $Z_n \stackrel{a.s.}{\rightarrow} \infty$. Thus, with a  probability approaching $1$,  we have $c_{2n} \geq Z^{1+\beta}_n > Z_n$ and for all $t > c_{2n}$, 
\begin{equation}\label{eq:CM_tail_bound}
	\hat{f}_n(t) \leq \frac{1}{Z_n} e^{- \frac{t}{Z_n}} \leq \frac{1}{t^2} \quad \text{ as } n \rightarrow \infty.
\end{equation}
We shall also obtain a rough upper bound for $\hat{f}_n(0)$. Observe that
\begin{equation*}
	\hat{f}_n(0) = \sum^{r_n}_{i=1} \hat{\lambda}_i \hat{p}_i \leq \sum^{r_n}_{i=1} \hat{p}_i \max_{j=1,\ldots,r_n} \hat{\lambda}_j \leq \max_{j=1,\ldots,r_n} \hat{\lambda}_j \leq Z^{-1}_1.
\end{equation*}
To obtain a rate of convergence of $\hat{f}_n$, we shall find an upper bound for the bracketing entropy  of a class of functions where the MLE will be in with a probability approaching $1$.  To this end, write
\begin{equation*}
	\hat{f}_n(t) = \hat{f}_n(t) I(t \leq c_{2n}) + \hat{f}_n(t) I(t > c_{2n}).
\end{equation*}
Define 
\begin{equation*}
	\tilde{\mathcal{F}}_n := \tilde{\mathcal{F}}^{c_{1n}}_1([0,c_{2n}]) + \mathcal{G}_{n},
\end{equation*}
where
\begin{equation*}
	\tilde{\mathcal{F}}^{c_{1n}}_1([0, c_{2n}]) := \left\{ f :[0, c_{2n}] \rightarrow \mathbb{R}^+ : \text{$f$ is decreasing}, \int^{c_{2n}}_0 f(x) \, dx \leq 1,  f(0) \leq c_{1n} \right \}
\end{equation*}
and 
\begin{equation*}
	\mathcal{G}_n:= \left\{ f : (c_{2n}, \infty) \rightarrow [0, 1] : \text{$f$ is decreasing, } f(t) \leq 1/t^2  \right\}.
\end{equation*}
Because of (\ref{eq:CM_cn1cn2}), $\lim_{n \rightarrow \infty} \mathbb{P}(\hat{f}_n(t) I(t \leq c_{2n}) \in \tilde{\mathcal{F}}^{c_{1n}}_1([0, c_{2n}]) = 1$. In view of (\ref{eq:CM_tail_bound}), we have
$\lim_{n \rightarrow \infty}	\mathbb{P}(\hat{f}_n(t) I(t > c_{2n}) \in \mathcal{G}_n) = 1$.
Therefore, 
\begin{equation}\label{eq:CM_fhat_in_tildeFn}
	\lim_{n \rightarrow \infty}	\mathbb{P}(\hat{f}_n \in \tilde{\mathcal{F}}_n) = 1.	
\end{equation}
We shall now show that $\tilde{\mathcal{F}}_n$ has a finite bracketing entropy with respect to the Hellinger distance. First, from the proof of Theorem 3 in \cite{gao2009rate}, we know the result of that theorem is also valid for $\tilde{\mathcal{F}}^{c_{1n}}_1([0, c_{2n}])$, where for any function $f$ in this class such that $\int^{c_{2n}}_0 f(x)\, dx \leq 1$ rather than just $\int^{c_{2n}}_0 f(x) \, dx = 1$. Thus,
\begin{equation}\label{eq:CM_entropy_1}
	\log N_{[\cdot]}(\varepsilon, \tilde{\mathcal{F}}^{c_{1n}}_1([0, c_{2n}]), h) \leq C | \log (c_{1n} c_{2n})|^{1/2} \varepsilon^{-1},
\end{equation}
where $C$ is a universal constant. Then, note that if $f \in \mathcal{G}_n$, then $f^{1/2} \in \mathcal{G}^{1/2}_n$, where
\begin{equation*}
	\mathcal{G}^{1/2}_n:= \left\{ f : (c_{2n}, \infty) \rightarrow [0, 1] : \text{$f$ is decreasing, } f(t) \leq 1/t  \right\}.
\end{equation*}
As $\int^\infty_{c_{2n}} 1/x^{2(1-\gamma)} \, dx < \infty$ for some $\gamma \in (0, 1)$, we can apply the idea of Lemma 7.10 in \cite{van2000empirical} to obtain that
\begin{equation}\label{eq:CM_entropy_2}
	\log N_{[\cdot]}(\varepsilon, 	\mathcal{G}^{1/2}_n, ||\cdot||_2) \leq A \varepsilon^{-1},
\end{equation}
where $A$ is a universal constant and $||\cdot||_2$ is the $L^2$-norm with respect to the Lebesgue measure on $[0, \infty)$. Now, simply note that
\begin{equation}\label{eq:CM_entropy_3}
	\log N_{[\cdot]}(\varepsilon, 	\mathcal{G}^{1/2}_n, ||\cdot||_2) = \log N_{[\cdot]}(\varepsilon, \mathcal{G}_n, h).
\end{equation}
Combining (\ref{eq:CM_entropy_1}) to (\ref{eq:CM_entropy_3}), we obtain that
\begin{equation}\label{eq:CM_tildeFn_entropy}
	\log N_{[\cdot]}(\varepsilon, 	\tilde{\mathcal{F}}_n, h) \leq  \{C |\log (c_{1n}c_{2n})|^{1/2} + A\} \varepsilon^{-1} \lesssim |\log (c_{1n}c_{2n})|^{1/2} \varepsilon^{-1},
\end{equation}
for all large $n$. With (\ref{eq:CM_fhat_in_tildeFn}) and (\ref{eq:CM_tildeFn_entropy}), we can obtain (\ref{eq:CM_LRT_rate}) as in the proof of Theorem \ref{thm:k_monotone_logLRT_rate} and we omit the details.	
\end{proof}

%

\subsection{Proofs for Subsection \ref{subsect:cm_log_density_ratio}}
The proof of the following lemma is essentially the same as that of Lemma \ref{lemma:anbn_logn_hall} and is therefore omitted.
\begin{lemma}\label{lemma:cn1cn2_logn_hall}
    Under Conditions (A) and (B), there exist $\{c_{n1}\}$ and $\{c_{n2}\}$ satisfying \eqref{eq:CM_cn1cn2} and $\log (c_{n1} c_{n2}) = O(\log n)$.
\end{lemma}

With Lemma \ref{lemma:cn1cn2_logn_hall}, we can establish Lemma \ref{lem:new_rate_cm} in view of Lemma \ref{lem:rate_cm}. The proof of Corollary \ref{thm:simple_CM_null} is similar to that of Corollary \ref{cor:simple_k_monotone_null} for $k$-monotone densities and is therefore omitted. 

\subsection{General results on $S_{n,\infty}$ under $H_1$}
\subsubsection{Fixed alternatives}
Under a fixed alternative, Theorem \ref{thm:C_monotone_H1} and Corollaries \ref{thm:simple_C_monotone_H1} and \ref{cor:simple_C_monotone_alt} for completely monotone densities parallel Theorem \ref{thm:k_monotone_H1} and Corollaries \ref{thm:simple_k_monotone_H1} and \ref{cor:simple_k_monotone_alt} for $k$-monotone densities, respectively. Their proofs are thus omitted.
	\begin{theorem}\label{thm:C_monotone_H1}
			If $\inf_{f \in \mathcal{F}_\infty}h(f, f_0) > 0$ and $\log (c_{1n} c_{2n}) = o(n^{2})$, then there exists some constant $c> 0$ such that
			\begin{equation*}
				\lim_{n\rightarrow \infty} \mathbb{P} ( S_{n,\infty} > c )  = 1. 
			\end{equation*}	
		\end{theorem}
    	\begin{corollary}\label{thm:simple_C_monotone_H1}
			If $\inf_{f \in \mathcal{F}_\infty}h(f, f_0) > 0$, $f_0$ satisfies Conditions (A) and (B), then there exists some constant $c> 0$ such that
			\begin{equation*}
				\lim_{n\rightarrow \infty} \mathbb{P} ( S_{n,\infty} > c )  = 1. 
			\end{equation*}	
		\end{corollary}

		\begin{corollary}[Consistency of the NPLRT for completely monotone densities]\label{cor:simple_C_monotone_alt}
			Consider $H_0: f_0 \in \mathcal{F}_\infty$ and $H_1: f_0 \notin \mathcal{F}_\infty$. Let $\nu = O(n^{1/3}(\log n)^{-1})$. Under $H_1$, suppose that  $\inf_{f \in \mathcal{F}_\infty}h(f, f_0) > 0$, $f_0$ satisfies Conditions (A), (B), and either (C)(i) or (C)(ii). Then, the NPLRT is consistent.
		\end{corollary}



\subsubsection{Local alternatives}
Consider a sequence of local alternatives where the underlying density $f_n \notin \mathcal{F}_\infty$ satisfies  
$\varepsilon_{\infty,n} := \inf_{f \in \mathcal{F}_\infty} h(f, f_n) > 0$, and assume that $\varepsilon_{n} \to 0$ as $n \to \infty$.  
Let $Z_{n1}$ and $Z_{nn}$ denote the minimum and maximum order statistics of a random sample $X_{n1}, \ldots, X_{nn}$ drawn from $f_n$.  
Let $\{c_{1n}\}$ and $\{c_{2n}\}$ be two sequences of real numbers such that
\begin{equation}\label{eq:a_nb_n_local_cm}
    \lim_{n \to \infty} \mathbb{P}( Z_{n1}^{-1} > c_{1n}) = 0 \quad \text{and} \quad \lim_{n \to \infty} \mathbb{P}(Z_{nn}^{1+\beta} > c_{2n}) = 0,
\end{equation}
for some $\beta > 0$. 

\begin{theorem}\label{coro:c_monotone_H1_local_alt_rate}
    Suppose that $\nu \asymp n^{1/3}(\log n)^{-1}$, $\log (c_{1n} c_{2n}) = O(\log n)$, where $\{c_{1n}\}$ and $\{c_{2n}\}$ satisfy \eqref{eq:a_nb_n_local_cm}, and $R_n = O_p(\frac{\nu \log n}{n})$. If $\varepsilon_{\infty, n} = \Omega(n^{-1/3} \log n)$, then for any $0 \leq c < \infty$,
    \begin{equation*}
\lim_{n \rightarrow \infty} \mathbb{P}     \left(
\sqrt{ \frac{n\nu}{\nu^2\psi_1(\nu)-\nu}} \left( T_n -  \log \nu +  \psi(\nu) \right) > c\right) = 1.
\end{equation*}
\end{theorem}
The argument used to obtain this result is the same as that for the $k$-monotone case, and we therefore omit the details.

\section{Additional results and proofs for Section \ref{subsect:lc_log_density_ratio}: log-concave densities}
Note that the derivation of Corollary \ref{cor:null_dist_log_concave} is explained in the paragraph preceding it. 

\subsection{Fixed alternatives}
	In \cite{doss2016global}, a bracketing entropy bound is derived for the following subset of $\mathcal{F}_{lc}$:
		\begin{equation}\label{eq:def_FMlc}
			\mathcal{F}^M_{lc} :=  \left \{ f \in \mathcal{F}_{lc}: \sup_{x \in \mathbb{R}} f(x) \leq M, \frac{1}{M} \leq  \inf_{x \in [-1,1]} f(x) \right\}, 
		\end{equation}
		where $0 < M < \infty$. Using this result, along with the deviation inequality in Theorem 1 in  \cite{wong1995probability}, we establish the following theorem.
		\begin{theorem}\label{thm:log_concave_H1}
			If $\inf_{f \in \mathcal{F}_{lc}}h(f, f_0) > 0$ and 
   $\int_{\mathbb{R}} |x| f_0(x)dx < \infty$, then there exists some finite constant $c > 0$ such that
			\begin{equation*}
				\lim_{n\rightarrow \infty} \mathbb{P} ( S_{n,lc} > c)  = 1. 
			\end{equation*}
		\end{theorem}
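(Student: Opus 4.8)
The plan is to show that, with probability tending to one, $\prod_{i=1}^n \hat f_{n,lc}(X_i)/f_0(X_i)\le e^{-cn}$ for a fixed $c>0$; since $S_{1n,lc}=-n^{-1/2}\sum_{i=1}^n\log\{\hat f_{n,lc}(X_i)/f_0(X_i)\}$, this is precisely $S_{1n,lc}>c\sqrt n$. Write $\delta_0:=\inf_{f\in\mathcal{F}_{lc}}h(f,f_0)>0$. Because $\mathcal{F}_{lc}$ is not totally bounded, the likelihood ratio cannot be controlled uniformly over all of $\mathcal{F}_{lc}$ at once, so I would argue in two stages, in the same spirit as the proofs of Theorems~\ref{thm:k_monotone_H1} and \ref{thm:C_monotone_H1}: first confine $\hat f_{n,lc}$ to the uniformly controlled subclass $\mathcal{F}^{M_n}_{lc}$ of (\ref{eq:def_FMlc}) for a slowly growing $M_n$, and then, on that subclass, apply the large-deviation inequality for likelihood ratios of \cite{wong1995probability}.

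For the confinement, after an affine change of variables --- under which $S_{1n,lc}$, the class $\mathcal{F}_{lc}$, the Hellinger distance, and the condition $\int|x|f_0<\infty$ are all invariant --- I may assume that the $\tfrac14$- and $\tfrac34$-quantiles of $f_0$ lie outside $[-2,2]$, so that $\mathbb{P}_{f_0}(X<-1)$ and $\mathbb{P}_{f_0}(X>1)$ are bounded away from $0$ and hence $[-1,1]$ is contained in the support $[Z_1,Z_n]$ of $\hat f_{n,lc}$ with probability tending to one. I would then exhibit $M_n\uparrow\infty$, growing at most polynomially in $n$, with $\mathbb{P}(\hat f_{n,lc}\in\mathcal{F}^{M_n}_{lc})\to1$: the bound $\sup_x\hat f_{n,lc}(x)\le M_n$ follows from elementary properties of the log-concave MLE relating its modal value to an interquartile spacing of the sample, which stays $O_p(1)$ because $\int|x|f_0(x)\,dx<\infty$; the bound $\hat f_{n,lc}(x)\ge 1/M_n$ for $x\in[-1,1]$ follows from log-concavity together with the consistency of the log-concave MLE under misspecification (\cite{cule2010theoretical}), which forces $\hat f_{n,lc}$ to stay bounded away from zero on any fixed interval interior to the convex hull of the support of $f_0$. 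I expect this confinement to be the main obstacle, since $f_0$ is an arbitrary density outside $\mathcal{F}_{lc}$ --- possibly unbounded or heavy-tailed --- so these bounds have to be read off from the structure of the MLE itself rather than borrowed from a limiting density.

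On the event $\{\hat f_{n,lc}\in\mathcal{F}^{M_n}_{lc}\}$ we have $\prod_{i=1}^n\hat f_{n,lc}(X_i)/f_0(X_i)\le\sup_{f\in\mathcal{F}^{M_n}_{lc}}\prod_{i=1}^n f(X_i)/f_0(X_i)$, so it remains to control the latter supremum. Every $f\in\mathcal{F}^{M_n}_{lc}\subseteq\mathcal{F}_{lc}$ satisfies $h(f,f_0)\ge\delta_0$, and the identity $\mathbb{E}_{f_0}\{(f/f_0)^{1/2}(X)\}=1-\tfrac12 h^2(f,f_0)\le 1-\tfrac12\delta_0^2$ holds regardless of whether $f_0\in\mathcal{F}_{lc}$; this is what makes Theorem~1 in \cite{wong1995probability} applicable in the present misspecified setting. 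The remaining input is the Hellinger bracketing-entropy bound $\log N_{[\cdot]}(\varepsilon,\mathcal{F}^M_{lc},h)\lesssim c_M\,\varepsilon^{-1/2}$ of \cite{doss2016global}, with $c_M$ growing only polylogarithmically in $M$; plugging $M=M_n$ and the fixed radius $\delta=\delta_0$ into the entropy-integral condition of that theorem gives a left-hand side that is polylogarithmic in $M_n$ up to a constant depending only on $\delta_0$, hence $o(\sqrt n)$, so the condition is met for all large $n$. The theorem then produces a constant $c_5>0$ such that $\mathbb{P}\bigl(\sup_{f\in\mathcal{F}^{M_n}_{lc}}\prod_{i=1}^n f(X_i)/f_0(X_i)\ge e^{-c_5 n\delta_0^2}\bigr)\to0$.

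Intersecting the two events of probability tending to one yields $\prod_{i=1}^n\hat f_{n,lc}(X_i)/f_0(X_i)<e^{-c_5 n\delta_0^2}$ with probability tending to one, that is, $S_{1n,lc}>c_5\delta_0^2\sqrt n$; taking $c:=c_5\delta_0^2$ completes the argument. Note that $\int_{\mathbb{R}}|x|f_0(x)\,dx<\infty$ is used only in the confinement step and $\inf_{f\in\mathcal{F}_{lc}}h(f,f_0)>0$ only through the constant $\delta_0$, paralleling Theorems~\ref{thm:k_monotone_H1} and \ref{thm:C_monotone_H1}.
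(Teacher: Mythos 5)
Your proposal is correct and follows essentially the same route as the paper: confine $\hat f_{n,lc}$ to the class $\mathcal{F}^M_{lc}$ with probability tending to one (the paper does this with a \emph{fixed} $M$ by directly citing Lemma~3 of \cite{cule2010MLE_log_concave}, which is exactly where $\int|x|f_0\,dx<\infty$ enters, rather than re-deriving the bounds with a growing $M_n$), then verify condition (3.1) of \cite{wong1995probability} via the Doss--Wellner bracketing entropy bound $\log N_{[\cdot]}(\varepsilon,\mathcal{F}^M_{lc},h)\precsim\varepsilon^{-1/2}$ and apply their Theorem~1. The growing-$M_n$ variant is unnecessary but harmless, and the rest of your argument matches the paper's proof.
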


\begin{proof}[Proof of Theorem \ref{thm:log_concave_H1}]
The proof is similar to the one in Theorem \ref{thm:k_monotone_H1_local_alt}, where we apply the large deviation inequality in Theorem 1 in \cite{wong1995probability}. To this end, it suffices to verify (3.1) in \cite{wong1995probability} and our claim then follows as a result. 
Without loss of generality, assume that the interval $[-1,1]$ is strictly contained in the support of $f_0$. 
Because $\int_{\mathbb{R}} |x|f_0(x)\, dx < \infty$, by Lemma 3 in \cite{cule2010MLE_log_concave}, $\hat{f}_{n,lc} \in \mathcal{F}^M_{lc}$ for some $M > 0$ with a probability approaching $1$ as $n \rightarrow \infty$.  By Theorem 3.1 in \cite{doss2016global}, $\log N_{[\cdot]}(u/c_3, \mathcal{F}^M_{lc}, h) \precsim u^{-1/2}$. Hence,
\begin{equation*}
\int^{\sqrt{2}\varepsilon}_{\frac{\varepsilon^2}{2^8}} \sqrt{ \log N_{[\cdot]} \bigg( \frac{u}{c_3}, \mathcal{F}^M_{lc}, h \bigg)} du \precsim \int^{\sqrt{2}{\varepsilon}}_0 u^{-1/4} du \precsim \varepsilon^{3/4}.
\end{equation*}
Thus, the right hand side of the last inequality is smaller than $c_4 n^{1/2} \varepsilon^2$ for all large enough $n$, and so (3.1) in \cite{wong1995probability} is satisfied.
\end{proof}

		\begin{corollary}[Consistency of NPLRT for log-concave densities]\label{cor:log_convcave_consistency}
			Consider $H_0: f_0 \in \mathcal{F}_{lc}$ and $H_1: f_0 \notin \mathcal{F}_{lc}$. 
 Let $\nu = O(n^{1/3}(\log n)^{-1})$. Under $H_1$, suppose that  $\inf_{f \in \mathcal{F}_{lc}}h(f, f_0) > 0$, $f_0$ satisfies    $\int_{\mathbb{R}} |x| f_0(x)dx < \infty$, Conditions (A), (B), and either (C)(i) or (C)(ii). Then, the NPLRT is consistent.

		\end{corollary}

\begin{proof}[Proof of Corollary \ref{cor:log_convcave_consistency}]
By Theorem \ref{thm:log_concave_H1}, there exists $c> 0$ such that $\mathbb{P}(\sqrt{n\nu} S_{n,lc} > \sqrt{n \nu} c) = 1$. Under Conditions (A), (B), and either (C)(i) or (C)(ii), $R_n = O_p( \frac{\nu \log n}{n})$ by Theorem \ref{theorem:Rn_order}. The result then follows from Theorem \ref{corollary:consistency}.

\end{proof}

        \subsection{Local alternatives}
  The following theorem extends Theorem \ref{thm:log_concave_H1} to sequences of local alternatives.
        \begin{theorem}\label{thm:local_alt_lc_SN}
            Let $f_{n} \notin \mathcal{F}_{lc}$ be a sequence of local alternatives with common support satisfying $\varepsilon_n := \inf_{f \in \mathcal{F}_{lc}} h(f, f_n) > 0$,   $\sup_n \int |x|^4 f_n(x) dx < \infty$, and $\sup_n f_n(x) < \infty$.  If $\varepsilon_n^{-5/4} = o(n^{1/2})$, then there exists a constant $c > 0$ such that
\begin{equation*}
    \lim_{n \rightarrow \infty} \mathbb{P}(S_{n,lc} > c \varepsilon_n^2) = 1.
\end{equation*}
            
        \end{theorem}
\begin{proof}[Proof of Theorem \ref{thm:local_alt_lc_SN}]
     Under the conditions on $f_n$, we can modify the proof of Lemma 3 in \cite{cule2010theoretical} to show that the MLE $\hat{f}_{nn,lc}$ based on a random sample $X_{n1}, \ldots, X_{nn}$ from $f_n$ satisfies:
\begin{enumerate}
    \item[(a)] There exists a constant $C > 0$ such that
    \begin{equation*}
        \mathbb{P}\left( \limsup_{n \rightarrow \infty} \sup_{x \in \mathbb{R}} \hat{f}_{nn,lc}(x) \leq C \right) = 1.
    \end{equation*}

    \item[(b)] For any compact set $S$ in the support of $f_n$, there exists a constant $c = c(S) > 0$ such that
    \begin{equation*}
        \mathbb{P}\left( \liminf_{n \rightarrow \infty} \inf_{x \in S} \hat{f}_{nn,lc}(x) \geq c \right) = 1.
    \end{equation*}
\end{enumerate}
See also Lemma \ref{lemma:f_n*lc_UB_LB} for a similar result to Lemma 3 in \cite{cule2010theoretical}, applied to a sequence of densities, where we provide more details on the proof. In particular, for $g(x) = \exp(-|x| + b)$, where $b$ is a normalization constant such that $g$ is a density, we have, instead of \eqref{eq:lower_bound_fnlogg},
\begin{equation*}
    \inf_n \int f_n \log g = - \sup_n \int |x| f_n(x) \, dx + b > -\infty.
\end{equation*}
A sufficient condition for applying the strong law of large numbers for triangular arrays, as used in the proof of Lemma \ref{lemma:f_n*lc_UB_LB}, is that
\begin{equation*}
    \sup_{n,i} \mathbb{E}\left\{ \log g(X_{ni}) - \int f_n \log g \right\}^4 < \infty,
\end{equation*}
which holds in view of the assumption $\sup_n \int |x|^4 f_n(x) \, dx < \infty$. The remainder of the argument is similar to that of Lemma \ref{lemma:f_n*lc_UB_LB} and is therefore omitted.

    Without loss of generality, we may assume that the interval $[-1, 1]$ is strictly contained in the support of $f_n$. As a result of (a) and (b) above, with probability tending to $1$, $\hat{f}_{nn,lc} \in \mathcal{F}_{lc}^M$ for some $M > 0$. 
    
Similar to the proof of Theorem \ref{thm:log_concave_H1}, it remains to verify that the last term on the right-hand side of the following inequality
\begin{equation*}
    \int_{\varepsilon_n^2/2^8}^{\sqrt{2}\varepsilon_n} \sqrt{ \log N_{[\cdot]}\left( \frac{u}{c_3}, \mathcal{F}^M_{lc}, h \right)} \, du \precsim \int_0^{\sqrt{2}\varepsilon_n} u^{-1/4} \, du \precsim \varepsilon_n^{3/4}
\end{equation*}
is smaller than $c_4 n^{1/2} \varepsilon_n^2$ for all sufficiently large $n$. This holds since $\varepsilon_n^{-5/4} = o(n^{1/2})$. 
\end{proof}

\begin{proof}[Proof of Theorem \ref{thm:lc_H1_local_alt_rate}]
  With Theorem \ref{thm:local_alt_lc_SN}, the claim in the theorem follows as in the proof of Theorem \ref{coro:k_monotone_H1_local_alt_rate}.
\end{proof}

\section{Proofs for Section \ref{sect:bootstrap}}

\subsection{Leading term of $S_{n,1}$ under $H_0$}

With additional regularity conditions, we obtain the following Theorem \ref{thm:monotone_log_like_conv} for 1-monotonicity. Its proof makes use of a generalization of Theorem 1.1 in \cite{Kulikov2005} and Theorem 2.1 in \cite{Kulikov2008}. The former establishes the asymptotic normality of a  weighted $L_2$-error between $\hat{f}_n$ and $f_0$, while  the latter establishes the asymptotic normality of a weighted $L_1$-norm between $\hat{\mathbb{F}}_n$ and $\mathbb{F}_n$, where $\mathbb{F}_n$ is the empirical distribution function and $\hat{\mathbb{F}}_n$ is the distribution function corresponding to the MLE, which is the least concave majorant of $\mathbb{F}_n$ in this case.


		%
		\begin{theorem}\label{thm:monotone_log_like_conv}
			Suppose that $f_0$ is a twice continuously differentiable and strictly decreasing density function with support contained in $[0, 1]$ and that it satisfies the following conditions:
			\begin{enumerate}
				\item[(i)] $0 < f_0(1) \leq f_0(0) < \infty$;
				\item[(ii)] $0 < \inf_{t \in (0,1)} |f'_0(x)|$.
			\end{enumerate}
			Then,
			\begin{equation*}
				\sqrt{n} S_{n,1} =  -(\mu^2_{2, f_0} + \kappa_{f_0})n^{-1/6} + O_p(n^{-\frac{1}{3} + \delta}),
			\end{equation*}
			for any $\delta > 0$, where
			\begin{eqnarray*}
				\mu_{2, f_0} &:=&  \bigg[ \mathbb{E}[V(0)]^2 \int_0^1 \{2|f_0'(t)|^2f_0(t)^{-1}\}^{\frac{1}{3}}\,dt \bigg]^{1/2},\\
				\kappa_{f_0} &:=& \mathbb{E}[\zeta(0)]\int_0^1 \{2|f_0'(t)|^2 f_0(t)^{-1}\}^{\frac{1}{3}}dt,
			\end{eqnarray*}	
			with $V(c) = \arg\max_{t\in\mathbb{R}} \{\mathbb{W}(t) - (t-c)^2\}$, \\ $\zeta(c) =  [\mathrm{CM}_\mathbb{R} \mathcal{Z}](c)-\mathcal{Z}(c)$ and $\mathcal{Z}(c) = \mathbb{W}(c) - c^2$, for $c \in \mathbb{R}$. Here, $\mathbb{W}$ is a standard two-sided Brownian motion with $\mathbb{W}(0) = 0$ and $\mathrm{CM}_\mathbb{R}\mathcal{Z}$ denotes the least concave majorant of  $\mathcal{Z}$ on $\mathbb{R}$.
		\end{theorem}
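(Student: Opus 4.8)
\noindent\emph{Proof strategy.}
Write $\hat f_n:=\hat f_{n,1}$, let $\mathbb{F}_n$ be the empirical distribution function, and let $\hat F_n(x):=\int_0^x\hat f_n$, which is the least concave majorant of $\mathbb{F}_n$. Since $S_{1n,1}=-\sqrt n\,R_n$ with $R_n:=n^{-1}\sum_{i=1}^n\log\{\hat f_n(X_i)/f_0(X_i)\}=\int_0^1\log(\hat f_n/f_0)\,d\mathbb{F}_n$, it suffices to prove that $R_n=(\mu^2_{2,f_0}+\kappa_{f_0})\,n^{-2/3}+O_p(n^{-5/6+\delta})$. Set $g_n:=(\hat f_n-f_0)/f_0$. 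The plan is to combine a second-order Taylor expansion of $\log(1+g_n)$ with three exact identities for the Grenander MLE and then to invoke the known asymptotics of two global functionals of $\hat f_n$. The identities are: (i) $\int_0^1 g_n\,dF_0=\int_0^1(\hat f_n-f_0)\,dx=0$, since $\hat f_n$ and $f_0$ are both densities on $[0,1]$; (ii) $\int_0^1(\hat F_n-\mathbb{F}_n)\,d\hat f_n=0$, since $d\hat f_n$ is carried by the jump points of $\hat f_n$, which are vertices of the concave majorant and hence points where $\hat F_n=\mathbb{F}_n$; and (iii) Stieltjes integration by parts, in which the boundary terms vanish ($\hat F_n-\mathbb{F}_n$ being $0$ at $0$ and at $1$) and the jump-correction term is $O_p(n^{-1})$.

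By (i), $\int g_n\,d\mathbb{F}_n=\int g_n\,d\hat F_n+\int g_n\,d(\mathbb{F}_n-\hat F_n)$; the first term is $\int \hat f_n(\hat f_n-f_0)/f_0\,dx=\int(\hat f_n-f_0)^2/f_0\,dx$ after writing $\hat f_n=(\hat f_n-f_0)+f_0$ and using $\int(\hat f_n-f_0)=0$, and for the second, writing $g_n=\hat f_n/f_0-1$, using $\int d(\mathbb{F}_n-\hat F_n)=0$, applying (iii), then $d(\hat f_n/f_0)=f_0^{-1}d\hat f_n-\hat f_n f_0' f_0^{-2}\,dx$, and finally (ii), one obtains $\int g_n\,d(\mathbb{F}_n-\hat F_n)=\int_0^1(\hat F_n-\mathbb{F}_n)\,\hat f_n|f_0'|f_0^{-2}\,dx$. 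Hence $\int_0^1 g_n\,d\mathbb{F}_n=I+II+O_p(n^{-1})$, where
$$I:=\int_0^1\frac{(\hat f_n-f_0)^2}{f_0}\,dx\ \ge 0,\qquad II:=\int_0^1(\hat F_n-\mathbb{F}_n)\,\frac{\hat f_n|f_0'|}{f_0^{2}}\,dx\ \ge 0$$
(the nonnegativity of $II$ because $f_0'<0$ and $\hat F_n\ge\mathbb{F}_n$). On the set $\{|g_n|\le\tfrac12\}$ the expansion $\log(1+g_n)=g_n-\tfrac12 g_n^2+O(|g_n|^3)$ is valid; combining it with the above, with $\int g_n^2\,d\mathbb{F}_n=I+o_p(n^{-5/6})$ (an empirical-process bound via the bracketing entropy of functions of bounded variation), and with the fact that $\{|g_n|>\tfrac12\}$ is confined to shrinking neighbourhoods of the endpoints, one gets $R_n=\tfrac12\,I+II+(\text{lower order})$. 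The control near $0$ and $1$ rests on Lemma~\ref{lemma:k_monotone_bounded_in_prob} and Lemma~\ref{lemma:bounds_on_kmonotone_support}: since $f_0(0+)<\infty$ they give $\hat f_n(0+)=O_p(1)$ (with the worst-case bound $\hat f_n(0+)\le Z_1^{-1}$ still available) and $\sigma_{\hat f_n}\le Z_n\le1$; together with the near-boundary behaviour of the Grenander estimator \citep{Balabdaoui2011grenander} these show that only $O_p(n^{\delta})$ observations lie in $\{|g_n|>\tfrac12\}$ and that the contributions of the endpoint regions to $R_n$, to $I$, and to $II$ are $O_p(n^{-5/6+\delta})$.

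It remains to pin down the asymptotics of $I$ and $II$. On a compact interior interval the classical pointwise theory of \citet{rao1969estimation} and \citet{Groeneboom1985} gives, uniformly in $t$, $n^{1/3}(\hat f_n(t)-f_0(t))\Rightarrow(\tfrac12 f_0(t)|f_0'(t)|)^{1/3}\mathbb{D}(0)$ and $n^{2/3}(\hat F_n(t)-\mathbb{F}_n(t))\Rightarrow(2f_0(t)^2/|f_0'(t)|)^{1/3}\zeta(0)$, where $\mathbb{D}$ is the derivative of $\mathrm{CM}_\mathbb{R}\mathcal{Z}$ and, by the switching relation, $\mathbb{D}(0)\stackrel{d}{=}2V(0)$; moreover the rescaled versions of these processes are asymptotically stationary with correlation length of order $n^{-1/3}$. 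A blocking argument---realised through a Koml\'os--Major--Tusn\'ady coupling of $\mathbb{F}_n$ to a Brownian bridge followed by localisation at the $n^{-1/3}$ scale, or, equivalently, read off from the global central limit theorems of Kulikov and Lopuha\"a for $\int|\hat f_n-f_0|^p$ and for $\hat F_n-\mathbb{F}_n$---then yields $\tfrac12\,I=n^{-2/3}\,\tfrac14\mathbb{E}[\mathbb{D}(0)^2]\int_0^1\{2|f_0'|^2f_0^{-1}\}^{1/3}\,dt+O_p(n^{-5/6+\delta})$ and $II=n^{-2/3}\,\mathbb{E}[\zeta(0)]\int_0^1\{2|f_0'|^2f_0^{-1}\}^{1/3}\,dt+O_p(n^{-5/6+\delta})$; since $\tfrac14\mathbb{E}[\mathbb{D}(0)^2]$ is precisely the coefficient appearing in $\mu_{2,f_0}$, these equal $\mu^2_{2,f_0}n^{-2/3}$ and $\kappa_{f_0}n^{-2/3}$. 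Collecting the pieces gives $R_n=(\mu^2_{2,f_0}+\kappa_{f_0})n^{-2/3}+O_p(n^{-5/6+\delta})$, hence $S_{1n,1}=-(\mu^2_{2,f_0}+\kappa_{f_0})n^{-1/6}+O_p(n^{-1/3+\delta})$.

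I expect the main obstacle to be the last step together with the endpoint bookkeeping: promoting the pointwise limit laws to sharp statements about the two \emph{global} integrals, with the $n^{-5/6+\delta}$ remainder and with uniform control all the way to $0$ and $1$, where the Grenander estimator spikes (at $0$) and is inconsistent (at $1$) and where the entropy/coupling machinery must be combined with the explicit bounds $\hat f_n(0+)\le Z_1^{-1}$ and $\sigma_{\hat f_n}\le Z_n$ to keep the endpoint contributions below the $n^{-5/6}$ level. A secondary technical point is tracking the universal constants---identifying $\tfrac14\mathbb{E}[\mathbb{D}(0)^2]$ with the moment of $V(0)$ that enters $\mu_{2,f_0}$ via the switching relation, and carrying the scale factors $(\tfrac12 f_0|f_0'|)^{1/3}$ and $(2f_0^2/|f_0'|)^{1/3}$ through the two Riemann-sum limits---so that exactly $\mu^2_{2,f_0}+\kappa_{f_0}$ emerges.
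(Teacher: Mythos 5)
Your proposal is correct and follows essentially the same route as the paper: the limit is obtained by reducing $S_{1n,1}$ to the sum of the weighted $L_2$-error $\tfrac12\int_0^1(\hat f_n-f_0)^2/f_0$ and the weighted $L_1$-distance $\int_0^1(\hat{\mathbb{F}}_n-\mathbb{F}_n)|f_0'|/f_0$, using the structural fact that $\hat{\mathbb{F}}_n$ touches $\mathbb{F}_n$ at the jump points of $\hat f_n$, and then invoking the Kulikov--Lopuha\"a global limit theorems for these two functionals (which is exactly what the paper's Lemmas \ref{lem:KL_weighted_L2error}, \ref{lem:KL_minus_log_like}, \ref{lem:B2} and \ref{lem:Lk_dist} do). The only real difference is organizational: the paper first splits $d\mathbb{F}_n=d\hat{\mathbb{F}}_n+d(\mathbb{F}_n-\hat{\mathbb{F}}_n)$, Taylor-expands the absolutely continuous part, and disposes of the singular part \emph{exactly} via the identity $\int\log\hat f_n\,d\mathbb{F}_n=\int\log\hat f_n\,d\hat{\mathbb{F}}_n$ followed by Fubini, whereas you Taylor-expand against $d\mathbb{F}_n$ first and then integrate by parts; your ordering forces one extra empirical-process bound, $\int g_n^2\,d(\mathbb{F}_n-F_0)=O_p(n^{-5/6})$, and a weight-replacement step ($\hat f_n|f_0'|/f_0^2\mapsto|f_0'|/f_0$ in your term $II$), both of which fit inside the $O_p(n^{-1/3+\delta})$ error budget but which the paper's ordering avoids entirely.
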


		However, the result in Theorem \ref{thm:monotone_log_like_conv} cannot be readily applied to improve the accuracy of the test, as $\mu_{2, f_0}$ and $\kappa_{f_0}$ are unknown, and their estimation is not straightforward as they depend on the derivative of $f_0$.

\subsection{Proof of Theorem \ref{thm:monotone_log_like_conv}}

For simplicity, we write $\hat{f}_n$ for $\hat{f}_{n,1}$ in this section. Recall that $\hat{\mathbb{F}}_n$ denotes the least concave majorant of the empirical distribution function $\mathbb{F}_n$. To prove Theorem \ref{thm:monotone_log_like_conv}, we write
\begin{eqnarray*}
\frac{1}{\sqrt{n}} \sum^n_{i=1} \log \frac{\hat{f}_{n}(X_i)}{f_0(X_i)}
&=& \sqrt{n}\int_0^1 \log\frac{\hat{f}_n(t)}{f_0(t)} \,d\hat{\mathbb{F}}_n(t) + \sqrt{n}\int_0^1 \log \frac{\hat{f}_n(t)}{f_0(t)} \,d\{\mathbb{F}_n(t)-\hat{\mathbb{F}}_n(t)\} \\
&=:& A_n + B_n. 
\end{eqnarray*}
Here, $A_n$ can be viewed as the Kullback--Leibler divergence of $f_0$ from $\hat{f}_n$, and thus $A_n$ is nonnegative by a simple application of Jensen's inequality. The following 
Lemma \ref{lem:KL_weighted_L2error} will demonstrate that $A_n$ is asymptotically equivalent to a weighted $L_2$-error between $\hat{f}_n$ and $f_0$, which is asymptotically normally distributed by a generalization of Theorem 1.1 in \cite{Kulikov2005}; see Lemma \ref{lem:KL_weighted_L2error}.
Lemma \ref{lem:KL_minus_log_like} will show that $B_n$ is also nonnegative and asymptotically equivalent to a weighted $L_1$-norm between $\hat{\mathbb{F}}_n$ and $\mathbb{F}_n$, which is also asymptotically normally distributed by Theorem 2.1 in \cite{Kulikov2008}.


\begin{lemma}\label{lem:KL_weighted_L2error}
Under the conditions in Theorem \ref{thm:monotone_log_like_conv}, we have
\begin{equation*}\label{eq:KL_become_L2}
	\sqrt{n}\int_0^1 \log \frac{\hat{f}_n(t)}{f_0(t)} \,d\hat{\mathbb{F}}_n(t)
	= \sqrt{n}\int_0^1 \frac{|\hat{f}_n(t) - f_0(t)|^2}{2f_0(t)} \,dt + O_p(n^{-1/3 + \delta}),
\end{equation*}
for any $\delta > 0$.
\end{lemma}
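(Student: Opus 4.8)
The first move is to note that $\hat{\mathbb{F}}_n$, being the least concave majorant of $\mathbb{F}_n$ on $[0,1]$, is absolutely continuous with a.e.\ derivative $\hat f_n$, so that $d\hat{\mathbb{F}}_n(t)=\hat f_n(t)\,dt$ and the quantity on the left equals $\sqrt n$ times the Kullback--Leibler divergence of $f_0$ from $\hat f_n$, i.e.\ $\sqrt n\int_0^1\hat f_n(t)\log\{\hat f_n(t)/f_0(t)\}\,dt$ (with $0\log 0=0$ on $(Z_n,1]$, where $\hat f_n\equiv0$). Since $\int_0^1\hat f_n=\hat{\mathbb{F}}_n(1)-\hat{\mathbb{F}}_n(0)=1=\int_0^1 f_0$, setting $\Delta_n:=\hat f_n-f_0$ and $\phi(x):=(1+x)\log(1+x)$ and Taylor--expanding $\phi$ about $0$ gives
\begin{equation*}
  \hat f_n\log\frac{\hat f_n}{f_0}=f_0\,\phi\!\Big(\frac{\Delta_n}{f_0}\Big)=\Delta_n+\frac{\Delta_n^2}{2f_0}+f_0\,r\!\Big(\frac{\Delta_n}{f_0}\Big),\qquad r(x):=\phi(x)-x-\tfrac12 x^2 .
\end{equation*}
Integrating and using $\int_0^1\Delta_n=0$ reduces the claim to $\sqrt n\int_0^1 f_0\,|r(\Delta_n/f_0)|\,dt=O_p(n^{-1/3+\delta})$. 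Since $r(0)=r'(0)=r''(0)=0$ and $r'''(x)=-(1+x)^{-2}$, one has $|r(x)|\le\tfrac23|x|^3$ for $|x|\le\tfrac12$, while $r$ extends continuously to $[-1,\infty)$ (with $r(-1)=\tfrac12$), so $\sup_{[-1,c]}|r|<\infty$ for every $c<\infty$.

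I would then split $[0,1]=G_n\cup B_n$ with $G_n:=\{t:\tfrac12 f_0(t)\le\hat f_n(t)\le 2f_0(t)\}$ and dispose of $G_n$ first. There $|\Delta_n/f_0|\le\tfrac12$, so $f_0|r(\Delta_n/f_0)|\le\tfrac23|\Delta_n|^3/f_0^2\le C|\Delta_n|^3$ because $f_0\ge f_0(1)>0$ on $[0,1]$, and it suffices to show $\int_0^1|\Delta_n|^3=O_p(n^{-1+\delta})$. Here I would use the known asymptotics of the Grenander estimator under the stated conditions, on three scales: on a fixed interior interval $[\eta,1-\eta]$ the uniform rate $\|\hat f_n-f_0\|_{\infty,[\eta,1-\eta]}=O_p((n^{-1}\log n)^{1/3})$ together with $\int_0^1|\Delta_n|=O_p(n^{-1/3})$ (Cauchy--Schwarz from $\int_0^1\Delta_n^2=O_p(n^{-2/3})$) gives $\int_\eta^{1-\eta}|\Delta_n|^3=O_p(n^{-1+\delta})$; on the mesoscopic strips the near-boundary rate $|\hat f_n(t)-f_0(t)|=O_p((nt)^{-1/2})$ (up to a polylogarithmic factor), valid for $M/n\le t\le\eta$, and its mirror near $1$, give $\int_{M/n}^{\eta}|\Delta_n|^3=O_p\big(n^{-3/2+\delta}\int_{M/n}^{\eta}t^{-3/2}\,dt\big)=O_p(n^{-1+\delta})$; and on the innermost pieces $[0,M/n]$, $[1-M/n,1]$ (with $M$ a fixed large constant) the crude bounds $\hat f_n(0+)=O_p(1)$ (Lemma \ref{lemma:k_monotone_bounded_in_prob}, since $f_0\le f_0(0)<\infty$) and $1-Z_n=O_p(1/n)$ (as $f_0(1)>0$) give a contribution $O_p(1/n)$. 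Hence $\int_{G_n}f_0|r(\Delta_n/f_0)|=O_p(n^{-1+\delta})$.

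For $B_n$ I would argue that, with probability tending to one, $B_n$ is contained in neighbourhoods of $0$ and $1$ of length $O_p(n^{-1+\delta})$: on $[\eta,1-\eta]$ this is interior consistency, on the mesoscopic strips it follows from the near-boundary rate since $|\Delta_n(t)|>\tfrac12 f_0(t)\ge\tfrac12 f_0(1)$ forces $\min(t,1-t)=O_p(n^{-1+\delta})$, and it also contains $(Z_n,1]$, of length $O_p(1/n)$. On such neighbourhoods $\hat f_n$ is monotone and bounded by $\hat f_n(0+)=O_p(1)$ while $f_0$ is bounded below, so $\Delta_n/f_0$ takes values in the bounded random interval $[-1,\hat f_n(0+)/f_0(1)]$, whence $|r(\Delta_n/f_0)|\le\sup_{[-1,\hat f_n(0+)/f_0(1)]}|r|=O_p(1)$ and $\int_{B_n}f_0|r(\Delta_n/f_0)|\le f_0(0)\,O_p(1)\,|B_n|=O_p(n^{-1+\delta})$. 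Combining the two regions, $\int_0^1 f_0|r(\Delta_n/f_0)|\,dt=O_p(n^{-1+\delta})$, so $\sqrt n\int_0^1 f_0|r(\Delta_n/f_0)|\,dt=O_p(n^{-1/2+\delta})=O_p(n^{-1/3+\delta})$, as required.

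The main obstacle is the behaviour of $\hat f_n$ near the two ends of the support. The quantity $\hat f_n(0+)$ is only $O_p(1)$ and has a power-one tail — in particular infinite first moment — so everything near $0$ must be argued in probability rather than in expectation; and the two genuinely analytic inputs, the near-boundary rate $|\hat f_n(t)-f_0(t)|=O_p((nt)^{-1/2})$ across the scales $1/n\lesssim t\lesssim n^{-1/3}$ and the $O_p(n^{-1+\delta})$ control of $\int_0^1|\Delta_n|^3$ including the boundary contribution, are exactly where the $n^{\delta}$ slack is consumed and where the real work lies (via the Poisson/Brownian descriptions of the least concave majorant near the endpoints). The rest is bookkeeping.
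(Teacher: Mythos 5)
Your algebraic skeleton is the same as the paper's: expand the integrand to third order, kill the linear term with $\int_0^1(\hat f_n-f_0)=0$, identify the quadratic term as the target, and reduce everything to controlling a cubic remainder. Your organization of the remainder via $r(x)=(1+x)\log(1+x)-x-x^2/2$ is in fact slightly cleaner than the paper's Lagrange form (whose denominator $\{f_n^*\}^3$ forces the paper to prove a separate lower bound $1/\hat f_n(Z_n)=O_p(1)$, its Lemma \ref{lem:inverse_fhat_n_Op1}), since $r$ stays bounded as $\hat f_n/f_0\to 0$. The difference, and the gap, is entirely in how $\int_0^1|\hat f_n-f_0|^3\,dt$ is controlled.

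You assert, as the "genuinely analytic input," a near-boundary deviation rate $|\hat f_n(t)-f_0(t)|=O_p((nt)^{-1/2})$ holding uniformly (up to polylogarithms) over $M/n\le t\le\eta$, and you use it twice: to bound $\int_{M/n}^{\eta}|\Delta_n|^3$ and to show the bad set $B_n$ has measure $O_p(n^{-1+\delta})$. This is not proved, is not a routine consequence of anything you cite, and is exactly where the difficulty of the lemma lives; as written the proof is incomplete. Two remarks. First, you are proving more than you need: the lemma only requires $\int_0^1|\Delta_n|^3=O_p(n^{-5/6+\delta})$, not $O_p(n^{-1+\delta})$. Second, the paper obtains precisely that weaker bound with no boundary analysis at all (its Lemma \ref{lem:B2}): write $|\Delta_n|^3=|\Delta_n|^{2.5-\delta'}|\Delta_n|^{0.5+\delta'}$, bound the second factor by $[2\max\{f_0(0),\hat f_n(0+)\}]^{0.5+\delta'}=O_p(1)$ using $\hat f_n(0+)=O_p(1)$, and apply the Kulikov--Lopuha\"a central limit theorem for the $L_k$-error with $k=2.5-\delta'<2.5$, which gives $\int_0^1|\Delta_n|^{2.5-\delta'}=O_p(n^{-5/6+\delta'/3})$. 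Your argument would close immediately if you substituted this interpolation for your multi-scale analysis: on $G_n$ it gives $\sqrt n\int_{G_n}f_0|r(\Delta_n/f_0)|\lesssim\sqrt n\int_0^1|\Delta_n|^3=O_p(n^{-1/3+\delta})$, and on $B_n$ the Chebyshev-type bound $|B_n|\le(2/f_0(1))^3\int_0^1|\Delta_n|^3$ replaces your length estimate, so the $B_n$ contribution is of the same order. With that substitution the proof is correct; without it, the key estimate is assumed rather than established.
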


\begin{lemma}\label{lem:KL_minus_log_like}
Under the conditions in Theorem \ref{thm:monotone_log_like_conv}, we have
\begin{equation*}
	\sqrt{n}\int_0^1 \log \frac{\hat{f}_n(t)}{f_0(t)} \,d\{\mathbb{F}_n(t)-\hat{\mathbb{F}}_n(t)\}
	=  \sqrt{n}\int_0^1 \{\hat{\mathbb{F}}_n(t) - \mathbb{F}_n(t)\} \frac{|f_0'(t)|}{f_0(t)}\,dt.
\end{equation*}
In addition, 
\begin{equation*}
	\sqrt{n}\int_0^1 \{\hat{\mathbb{F}}_n(t) - \mathbb{F}_n(t)\} \frac{|f_0'(t)|}{f_0(t)}\,dt = \kappa_{f_0}n^{-1/6} + O_p(n^{-1/3}).
\end{equation*}
\end{lemma}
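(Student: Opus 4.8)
The first displayed equality is an exact pathwise identity coming from the structure of the Grenander estimator, while the second is a distributional statement about the weighted $L_1$--deviation of the least concave majorant from the empirical distribution function, for which I would invoke the limit theory of \cite{Kulikov2008}. For the first, I would split $\log(\hat f_n/f_0)=\log\hat f_n-\log f_0$ and treat the two contributions to $\int_0^1\log(\hat f_n/f_0)\,d(\mathbb F_n-\hat{\mathbb F}_n)$ separately. The estimator $\hat f_n$ is piecewise constant, equal to $c_j:=\{\hat{\mathbb F}_n(\tau_j)-\hat{\mathbb F}_n(\tau_{j-1})\}/(\tau_j-\tau_{j-1})$ on each cell $(\tau_{j-1},\tau_j]$ determined by the vertices $0=\tau_0<\tau_1<\cdots<\tau_m=Z_n$ of $\hat{\mathbb F}_n$, and $\hat{\mathbb F}_n(\tau_j)=\mathbb F_n(\tau_j)$ at every vertex; hence the number of observations in $(\tau_{j-1},\tau_j]$ equals $n\{\mathbb F_n(\tau_j)-\mathbb F_n(\tau_{j-1})\}=n\int_{\tau_{j-1}}^{\tau_j}\hat f_n$, and since $\log\hat f_n$ is constant on each cell,
\[
\frac1n\sum_{i=1}^n\log\hat f_n(X_i)=\sum_{j=1}^m(\log c_j)\{\mathbb F_n(\tau_j)-\mathbb F_n(\tau_{j-1})\}=\int_0^1\hat f_n\log\hat f_n\,dt,
\]
i.e.\ $\int_0^1\log\hat f_n\,d(\mathbb F_n-\hat{\mathbb F}_n)=0$. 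Since $\log f_0\in C^1[0,1]$ under the stated hypotheses, integration by parts gives $\int_0^1\log f_0\,d(\mathbb F_n-\hat{\mathbb F}_n)=[\log f_0\cdot(\mathbb F_n-\hat{\mathbb F}_n)]_0^1-\int_0^1(\mathbb F_n-\hat{\mathbb F}_n)(f_0'/f_0)\,dt$, where the boundary term vanishes because $(\mathbb F_n-\hat{\mathbb F}_n)(0)=(\mathbb F_n-\hat{\mathbb F}_n)(1)=0$; using $f_0'=-|f_0'|$ and combining, $\int_0^1\log(\hat f_n/f_0)\,d(\mathbb F_n-\hat{\mathbb F}_n)=\int_0^1(\hat{\mathbb F}_n-\mathbb F_n)(|f_0'|/f_0)\,dt$. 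Multiplying by $\sqrt n$ gives the first claim, and $B_n\ge 0$ is immediate from $\hat{\mathbb F}_n\ge\mathbb F_n$.

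For the second equality, set $w:=|f_0'|/f_0$, which under the hypotheses ($f_0\in C^2$, $0<\inf|f_0'|$, $0<f_0(1)\le f_0(0)<\infty$) is continuous, bounded, and bounded away from $0$ on the closed interval $[0,1]$. The quantity $\int_0^1(\hat{\mathbb F}_n-\mathbb F_n)w\,dt$ is a global weighted $L_1$--functional of the gap between the concave majorant and the empirical distribution function, exactly of the kind handled by Theorem 2.1 of \cite{Kulikov2008}. I would apply (an adaptation of) that result to obtain, first, $n^{2/3}\mathbb E[\int_0^1(\hat{\mathbb F}_n-\mathbb F_n)w\,dt]\to\kappa_{f_0}$ with error $O(n^{-1/6})$: by the usual localisation, near an interior point $t$ the rescaled gap $n^{2/3}(\hat{\mathbb F}_n-\mathbb F_n)(t)$ behaves like a copy of $\zeta(0)$ with scale factor $(2f_0(t)^2/|f_0'(t)|)^{1/3}$ (Groeneboom's local parabolic picture), and $w(t)(2f_0(t)^2/|f_0'(t)|)^{1/3}=(2|f_0'(t)|^2/f_0(t))^{1/3}$, so integrating in $t$ reproduces $\kappa_{f_0}=\mathbb E[\zeta(0)]\int_0^1(2|f_0'|^2/f_0)^{1/3}$; and second, that the centred functional has variance $O(n^{-5/3})$, the CLT in \cite{Kulikov2008} being what identifies the $O_p(n^{-5/6})$ fluctuation. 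Together these give $\int_0^1(\hat{\mathbb F}_n-\mathbb F_n)w\,dt=\kappa_{f_0}n^{-2/3}+O_p(n^{-5/6})$, and multiplying by $\sqrt n$ yields $\sqrt n\int_0^1(\hat{\mathbb F}_n-\mathbb F_n)w\,dt=\kappa_{f_0}n^{-1/6}+O_p(n^{-1/3})$.

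The algebraic first step is routine; the main obstacle is the second, and within it the behaviour near the endpoints. The local limit theory of \cite{Kulikov2008} describes $\hat{\mathbb F}_n-\mathbb F_n$ at fixed interior points, whereas here $t$ sweeps all of $[0,1]$, including neighbourhoods of $0$ and of the random right endpoint $Z_n$ at which $\hat{\mathbb F}_n$ reaches $1$. One must show that the contributions of shrinking boundary strips $[0,\varepsilon_n]\cup[1-\varepsilon_n,1]$ are negligible at the $n^{-2/3}$ scale for the mean and at the $n^{-5/3}$ scale for the variance --- this is precisely where one uses that $f_0$ is $C^2$ with $f_0$ and $|f_0'|$ bounded away from $0$ and $\infty$ on the \emph{closed} interval $[0,1]$ --- and then patch the interior estimate together with moment bounds on the gap process that are uniform enough to deliver the sharp $O_p(n^{-1/3})$ remainder rather than merely $o_p(n^{-1/6})$.
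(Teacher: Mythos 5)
Your proposal is correct and follows essentially the same route as the paper: the first display is reduced to the identity $\int_0^1\log\hat f_n\,d(\mathbb F_n-\hat{\mathbb F}_n)=0$ (which you derive directly from the piecewise-constant structure of the Grenander estimator and the agreement of $\hat{\mathbb F}_n$ with $\mathbb F_n$ at the vertices, where the paper instead cites Jankowski's lemma) plus an integration-by-parts manipulation of the $\log f_0$ term (the paper writes $\log f_0(t)-\log f_0(0)$ as an integral and applies Fubini, which is the same computation), and the second display is obtained exactly as in the paper by invoking Theorem 2.1 of Kulikov and Lopuha\"a (2008) for the weighted $L_1$-distance between $\hat{\mathbb F}_n$ and $\mathbb F_n$. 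These are only cosmetic variations, so the argument matches the paper's proof.
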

\begin{remark}
Since $\hat{\mathbb{F}}_n$ is the least concave majorant of $\mathbb{F}_n$, $\hat{\mathbb{F}}_n \geq \mathbb{F}_n$. Thus, $\hat{\mathbb{F}}_n(t) - \mathbb{F}_n(t) = |\hat{\mathbb{F}}_n(t) - \mathbb{F}_n(t)|$ and so the term $\int^1_0 \{\hat{\mathbb{F}}_n(t) - \mathbb{F}_n(t) \} |f'_0(t)|/f_0(t) dt$ is a weighted $L_1$-norm of the difference between $\hat{\mathbb{F}}_n$ and $\mathbb{F}_n$.
\end{remark}


We first provide Lemmas \ref{lem:inverse_fhat_n_Op1} - \ref{lem:Lk_dist} that are used in the proof of Lemma \ref{lem:KL_weighted_L2error}.
\begin{lemma}\label{lem:inverse_fhat_n_Op1}
Suppose that $f_0$ is a decreasing density with support on $[0, 1]$ and $0 < f_0(1) \leq f_0(0) < \infty$. Then, $1/\hat{f}_n(\hat{Z}_n) = O_p(1)$. In other words, for any $\epsilon>0$, there exists $\delta >0$ such that for all sufficiently large $n$,
\begin{align*}
	\mathbb{P}( \hat{f}_n(Z_n)> \delta ) > 1-\epsilon. 
\end{align*}
\end{lemma}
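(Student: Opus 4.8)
The plan is to identify $\hat f_n(Z_n)$ \emph{exactly} as a minimum of chord slopes of the least concave majorant, and then reduce the lemma to a union bound over Binomial lower tails. Recall $\hat f_n=\hat f_{n,1}$ is the left derivative of $\hat{\mathbb F}_n$, the least concave majorant of $\mathbb F_n$ on $[0,Z_n]$. Since $\hat{\mathbb F}_n\ge\mathbb F_n$, $\hat{\mathbb F}_n(Z_n)=\mathbb F_n(Z_n)=1$, the vertices of a least concave majorant touch the majorised step function at data points, and $\mathbb F_n(Z_v)=v/n$, the slope of the last linear piece of $\hat{\mathbb F}_n$ — which is the value of $\hat f_n$ on its final constant block, i.e.\ $\hat f_n(Z_n)$ in the notation of the statement — equals
\begin{equation*}
\hat f_n(Z_n)=\min_{0\le v\le n-1}\frac{1-v/n}{Z_n-Z_v}=\min_{1\le k\le n-1}\frac{k/n}{Z_n-Z_{n-k}},\qquad Z_0:=0 .
\end{equation*}
Because the support of $f_0$ lies in $[0,1]$ we have $0\le Z_v\le Z_n\le 1$ a.s., so the term $v=0$ equals $1/Z_n\ge 1$ and is irrelevant once $\delta<1$; moreover $Z_n-Z_{n-k}\le 1$ forces only indices $1\le k\le\lfloor\delta n\rfloor$ to be able to make the corresponding chord slope $\le\delta$. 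Hence, for $\delta<1$,
\begin{equation*}
\{\hat f_n(Z_n)\le\delta\}=\bigcup_{k=1}^{\lfloor\delta n\rfloor}\Bigl\{\tfrac{k}{n}\le\delta\,(Z_n-Z_{n-k})\Bigr\}.
\end{equation*}

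Next I would bring in condition (i). Since $f_0$ is decreasing on $[0,1]$ it is bounded below there by $m:=f_0(1)\in(0,1]$ (the bound $m\le1$ being forced by $\int_0^1 f_0=1$), so $F_0(Z_n)-F_0(Z_{n-k})\ge m\,(Z_n-Z_{n-k})$ and therefore $Z_n-Z_{n-k}\le m^{-1}\bigl(1-F_0(Z_{n-k})\bigr)$. Consequently the $k$-th event above is contained in $\{1-F_0(Z_{n-k})\ge mk/(\delta n)\}$, and using $(F_0(Z_1),\dots,F_0(Z_n))\stackrel{d}{=}(U_{(1)},\dots,U_{(n)})$ together with $1-U_{(n-k)}\stackrel{d}{=}U_{(k+1)}$, a union bound gives
\begin{equation*}
\mathbb P\bigl(\hat f_n(Z_n)\le\delta\bigr)\le\sum_{k=1}^{\lfloor\delta n\rfloor}\mathbb P\!\left(U_{(k+1)}\ge\frac{mk}{\delta n}\right).
\end{equation*}

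Finally, $\{U_{(k+1)}\ge t\}$ is precisely the event $\{\mathrm{Bin}(n,t)\le k\}$; taking $t=mk/(\delta n)\in(0,1]$, the mean is $nt=mk/\delta\ge 2k$ as soon as $\delta\le m/2$, so a Chernoff lower-tail bound yields $\mathbb P(\mathrm{Bin}(n,t)\le k)\le e^{-nt/8}=e^{-mk/(8\delta)}$. Summing the geometric series,
\begin{equation*}
\mathbb P\bigl(\hat f_n(Z_n)\le\delta\bigr)\le\sum_{k\ge1}e^{-mk/(8\delta)}=\frac{1}{e^{m/(8\delta)}-1},
\end{equation*}
uniformly in $n$, and the right-hand side tends to $0$ as $\delta\downarrow0$; given $\epsilon>0$ one fixes $\delta\le m/2$ small enough to make it $<\epsilon$. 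The only genuinely delicate point is the first step — pinning $\hat f_n(Z_n)$ down as that minimum of chord slopes and checking that only $k\le\lfloor\delta n\rfloor$ can matter — after which everything is a routine Chernoff estimate; note that condition (ii) of Theorem~\ref{thm:monotone_log_like_conv} plays no role here.
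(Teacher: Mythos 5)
Your proof is correct, and it shares the paper's starting point but replaces its key citation with a self-contained argument. The paper's proof is two lines: it quotes the same min-of-chord-slopes identity $\hat f_n(Z_n)=\min_{0\le i\le n-1}\frac{n-i}{n(Z_n-Z_i)}$ from Robertson et al.\ (pp.~326--328) and then concludes $\max_{h}\frac{n(Z_n-Z_{n-h})}{h}=O_p(1)$ by invoking Corollary 5.2(i) of the cited Chan et al.\ (2018) paper. You prove exactly that maximal-spacing bound from scratch: transporting to uniform order statistics via the lower bound $f_0\ge f_0(1)=m$ on the support, recognizing $\{U_{(k+1)}\ge t\}$ as a Binomial lower-tail event, and summing a geometric series of Chernoff bounds. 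What your route buys is a non-asymptotic bound, uniform in $n$, with an explicit dependence on $m=f_0(1)$ (and it makes transparent that only condition (i) of Theorem \ref{thm:monotone_log_like_conv} is used, consistent with the lemma's hypotheses); what the paper's route buys is brevity. Two cosmetic points: your displayed ``equality'' of the two minima silently drops the $v=0$ (i.e.\ $k=n$) term, though you correctly note immediately afterwards that this term is $\ge 1$ and hence irrelevant once $\delta<1$; and the statement's ``$\hat{Z}_n$'' is a typo for $Z_n$, which you have interpreted correctly.
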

\begin{proof}[Proof of Lemma \ref{lem:inverse_fhat_n_Op1}]
From page 326-328 of \cite{robertson1988order}, we know that
\begin{equation*}
	\hat{f}_n(Z_n) =  \min_{0 \leq i \leq n-1} \frac{n-i}{n(Z_n - Z_i)}.
\end{equation*}
Then
\begin{equation*}
	1/\hat{f}_n(Z_n) = \max_{0 \leq i \leq n-1} \frac{n(Z_n - Z_i)}{n-i} = \max_{h=1,\ldots,n} \frac{n(Z_n - Z_{n-h})}{h} = O_p(1),
\end{equation*}
by Corollary 5.2 (i) in \cite{chan2018estimation}.
\end{proof}

\begin{lemma}\label{lem:B2}
Under the conditions in Theorem \ref{thm:monotone_log_like_conv}, we have
\begin{align*}
	~\sqrt{n}\int_0^1
	|\hat{f}_n(t) - f_0(t)|^3\,dt = O_p(n^{-1/3 + \delta}) 
\end{align*}
for any $\delta > 0$.
\end{lemma}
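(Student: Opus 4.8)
The plan is to interpolate the $L^{3}$-error $\int_{0}^{1}|\hat f_{n}-f_{0}|^{3}$ between an $L^{k}$-error with $k$ just below $\tfrac{5}{2}$ (whose order is known) and the $L^{\infty}$-error, which is only $O_{p}(1)$ because the Grenander estimator is inconsistent at the endpoints of the support. Write $g_{n}:=\hat f_{n}-f_{0}$. Since $|g_{n}(t)|^{3-k}\le\bigl(\sup_{s\in[0,1]}|g_{n}(s)|\bigr)^{3-k}$ for $1\le k\le 3$,
\[
 \int_{0}^{1}|g_{n}(t)|^{3}\,dt\;\le\;\Bigl(\sup_{s\in[0,1]}|g_{n}(s)|\Bigr)^{3-k}\int_{0}^{1}|g_{n}(t)|^{k}\,dt ,
\]
so it suffices to prove (A) $\int_{0}^{1}|g_{n}(t)|^{k}\,dt=O_{p}(n^{-k/3})$ for every $k\in[1,\tfrac{5}{2})$, and (B) $\sup_{t\in[0,1]}|g_{n}(t)|=O_{p}(1)$. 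Granting these, fix $\delta\in(0,\tfrac12)$ and take $k=\tfrac{5}{2}-3\delta$: then $3-k>0$ gives $\bigl(\sup_{[0,1]}|g_{n}|\bigr)^{3-k}=O_{p}(1)$, hence $\int_{0}^{1}|g_{n}|^{3}=O_{p}(n^{-k/3})=O_{p}(n^{-5/6+\delta})$ and $\sqrt n\int_{0}^{1}|g_{n}|^{3}=O_{p}(n^{-1/3+\delta})$; the case $\delta\ge\tfrac12$ follows a fortiori.

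For (B), recall that $\hat f_{n}$ is non-increasing with support $[0,Z_{n}]$, so $\sup_{[0,1]}\hat f_{n}=\hat f_{n}(0+)=\max_{1\le i\le n}(i/n)/Z_{i}$, the steepest slope of a chord of $\mathbb F_{n}$ issued from the origin (the analogue at $Z_{n}$ of the characterisation used in the proof of Lemma~\ref{lem:inverse_fhat_n_Op1}). By a union bound, the elementary inequality $F_{0}(x)\le f_{0}(0)x$, and a Chernoff bound for the binomial, for $M>ef_{0}(0)$,
\[
 \mathbb{P}\bigl(\hat f_{n}(0+)>M\bigr)=\mathbb{P}\bigl(\exists i:\ Z_{i}<\tfrac{i}{nM}\bigr)\le\sum_{i=1}^{n}\mathbb{P}\bigl(\mathrm{Bin}(n,F_{0}(\tfrac{i}{nM}))\ge i\bigr)\le\sum_{i\ge 1}\Bigl(\tfrac{ef_{0}(0)}{M}\Bigr)^{i}\xrightarrow[M\to\infty]{}0,
\]
so $\hat f_{n}(0+)=O_{p}(1)$, and with $0\le f_{0}\le f_{0}(0)<\infty$ this gives $\sup_{[0,1]}|g_{n}|\le\hat f_{n}(0+)+f_{0}(0)=O_{p}(1)$. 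Fact (A) is the known rate of the $L_{k}$-error of the Grenander estimator: under the hypotheses of Theorem~\ref{thm:monotone_log_like_conv}, $f_{0}$ is $C^{1}$ on $[0,1]$ with $|f_{0}'|$ bounded and bounded away from $0$ and $f_{0}(0)<\infty$, so Theorem~1.1 of \cite{Kulikov2005} shows that $n^{k/3}\int_{0}^{1}|\hat f_{n}-f_{0}|^{k}\,dt$ is, after centering, asymptotically normal for $1\le k<\tfrac52$, in particular of order $O_{p}(n^{-k/3})$; the restriction $k<\tfrac52$ is exactly the regime in which the endpoint behaviour of $\hat f_{n}$ is asymptotically negligible.

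The point that needs care is therefore Fact (A) with the integral over the \emph{whole} support $[0,1]$: one must check that the endpoints, where $\hat f_{n}$ is tight but inconsistent, do not degrade the $n^{-k/3}$ rate when $k<\tfrac52$. This is precisely what the $L_{k}$-error analysis of \cite{Kulikov2005} establishes, but a self-contained route is to split $[0,1]=[0,\rho]\cup[\rho,1-\rho]\cup[1-\rho,1]$ for a fixed small $\rho$. On the interior block, $\int_{\rho}^{1-\rho}|g_{n}|^{3}\le\sup_{[\rho,1-\rho]}|g_{n}|\cdot\int_{\rho}^{1-\rho}g_{n}^{2}=O_{p}((n^{-1}\log n)^{1/3})\,O_{p}(n^{-2/3})=O_{p}(n^{-1}(\log n)^{1/3})$, using the standard local uniform rate together with the $k=2$ instance of \cite{Kulikov2005}. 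On $[0,\rho]$, $\hat f_{n}$ is constant ($=\hat f_{n}(0+)=O_{p}(1)$) on the initial segment $[0,V_{n}]$, where $V_{n}$, the first vertex of the least concave majorant of $\mathbb F_{n}$, satisfies $V_{n}=O_{p}(n^{-1}\log n)$, so $\int_{0}^{V_{n}}|g_{n}|^{3}=O_{p}(n^{-1}\log n)$, while for $V_{n}\le t\le n^{-1/3}$ the switching relation plus Bernstein bounds for increments of $\mathbb F_{n}$ give $|\hat f_{n}(t)-f_{0}(t)|=O_{p}((\log n/(nt))^{1/2})$, whence $\int_{V_{n}}^{\rho}|g_{n}|^{3}\lesssim(\log n)^{3/2}n^{-3/2}\int_{V_{n}}^{\rho}t^{-3/2}\,dt=O_{p}(n^{-1}\log n)$; the block $[1-\rho,1]$ is symmetric, using $1-Z_{n}=O_{p}(n^{-1})$ and $1/\hat f_{n}(Z_{n})=O_{p}(1)$ from Lemma~\ref{lem:inverse_fhat_n_Op1}. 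Either way $\int_{0}^{1}|g_{n}|^{3}=O_{p}(n^{-1}\log n)$, which is well inside the required $O_{p}(n^{-5/6+\delta})$.
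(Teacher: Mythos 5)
Your main argument is exactly the paper's proof: write $|\hat f_n-f_0|^3=|\hat f_n-f_0|^{2.5-\delta'}\,|\hat f_n-f_0|^{0.5+\delta'}$, bound the second factor by $O_p(1)$ via $\hat f_n(0+)=O_p(1)$ and $f_0(0)<\infty$, and apply Theorem 1.1 of Kulikov--Lopuha\"a with exponent $k=2.5-\delta'$ to get $\int_0^1|\hat f_n-f_0|^{k}\,dt=O_p(n^{-5/6+\delta})$; your exponent bookkeeping ($\delta'=3\delta$) matches the paper's. The extra ``self-contained'' splitting of $[0,1]$ at the end is redundant for the lemma and contains at least one doubtful claim (the first vertex of the least concave majorant is $O_p(n^{-1/3})$ in the regular case, not $O_p(n^{-1}\log n)$), but since the Kulikov-based route already closes the proof, the proposal is correct.
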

\begin{proof}[Proof of Lemma \ref{lem:B2}]
For any $0 < \delta' < 2.5$, we can bound this third-degree moment term by
\begin{align*}
	~\int_0^1
	|\hat{f}_n(t) - f_0(t)|^3
	\,dt
	=&~ 
	\int_0^1\left|\hat{f}_n(t) - f_0(t)\right|^{2.5 -\delta'}\left|\hat{f}_n(t) - f_0(t)\right|^{0.5+\delta'}
	\,dt\\
	\leq&~ [2\max\{f_0(0), \hat{f}_n(0+)\}]^{0.5+\delta'}\int_0^1
	\left|\hat{f}_n(t) - f_0(t)\right|^{2.5-\delta'}
	\,dt, 
\end{align*}
where $[2\max\{f_0(0), \hat{f}_n(0+)\}]^{0.5+\delta'} = O_p(1)$.  Theorem 1.1 in \cite{Kulikov2005} with $k = 2.5-\delta'$ implies that
\begin{equation*}
	n^{1/6}\left[ n^{1/3}\left\{ \int_0^1
	\left|\hat{f}_n(t) - f_0(t)\right|^{2.5-\delta'}\,dt\right\}^{1/(2.5-\delta')} - \mu_{2.5 - \delta'}
	\right] = O_p(1),
\end{equation*}
for some finite constant $\mu_{2.5-\delta'}$. By straightforward algebra, we have
\begin{equation*}
	\left\{ \int_0^1
	\left|\hat{f}_n(t) - f_0(t)\right|^{2.5-\delta'}\,dt\right\}^{1/(2.5-\delta')} = O_p(n^{-1/2}) + \mu_{2.5 - \delta'}n^{-1/3} = O_p(n^{-1/3})
\end{equation*}
and so
\begin{equation*}
	\int_0^1
	\left|\hat{f}_n(t) - f_0(t)\right|^{2.5-\delta'} = O_p(n^{-5/6 + \delta}),
\end{equation*}
where $\delta = \delta'/3$ and the result in the lemma follows.

\end{proof}

It is indicated in Remark 1.1 in \cite{Kulikov2005} that one may obtain the asymptotic normality of the following weighted version of the $L_k$-error of $\hat{f}_n$:
\begin{equation*}
n^{1/6}\left\{n^{k/3} \int_0^1 \frac{|\hat{f}_n(t) - f_0(t)|^k}{2f_0(t)}\, dt - \mu_{k, f_0}^k\right\},
\end{equation*}
for some $\mu_{k, f_0}>0$. We state such claim formally in Lemma \ref{lem:Lk_dist} and omit the proof.

\begin{lemma}[Modified version of Theorem 1.1 in \cite{Kulikov2005}]
\label{lem:Lk_dist}
Under the conditions in Theorem \ref{thm:monotone_log_like_conv}, we have, for $1 \leq k < 2.5$,
\begin{equation*}
	n^{1/6}\left\{n^{k/3} \int_0^1 \frac{|\hat{f}_n(t) - f_0(t)|^k}{2f_0(t)}\, dt - \mu_{k, f_0}^k\right\}
\end{equation*}
converges in distribution to a zero-mean normal random variable with a finite variance as $n\to\infty$,
where
\begin{equation*}
	\mu_{k,f_0} := \bigg\{ \mathbb{E}|V(0)|^k \int^1_0 2^{\frac{2k}{3} -1} f_0(t)^{\frac{k}{3}- 1} |f'_0(t)|^{\frac{k}{3}} dt \bigg \}^{\frac{1}{k}},
\end{equation*}
with $V(\cdot)$ defined in Theorem \ref{thm:monotone_log_like_conv}.
\end{lemma}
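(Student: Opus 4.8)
The plan is to follow the proof of Theorem 1.1 in \cite{Kulikov2005} almost verbatim, carrying the bounded continuous weight $w(t):=1/(2f_0(t))$ through every estimate, as anticipated by Remark 1.1 there. Under the hypotheses of Theorem \ref{thm:monotone_log_like_conv}, $f_0\in C^2[0,1]$ is strictly decreasing with $0<f_0(1)\le f_0(0)<\infty$ and $\inf_{(0,1)}|f_0'|>0$, so $w$ is Lipschitz on $[0,1]$ and satisfies $0<\inf_{[0,1]}w\le\sup_{[0,1]}w<\infty$; hence the weight only rescales local constants and leaves all moment, tail, and tightness bounds intact.

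First I would import the local structure of the Grenander estimator used in \cite{Kulikov2005}: via the switching relation and a strong approximation of $\mathbb{F}_n$ by Brownian motion, for each fixed $t\in(0,1)$,
\[
n^{1/3}\{\hat{f}_n(t)-f_0(t)\}\stackrel{d}{\longrightarrow}\bigl(4 f_0(t)\,|f_0'(t)|\bigr)^{1/3}V(0),
\]
with $V(0)=\arg\max_{s\in\mathbb{R}}\{\mathbb{W}(s)-s^2\}$ as in Theorem \ref{thm:monotone_log_like_conv}, together with the uniform moment bounds $\sup_n\sup_{t\in[\eta,1-\eta]}\mathbb{E}\,n^{k'/3}|\hat{f}_n(t)-f_0(t)|^{k'}<\infty$ for all $k'<2.5$ and $\eta>0$, and the boundary estimates near $0$ and $1$ that follow from $1/\hat{f}_n(Z_n)=O_p(1)$ (Lemma \ref{lem:inverse_fhat_n_Op1}), its left-endpoint analogue, and $\max_h n(Z_n-Z_{n-h})/h=O_p(1)$. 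Integrating against $w$ and using these bounds (in the same spirit as the estimate behind Lemma \ref{lem:B2}) gives $\mathbb{E}\bigl[n^{k/3}\int_0^1|\hat{f}_n-f_0|^k w\bigr]\to\mathbb{E}|V(0)|^k\int_0^1(4 f_0(t)|f_0'(t)|)^{k/3}w(t)\,dt$, which equals $\mu_{k,f_0}^k$ once the identity $(4 f_0|f_0'|)^{k/3}/(2 f_0)=2^{2k/3-1}f_0^{k/3-1}|f_0'|^{k/3}$ is used; in particular this reproduces the constant $\mu_{2,f_0}$ of Theorem \ref{thm:monotone_log_like_conv} when $k=2$.

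To obtain the $n^{1/6}$-scaled Gaussian limit I would partition $[0,1]$ into $m_n\to\infty$ consecutive intervals $I_{n,j}$ of length of order $n^{-1/3}\log n$ and decompose
\[
n^{k/3}\int_0^1\frac{|\hat{f}_n(t)-f_0(t)|^k}{2 f_0(t)}\,dt=\sum_j w(t_{n,j})\,n^{k/3}\!\!\int_{I_{n,j}}|\hat{f}_n(t)-f_0(t)|^k\,dt+R_n,
\]
with $t_{n,j}\in I_{n,j}$ and $R_n=o_p(n^{-1/6})$ by the Lipschitz property of $w$ and the moment bounds. On each block the strong approximation replaces $n^{k/3}\int_{I_{n,j}}|\hat{f}_n-f_0|^k$ by a functional of a rescaled two-sided Brownian motion plus parabola restricted to a short time window; since windows separated by more than $O(n^{-1/3})$ become asymptotically independent, a big-block/small-block decoupling turns the centered sum into a row-wise independent triangular array of mean-zero terms of order $n^{-1/3}$, to which the Lindeberg--Feller central limit theorem applies. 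The limiting variance $\sigma_k^2=\lim_n n^{1/3}\mathrm{Var}\bigl(n^{k/3}\int_0^1|\hat{f}_n-f_0|^k w\bigr)$ is finite by the $k<2.5$ moment bounds and the covariance estimate $|\mathrm{Cov}(|\hat{f}_n(s)-f_0(s)|^k,|\hat{f}_n(t)-f_0(t)|^k)|\lesssim n^{-2k/3}\phi(n^{1/3}|s-t|)$ with $\phi$ integrable, proved as in \cite{Kulikov2005}.

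The main obstacle is this third step: rigorously decoupling the blocks, i.e. quantifying and summing the long-range dependence of the Grenander process and verifying the Lindeberg condition for the triangular array. This is precisely the technical core of \cite{Kulikov2005}, and also the origin of the restriction $k<2.5$, which is exactly the range in which the requisite uniform $L_k$-moment control on $n^{1/3}(\hat{f}_n-f_0)$ is available. Inserting $w$ is benign because it is bounded and Lipschitz, but one must still check that the boundary blocks near $0$ and $1$, where $\hat{f}_n$ is least controlled, contribute $o_p(n^{-1/6})$ after weighting, using the order bounds on $\hat{f}_n(0+)$, $\hat{f}_n(Z_n)$ and on $\max_h n(Z_n-Z_{n-h})/h$; a minor bookkeeping point is keeping the Brownian normalization consistent with the parameterization of $V(\cdot)$ so that the stated form of $\mu_{k,f_0}$ comes out correctly.
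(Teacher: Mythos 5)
Your proposal is correct and follows essentially the same route as the paper: the authors explicitly state the lemma as the weighted extension anticipated by Remark 1.1 of Kulikov and Lopuha\"{a} (2005) and omit the proof, and your sketch (carrying the bounded Lipschitz weight $1/(2f_0)$ through their Theorem 1.1, with the identity $(4f_0|f_0'|)^{k/3}/(2f_0)=2^{2k/3-1}f_0^{k/3-1}|f_0'|^{k/3}$ yielding the stated $\mu_{k,f_0}$) is exactly the adaptation that remark describes. You also correctly identify that the block-decoupling and Lindeberg verification are the technical core imported wholesale from the cited paper rather than re-proved here.
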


\begin{proof}[Proof of Lemma \ref{lem:KL_weighted_L2error}]

Using Taylor's expansion, for some $f_n^*(t)$ lying between $f_0(t)$ and $\hat{f}_n(t)$, we have
\begin{eqnarray*}
	\sqrt{n}\int_0^1 \log \frac{\hat{f}_n(t)}{f_0(t)} \hat{f}_n(t)\,dt &=& \sqrt{n}\int_0^1 \frac{\hat{f}_n(t) - f_0(t)}{f_0(t)}\hat{f}_n(t)\,dt -\sqrt{n}\int_0^1\frac{\{\hat{f}_n(t) - f_0(t)\}^2}{2f_0^2(t)} \hat{f}_n(t)\,dt\\
	&& 	 + \sqrt{n}\int_0^1\frac{\{\hat{f}_n(t) - f_0(t)\}^3}{3\{f_n^*(t)\}^3} \hat{f}_n(t)\,dt, \\
	&=:& A_{n1} + A_{n2} + A_{n3}. 
\end{eqnarray*}
%
Interestingly, at the first glance, $A_{n1}$ and $A_{n2}$ do not share the same rate of convergence.  
However, they are equally important for the Kullback--Leibler divergence from $\hat{f}_n$ to $f_0$. 
In the following, we shall show that both $A_{n1}$ and $A_{n2}$ are related to a weighted $L_2$-error between $\hat{f}_n$ and $f_0$ asymptotically. 
For $A_{n3}$, it is relatively small and comparatively negligible.

\begin{enumerate}
	\item [(i)]
	For $A_{n1}$, because $\hat{f}_n$ and $f_0$ are both densities, we have $\int_0^1 \{\hat{f}_n(t) - f_0(t) \}\,dt = 0$. Thus,
	\begin{align*}
		A_{n1} 
		=&~ \sqrt{n}\int_0^1 \frac{\hat{f}_n(t) - f_0(t)}{f_0(t)}\hat{f}_n(t)\,dt 
		- \sqrt{n}\int_0^1 \{ \hat{f}_n(t) - f_0(t) \} \frac{f_0(t)}{f_0(t)}\,dt\\
		=&~ \sqrt{n}\int_0^1 \frac{\{\hat{f}_n(t) - f_0(t)\}^2}{f_0(t)}\,dt.
	\end{align*}

	\item [(ii)] For $A_{n2}$, we have
	\begin{align*}
		A_{n2} =&~-\sqrt{n}\int_0^1\frac{ \{\hat{f}_n(t) - f_0(t)\}^2}{2f_0(t)} \frac{\hat{f}_n(t)}{f_0(t)}\,dt\\
		=&~ -\sqrt{n}\int_0^1\frac{\{\hat{f}_n(t) - f_0(t)\}^2}{2f_0(t)}\left\{ \frac{f_0(t)}{f_0(t)} + \frac{\hat{f}_n(t) - f_0(t)}{f_0(t)}\right\} \,dt\\
		=&~ -\sqrt{n}\int_0^1\frac{\{\hat{f}_n(t) - f_0(t)\}^2}{2f_0(t)}\,dt
		- \sqrt{n}\int_0^1\frac{\{\hat{f}_n(t) - f_0(t)\}^3}{2\{f_0(t)\}^2}\,dt. 
	\end{align*}
	Under the conditions in Theorem \ref{thm:monotone_log_like_conv}, we can apply Lemma \ref{lem:B2} to obtain that the second term in the last displayed equation is of the order $O_p(n^{-1/3 + \delta} )$ because $f_0(1) > 0$.
	
	\item[(iii)]	We aim to show that $A_{n3} = O_p(n^{-1/3 + \delta})$.	Because $\hat{f}_n(t) = 0$ when $t>Z_n$ and, for all $t$, $\hat{f}_n(t) \leq \hat{f}_n(0+) = O_p(1)$, the absolute value of $A_{n3}$ is bounded above by 
	\begin{align*}
		\sqrt{n}\int_0^{Z_n}\frac{|\hat{f}_n(t) - f_0(t)|^3}{3\{f_n^*(t)\}^3} \hat{f}_n(t)\,dt 
		\leq O_p(1) \sqrt{n}\int_0^{Z_n}\frac{|\hat{f}_n(t) - f_0(t)|^3}{3\{f_n^*(t)\}^3}\,dt. 
	\end{align*}
	Although $f_n^*(t)$ is lying between and $\hat{f}_n$ and $f_0$, where for all $t, f_0(t) \geq f_0(1) > 0$, it is not guaranteed that $\hat{f}_n$ is bounded away from zero as well. By Lemma \ref{lem:inverse_fhat_n_Op1}, we show that $1/\hat{f}_n(1) = O_p(1)$. As a result, we have
	\begin{eqnarray*}
		\sqrt{n}\int_0^{Z_n}\frac{|\hat{f}_n(t) - f_0(t)|^3}{3\{f_n^*(t)\}^3}\,dt 
		&\leq &  \sqrt{n} \int^{Z_n}_0 \frac{|\hat{f}_n(t) - f_0(t)|^3}{3 \min\{\hat{f}_n(Z_n)^3, f_0(1)^3 \} } dt\\
		& \leq & O_p(1)  \sqrt{n} \int^1_0 |\hat{f}_n(t) - f_0(t)|^3 dt = O_p(n^{-1/3 +\delta}), 
	\end{eqnarray*}
	by Lemma \ref{lem:B2}.
\end{enumerate}
Combining (i)-(iii), the claim in the lemma follows.
%
\end{proof}

\begin{proof}[Proof of Lemma \ref{lem:KL_minus_log_like}]
First, from the proof of Lemma 4.2 in \cite{Jankowski2014}, we know 
\begin{equation*}
	\int_0^1 \log \hat{f}_n(t) \,d\mathbb{F}_n(t) =\int_0^1 \log \hat{f}_n(t) \, \hat{\mathbb{F}}_n(t).	
\end{equation*}
Therefore,
\begin{equation}\label{eq:log_fhat_n_dFndFhatn}
	\int_0^1 \log \frac{\hat{f}_n(t)}{f_0(t)} \,d \{\mathbb{F}_n(t) - \hat{\mathbb{F}}_n(t)\} = 
	-\int_0^1 \log f_0(t) \,d \{ \mathbb{F}_n(t) - \hat{\mathbb{F}}_n(t)\}. 
\end{equation}
Also, because $\mathbb{F}_n$ and $\hat{\mathbb{F}}_n$ are both distribution functions over $[0,1]$, we have 
\begin{equation*}
	\sqrt{n} \int_0^1 \log f_0(0) \,d\{\mathbb{F}_n(t) - \hat{\mathbb{F}}_n(t)\} = 0.	
\end{equation*}
Thus,
\begin{eqnarray}
	-\int_0^1\log f_0(t)\,d\{\mathbb{F}_n(t) - \hat{\mathbb{F}}_n(t)\} 
	&=& -\int_0^1  \{\log f_0(t) - \log f_0(0)\} \,d \{ \mathbb{F}_n(t) - \hat{\mathbb{F}}_n(t)\} \nonumber\\
	&=& \int_0^1 \int_0^t \frac{|f_0'(w)|}{f_0(w)} \,dw \,d \{\mathbb{F}_n(t) - \hat{\mathbb{F}}_n(t)\} \nonumber \\
	&=& 	\int_0^1 \left[\int_w^1  \,d\{\mathbb{F}_n(t) - \hat{\mathbb{F}}_n(t)\} \right] \frac{|f_0'(w)|}{f_0(w)} \,dw \nonumber \\
	&=& \int_0^1 \{ \hat{\mathbb{F}}_n(w) - \mathbb{F}_n(w)\} \frac{|f_0'(w)|}{f_0(w)} dw, \label{eq:Fubini}
\end{eqnarray}
where the third equality holds by Fubini's theorem. Under the conditions in Theorem \ref{thm:monotone_log_like_conv}, we can apply Theorem 2.1 in \cite{Kulikov2008} to obtain that
\begin{equation*}
	n^{1/6}\left[ n^{2/3} \int_0^1 \{\hat{\mathbb{F}}_n(w) - \mathbb{F}_n(w)\} \frac{|f_0'(w)|}{f_0(w)} dw - \kappa_{f_0} \right]
\end{equation*}
converges to a normal distribution with zero mean and a finite variance.
The above convergence implies that
\begin{equation}\label{eq:KL07_implication}
	\int_0^1 \{\hat{\mathbb{F}}_n(w) - \mathbb{F}_n(w)\} \frac{|f_0'(w)|}{f_0(w)} dw = \kappa_{f_0} n^{-1/6} + O_p(n^{-1/3}),
\end{equation}
where $\kappa_{f_0}$ is defined in Theorem \ref{thm:monotone_log_like_conv}.
The claim in the lemma follows in view of (\ref{eq:log_fhat_n_dFndFhatn}), (\ref{eq:Fubini}) and (\ref{eq:KL07_implication}).
\end{proof}

\begin{proof}[Proof of Theorem \ref{thm:monotone_log_like_conv}]
By Lemmas \ref{lem:KL_weighted_L2error} and \ref{lem:KL_minus_log_like}, 
\begin{equation*}
	\frac{1}{\sqrt{n}} \sum^n_{i=1} \log \frac{\hat{f}_{n}(X_i)}{f_0(X_i)} = - \sqrt{n}\int_0^1 \frac{|\hat{f}_n(t) - f_0(t)|^2}{2f_0(t)} \,dt  -  \kappa_{f_0}n^{-1/6} +  O_p(n^{-1/3 + \delta}).
\end{equation*}
It remains to show that 
\begin{equation*}
	\sqrt{n}\int_0^1 \frac{|\hat{f}_n(t) - f_0(t)|^2}{2f_0(t)}\,dt 
	= n^{-1/6}\mu_{2, f_0}^2 + O_p(n^{-1/3 + \delta}),
\end{equation*}
which is implied by Lemma \ref{lem:Lk_dist}.

\end{proof}

\subsection{Proof of Theorem \ref{thm:bootstrap_general}}
\begin{proof}[Proof of Theorem \ref{thm:bootstrap_general}]
Let $\mathcal{X}_n = \{X_1,\ldots,X_n \}$. As before, we suppress the dependence of $n$ in $\nu$ in the notation. To prove (\ref{eq:bootstrap_consistencty_general})  is equivalent to show that every subsequence  $\{n_k\}$ of $\{n\}$ has a further subsequence $\{n_{k_l}\}$ along which (\ref{eq:bootstrap_consistencty_general}) holds almost surely instead of in probability.
Similar to $T_n$, we can write $T^*_n$ as 
\begin{eqnarray*}
T_n^* = S_n^* + M_n^* + R_n^*.
\end{eqnarray*}
Clearly, $M_n^*$ is distribution-free  because $( \tilde{F}_n(Z_{n1}^*),\ldots, \tilde{F}_n(Z_{nl}^*)) \stackrel{d}{=} (U_{(n1)},\ldots,U_{(nn)})$, where $U_{ni}$'s are a random sample from the standard uniform distribution. Thus, for the entire sequence $\{n\}$, for almost all $\omega \in \mathcal{S}$, $	\sqrt{ \frac{n\nu}{\nu^2\psi_1(\nu)-\nu}} \left( M_n^* -  \log \nu +  \psi(\nu) \right) \stackrel{d}{\rightarrow} N(0, 1)
$ conditional on $\mathcal{X}_n(\omega)$ as in the proof of Theorem \ref{thm:main_hist_asy_dist} 

By conditions (i) and (ii), every subsequence $\{n_k\}$ of $\{n\}$ has a further subsequence $\{n_{k_l}\}$ along which $\sqrt{n_{k_l} \nu}S_{n_{k_l} } = o_{\tilde{\mathbb{P}}^*(\omega)} \left(1 \right)$ and $\sqrt{n_{k_l}\nu} R_{n_{k_l}}^* = o_{\tilde{\mathbb{P}}^*(\omega)} \left(1\right)$ for almost all $\omega \in \mathcal{S}$. Hence, along that subsequence $\{n_{k_l}\}$, for almost all $\omega \in \mathcal{S}$,  we have $\sqrt{ \frac{n_{k_l}\nu}{\nu^2\psi_1(\nu)-\nu}} \left( T_n^* -  \log \nu +  \psi(\nu) \right) \stackrel{d}{\rightarrow} N(0, 1)$ conditional on $\mathcal{X}_{n_{k_l}}(\omega)$. The result then follows as $N(0, 1)$ is a continuous distribution.
\end{proof}

\section{Bootstrap consistency for particular classes}
\subsection{Convergence under a sequence of underlying distributions}\label{sect:bootstrap_general}
To establish bootstrap consistency in Section \ref{sect:bootstrap}, we have to consider the MLE under the setting that the samples are generated from a sequence of underlying distributions defined on a probability space $(\mathcal{S}, \mathcal{G}, \mathbb{P})$. To this end, let $X_{n1},\ldots,X_{nn}$ be a random sample from $f_n$, a deterministic sequence of densities. Let $F_n$ be the distribution function of $f_n$ and $\mathbb{F}_{nn}$ the empirical distribution from $X_{ni}$'s. Denote the corresponding order statistics of $X_{ni}$'s by $Z_{n1} \leq  \ldots \leq  Z_{nn}$. The $k$-monotone MLE, completely monotone MLE, and log-concave MLE for $f_n$ from the sample $X_{ni}$'s are denoted by $\hat{f}_{nn,k}$, $\hat{f}_{nn, \infty}$ and $\hat{f}_{nn,lc}$, respectively. 
To prepare for the proofs of the bootstrap consistency in Theorems \ref{thm:bootstrap_consistency_k_monotone} - \ref{thm:bootstrap_consistency_LC}, we first provide upper bounds for $-\log f_n(Z_{n1})$ and $-\log f_n(Z_{nn})$ in the following Lemma \ref{lemma:fn_unimodal_logf_bound}, and establish the rates of convergence to $0$ of the respective $\hat{f}_{nn,k}$, $\hat{f}_{nn, \infty}$ and $\hat{f}_{nn,lc}$ in terms of the log-likelihood ratio to $f_n$ in Lemmas \ref{thm:bootstrap_Kn_rate} to \ref{lemma:f_n*lc_UB_LB} when $f_n$ is $k$-monotone, completely monotone and log-concave, respectively. 
To indicate the dependence of a sample point $\omega$ in the sample space $\mathcal{S}$ in different functions, notations such as $Z_{nn}(\omega)$ and $\hat{f}_{nn,k}(\cdot;\omega)$ will also be used.

\begin{lemma}\label{lemma:fn_unimodal_logf_bound}
    \begin{enumerate}[(a)]
        \item 
        Let $f_n$ be a sequence of decreasing densities satisfying $\int^\infty_{-\infty} f^{1\pm \alpha}_n(x)dx \leq n^{m_0}$ for some $\alpha, m_0 > 0$ for all sufficiently large $n$. Then, 
        \begin{equation*}
            |\log f_n(Z_{n1})| + |\log f_n(Z_{nn})| = O_p(\log n).
        \end{equation*}
        \item 
        Let $f_n$ be a sequence of log-concave densities satisfying $\int^\infty_{-\infty} f^{1- \alpha}_n(x)dx \leq n^{m_0}$ for some $\alpha, m_0 > 0$  and $f_n \leq C$ for all sufficiently large $n$. Then, 
        \begin{equation*}
            |\log f_n(Z_{n1})| + |\log f_n(Z_{nn})| = O_p(\log n).
        \end{equation*}
    \end{enumerate}    
\end{lemma}

\begin{proof}[Proof of Lemma \ref{lemma:fn_unimodal_logf_bound}]
    \begin{enumerate}[(a)]
        \item Let $m > m_0 + 1$. Since $f_n$ is decreasing,
        \begin{align*}
            \mathbb{P}( (f_n(Z_{nn}))^{-\alpha} > n^m ) &= 1 - \mathbb{P}( (f_n(Z_{nn}))^{-\alpha} \leq n^m )\\
            &= 1 - [\mathbb{P}( f_n(X_{n1})^{-\alpha} \leq n^m]^n \\
            &= 1 - [1 - \mathbb{P}( f_n(X_{n1})^{-\alpha} > n^m]^n\\
            &=1 - \left[ 1- \frac{\mathbb{E}(f_n(X_{n1})^{-\alpha}}{n^m} \right]^n\\
            &\leq 1 - \left( 1 - \frac{n^{m_0}}{n^m}\right)^n   \leq \frac{1}{n^2}.
        \end{align*}
        Hence, $\sum^\infty_{n=1} \mathbb{P}( (f_n(Z_{nn}))^{-\alpha} > n^m ) < \infty$. By the first Borel-Cantelli Lemma, with probability one, for all sufficiently large $n$, $f_n(Z_{nn})^{-\alpha} \leq n^m$ and thus $-\log f_n(Z_{nn}) \leq \frac{m}{\alpha} \log n$. Similar calculation gives
        \begin{equation*}
            \mathbb{P}( (f_n(Z_{nn}))^{\alpha} > n^m ) \leq \frac{1}{n^2}.
        \end{equation*}
        The same argument gives with probability one, $\log f_n(Z_{nn}) \leq \frac{m}{\alpha} \log n$ for all sufficiently large $n$. Therefore, $\log f_n(Z_{nn}) = O_p(\log n)$.
For $f_n(Z_{n1})$, we have
        \begin{align*}
            \mathbb{P}( (f_n(Z_{n1}))^{-\alpha} > n^m ) &=  \mathbb{P}( (f_n(X_{n1}))^{-\alpha} > n^m )^n
             \leq \left[ \frac{\mathbb{E}(f_n(X_{n1})^{-\alpha})}{n^m} \right]^n \\
            & \leq \left( \frac{n^{m_0}}{n^m}\right)^n \leq \frac{1}{n^2}.
        \end{align*}
        Similarly, 
        \begin{equation*}
            \mathbb{P}( (f_n(Z_{n1}))^{\alpha} > n^m ) \leq \frac{1}{n^2}.
        \end{equation*}
        Using the same argument as above, we obtain $\log f_n(Z_{n1}) = O_p(\log n)$.
\item Since $f_n \leq C$ for all sufficiently large $n$, 
        \begin{equation}\label{eq:log_concave_logmin}
            |\log f_n(Z_{n1})| + |\log f_n(Z_{nn})| \leq 2\max\{  |\log C|, - \log \min (f_n(Z_{n1}), f_n(Z_{nn}))\}.
        \end{equation}
Fix $m > m_0 + 1$. Then
	\begin{eqnarray*}
		\mathbb{P}( \min \{f_n(Z_{n1}), f_n(Z_{nn})\}^{-\alpha} > n^m) &=& 1 - \mathbb{P}(\min \{f_n(Z_{n1}), f_n(Z_{nn})\}^{-\alpha} \leq n^m) \\
		&=& 1 - \{ \mathbb{P}(f^{-\alpha}_n(X_{n1}) \leq n^m )  \}^n\\
		&=& 1 - \{1 - \mathbb{P}(f^{-\alpha}_n(X_{n1}) > n^m) )  \}^n \\
		& \leq & 1 - \left( 1 - \frac{ \mathbb{E} (f^{-\alpha}_n(X_{n1})) }{n^m} \right)^n \\
		& \leq & 1 - \left( 1 - \frac{ n^{m_0}}{n^m} \right)^n \\
		& \leq & \frac{1}{n^2},
	\end{eqnarray*}
	where the second equality follows because $f_n$, being log-concave, is unimodal, and the first inequality follows from Markov's inequality. Hence,  $\sum^\infty_{n=1} \mathbb{P}(\min \{f_n(Z_{n1}), f_n(Z_{nn})\} > n^m) < \infty$. By the first Borel-Cantelli Lemma, with  probability one, for all sufficiently large  $n$, $\min \{f_n(Z_{n1}), f_n(Z_{nn})\}^{-\alpha} \leq n^m$ and thus $-\log \min \{f_n(Z_{n1}), f_n(Z_{nn})\} \leq \frac{m}{\alpha} \log n$. Together with \ref{eq:log_concave_logmin}, the claim of the lemma follows.
    \end{enumerate}
    
\end{proof}

\begin{lemma}[for $k$-monotone densities]\label{thm:bootstrap_Kn_rate}
Let $\{M_n\}$ and $\{ \tilde{b}_n\}$ be sequences that can possibly go to $\infty$.	Suppose that $f_n \in \mathcal{F}_k^{M_n}([0, \tilde{b}_n])$ for all sufficiently large $n$. Then,
\begin{equation*}
	\frac{1}{n} \sum^n_{i=1} \log \frac{ \hat{f}_{nn,k}(X_{ni})}{f_n(X_{ni})} = O_p\left ( n^{-\frac{2k}{2k+1}}|\log  (\widetilde{M}_n \tilde{b}_n )|^{\frac{1}{2k+1}}\right),
\end{equation*}
where $\widetilde{M}_n$ satisfies $\widetilde{M}_n/M_n \rightarrow \infty$.
\end{lemma}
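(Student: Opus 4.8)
The plan is to establish this as the triangular-array counterpart of Theorem~\ref{thm:k_monotone_logLRT_rate}(i)/(iv): replace the fixed density $f_0$ by the deterministic sequence $f_n$ and the empirical process by $\mathbb{G}_{nn}:=\sqrt{n}(\mathbb{F}_{nn}-F_n)$. Set $K_{nn,k}:=\frac1n\sum_{i=1}^n\log\{f_n(X_{ni})/\hat f_{nn,k}(X_{ni})\}$. Since each $f_n$ is itself $k$-monotone it competes in the likelihood maximization, so $K_{nn,k}\le 0$ and $\frac1{\sqrt n}\sum_{i=1}^n\log\{\hat f_{nn,k}(X_{ni})/f_n(X_{ni})\}=-\sqrt n\,K_{nn,k}\ge 0$; it therefore suffices to produce an upper bound of the stated order. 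The argument has two parts: (a) confine $\hat f_{nn,k}$ to a totally bounded subclass of $\mathcal F_k$ with probability tending to $1$, and (b) run the peeling/maximal-inequality argument of the proof of Theorem~\ref{thm:k_monotone_logLRT_rate} on that subclass.

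For part (a): because $f_n$ is supported on $[0,\tilde b_n]$ we have $Z_{nn}\le\tilde b_n$ almost surely, so Lemma~\ref{lemma:bounds_on_kmonotone_support} gives $\sigma_{\hat f_{nn,k}}\le kZ_{nn}\le k\tilde b_n$ deterministically. For the height at the origin, the crude bound $\hat f_{nn,k}(0+)\le kZ_{n1}^{-1}$ from Lemma~\ref{lemma:k_monotone_bounded_in_prob} would cost an extra $\log n$, so instead I would use the sharper estimate from the proof of Proposition~6 in \cite{gao2009rate}, namely $\hat f_{nn,k}(0+)\le k\,\mathbb{F}_{nn}(\hat a_1)/\hat a_1$ where $\hat a_1>Z_{n1}$ is the smallest support point of the maximizing mixing distribution for $\hat f_{nn,k}$ (cf.\ (\ref{eq:k_monotone_form})); this gives $\hat f_{nn,k}(0+)\le k\max_{1\le j\le n}(j/n)/Z_{nj}$. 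Using $F_n(x)\le M_n x$ and the binomial bound $\mathbb{P}(Z_{nj}<t)=\mathbb{P}(\mathrm{Bin}(n,F_n(t))\ge j)\le (nM_n t)^j/j!$, a union bound over $j$ gives $\mathbb{P}(\max_j (j/n)/Z_{nj}>\lambda M_n)\le\sum_{j\ge1}(j/\lambda)^j/j!\to 0$ as $\lambda\to\infty$, uniformly in $n$; hence $\hat f_{nn,k}(0+)=O_p(M_n)$, with the $O_p$ uniform over the array. Choosing $\widetilde M_n:=M_n\,\omega_n$ for any $\omega_n\uparrow\infty$ (for instance $\omega_n=\log n$) then gives $\widetilde M_n/M_n\to\infty$ and $\mathbb{P}(\mathcal E_n)\to 1$, where $\mathcal E_n:=\{\hat f_{nn,k}\in\mathcal F_k^{\widetilde M_n}([0,k\tilde b_n])\}$.

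For part (b): the basic inequality from Lemmas~4.1--4.2 of \cite{van2000empirical}, applied with $f_n$ in the role of $f_0$, gives $-\tfrac{\sqrt n}{2}K_{nn,k}\le\mathbb{G}_{nn}(m_{\hat f_{nn,k}})-\tfrac{\sqrt n}{8}h^2(\hat f_{nn,k},f_n)$ with $m_f$ the map used in the proof of Theorem~\ref{thm:k_monotone_logLRT_rate}. Intersecting with $\mathcal E_n$ and peeling over the Hellinger shells $2^{M+s}\delta_n\le h(f,f_n)<2^{M+s+1}\delta_n$, $f\in\mathcal F_k^{\widetilde M_n}([0,k\tilde b_n])$, then bounding each shell probability by Markov's inequality together with the maximal inequality Theorem~3.4.4 of \cite{van1996weak} and Gao's bracketing bound (\ref{eq:Gao_entropy}), $\log N_{[\cdot]}(\varepsilon,\mathcal F_k^{\widetilde M_n}([0,k\tilde b_n]),h)\le C_k|\log(k\widetilde M_n\tilde b_n)|^{1/(2k)}\varepsilon^{-1/k}\lesssim|\log(\widetilde M_n\tilde b_n)|^{1/(2k)}\varepsilon^{-1/k}$, one repeats verbatim the computation of that proof with the choice $\delta_n:=n^{-k/(2k+1)}|\log(\widetilde M_n\tilde b_n)|^{1/(2(2k+1))}$ (for which the associated quantity $\xi_n(\delta)/\delta$ is nonincreasing), obtaining $\mathbb{P}(\{-\tfrac{\sqrt n}{2}K_{nn,k}\ge\sqrt n\,2^{2M}\delta_n^2\}\cap\mathcal E_n)\le 10/2^M$ for all large $n$. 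Since $\mathbb{P}(\mathcal E_n^c)\to 0$, $-\sqrt n\,K_{nn,k}\ge 0$, and $\sqrt n\,\delta_n^2=n^{-(2k-1)/(2(2k+1))}|\log(\widetilde M_n\tilde b_n)|^{1/(2k+1)}$, letting $M\to\infty$ yields the claim.

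The main obstacle is part (a): making rigorous that the boundary estimate from Proposition~6 of \cite{gao2009rate} scales linearly in the uniform sup-norm bound $M_n$ and that the order-statistic/binomial estimates feeding into it hold uniformly over the triangular array $\{f_n\}$ (all the relevant bounds are non-asymptotic, so uniformity is not a serious difficulty, but it must be stated carefully). This is precisely what lets $|\log(\widetilde M_n\tilde b_n)|$, rather than the weaker $|\log(nM_n\tilde b_n)|$ coming from the crude bound $\hat f_{nn,k}(0+)\le kZ_{n1}^{-1}$, appear in the rate. Once (a) is in place, part (b) is a direct transcription of the fixed-density proof, with $\widetilde M_n\tilde b_n$ playing the role of $a_nb_n$.
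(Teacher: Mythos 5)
Your proposal is correct and follows essentially the same route as the paper: confine $\hat f_{nn,k}$ to $\mathcal F_k^{\widetilde M_n}([0,k\tilde b_n])$ with probability tending to one (via $\sigma_{\hat f_{nn,k}}\le kZ_{nn}\le k\tilde b_n$ and the Gao--Wellner boundary estimate giving $\hat f_{nn,k}(0+)=O_p(M_n)$), then rerun the peeling argument of Theorem~\ref{thm:k_monotone_logLRT_rate} with $\delta_n=n^{-k/(2k+1)}|\log(\widetilde M_n\tilde b_n)|^{1/(2(2k+1))}$. The only cosmetic difference is in part (a), where the paper bounds $\sup_{t>0}\mathbb{F}_{nn}(t)/F_n(t)=O_p(1)$ by citing Daniels' theorem, whereas you derive the equivalent uniform bound directly from binomial tail estimates.
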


\begin{proof}[Proof of Lemma \ref{thm:bootstrap_Kn_rate}]
First, note that if $f_n \in \mathcal{F}_k^{M_n}([0, \tilde{b}_n])$, then $\hat{f}_{nn,k}(0+) = O_p(M_n)$. To see this, following the proof of Proposition 6 in \cite{gao2009rate}, we know that
\begin{equation*}
	\hat{f}_{nn,k}(0+) \leq k \sup_{t > 0} \frac{ \mathbb{F}_{nn}(t)}{F_n(t)} f_n(0+) = O_{p}(M_n),
\end{equation*}
since
\begin{equation*}
	\sup_{t > 0} \frac{ \mathbb{F}_{nn}(t)}{F_n(t)} = \sup_{ 0 < t \leq 1} \frac{\mathbb{G}_{nn}(t)}{t} = O_{p}(1)
\end{equation*}
by Daniels' theorem (see Theorem 2 on p. 345 in \cite{Shorack1986empirical}), where 
\begin{equation*}
	\mathbb{G}_{nn}(t) := \frac{1}{n} \sum^n_{i=1}I(F_n(X_{ni}) \leq t).
\end{equation*}
Since $f_n \in \mathcal{F}_k^{M_n}([0, \tilde{b}_n])$, it follows that $Z_{nn} \leq \tilde{b}_n$. By Lemma \ref{lem:kZn}, we also know $\sigma_{\hat{f}_{nn}} \leq k Z_{nn}$. Hence, $\sigma_{\hat{f}_{nn}} \leq k \tilde{b}_n$. Since $\widetilde{M}_n / M_n \rightarrow \infty$ by assumption, we then have
\begin{equation}\label{eq:fnn_k_Fk}
	\lim_{n \rightarrow \infty} \mathbb{P}(\hat{f}_{nn, k} \notin \mathcal{F}^{\widetilde{M}_n}_k([0, k \tilde{b}_n]) ) = \lim_{n \rightarrow \infty}\mathbb{P}(\hat{f}_{nn,k}(0+) > \widetilde{M}_n) = 0.
\end{equation}
Denote
\begin{equation*}
	K_{nn, k} := - \frac{1}{n} \sum^n_{i=1} \log \frac{\hat{f}_{nn,k}(X_{ni})}{f_n(X_{ni})}.
\end{equation*}
Let $\delta_n := n^{-\frac{k}{2k+1}}|\log( \tilde{M}_n \tilde{b}_n)|^{ \frac{1}{2(2k+1)}}$ and $m_{f,n}(t) := \log \frac{f(t) + f_n(t)}{2}$. Similar to the proof of Theorem \ref{thm:k_monotone_logLRT_rate}, we have
\begin{eqnarray*}
	&& 	\mathbb{P} \left(  - \frac{\sqrt{n}}{2} K_{nn, k} \geq \sqrt{n} 2^{2M} \delta^2_n \right) \\
	&\leq & \mathbb{P} \left( \mathbb{G}_n(m_{\hat{f}_{nn}}, n) - \frac{\sqrt{n}}{8} h^2 ( \hat{f}_{nn}, f_n ) \geq \sqrt{n} 2^{2M} \delta^2_n, \hat{f}_{nn,k} \in  \mathcal{F}^{\widetilde{M}_n}_k([0, k \tilde{b}_n]) \right)  \\
	&& \quad + \mathbb{P}(\hat{f}_{nn,k} \notin \mathcal{F}^{\widetilde{M}_n}_k([0, k \tilde{b}_n])),
\end{eqnarray*}
where the last term on the above display goes to $0$ as $n$ goes to $\infty$ by (\ref{eq:fnn_k_Fk}).
The rest of the proof is similar to that of Theorem \ref{thm:k_monotone_logLRT_rate}, and is therefore omitted.
\end{proof}


\begin{lemma}[for completely monotone densities]\label{lemma:CM_bootstrap_rate}
Let $f_n \in \mathcal{F}_\infty$ such that $\lim_{n \rightarrow \infty}\mathbb{P}(Z_{n1}^{-1} > \tilde{c}_{1n}) = 0$ and $\lim_{n \rightarrow \infty}\mathbb{P}(Z^{1+\beta}_{nn} > \tilde{c}_{2n}) = 0$ for some $\beta  >0$. Then
\begin{equation*}
	\frac{1}{n} \sum^n_{i=1} \log \frac{ \hat{f}_{nn,\infty}(X_{ni})}{f_n(X_{ni})} = O_p\left ( n^{-\frac{2}{3}}|\log  (\tilde{c}_{1n} \tilde{c}_{2n} )|^{\frac{1}{3}}\right),
\end{equation*}
\end{lemma}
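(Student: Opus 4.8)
The plan is to follow the proof of Lemma~\ref{lem:rate_cm} closely, with the fixed density $f_0$ there replaced by the deterministic triangular array $f_n$ and the order statistics $Z_1,Z_n$ replaced by $Z_{n1},Z_{nn}$. First I would recall from \cite{Jewell1982mixtures} that the completely monotone MLE has the form $\hat f_{nn,\infty}(t)=\sum_{i=1}^{r_n}\hat\lambda_i\hat p_i e^{-\hat\lambda_i t}$ with $r_n\le n$, $\sum_i\hat p_i=1$, and $\hat\lambda_i\in[Z_{nn}^{-1},Z_{n1}^{-1}]$; this is a property of the MLE construction and does not rely on the data-generating density being fixed. As in the proof of Lemma~\ref{lem:rate_cm} this yields the deterministic bounds $\hat f_{nn,\infty}(0+)\le Z_{n1}^{-1}$ and, since $\lambda\mapsto\lambda e^{-\lambda t}$ is unimodal with mode $1/t$, $\hat f_{nn,\infty}(t)\le Z_{nn}^{-1}e^{-t/Z_{nn}}$ for every $t>Z_{nn}$.

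Next, using the hypotheses $\mathbb P(Z_{n1}^{-1}>\tilde c_{1n})\to 0$ and $\mathbb P(Z_{nn}^{1+\beta}>\tilde c_{2n})\to 0$, I would show that with probability tending to one $\hat f_{nn,\infty}$ lies in a class $\tilde{\mathcal F}_n$ that is totally bounded for each $n$. On the event $\{Z_{n1}^{-1}\le\tilde c_{1n},\ Z_{nn}^{1+\beta}\le\tilde c_{2n}\}$ one has $Z_{nn}\le\tilde c_{2n}^{1/(1+\beta)}$, so choosing a split point $s_n$ with $s_n\ge 6\tilde c_{2n}^{2/(1+\beta)}$ (hence $s_n\ge 6Z_{nn}^2$) the elementary inequality $e^x\ge x^3/6$ gives $Z_{nn}^{-1}e^{-t/Z_{nn}}\le t^{-2}$ for all $t\ge s_n$; moreover $s_n$ may be taken so that $|\log(\tilde c_{1n}s_n)|\lesssim|\log(\tilde c_{1n}\tilde c_{2n})|$. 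Splitting $\hat f_{nn,\infty}$ at $s_n$, its restriction to $[0,s_n]$ is a decreasing function bounded by $\tilde c_{1n}$ with integral at most one, and its restriction to $(s_n,\infty)$ is a decreasing function bounded above by $t^{-2}$ and by one; letting $\tilde{\mathcal F}_n$ be the sum of these two classes, the bracketing-entropy computation in the proof of Lemma~\ref{lem:rate_cm} applies verbatim --- the proof of Theorem~3 in \cite{gao2009rate} for the monotone piece on $[0,s_n]$, and the argument of Lemma~7.10 in \cite{van2000empirical} (passing through square roots to convert $L^2$ brackets into Hellinger brackets) for the $t^{-2}$-tail piece --- and yields $\log N_{[\cdot]}(\varepsilon,\tilde{\mathcal F}_n,h)\lesssim|\log(\tilde c_{1n}\tilde c_{2n})|^{1/2}\varepsilon^{-1}$ for all large $n$.

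Finally I would reproduce the modulus-of-continuity/peeling argument from the proof of Theorem~\ref{thm:k_monotone_logLRT_rate} with $f_0$ replaced by $f_n$: the basic inequality of Lemmas~4.1 and~4.2 in \cite{van2000empirical} controls minus the log-likelihood ratio by $\tfrac12\int\log\frac{\hat f_{nn,\infty}+f_n}{2f_n}\,d(\mathbb F_{nn}-F_n)-\tfrac1{16}h^2(\hat f_{nn,\infty},f_n)$; restricting to the event $\{\hat f_{nn,\infty}\in\tilde{\mathcal F}_n\}$, peeling over Hellinger shells, bounding the modulus of the empirical process by the bracketing entropy integral via Theorem~3.4.4 in \cite{van1996weak}, and inserting the entropy bound above gives, with $\delta_n=n^{-1/3}|\log(\tilde c_{1n}\tilde c_{2n})|^{1/6}$, that $-\tfrac1{\sqrt n}\sum_{i=1}^n\log\frac{\hat f_{nn,\infty}(X_{ni})}{f_n(X_{ni})}=O_p(\sqrt n\,\delta_n^2)=O_p(n^{-1/6}|\log(\tilde c_{1n}\tilde c_{2n})|^{1/3})$, where determinism of $f_n$ is used so that $\mathbb G_n=\sqrt n(\mathbb F_{nn}-F_n)$ and the attendant expectations are well defined. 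The main obstacle is the middle step: Lemma~\ref{lem:rate_cm} used the almost sure divergence $Z_n\to\infty$ (from $f_0$ having unbounded support) to guarantee that the exponential tail of the MLE eventually dominates $t^{-2}$, but for a triangular array $Z_{nn}$ need not diverge, so one must instead extract the quantitative bound $Z_{nn}\le\tilde c_{2n}^{1/(1+\beta)}$ from the hypothesis and pick $s_n$ to dominate $6Z_{nn}^2$ while keeping $\log s_n$ of the same order as $\log\tilde c_{2n}$; once this is arranged the rest is a routine transcription of the fixed-$f_0$ proof.
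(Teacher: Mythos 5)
Your proposal is correct and follows exactly the route the paper intends: the paper's own ``proof'' of this lemma is a one-line remark that the argument is the same as for Lemma \ref{lem:rate_cm}, with $f_0$ replaced by the triangular-array density $f_n$ and the peeling/entropy machinery of Theorem \ref{thm:k_monotone_logLRT_rate} applied to the class $\tilde{\mathcal F}_n$. Your one substantive addition --- noting that the fixed-$f_0$ proof invokes $Z_n\stackrel{a.s.}{\to}\infty$ to get the tail bound $\hat f(t)\le t^{-2}$ beyond the split point, which is unavailable for a triangular array, and repairing it by choosing the split point $s_n\gtrsim \tilde c_{2n}^{2/(1+\beta)}\ge 6Z_{nn}^2$ so that $Z_{nn}^{-1}e^{-t/Z_{nn}}\le t^{-2}$ holds deterministically on the good event while $\log s_n\asymp\log\tilde c_{2n}$ --- is a legitimate and correctly executed patch of a detail the paper glosses over.
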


\begin{proof}[Proof of Lemma \ref{lemma:CM_bootstrap_rate}]
The proof is essentially the same as that of Lemma \ref{lem:rate_cm} and is therefore omitted.
\end{proof}

\begin{lemma}[for log-concave densities]\label{lemma:f_n*lc_UB_LB}
Let $f_n \in \mathcal{F}_{lc}$ with support $[l_n, u_n]$, where $l_n$ and $u_n$ are sequences of real numbers that are decreasing and increasing, respectively. 
Suppose that there exists $M > 0$ such that $\sup_n \sup_{x \in \mathbb{R}} f_n(x) \leq M$ and for any compact set $S_1 \subset (\lim_n l_n, \lim_n u_n)$, there exists $m=m(S_1) > 0$ such that $\liminf_n \inf_{x \in S_1} f_n(x) \geq m$. Furthermore, suppose that there exists $f^* \in \mathcal{F}_{lc}$ such that $f^*(x) \leq e^{-a_0|x| + b_0}$, for some $a_0 > 0$ and $b_0 \in \mathbb{R}$, and for any $a \in (0, a_0)$,
\begin{equation}\label{eq:exp_L1_conv}
	\lim_{n \rightarrow \infty}	\int_{\mathbb{R}} e^{a|x|}|f_n(x) - f^*(x)|\, dx = 0.
\end{equation}
Then:
\begin{enumerate}
	\item [(a)] There exists a constant $C > 0$ such that
	\begin{equation*}
		\mathbb{P}\left ( \limsup_{n \rightarrow \infty} \sup_{x \in \mathbb{R}} \hat{f}_{nn,lc}(x) \leq C \right )  = 1.
	\end{equation*}  
	
	\item [(b)] For any compact set $S_2 \subset (\lim_n l_n, \lim_n u_n)$, there exists a constant $c = c(S_2) > 0$ such that 
	\begin{equation*}
		\mathbb{P}\left ( \liminf_{n \rightarrow \infty} \inf_{x \in S_2} \hat{f}_{nn,lc}(x) \geq c \right )  = 1.
	\end{equation*}  
	
	\item [(c)]
	\begin{equation*}
		\frac{1}{n} \sum^n_{i=1} \log \frac{ \hat{f}_{nn,lc}(X_{ni})}{f_n(X_{ni})} = O_p(n^{-4/5}),
	\end{equation*}
	which is the same as the rate when $f_n = f_0$ for all $n$, where $f_0$ is a fixed log-concave density.
\end{enumerate}
\end{lemma}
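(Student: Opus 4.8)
The plan is to prove (a) and (b) first, using only the structural properties of the log-concave MLE together with strong (almost sure) control of the empirical measures, and then to feed (a)+(b) into the bracketing-entropy/peeling machinery already developed in the proof of Theorem~\ref{thm:k_monotone_logLRT_rate} (in the triangular-array form of Lemma~\ref{thm:bootstrap_Kn_rate}) in order to obtain the rate in (c). As in the proof of Theorem~\ref{thm:log_concave_H1}, after a fixed translation we may assume that $[-1,1]$ lies strictly inside $(\lim_n l_n,\lim_n u_n)$, so that (a) and (b) together will place $\hat f_{nn,lc}$ in the class $\mathcal F^{M'}_{lc}$ of \eqref{eq:def_FMlc} for a fixed $M'<\infty$.

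First I would control the empirical measures $\mathbb{P}_{nn}:=n^{-1}\sum_{i=1}^n\delta_{X_{ni}}$ (whose distribution function is $\mathbb{F}_{nn}$). The hypothesis \eqref{eq:exp_L1_conv} together with $f^*(x)\le e^{-a_0|x|+b_0}$ gives $\sup_n\int e^{a|x|}f_n(x)\,dx<\infty$ for every $a\in(0,a_0)$, hence densities $f_n$ with uniformly bounded moments of all orders. A Markov/Borel--Cantelli argument of the type used in the proof of Lemma~\ref{lemma:bootstrap_monotone_log_f_n_Zn} then yields, almost surely: $\int e^{a|x|}\,d\mathbb{P}_{nn}\to\int e^{a|x|}f^*$ for small $a>0$; $\liminf_n\mathbb{P}_{nn}([p,q])\ge\int_p^q f^*>0$ whenever $[p,q]$ is contained in a compact subset of $(\lim_n l_n,\lim_n u_n)$, using $\liminf_n\inf_{[p,q]}f_n\ge m>0$; and $Z_{n1}\to\lim_n l_n$, $Z_{nn}\to\lim_n u_n$ (since for $x>\lim_n l_n$ one has $F_n(x)$ eventually bounded away from $0$, making $\mathbb{P}(Z_{n1}>x)=(1-F_n(x))^n$ summable, and symmetrically at the right end).

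Next I would prove (a) and (b). The cleanest route is to invoke the robustness of the log-concave MLE: the uniform consistency theory (\cite{dumbgen2009maximum}) and the exponentially weighted consistency theory (\cite{cule2010theoretical}) are both driven entirely by the empirical-measure behaviour controlled above, so under \eqref{eq:exp_L1_conv} the same arguments give $\sup_x|\hat f_{nn,lc}(x)-f^*(x)|\to 0$ almost surely; this yields (a) with any $C>\sup_x f^*(x)$. For (b), fix a compact $S_2$ and choose $p<q$ with $S_2\subset(p,q)$ and $[p,q]$ inside $(\lim_n l_n,\lim_n u_n)$: for large $n$ the function $\hat f_{nn,lc}$ is log-concave, bounded above by $C$, positive on $(Z_{n1},Z_{nn})\supset[p,q]$, and (by the empirical-measure step and its approximation of $f^*$) carries mass bounded below on both $[p,x_0]$ and $[x_0,q]$ for each $x_0\in S_2$; log-concavity then forces $\inf_{S_2}\hat f_{nn,lc}\ge c(S_2)>0$ eventually, almost surely. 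Alternatively (a)--(b) can be read off from the finite-sample deterministic bounds on the log-concave MLE in terms of quantiles of the empirical distribution, as in \cite{cule2010MLE_log_concave}, using the uniform upper bound $\sup_n\sup_x f_n\le M$ and the uniform-on-compacts lower bound on $f_n$.

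Finally, for (c): by (a) and (b) applied with $S_2=[-1,1]$ there is a fixed $M'\in(0,\infty)$ with $\mathbb{P}(\hat f_{nn,lc}\in\mathcal F^{M'}_{lc}\text{ for all large }n)=1$, and by Theorem~3.1 in \cite{doss2016global} the bracketing entropy satisfies $\log N_{[\cdot]}(\varepsilon,\mathcal F^{M'}_{lc},h)\lesssim\varepsilon^{-1/2}$ with a constant independent of $n$. I would then run the peeling argument exactly as in the proof of Theorem~\ref{thm:k_monotone_logLRT_rate} and Lemma~\ref{thm:bootstrap_Kn_rate}: restrict to the event $\{\hat f_{nn,lc}\in\mathcal F^{M'}_{lc}\}$, bound the supremum of $\mathbb{G}_n(m_{f,n})$ over $\{f\in\mathcal F^{M'}_{lc}:h(f,f_n)<\delta\}$ via Theorem~3.4.4 in \cite{van1996weak}, and optimise. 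With entropy exponent $1/2$ one has $\tilde J_{[\cdot]}(\delta)\lesssim\delta^{3/4}$, the balance $\sqrt n\,\delta_n^2\asymp\tilde J_{[\cdot]}(\delta_n)$ gives $\delta_n\asymp n^{-2/5}$, hence $h(\hat f_{nn,lc},f_n)=O_p(n^{-2/5})$ and $n^{-1/2}\sum_{i=1}^n\log\{\hat f_{nn,lc}(X_{ni})/f_n(X_{ni})\}=O_p(\sqrt n\,\delta_n^2)=O_p(n^{-3/10})$, the same rate as in the fixed-density case (Corollary~3.2 of \cite{doss2016global}). The hard part will be the triangular-array extension in the third paragraph: promoting the consistency and boundedness theory of the log-concave MLE from a single fixed underlying density to a sequence $f_n\to f^*$, where \eqref{eq:exp_L1_conv} is precisely what makes the tail and moment control uniform in $n$ and the compact-lower-bound hypothesis is what prevents the MLE from degenerating on compact subsets of the limiting support; once those are secured, paragraphs two and four are routine.
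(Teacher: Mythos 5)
Your plan for part (c) coincides with the paper's: once (a) and (b) place $\hat f_{nn,lc}$ in $\mathcal F^{M}_{lc}$ with probability tending to one, the Doss--Wellner entropy bound $\log N_{[\cdot]}(\varepsilon,\mathcal F^{M}_{lc},h)\lesssim\varepsilon^{-1/2}$ plus the peeling argument of Theorem \ref{thm:k_monotone_logLRT_rate} gives $O_p(n^{-3/10})$. The problem is that (a) and (b) \emph{are} the substance of this lemma, and you do not prove them. Your primary route asserts that the consistency theory of \cite{dumbgen2009maximum} and \cite{cule2010theoretical} ``goes through'' for the triangular array to give $\sup_x|\hat f_{nn,lc}(x)-f^*(x)|\to 0$ a.s.; that is a strictly stronger statement than (a)--(b), it is exactly the triangular-array extension you later flag as ``the hard part,'' and it is left unverified. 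Your fallback sketch for (b) also has a hole: you need the \emph{MLE's} mass on subintervals $[p,x_0]$ and $[x_0,q]$ to be bounded below, but the empirical-measure control you establish only bounds the mass of $\mathbb{P}_{nn}$; passing from empirical mass to MLE mass is itself a property of the MLE under the array that would need an argument.

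What the paper actually does is adapt the likelihood-comparison proof of Lemma 3 in \cite{cule2010MLE_log_concave} directly to the array. Fix the Laplace-type density $g(x)=e^{-|x|+b}$ and use \eqref{eq:exp_L1_conv} (which gives $\sup_n\int e^{a|x|}f_n<\infty$ for $a\in(0,a_0)$) to show $\liminf_n\int f_n\log g\ge q+1$ and $\sup_{n,i}\mathbb{E}\{\log g(X_{ni})-\int f_n\log g\}^4<\infty$; a strong law for triangular arrays with bounded fourth moments then gives $n^{-1}\sum_i\log g(X_{ni})\ge q$ eventually a.s. On the other side, for any log-concave $f$ with $\sup f=\widetilde M$ too large (for (a)) or with $\inf_{S}f$ too small (for (b), using that the set $\{x\in S^\delta:f(x)\le c\}$ contains a ball and hence has $f_n$-mass $\ge p>0$ uniformly in large $n$ by your compact lower-bound hypothesis), Hoeffding's inequality and Borel--Cantelli show $n^{-1}\sum_i\log f(X_{ni})>q$ only finitely often a.s., so such an $f$ cannot be the MLE. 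This direct comparison is what closes the gap; once you write it out, the rest of your proposal (the moment/Borel--Cantelli control of the empirical measures and part (c)) is correct and matches the paper.
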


The main idea of the proof of Lemma \ref{lemma:f_n*lc_UB_LB} follows from the proof for Lemma 3 in \cite{cule2010theoretical}. In our case, we need to consider the lower and upper bounds of the log-concave MLE from a random sample from a sequence of log-concave densities instead from a fixed density.

\begin{proof}[Proof of Lemma \ref{lemma:f_n*lc_UB_LB}]

\begin{enumerate}
	\item [(a)] Let $g(x) = \exp(-|x|+b)$ for $x \in \mathbb{R}$, where $b$ is the normalization constant such that $g$ is a density.
	First, note that for any $a > 0$, $\lim_{x\rightarrow \infty} |x|/e^{a|x|} = 0$. Therefore, $\{x: |x| > e^{a|x|}\}$ is a bounded subset of $\mathbb{R}$ and $\sup\{|x|:|x| > e^{a|x|} \} < \infty$. Now, for some $a \in (0, a_0)$, we have
	\begin{eqnarray}
		\int_\mathbb{R} |x| f_n(x)\, dx &=& \int_{ \{x:|x| < e^{a|x|} \} } |x| f_n(x)\, dx + \int_{ \{x:|x| > e^{a|x|} \} }|x| f_n(x)\, dx \nonumber \\
		&\leq& \int_{\mathbb{R}} e^{a|x|}f_n(x)\, dx + \int_{{ \{x:|x| > e^{a|x|} \} }}|x| f_n(x)\, dx \nonumber \\
		&\leq & \int_{\mathbb{R}} e^{a|x|}f_n(x)\, dx + C_1, \label{eq:bound_xf_n}
	\end{eqnarray}
	where $C_1 := M\sup\{|x|: (\log |x|)/|x|> a\} \lambda({ \{x: (\log |x|)/|x|> a \} })$ and $\lambda$ is the Lebesgue measure. Thus,
	\begin{eqnarray*}
		\int_\mathbb{R} f_n(x) \log g(x) \, dx &=& - \int_\mathbb{R} |x| f_n(x) \, dx + b\\
		& \geq& - \int_\mathbb{R} e^{a|x|}f_n(x)\, dx - C_1 + b.
	\end{eqnarray*}
	As a consequence of (\ref{eq:exp_L1_conv}), we have
	\begin{equation}\label{eq:lower_bound_fnlogg}
		\liminf_{n \rightarrow \infty} \int_\mathbb{R} f_n \log g \geq - \int_\mathbb{R} e^{a|x|} f^*(x) \, dx - C_1 + b =: q + 1,
	\end{equation}
	say, with $f^*$ as specified in the statement of the lemma and $\int_{\mathbb{R}} e^{a|x|}f^*(x)\, dx < \infty$ because $a \in (0, a_0)$. Now, let $\widetilde{M} := e^{M_2}$ where $M_2$ is large enough such that $M_2 > q+1$ and such that $\int_D f_n \leq \frac{1}{4}$ whenever $D \subset \mathbb{R}$ such that $\lambda(D) \leq 2^4 (M_2 - q) e^{-M_2}$. This is possible because $f_n \leq M$ for all $n$. Let $f$ be any log-concave density with $\sup_{x \in \mathbb{R}} f(x) =\widetilde{M}$. We claim that, for all sufficiently large $n$, the log-concave density $g$ has larger in value of the log-likelihood. Equivalently, we shall show that
	\begin{equation}\label{eq:f_cannot_be_NPMLE}
		\mathbb{P} \bigg( \frac{1}{n} \sum^n_{i=1} \log f(X_{ni}) > \frac{1}{n} \sum^n_{i=1} \log g(X_{ni}) \text{ i.o.} \bigg) = 0,
	\end{equation}
	where i.o. stands for infinitely often. Observe that
	\begin{eqnarray}
		&& \mathbb{P} \bigg( \frac{1}{n} \sum^n_{i=1} \log f (X_{ni}) > \frac{1}{n} \sum^n_{i=1} \log g(X_{ni}) \text{ i.o.} \bigg) \nonumber \\
		&& \quad \leq \mathbb{P} \bigg( \frac{1}{n} \sum^n_{i=1} \log g(X_{ni}) < q \text{ i.o.} \bigg) +  \mathbb{P} \bigg( \frac{1}{n} \sum^n_{i=1} \log f(X_{ni}) > q \text{ i.o.} \bigg). \label{eq:phiX_ni}
	\end{eqnarray}
	Because of (\ref{eq:lower_bound_fnlogg}),
	\begin{eqnarray}\label{eq:gXniSLLN}
		\quad \quad \mathbb{P} \bigg( \frac{1}{n} \sum^n_{i=1} \log g(X_{ni}) < q \text{ i.o.} \bigg) \leq 
		\mathbb{P} \bigg( \frac{1}{n} \sum^n_{i=1} \log g(X_{ni}) -\int_\mathbb{R} f_n \log g < -1 \text{ i.o.} \bigg)
	\end{eqnarray}
	We now claim that the right-hand side of (\ref{eq:gXniSLLN}) is $0$. First, using similar bounds as in (\ref{eq:bound_xf_n}), it is easy to see that $\sup_{n,i} \mathbb{E}\{\log g(X_{ni}) - \int f_n \log g \}^4 < \infty$. For example, to show that $ \mathbb{E}[\{ \log g(X_{ni}) \}^4] < \infty$, we have for some $a \in (0, a_0)$,
	\begin{eqnarray*}
		&& \mathbb{E}[\{ \log g(X_{ni}) \}^4] \leq  4 \int_\mathbb{R} |x|^4 f_n(x) \, dx + 4|b|^4 \\
		& \leq & 4 \int_{\mathbb{R}} e^{a|x|}f_n(x)\, dx+ 4 M \sup \{|x|: |x|^4 > e^{a|x|}\} \lambda (\{x: |x|^4 > e^{a|x|}\}) + 4|b|^4.
	\end{eqnarray*}
	To see the bound of the last inequality is finite, simply note that by (\ref{eq:exp_L1_conv}), $\lim_n \int_{\mathbb{R}} e^{a|x|}f_n(x)\, dx = \int_{\mathbb{R}} e^{a|x|} f^*(x) \, dx < \infty$ for $a \in (0, a_0)$. Let $\{Y_{ni}\}_{i=1,\ldots,n,n \in \mathbb{N}}$ be a triangular array of scalar random variables $Y_{ni} := \log g(X_{ni}) -\int_\mathbb{R} f_n \log g$. Then, the row $Y_{n1},\ldots,Y_{nn}$ is a collection of independent random variables with the mean $0$ and $\sup_{i=1,\ldots,n,n} \mathbb{E}(Y_{ni}^4) < \infty$.	 Hence, by a strong law of large numbers for triangular arrays, 
	\begin{equation*}
		\frac{1}{n} \sum^n_{i=1} \log g(X_{ni}) -\int_\mathbb{R} f_n \log g = \frac{1}{n} \sum^n_{i=1} Y_{ni} \stackrel{a.s.}{\longrightarrow} 0
	\end{equation*}
	and so the right-hand side of (\ref{eq:gXniSLLN}) is $0$. The proof that shows the second term on the right-hand side of (\ref{eq:phiX_ni}) equals $0$ is similar to the corresponding proof in Lemma 3(a) in \cite{cule2010MLE_log_concave}  (p.262) with $X_i$ replaced by $X_{ni}$, $f_0$ replaced by $f_n$ and the fact that Hoeffding's inequality is valid for each $n$, and is therefore omitted.
	
	\item [(b)]
	Let $S$ be a compact subset of the interval $(\lim_n l_n, \lim_n u_n)$ and $\delta > 0$ be small enough that $S^{\delta} := \{x \in \mathbb{R}: \text{dist}(x, S) \leq \delta \} \subset (\lim_n l_n, \lim_n u_n)$.
	Let $f$ be any log-concave density on $\mathbb{R}$. We claim that if $c := 2\inf_{x \in S} f(x)$ is sufficiently small but positive, then $f$ cannot be the NPMLE for any large $n$ with  probability one, that is, (\ref{eq:f_cannot_be_NPMLE}) holds. Indeed, by Lemma \ref{lemma:f_n*lc_UB_LB} (a), we can assume that $\sup_{x \in \mathbb{R}} f(x) \leq C$.
	If $B \subset \mathbb{R}$ contains a ball of radius $\delta/2$ centered at a point in $S^{\delta/2}$, say $B_{\delta/2}(x_0)$, where $B_{\delta/2}(x_0)$ is the closed ball of radius $\delta$ centered at $x_0$, for some $x_0 \in S^{\delta/2}$, then
	\begin{eqnarray*}
		\liminf_n \int_B f_n &\geq &\liminf_n \int_{B_{\delta/2}(x_0)} f_n \geq \liminf_n  \inf_{x \in S^{\delta/2}} \int_{B_{\delta/2}(x)} f_n \\
		&\geq & \delta \liminf_n  \inf_{x \in S^\delta}f_n =: p > 0.
	\end{eqnarray*}
	Recall the density $g$ defined in (a) and the constant $q$ defined in (\ref{eq:lower_bound_fnlogg}); suppose that $c \in (0, C]$ is small enough such that
	\begin{equation}\label{eq:for_inequality_io}
		\frac{p}{2}\log c + \left(1 - \frac{p}{2}\right) \log C \leq q.
	\end{equation}
	Denote $B := \{x \in S^\delta : f(x) \leq c \}$. Then, $B$ contains a ball of radius $\delta/2$ centered at a point in $S^{\delta/2}$, so $\liminf_n \int_B f_n \geq p$. Then 
	\begin{equation*}
		\mathbb{P}\bigg( \frac{1}{n} \sum^n_{i=1} \log f (X_{ni})> q \bigg) \leq \mathbb{P} \bigg( \frac{1}{n}\sum^n_{i=1} I\{X_{ni} \in B \} \leq \frac{p}{2} \bigg) \leq e^{-np^2/2},
	\end{equation*}
	by (\ref{eq:for_inequality_io}) and Hoeffding's inequality. By the first Borel-Cantelli lemma,
	\begin{equation*}
		\mathbb{P} \bigg( \frac{1}{n} \sum^n_{i=1} \log f(X_{ni}) > q \text{ i.o.} \bigg) = 0.
	\end{equation*}
	Finally, arguing as in the proof of Lemma \ref{lemma:f_n*lc_UB_LB} (a) above (see (\ref{eq:phiX_ni}) and (\ref{eq:gXniSLLN})), we have (\ref{eq:f_cannot_be_NPMLE}).

	\item [(c)]
	Without loss of generality, we can assume that $[-1,1]$ is strictly inside $(\lim l_n, \lim u_n)$. By parts (a) and (b) in this lemma, we have, with a probability going to $1$ that $\hat{f}_{nn,lc} \in \mathcal{F}^M_{lc}$ for some finite $M > 0$, where $\mathcal{F}^M_{lc}$ is defined in  (\ref{eq:def_FMlc}). With the above result and the result from Theorem 3.1 in \cite{doss2016global} that $\log N_{[\cdot]}(\varepsilon, \mathcal{F}^M_{lc},h) \precsim \varepsilon^{-1/2}$, the proof of the claim regarding the rate of convergence to $0$ in this part is similar to that for the $k$-monotone case and is therefore omitted. 
\end{enumerate}

\end{proof}

\subsection{Bootstrap consistency}
In this subsection, we establish the bootstrap consistency of the NPLRT for $k$-monotone densities, completely monotone densities, and log-concave densities under mild regularity conditions.

\begin{theorem}[Bootstrap Consistency for $k$-monotone Densities]\label{thm:bootstrap_consistency_k_monotone}
Fix $k \in \mathbb{N}$. Suppose that $f_0$ is a density that may not be bounded from above or have a bounded support. Without loss of generality, assume that $\tau_{f_0} = 0$. Suppose that $f_0$ satisfies Conditions (A) and (B). Let $\nu = O(n^{1/3}(\log n)^{-1})$.
Regardless of whether $f_0$ is $k$-monotone or not, 
\begin{equation*}
	\sup_{x \in \mathbb{R}} \left| \mathbb{P}^* \left( \sqrt{ \frac{n\nu}{\nu^2\psi_1(\nu)-\nu}} \left( T_n^* -  \log \nu +  \psi(\nu) \right)  \leq x\right) - \mathbb{P}(Z \leq x ) \right| \stackrel{\mathbb{P}}{\rightarrow} 0,
\end{equation*}
where $Z \sim N(0, 1)$  and $\tilde{f}_n$ in the bootstrap procedure is the $k$-monotone MLE $\hat{f}_{n,k}$.
\end{theorem}

\begin{proof}[Proof of Theorem \ref{thm:bootstrap_consistency_k_monotone}]
First, from the proof of Lemma \ref{lemma:anbn_logn_hall}, we have $Z_1^{-1} = o_p(n^{m_0})$ and $Z_n = o_p(n^{m_0})$ for some $m_0 > 0$.

In view of Theorem \ref{thm:bootstrap_general}, it suffices  to verify Conditions (i) and (ii) in  Theorem \ref{thm:bootstrap_general}. Let $M_n = n^{m_0}$, $\tilde{b}_n = n^{m_0}$, and $\widetilde{M}_n = n^{2m_0}$.
Recall that by Lemma \ref{lemma:k_monotone_bounded_in_prob} and \ref{lemma:bounds_on_kmonotone_support}, we have $\hat{f}_{n,k}(0+) \leq kZ^{-1}_1$ and $\sigma_{\hat{f}_n} \leq k Z_n$. Thus,
\begin{eqnarray*}
	\mathbb{P}(\hat{f}_{n,k} \notin \mathcal{F}^{M_n}_k([0,\tilde{b}_n]))  & \leq & \mathbb{P}(\hat{f}_{n,k}(0+) > M_n) + \mathbb{P}(\hat{f}_n(\tilde{b}_n) > 0) \\
	& \leq & \mathbb{P}(kZ^{-1}_1 > M_n) + \mathbb{P}(kZ_n > \tilde{b}_n) \rightarrow 0,
\end{eqnarray*}
as $n$ goes to $\infty$ by assumptions. This implies that $I(\hat{f}_{n,k} \notin \mathcal{F}^{M_{n}}_k ([0, \tilde{b}_n])) \stackrel{\mathbb{P}}{\rightarrow } 0$. The last result together with the assumption that $Z_n = o_p(n^{m_0})$ imply that for any subsequence $\{n_k\}$ of $\{n\}$, there exists a further subsequence $\{n_{k_l}\}$ and $\mathcal{S}_0$ such that $\mathbb{P}(\mathcal{S}_0) = 1$ and for all $\omega \in \mathcal{S}_0$, $\hat{f}_{n_{k_l},k}(\cdot;\omega) \in \mathcal{F}^{M_{n_{k_l}}}_k([0,\tilde{b}_{n_{k_l}}])$ and $Z_{n_{k_l}}(\omega)/{n_{k_l}}^{m_0} \leq 1$  for all large enough $l$.  Therefore, Lemma \ref{thm:bootstrap_Kn_rate} implies that Condition (i) is satisfied as
\begin{equation*}
	\sqrt{n \nu} n^{- \frac{2k}{2k+1}} | \log (\widetilde{M}_n \tilde{b}_n)|^{\frac{1}{2k+1}} = o(1),
\end{equation*}
for all $k \in \mathbb{N}$ under $\nu = O(n^{1/3}/\log n)$.

In the following, the dependence on $\omega$ is not always written explicitly 	for the sake of notational simplicity. We now verify Condition (ii) in Theorem \ref{thm:bootstrap_general}. Write
\begin{equation*}
    R_{n_{k_l}}^* = R_{1{n_{k_l}}}^* +R_{2{n_{k_l}}}^*,
\end{equation*}
where
\begin{align*}
R_{1{n_{k_l}}}^* &:=	- \frac{1}{{n_{k_l}}} \sum_{j=0,\ldots,\frac{{n_{k_l}}-1}{\nu} - 1} 
	\sum^\nu_{l=1} \log \frac{\hat{f}_{{n_{k_l}},k}(Z_{j\nu+l+1}^*)(Z_{(j+1)\nu+1}^* - Z_{j\nu+1}^*)}{\hat{F}_{{n_{k_l}},k}(Z_{(j+1)\nu+1}^*) - \hat{F}_{{n_{k_l}},k}(Z_{j\nu+1}^*)},\\
	R_{2n}^* &:= -\frac{1}{n}\log \frac{\hat{f}_{n,k}(Z_1)}{f^{H,*}_{n,\nu}(Z_1)}.
\end{align*}
For $R_{1{n_{k_l}}}^*$, by the mean value theorem, there exists $\tilde{Z}^*_j$ lying between $Z_{j\nu+1}^*$ and $Z_{(j+1)\nu+1}^*$ such that $\hat{F}_{{n_{k_l}},k}(Z_{(j+1)\nu+1}^*) - \hat{F}_{{n_{k_l}},k}(Z_{j\nu+1}^*) = \hat{f}_{{n_{k_l}},k}(\tilde{Z}^*_j)(Z_{(j+1)\nu+1}^* - Z_{j\nu+1}^*)$ and so
	\begin{equation*}
R_{1{n_{k_l}}}^*  = -\frac{1}{{n_{k_l}}}\sum_{j=0,\ldots, \frac{{n_{k_l}}-1}{\nu}-1} \sum^\nu_{l=1} \log \frac{\hat{f}_{n_{k_l},k}(Z_{j\nu+l+1}^*)}{\hat{f}_{n_{k_l},k}(Z^*_j)}.
	\end{equation*}
 By the monotonicity of $\hat{f}_{n_{k_l},k}$,
\begin{align}
    |n_{k_l} R_{1n_{k_l}}^*| & \leq \left|  \sum_{j=0,\ldots, \frac{n_{k_l}-1}{\nu}-1} \sum^\nu_{l=1} \log \frac{\hat{f}_{n_{k_l},k}(Z_{j\nu+1}^*)}{\hat{f}_{n_{k_l},k}(Z_{(j+1)\nu+1}^*)}\right| 
    = \nu \left| \log \hat{f}_{n_{k_l},k}(Z_1^*) - \log \hat{f}_{n_{k_l},k}(Z_{n_{k_l}}^*)\right|\nonumber \\
    & \leq \nu (|\log \hat{f}_{n_{k_l},k}(Z_1^*)| + |\log \hat{f}_{n_{k_l},k}(Z_{n_{k_l}}^*)|). 
\end{align}
%
We now apply Lemma \ref{lemma:fn_unimodal_logf_bound} (a) with $f_{n_{k_l}} = \hat{f}_{n_{k_l},k}$. To verify the condition in that lemma, note that $\int^\infty_0 \hat{f}^{1\pm\alpha}_{n_{k_l},k}(x) \, dx \leq k \hat{f}_{n_{k_l}}^{1\pm\alpha}(0) Z_{n_{k_l}} \leq k M^{1\pm\alpha}_{n_{k_l}} \tilde{b}_{n_{k_l}} = k n_{k_l}^{m_0(1\pm\alpha) + m_0} \leq n_{k_l}^{m_0(1\pm\alpha) + m_0 + 1}$ for some $\alpha \in (0, 1)$ and for all large $l$. By Lemma \ref{lemma:fn_unimodal_logf_bound} (a), we have $|\log \hat{f}_{n_{k_l},k}(Z_1^*)| +  |\log \hat{f}_{n_{k_l},k}(Z_n^*)| = O_{\mathbb{P}^*_\omega}(\log n_{k_l})$.
Therefore, 
\begin{equation*}
    |\sqrt{n_{k_l} \nu} R_{1n_{k_l}}^*| \leq 
    \frac{\nu^{3/2}}{\sqrt{n_{k_l}}}O_{\mathbb{P}^*_\omega}(\log n_{k_l}) = o_{\mathbb{P}^*_\omega}(1)
\end{equation*}
when $v = O(n^{1/3}/\log n)$. Following the proof of Lemma \ref{lemma:Rn_terms}, we have
\begin{equation*}
    |n_{k_l} R^*_{2n_{k_l}}| \leq |\log \hat{f}_{n_{k_l},k}(Z^*_1)| + |\log \hat{f}_{n_{k_l},k}(Z^*_{n_{k_l}})| + |\log(\hat{F}_{n_{k_l},k}(Z_{\nu+1}^*) - 
    \hat{F}_{n_{k_l},k}(Z_{1}^*))| + \log \frac{n_{k_l}-1}{\nu},
\end{equation*}
which is $O_{\mathbb{P}^*_\omega}(\log n_{k_l})$ as in (\ref{eq:R2n4}). Thus, Condition (ii) in Theorem \ref{thm:bootstrap_general} is satisfied.

\end{proof}

\begin{theorem}[Bootstrap Consistency for Completely Monotone Densities]\label{thm:bootstrap_consistency_CM}
Suppose that $f_0$ is a density that may not be bounded from above or have a bounded support. Without loss of generality, assume that $\tau_{f_0} = 0$. Suppose that $f_0$ satisfies Conditions (A) and (B). Let $\nu = O(n^{1/3}(\log n)^{-1})$.
Regardless of whether $f_0$ is completely monotone or not, 
\begin{equation*}
	\sup_{x \in \mathbb{R}} \left| \mathbb{P}^* \left( \sqrt{ \frac{n\nu}{\nu^2\psi_1(\nu)-\nu}} \left( T_n^* -  \log \nu +  \psi(\nu) \right)  \leq x\right) - \mathbb{P}(Z \leq x ) \right| \stackrel{\mathbb{P}}{\rightarrow} 0,
\end{equation*}
where $Z \sim N(0, 1)$  and $\tilde{f}_n$ in the bootstrap procedure is the completely monotone MLE $\hat{f}_{n,\infty}$.
\end{theorem}

\begin{proof}[Proof of Theorem \ref{thm:bootstrap_consistency_CM}]
First, from the proof of Lemma \ref{lemma:anbn_logn_hall}, we have $Z_1^{-1} = o_p(n^{m_0})$ and $Z_n = o_p(n^{m_0})$ for some $m_0 > 0$.

Since $Z^{-1}_1 = o_p(n^{m_0})$ and $Z_n = o_p(n^{m_0})$, for any subsequence $\{n_k\}$ of $\{n\}$, there exists a further subsequence $\{n_{k_l}\}$ and $\mathcal{S}_0$ such that $\mathbb{P}(\mathcal{S}_0) = 1$ and for all $\omega \in \mathcal{S}_0$, $Z^{-1}_1(\omega)/n^{m_0}_{k_l} \rightarrow 0$ and $Z_{n_{k_l}}(\omega)/n^{m_0}_{k_l} \rightarrow 0$. Now, note that the distribution function of the completely monotone MLE $\hat{F}_{n,\infty}(t)$ is given by $ = 1 - \sum^{r_n}_{i=1} \hat{p}_i e^{-\hat{\lambda}_i t}$. Thus,
\begin{equation*}
	1 - e^{-\frac{t}{Z_n}} \leq 1 - \max_{i=1,\ldots,r_n} e^{-\hat{\lambda}_i t} \leq \hat{F}_n(t) \leq 1 -  \min_{i=1,\ldots,r_n} e^{-\hat{\lambda}_i t} \leq 1 - e^{-\frac{t}{Z_1}}.
\end{equation*}
Hence, we have
\begin{eqnarray*}
	\mathbb{P}^*_{\omega}( \{Z^*_{n_{k_l},1}\}^{-1} > t) &=& 1 - \mathbb{P}^*_\omega( \{Z^*_{n_{k_l}1}\}^{-1} \leq t) \\
	&=& 1 - \{\mathbb{P}^*_\omega( \{X^*_{n_{k_l},1}\}^{-1} \leq t) \}^{n_{k_l}} \\
	&=& 1 - \left \{\mathbb{P}^*_\omega \left(X^*_{n_{k_l},1}  > \frac{1}{t} \right) \right \}^{n_{k_l}} \\
	&=& 1 - \{1 - \hat{F}_{n_{k_l}}(1/t; \omega)\}^{n_{k_l}} \\
	& \leq & 1 - e^{- \frac{n_{k_l}}{Z_1(\omega) t}}
\end{eqnarray*}
and
\begin{equation*}
	\mathbb{P}^*_\omega \left(Z^*_{n_{k_l},n_{k_l}} > t\right) =  1 - \{\mathbb{P}^*_\omega(X^*_{n_{k_l},1} \leq t) \}^{n_{k_l}} \leq 1 - \left(1 - e^{-\frac{t}{Z_{n_{k_l}}(\omega)}}\right)^{n_{k_l}}.
\end{equation*}
Therefore, as $l$ goes to infinity,
\begin{equation*}
	\mathbb{P}^*_{\omega}(\{Z^*_{n_{k_l}, 1}\}^{-1} > n^{m_0 + 1}_{k_l}) \leq 1 - e^{- \frac{Z^{-1}_1(\omega)}{n^{m_0}_{k_l}} } \rightarrow 0
\end{equation*}
and
\begin{equation*}
	\mathbb{P}^*_\omega(Z^*_{n_{k_l}, n_{k_l}} > n^{m_0 + 1}_{k_l}) \leq 1 - \left(1 - \exp \left\{- \frac{n^{m_0}_{k_l}}{Z_{n_{k_l}}(\omega)} n_{k_l} \right\} \right)^{n_{k_l}}
	\rightarrow 1 - e^0 = 0.
\end{equation*}
We now apply Lemma \ref{lemma:CM_bootstrap_rate} with $f_{n_{k_l}} = \hat{f}_{n_{k_l}, \infty}(\cdot; \omega)$, $\tilde{c}_{1n_{k_l}} = n^{m_0+1}_{k_l}$ and $\tilde{c}_{2n_{k_l}} = n^{\frac{m_0+1}{1+\beta}}_{k_l}$. Therefore, Condition (i) in Theorem \ref{thm:bootstrap_general} is satisfied.

To verify Condition (ii) in Theorem \ref{thm:bootstrap_general}, similar to the $k$-monotone case, it suffices to show that 
\begin{equation}\label{eq:temp2}
	|\log \hat{f}_{n_{k_l},\infty}(Z^*_1)| + |\log \hat{f}_{n_{k_l},\infty}(Z^*_{n_{k_l},n_{k_l}})| = O_{ \mathbb{P}^*_\omega}(\log n).
\end{equation}
To apply Lemma \ref{lemma:fn_unimodal_logf_bound} (a), note that $\hat{f}_{n_{k_l},\infty}(0) \leq Z^{-1}_1$ and for all large $l$, $\hat{f}_{n_{k_l}, \infty}(x) \leq 1/x^2$ for all $x \geq Z^{1+\beta}_{n_{k_l}}(\omega)$ (see the proof of Lemma \ref{lem:rate_cm}). Hence, for all large $l$,
\begin{eqnarray*}
	\int^\infty_0 \hat{f}^{1\pm \alpha}_{n_{k_l}, \infty}(x;\omega)\, dx &=&  \int^{Z^{1+\beta}_{n_{k_l}}(\omega)}_0 \hat{f}^{1\pm\alpha}_{n_{k_l}, \infty}(x;\omega)\, dx + \int^\infty_{Z_{n_{k_l}}^{1+\beta}(\omega)} \hat{f}^{1\pm\alpha}_{n_{k_l}, \infty}(x;\omega)\, dx  \\
	&	\leq & Z^{-(1\pm\alpha)}_1(\omega) Z^{1+\beta}_{n_{k_l}}(\omega) + \int^\infty_{Z^{1+\beta}_{n_{k_l}}(\omega)} \frac{1}{x^{2(1\pm \alpha)} } \, dx \\
	& \leq &  n_{k_l}^{m_0(1\pm \alpha) + m_0(1+\beta)} + \frac{1}{Z^{(1+\beta)(1\pm 2\alpha) }_{n_{k_l}}(\omega)} \\
	& \leq & n_{k_l}^{m_0(1  \pm \alpha) + m_0(1+\beta) + 1},
\end{eqnarray*}
as $Z^{(1+\beta)(1\pm 2\alpha)}_1(\omega) > 1$ for all large $l$. Therefore, we can apply Lemma \ref{lemma:fn_unimodal_logf_bound} (a) and (\ref{eq:temp2}) is satisfied.
%
%
\end{proof}

\begin{theorem}[Bootstrap Consistency for Log-concave Densities]\label{thm:bootstrap_consistency_LC}
Regardless of whether $f_0$ is log-concave or not, suppose that $\int_{\mathbb{R}} |x| f_0(x) \, dx< \infty$ and $\nu = O(n^{1/3}(\log n)^{-1})$. We have
\begin{equation*}
	\sup_{x \in \mathbb{R}} \left| \mathbb{P}^* \left( \sqrt{ \frac{n\nu}{\nu^2\psi_1(\nu)-\nu}} \left( T_n^* -  \log \nu +  \psi(\nu) \right)  \leq x\right) - \mathbb{P}(Z \leq x ) \right| \stackrel{\mathbb{P}}{\rightarrow} 0,
\end{equation*}
where $Z \sim N(0, 1)$ and  $\tilde{f}_n$ in the bootstrap procedure is the log-concave MLE $\hat{f}_n$.
\end{theorem}

\begin{proof}[Proof of Theorem \ref{thm:bootstrap_consistency_LC}]
In view of Theorem \ref{thm:bootstrap_general}, it suffices  to verify Conditions (i) and (ii) in  Theorem \ref{thm:bootstrap_general}.
%
%
First, note that the support of $\hat{f}_{n, lc}$ is $[Z_1, Z_n]$. We now claim that $Z_n - Z_1 = o_p(n^2)$. Indeed, note that $|Z_n - Z_1| \leq |Z_n| + |Z_1| \leq 2 \max_{i=1,\ldots,n}|X_i|$ and for any $\delta > 0$,
\begin{equation*}
	\mathbb{P}\left(\max_{i=1,\ldots,n} |X_i| > \delta n^2\right) = 1 - \{ 1- \mathbb{P}(|X_1| > \delta n^2) \}^n \leq 1- \left\{ 1- \frac{\mathbb{E}(|X_1|)}{\delta n^2} \right\}^n \rightarrow 0,
\end{equation*}
as $n \rightarrow \infty$, where the inequality follows from Markov's inequality. Hence, $Z_n - Z_1 = o_p(n^2)$, which then implies that for any subsequence $\{n_k\}$ of $\{n\}$, there exists a further subsequence $\{n_{k_l}\}$ and $\mathcal{S}_0$ such that $\mathbb{P}(\mathcal{S}_0) = 1$ and for any $\omega \in \mathcal{S}_0$,  $\{Z_{n_{k_l}}(\omega)-Z_1(\omega)\}/n^{2}_{k_l} = o(n^2_{k_l})$.
Now, fix a $\omega \in \mathcal{S}_0$.
Because $\int_\mathbb{R} |x| f_0(x)\, dx < \infty$, Lemma 3  and Theorem 4 in \cite{cule2010MLE_log_concave} imply that $\hat{f}_{n,lc}(\cdot; \omega)$ satisfies all the conditions in Lemma \ref{lemma:f_n*lc_UB_LB} for all large enough $n$ (where without loss of generality, we assume that the sets with probability one from Lemma 3 and Theorem 4 in \cite{cule2010MLE_log_concave} contain $\mathcal{S}_0$). Hence, Lemma \ref{lemma:f_n*lc_UB_LB}(c) implies that Condition (i) is satisfied.

In the following, the dependence on $\omega$ is not always written explicitly for the sake of notational simplicity.  We now verify Condition (ii) in Theorem \ref{thm:bootstrap_general}. Write
\begin{equation*}
    R_{n_{k_l}}^* = R_{1{n_{k_l}}}^* +R_{2{n_{k_l}}}^*,
\end{equation*}
where
\begin{align*}
R_{1{n_{k_l}}}^* &:=	- \frac{1}{{n_{k_l}}} \sum_{j=0,\ldots,\frac{{n_{k_l}}-1}{\nu} - 1} 
	\sum^\nu_{l=1} \log \frac{\hat{f}_{{n_{k_l}},lc}(Z_{j\nu+l+1}^*)(Z_{(j+1)\nu+1}^* - Z_{j\nu+1}^*)}{\hat{F}_{{n_{k_l}},lc}(Z_{(j+1)\nu+1}^*) - \hat{F}_{{n_{k_l}},lc}(Z_{j\nu+1}^*)},\\
	R_{2n}^* &:= -\frac{1}{n}\log \frac{\hat{f}_{n,lc}(Z_1)}{f^{H,*}_{n,\nu}(Z_1)}.
\end{align*}
For $R_{1{n_{k_l}}}^*$, by the mean value theorem, there exists $\tilde{Z}^*_j$ lying between $Z_{j\nu+1}^*$ and $Z_{(j+1)\nu+1}^*$ such that $\hat{F}_{{n_{k_l}},lc}(Z_{(j+1)\nu+1}^*) - \hat{F}_{{n_{k_l}},lc}(Z_{j\nu+1}^*) = \hat{f}_{{n_{k_l}},lc}(\tilde{Z}^*_j)(Z_{(j+1)\nu+1}^* - Z_{j\nu+1}^*)$ and so
	\begin{equation*}
R_{1{n_{k_l}}}^*  = -\frac{1}{{n_{k_l}}}\sum_{j=0,\ldots, \frac{{n_{k_l}}-1}{\nu}-1} \sum^\nu_{l=1} \log \frac{\hat{f}_{n_{k_l},lc}(Z_{j\nu+l+1}^*)}{\hat{f}_{n_{k_l},lc}(Z^*_j)}.
	\end{equation*}
Since $\hat{f}_{n_{k_l},lc}$ is log-concave, it is unimodal. Let $s_{n_{k_l}}$ be its mode. Let $j^*$ be such that $s_{n_{k_l}} \in [Z^*_{j^*\nu+1}, Z^*_{(j^*+1)\nu+1}]$. We have
\begin{align*}
    |n_{k_l} R_{1n_{k_l}}^*| & \leq \left|  \sum_{j=0,\ldots, j^*-1} \sum^\nu_{l=1} \log \frac{\hat{f}_{n_{k_l},lc}(Z_{(j+1)\nu+1}^*)}{\hat{f}_{n_{k_l},lc}(Z_{j\nu+1}^*)}\right| + \left|  \sum_{j=j^*+1,\ldots, \frac{n_{k_l}-1}{\nu}} \sum^\nu_{l=1} \log \frac{\hat{f}_{n_{k_l},lc}(Z_{(j+1)\nu+1}^*)}{\hat{f}_{n_{k_l},lc}(Z_{j\nu+1}^*)}\right| \\
    &\quad + \left| \sum^\nu_{l=1} \log \frac{ \hat{f}_{n_{k_l},lc}(s_{n_{k_l}})}{\hat{f}_{n_{k_l},lc}(Z^*_{j^*\nu+1}) }\right| + 
    \left| \sum^\nu_{l=1} \log \frac{\hat{f}_{n_{k_l},lc}(Z^*_{(j^*+1)\nu+1}) }{ \hat{f}_{n_{k_l},lc}(s_{n_{k_l}})} \right|
    \\
    &= \nu | \log \hat{f}_{n_{k_l},lc}(s_{n_{k_l}}) - \log \hat{f}_{n_{k_l},lc}(Z^*_1)| + 
    \nu | \log \hat{f}_{n_{k_l},lc}(s_{n_{k_l}}) - \log \hat{f}_{n_{k_l},lc}(Z^*_n)| \\
    & \leq 2 \nu |\log f_{n_{k_l},lc}(s_{n_{k_l}})| + \nu (| \log \hat{f}_{n_{k_l},lc}(Z^*_1)| + 
   | \log \hat{f}_{n_{k_l},lc}(Z^*_{n_{k_l}})|).
\end{align*}
Note that for all large $n$, we know $\hat{f}_{n_{k_l},lc} \leq C$ for some constant $C$; see \cite{cule2010theoretical}. We now verify the condition in Lemma \ref{lemma:fn_unimodal_logf_bound} (b), 
\begin{equation*}
	\int^\infty_{-\infty} \hat{f}_{n_{k_l},lc}^{1-\alpha}(x) \, dx \leq C^{1-\alpha} (\sigma_{\hat{f}_{n_{k_l}}} - \tau_{\hat{f}_{n_{k_l}}}) = C^{1-\alpha}(Z_{n_{k_l}}(\omega) - Z_1(\omega)) = o(n^2_{k_l}).
\end{equation*}
Thus, by Lemma  \ref{lemma:fn_unimodal_logf_bound} (b), we have
\begin{equation*}
|n_{k_l} R_{1n_{k_l}}^*| = O_{\mathbb{P}^*_\omega}(\log n_{k_l}).
\end{equation*}
The proof of $|n_{k_l} R_{2n_{k_l}}^*| = O_{\mathbb{P}^*_\omega}(\log n_{k_l})$ is similar to that in the proof of Theorem \ref{thm:bootstrap_consistency_k_monotone} and is therefore omitted. Thus, Condition (ii) in Theorem \ref{thm:bootstrap_general} is satisfied.
\end{proof}

{
\section{Choice of $\nu$ based on bootstrap calibration}

We investigate the relationship between $\nu$ and the size distortion under the null, and discuss our recommendation of the upper bound in $\nu$ that is used in the cross-validation procedure.  The size distortion is essentially due to the difference between the distribution of $T_n$ and the bootstrapped statistics $T_n^*$ in small samples.  We observe that $T_n$ and $T_n^*$ are approximately Gaussian, the difference in mean between $T_n$ and $T_n^*$ are typically small compared to their respective variances, and therefore the difference in the distribution of $T_n$ and $T_n^*$ is mainly due to the variance.  Recall $T_n=S_n+M_n+R_n$, where $S_n$ depends on the null distribution class but not $\nu$, $M_n$ is a function of independent standard exponential distributions and is the dominant asymptotically normal term, and $R_n$ is $O_p(\frac{v\log(n)}{n})$.  Likewise, $T_n^*=S_n^*+M_n^*+R_n^*$, where the distribution of $M_n^*$ is the same as that of $M_n$ in finite samples.  For log-concave and $k$-monotone null densities with $k>1$, $R_n$ (and $R_n^*$) are typically non-negligible compared to $S_n$ (and $S_n^*$), and the difference between the variance of $T_n$ and $T_n^*$ increases with $\nu$, contributed mostly by the terms $R_n$ and $R_n^*$.  Therefore, the size distortion happens when $\nu$ is large, which can lead to a too conservative or too liberal type I error depending on the true distribution.  Also, the asymptotic approximation requires $\nu=O(n^{1/3}(\log n)^{-1}$).  Therefore, we propose an upper bound such that the cross validation searches through $\nu=1,\ldots, \lfloor C n^{1/3} (\log n)^{-1} \rfloor$ for some $C$.  $C$ is chosen to be $8$ for all cases except testing against the 1-monotone density which will be explained below.  This choice can limit in the size distortion for many different distributions.  The choice of $C=6$ to $C=10$ were considered and the empirical performance is very similar to $C=8$ under null and alternative distributions.  

The situation is different for monotone densities, in which $S_n$ (and $S_n^*$) converges to $0$ at a slower rate compared to other null cases and we observe a much larger magnitude of $S_n$ (and $S_n^*$) compared to $R_n$ (and $R_n^*$).  Therefore, $\Var(T_n)\approx \Var(S_n+M_n)$ and $\Var(T_n^*)\approx \Var(S_n^*+M_n^*)$ are more similar under a wider range of $\nu$ and the size distortion over $\nu$ is less pronounced.  In this case, our cross validation searches through $\nu=1,\ldots, \lfloor C n^{1/3} (\log n)^{-1} \rfloor$ for $C=30$.  This choice does not exhibit size distortion and enables higher power under a range of alternative distribution compared to a smaller $C$.


}

\bibliographystyle{apalike}
\bibliography{general_abb, shape_bayesian_abb}  

@Manual{lcmode,
    title = {logcondens.mode: Compute MLE of Log-Concave Density on R with Fixed Mode, and
Perform Inference for the Mode.},
    author = {Doss, Charles R},
    year = {2013},
    note = {R package version 1.0.1},
    url = {https://CRAN.R-project.org/package=logcondens.mode},
  }

@article{tatiana2009mixtools,
  title={mixtools: an R package for analyzing finite mixture models},
  author={Tatiana, Benaglia and Didier, C and David, RH and Derek, Y},
  journal={J. Stat. Softw.},
  volume={32},
  number={6},
  pages={1--29},
  year={2009}
}

@article{walther2002detecting,
  title={Detecting the presence of mixing with multiscale maximum likelihood},
  author={Walther, Guenther},
  journal={J. Amer. Statist. Assoc.},
  volume={97},
  number={458},
  pages={508--513},
  year={2002},
  publisher={Taylor \& Francis},
doi = {10.1198/016214502760047032}
}

@article{balabdaoui2018inference,
  title={Inference for a two-component mixture of symmetric distributions under log-concavity},
author={Balabdaoui, Fadoua and Doss, Charles R},
journal={Bernoulli},
  volume={24},
  number={2},
  pages={1053--1071},
  year={2018},
  publisher={Bernoulli Society for Mathematical Statistics and Probability},
  doi={10.3150/16-BEJ864}
}

@article{chakraborty2022rates,
  title={Rates and coverage for monotone densities using projection-posterior},
  author={Chakraborty, Moumita and Ghosal, Subhashis},
  journal={Bernoulli},
  volume={28},
  number={2},
  pages={1093--1119},
  year={2022},
  publisher={Bernoulli Society for Mathematical Statistics and Probability},
doi = {10.3150/21-BEJ1379}
}

@article{deng2023inference,
  title={Inference for local parameters in convexity constrained models},
  author={Deng, Hang and Han, Qiyang and Sen, Bodhisattva},
  journal={J. Amer. Statist. Assoc.},
  volume={118},
  number={544},
  pages={2721--2735},
  year={2023},
  publisher={Taylor \& Francis},
doi = {10.1080/01621459.2022.2071721}
}

@article{deng2021confidence,
  title={Confidence intervals for multiple isotonic regression and other monotone models},
  author={Deng, Hang and Han, Qiyang and Zhang, Cun-Hui},
  journal={Ann. Statist.},
  volume={49},
  number={4},
  pages={2021--2052},
  year={2021},
  publisher={JSTOR},
doi = {10.1214/20-AOS2025}
}

@article{nane2015likelihood,
  title={A likelihood ratio test for monotone baseline hazard functions in the Cox model},
  author={Nane, Gabriela F},
  journal={Statist. Sinica},
  pages={1163--1184},
  year={2015},
  publisher={JSTOR},
doi = {10.5705/ss.2013.046}
}

@article{groeneboom2015nonparametric,
  title={Nonparametric confidence intervals for monotone functions},
  author={Groeneboom, Piet and Jongbloed, Geurt},
  journal={Ann. Statist.},
  volume={43},
  number={5},
  pages={2019--2054},
  year={2015},
  doi={10.1214/15-AOS1335},
  publisher={Institute of Mathematical Statistics}
}

@article{doss2019inference,
  title={Inference for the mode of a log-concave density},
  author={Doss, Charles R and Wellner, Jon A},
  journal={Ann. Statist.},
  volume={47},
  number={5},
  pages={2950--2976},
  year={2019},
  publisher={JSTOR},
doi = {10.1214/18-AOS1770}
}

@article{doss2019concave,
  title={Concave regression: value-constrained estimation and likelihood ratio-based inference},
  author={Doss, Charles R},
  journal={Math. Program.},
  volume={174},
  number={1},
  pages={5--39},
  year={2019},
  publisher={Springer},
doi = {10.1007/s10107-018-1338-5}
}

@article{banerjee2001likelihood,
  author    = {Banerjee, Moulinath and Wellner, Jon A.},
  title     = {Likelihood ratio tests for monotone functions},
  journal   = {Ann. Statist.},
  year      = {2001},
  volume    = {29},
  number    = {6},
  pages     = {1699--1731},
doi = {10.1214/aos/1015345959}
}

@article{banerjee2007likelihood,
  author  = {Banerjee, Moulinath},
  title   = {Likelihood based inference for monotone response models},
  journal = {Ann. Statist.},
  year    = {2007},
  volume  = {35},
  number  = {3},
  pages   = {931--956},
doi = {10.1214/009053606000001578}
}

@article{banerjee2005likelihood,
  author  = {Banerjee, Moulinath},
  title   = {Likelihood ratio tests under local and fixed alternatives in monotone function problems},
  journal = {Scand. J. Statist.},
  year    = {2005},
  volume  = {32},
  number  = {4},
  pages   = {507--525},
 doi = {https://doi.org/10.1111/j.1467-9469.2005.00458.x}
}

@article{groeneboom2012likelihood,
  author  = {Groeneboom, Piet},
  title   = {Likelihood ratio type two-sample tests for current status data},
  journal = {Scand. J. Statist.},
  year    = {2012},
  volume  = {39},
  number  = {4},
  pages   = {645--662},
doi = {10.1111/j.1467-9469.2012.00789.x}
}

@manual{fdrtoolR,
  author = {Klaus, Bernd and Strimmer, Korbinian},
  title  = {fdrtool: Estimation of (Local) False Discovery Rates and Higher Criticism},
  year   = {2021},
  note   = {R package version 1.2.17},
  url    = {https://CRAN.R-project.org/package=fdrtool}
}

@article{deheuvels1983strong,
  author  = {Deheuvels, Paul},
  title   = {Strong bounds for multidimensional spacings},
  journal = {Probab. Theory Relat. Fields},
  year    = {1983},
  volume  = {64},
  number  = {4},
  pages   = {411--424},
doi = {10.1007/BF00534948}
}

@article{li2008multivariate,
  author  = {Li, Jun and Liu, Regina Y.},
  title   = {Multivariate spacings based on data depth: I. Construction of nonparametric multivariate tolerance regions},
  journal = {Ann. Statist.},
  year    = {2008},
  volume  = {36},
  number  = {3},
  pages   = {1299--1323},
doi = {10.1214/07-AOS505}
}

@article{zhou1993goodness,
  author  = {Zhou, Sean and Jammalamadaka, S. Rao},
  title   = {Goodness of fit in multidimensions based on nearest neighbour distances},
  journal = {J. Nonparametr. Stat.},
  year    = {1993},
  volume  = {2},
  number  = {3},
  pages   = {271--284},
doi = {10.1080/10485259308832558}
}

@article{mikosch1999regular,
  title={Regular variation, subexponentiality and their applications in probability theory},
  author={Mikosch, Thomas},
  volume={99},
  journal={Eindhoven, the Netherlands: Eindhoven University of Technology},  
  year={1999},
doi ={https://pure.tue.nl/ws/portalfiles/portal/3095710/524466.pdf}
}

@article{hall1979note,
  author  = {Hall, Peter},
  title   = {A note on a paper of Barnes and Tucker},
  journal = {J. Lond. Math. Soc. (2)},
  year    = {1979},
  volume  = {2},
  number  = {1},
  pages   = {170--174}
}

@phdthesis{del1976spacings,
  title={Spacings},
  author={Del Pino, Guido Enrique},
  year={1976},
  school={The University of Wisconsin-Madison}
}

@article{wells1993large,
  author  = {Wells, Martin T. and Jammalamadaka, S. Rao and Tiwari, Ram C.},
  title   = {Large sample theory of spacings statistics for tests of fit for the composite hypothesis},
  journal = {J. R. Stat. Soc. Ser. B.  Stat. Methodol. },
  year    = {1993},
  volume  = {55},
  number  = {1},
  pages   = {189--203},
doi = {10.1111/j.2517-6161.1993.tb01478.x}
}

@article{wells1992tests,
  title={Tests of fit using spacings statistics with estimated parameters},
  author={Wells, Martin T and Jammalamadaka, Sreenivasa R and Tiwari, Ram C},
  journal = {Statist. Probab. Lett.},
  volume={13},
  number={5},
  pages={365--372},
  year={1992},
  publisher={Elsevier},
doi = {10.1016/0167-7152(92)90109-I}
}

@article{cheng1989goodness,
  title={A goodness-of-fit test using Moran's statistic with estimated parameters},
  author={Cheng, RCH and Stephens, MA},
  journal={Biometrika},
  volume={76},
  number={2},
  pages={385--392},
  year={1989},
  publisher={Oxford University Press},
doi = {10.1093/biomet/76.2.385}
}

@article{jammalamadaka1989asymptotic,
  title={Asymptotic efficiencies of spacings tests for goodness of fit},
  author={Jammalamadaka, S Rao and Zhou, Xian and Tiwari, Ram C},
  journal={Metrika},
  volume={36},
  pages={355--377},
  year={1989},
  publisher={Springer},
doi = {10.1007/BF02614112}
}

@article{hall1986powerful,
  title={On powerful distributional tests based on sample spacings},
  author={Hall, Peter},
  journal={J. Multivariate Anal.},
  volume={19},
  number={2},
  pages={201--224},
  year={1986},
  publisher={Elsevier},
doi = {10.1016/0047-259X(86)90027-8}
}

@article{del1979asymptotic,
  title={On the asymptotic distribution of k-spacings with applications to goodness-of-fit tests},
  author={Del Pino, Guido E},
  journal={Ann. Statist.},
  volume={7},
  number={5},
  pages={1058--1065},
  year={1979},
  publisher={JSTOR},
doi = {10.1214/aos/1176344789}
}

@article{cressie1979optimal,
  title={An optimal statistic based on higher order gaps},
  author={Cressie, Noel},
  journal={Biometrika},
  volume={66},
  number={3},
  pages={619--627},
  year={1979},
  publisher={Oxford University Press},
doi = {10.2307/2335184}
}

@article{chibisov1961tests,
  title={On the tests of fit based on sample spacings},
  author={Chibisov, Dmitrii Mikhailovich},
  journal={Theory Probab. Appl.},
  volume={6},
  number={3},
  pages={325--329},
  year={1961},
  publisher={SIAM},
doi = {10.1137/1106045}
}

@article{rao1975weak,
  title={Weak convergence of empirical distribution functions of random variables subject to perturbations and scale factors},
  author={Rao, JS and Sethuraman, J},
  journal={Ann. Statist.},
  pages={299--313},
  volume={3},
  number={2},
  year={1975},
  publisher={JSTOR},
doi = {10.1214/aos/1176343058}
}

@article{moran1947random,
  title={The random division of an interval},
  author={Moran, PAP},
  journal={J. R. Stat. Soc. Ser. B.  Stat. Methodol. },
  volume={9},
  number={1},
  pages={92--98},
  year={1947},
  publisher={JSTOR},
doi = {10.2307/2983572}
}

@article{kimball1947some,
  title={Some basic theorems for developing tests of fit for the case of the non-parametric probability distribution function, I},
  author={Kimball, Bradford F},
  journal={Ann. Math. Statist.},
  volume={18},
  number={4},
  pages={540--548},
  year={1947},
  publisher={JSTOR},
doi = {10.1214/aoms/1177730344}
}

@article{greenwood1946statistical,
  title={The statistical study of infectious diseases},
  author={Greenwood, Major},
  journal={J. Roy. Statist. Soc. Ser. A},
  volume={109},
  number={2},
  pages={85--110},
  year={1946},
  publisher={JSTOR},
doi = {10.2307/2981176}
}

@article{dumbgen2011logcondens,
  title={logcondens: Computations related to univariate log-concave density estimation},
  author={D{\"u}mbgen, Lutz and Rufibach, Kaspar},
  journal={J. Stat. Softw.},
  volume={39},
  pages={1--28},
  year={2011},
doi = {10.18637/jss.v039.i06}
}

@article{wang2007fast,
  title={On fast computation of the non-parametric maximum likelihood estimate of a mixing distribution},
  author={Wang, Yong},
  journal={J. R. Stat. Soc. Ser. B.  Stat. Methodol. },
  volume={69},
  number={2},
  pages={185--198},
  year={2007},
  publisher={Oxford University Press},
doi = {10.1111/j.1467-9868.2007.00583.x}
}

@article{dehouche2023hospital,
  title={Hospital length of stay: A cross-specialty analysis and Beta-geometric model},
  author={Dehouche, Nassim and Viravan, Sorawit and Santawat, Ubolrat and Torsuwan, Nungruethai and Intharakosum, Atthakorn and Sirivatanauksorn, Yongyut},
  journal = {PLoS ONE},
  volume={18},
  number={7},
  pages={e0288239},
  year={2023},
  publisher={Public Library of Science San Francisco, CA USA},
doi={10.1371/journal.pone.0288239}
}

@article{stone2022systematic,
  title={A systematic review of the prediction of hospital length of stay: Towards a unified framework},
  author={Stone, Kieran and Zwiggelaar, Reyer and Jones, Phil and Mac Parthal{\'a}in, Neil},
  journal = {PLoS Digit. Health},
  volume={1},
  number={4},
  pages={e0000017},
  year={2022},
  publisher={Public Library of Science},
doi = {10.1371/journal.pdig.0000017}
}

@article{harrison1991balancing,
  title={Balancing acute and long-term care: the mathematics of throughput in departments of geriatric medicine},
  author={Harrison, GW and Millard, PH},
journal ={Methods Inf. Med.},
  volume={30},
  number={03},
  pages={221--228},
  year={1991},
  publisher={Schattauer GmbH},
doi = {10.1055/s-0038-1634832}
}

@article{maguire1952time,
  title={The time intervals between industrial accidents},
  author={Maguire, Brian A and Pearson, Egon S and Wynn, AHA},
  journal={Biometrika},
  volume={39},
  number={1/2},
  pages={168--180},
  year={1952},
  publisher={JSTOR},
doi = {10.2307/2332475}
}

@article{mahdavi2017new,
  title={A new method for generating distributions with an application to exponential distribution},
  author={Mahdavi, Abbas and Kundu, Debasis},
  journal={Comm. Statist. Theory Methods},
  volume={46},
  number={13},
  pages={6543--6557},
  year={2017},
  publisher={Taylor \& Francis},
doi = {10.1080/03610926.2015.1130839}
}

@article{kulikov2004testing,
  title={Testing for a monotone density using $L_k$-distances between the empirical distribution function and its concave majorant.  Eurandom Technical Report 2004-028, Eindhoven, the Netherlands},
  author={Kulikov, Vladimir Nikolaevi{\v{c}} and Lopuha{\"a}, Hendrik Paul},
  year={2004},
  publisher={Technische Universiteit Eindhoven, Eurandom}
}

@book{wasserman2006all,
  author    = {Wasserman, Larry},
  title     = {All of Nonparametric Statistics},
  publisher = {Springer},
  year      = {2006}
}

@article{dunn2021universal,
  author = {Robin Dunn and Aditya Gangrade and Larry Wasserman and Aaditya Ramdas},
  title = {Universal Inference Meets Random Projections: A Scalable Test for Log-Concavity},
  journal = {J. Comput. Graph. Statist.},
  volume = {34},
  number = {1},
  pages = {267--279},
  year = {2025},
  publisher = {ASA Website},
doi = {10.1080/10618600.2024.2347338}
}

@article{birge1993rates,
  title={Rates of convergence for minimum contrast estimators},
  author={Birg{\'e}, Lucien and Massart, Pascal},
  journal={Probab. Theory Relat. Fields},
  volume={97},
  number={1},
  pages={113--150},
  year={1993},
  publisher={Springer},
doi = {10.1007/BF01199316}
}

@article{Jewell1982mixtures,
  title={Mixtures of exponential distributions},
  author={Jewell, Nicholas P},
  journal={Ann. Statist.},
  volume = {10},
  number = {2},
  pages={479--484},
  year={1982},
  publisher={JSTOR},
doi = {10.1214/aos/1176345789}
}

@article{Balabdaoui2011grenander,
  title={On the Grenander estimator at zero},
  author={Balabdaoui, Fadoua and Jankowski, Hanna and Pavlides, Marios and Seregin, Arseni and Wellner, Jon},
  journal={Statist. Sinica },
  volume={21},
  number={2},
  pages={873--899},
  year={2011},
  publisher={NIH Public Access},
doi = {10.5705/ss.2011.038a}
}

@book{feller1971introduction,
  author    = {Feller, William},
  title     = {An Introduction to Probability Theory and Its Applications, Vol. 2},
  publisher = {John Wiley \& Sons},
  year      = {1971}
}

@article{chen2013smoothed,
  title={Smoothed log-concave maximum likelihood estimation with applications},
  author={Chen, Yining and Samworth, Richard J},
  journal={Statist. Sinica},
  pages={1373--1398},
  year={2013},
  publisher={JSTOR},
doi = {10.5705/ss.2011.224}
}

@article{sen2010inconsistency,
  title={Inconsistency of bootstrap: The Grenander estimator},
  author={Sen, Bodhisattva and Banerjee, Moulinath and Woodroofe, Michael},
  journal={Ann. Statist.},
  volume={38},
  number={4},
  pages={1953--1977},
  year={2010},
  publisher={Institute of Mathematical Statistics},
doi = {10.1214/09-AOS777}
}

@article{walther2009inference,
  title={Inference and modeling with log-concave distributions},
  author={Walther, Guenther},
  journal={Statist. Sci.},
  volume={24},
  number={3},
  pages={319--327},
  year={2009},
  publisher={Institute of Mathematical Statistics},
doi = {10.1214/09-STS303}
}

@article{saumard2014log,
  title={Log-concavity and strong log-concavity: a review},
  author={Saumard, Adrien and Wellner, Jon A},
  journal={Stat. Surv.},
  volume={8},
  pages={45},
  year={2014},
  publisher={NIH Public Access},
doi = {10.1214/14-SS107}
}

@article{proschan1967tests,
  title={Tests for monotone failure rate},
  author={Proschan, Frank and Pyke, Ronald},
  journal={Fifth Berkley Symposium},
  volume={3},
  pages={293--313},
  year={1967}
}

@article{groeneboom2001estimation,
  title={Estimation of a convex function: characterizations and asymptotic theory},
  author={Groeneboom, Piet and Jongbloed, Geurt and Wellner, Jon A},
  journal={Ann. Statist.},
  volume={29},
  number={6},
  pages={1653--1698},
  year={2001},
  publisher={Institute of Mathematical Statistics},
doi = {10.1214/aos/1015345958}
}

@article{rao1969estimation,
  title={Estimation of a unimodal density},
  author={Rao, BLS Prakasa},
  journal={Sankhya A},
  volume={31},
  number={1},
  pages={23--36},
  year={1969},
  publisher={JSTOR}
}

@ARTICLE{Groeneboom1985,
	author = {Groeneboom, P.},
	title = {Estimating a monotone density},
    	journal = {{$\mathrm{In}$} Proceedings of the Berkeley Conference in Honor of Jerzy Neyman and Jack Kiefer, Vol. II (Berkeley, Calif., 1983)},
    	year = {1985},
    	pages = {539-555. Wadsworth, Belmont, CA.},
  	mrnumber = {822052}
}

@BOOK{Devroye1986,
  author = {Devroye, L},
  description = {Non-Uniform Random Variate Generation},
  address = {New York},
  publisher = {Springer-Verlag},
  title = {Non-Uniform Random Variate Generation},
  year = {1986}
}

@ARTICLE {Kulikov2008,
	author  = {Kulikov, V. N. and Lopuha\"{a}, H. P.},
	title   = {Distribution of global measures of deviation between the empirical distribution function and its concave majorant},
	journal = {J. Theoret. Probab.},
	year    = {2008},
	volume  = {21},
	pages   = {356--377},
	doi = {10.1007/s10959-007-0103-0}
}

@ARTICLE {Kulikov2005,
	author  = {Kulikov, V. N. and Lopuha\"{a}, H. P.},
	title   = {Asymptotic normality of the $L_k$-error of the Grenander estimator},
	journal = {Ann. Statist.},
	year    = {2005},
	volume  = {33},
	pages   = {2228--2255},
doi = {10.1214/009053605000000462}
}

@ARTICLE {Patilea2001,
	author  = {Patilea, V.},
	title   = {Convex models, MLE and misspecification},
	journal = {Ann. Statist.},
	year    = {2001},
	volume  = {29},
	pages   = {94--123},
doi = {10.1214/aos/996986503}
}

@ARTICLE {Jankowski2014,
	author  = {Jankowski, H.},
	title   = {Convergence of linear functionals of the Grenander estimator under misspecification},
	journal = {Ann. Statist.},
	year    = {2014},
	volume  = {42},
	pages   = {625--653},
	mrnumber = {3210981}
}

@article{wong1995probability,
  title={Probability inequalities for likelihood ratios and convergence rates of sieve MLEs},
  author={Wong, Wing Hung and Shen, Xiaotong},
  journal={Ann. Statist.},
  volume={23},
  number={2},
  pages={339--362},
  year={1995},
  publisher={Institute of Mathematical Statistics},
doi = {10.1214/aos/1176324524}
}

@article{cule2010theoretical,
  title={Theoretical properties of the log-concave maximum likelihood estimator of a multidimensional density},
  author={Cule, Madeleine and Samworth, Richard},
  journal={Electron. J. Stat.},
  volume={4},
  pages={254--270},
  year={2010},
  publisher={The Institute of Mathematical Statistics and the Bernoulli Society},
doi = {10.1214/09-EJS505}
}

@article{doss2016global,
  title={Global rates of convergence of the MLEs of log-concave and s-concave densities},
  author={Doss, Charles R and Wellner, Jon A},
  journal={Ann. Statist.},
  volume={44},
  number={3},
  pages={954--981},
  year={2016},
  publisher={NIH Public Access},
doi = {10.1214/15-AOS1394}
}

@article{gao2009rate,
  title={On the rate of convergence of the maximum likelihood estimator of a k-monotone density},
  author={Gao, FuChang and Wellner, Jon A},
  journal={Sci. China Ser. A},
  volume={52},
  number={7},
  pages={1525--1538},
  year={2009},
  publisher={Springer},
doi = {10.1007/s11425-009-0102-y}
}

@article{darling1953class,
  title={On a class of problems related to the random division of an interval},
  author={Darling, DA},
  journal={Ann. Math. Statist.},
  pages={239--253},
  year={1953},
  publisher={JSTOR},
doi = {10.1214/aoms/1177729030}
}

@book{robertson1988order,
  author    = {Robertson, Tim},
  title     = {Order Restricted Statistical Inference},
  publisher = {Wiley},
  year      = {1988},
  series    = {Wiley Series in Probability and Statistics}
}

@book{groeneboom2014nonparametric,
  author    = {Groeneboom, Piet and Jongbloed, Geurt},
  title     = {Nonparametric Estimation under Shape Constraints},
  publisher = {Cambridge University Press},
  year      = {2014},
  volume    = {38}
}

@incollection{kosorok2008bootstrapping,
  title={Bootstrapping the Grenander estimator},
  author={Kosorok, Michael R},
  booktitle={Beyond parametrics in interdisciplinary research: Festschrift in honor of Professor Pranab K. Sen},
  pages={282--292},
  year={2008},
  publisher={Institute of Mathematical Statistics},
doi = {10.1214/193940307000000202}
}

@article{grenander1956theory,
  title={On the theory of mortality measurement: part ii},
  author={Grenander, Ulf},
  journal={Scand. Actuar. J.},
  volume={1956},
  number={2},
  pages={125--153},
  year={1956},
  publisher={Taylor \& Francis},
doi = {10.1080/03461238.1956.10414944}
}

@article{balabdaoui2007estimation,
  title={Estimation of a k-monotone density: limit distribution theory and the spline connection},
  author={Balabdaoui, Fadoua and Wellner, Jon A},
  journal={Ann. Statist.},
  volume={35},
  number={6},
  pages={2536--2564},
  year={2007},
  publisher={Institute of Mathematical Statistics},
  doi       = {10.1214/009053607000000262},
  mrnumber  = {2382657}
}

@article{balabdaoui2010estimation,
  title={Estimation of a k-monotone density: characterizations, consistency and minimax lower bounds},
  author={Balabdaoui, Fadoua and Wellner, Jon A},
  journal={Stat. Neerl.},
  volume={64},
  number={1},
  pages={45--70},
  year={2010},
  publisher={Wiley Online Library},
doi = {10.1111/j.1467-9574.2009.00438.x}
}

@article{chan2018estimation,
  title={Estimation of a monotone density in $ s $-sample biased sampling models},
  author={Chan, Kwun Chuen Gary and Ling, Hok Kan and Sit, Tony and Yam, Sheung Chi Phillip},
  journal={Ann. Statist.},
  volume={46},
  number={5},
  pages={2125--2152},
  year={2018},
  publisher={Institute of Mathematical Statistics}
}

@book{Shorack1986empirical,
  title={Empirical processes with applications to statistics},
  author={Shorack, Galen R and Wellner, Jon A},
  volume={59},
  year={2009},
  publisher={Siam}
}

@article{durot2010goodness,
  title={Goodness-of-Fit Test for Monotone Functions},
  author={Durot, C{\'e}cile and Reboul, Laurence},
  journal={Scand. J. Stat.},
  volume={37},
  number={3},
  pages={422--441},
  year={2010},
  publisher={Wiley Online Library},
doi = {10.1111/j.1467-9469.2010.00688.x}
}

@article{groeneboom2012isotonic,
  title={Isotonic L2-projection test for local monotonicity of a hazard},
  author={Groeneboom, Piet and Jongbloed, Geurt},
  journal={J. Statist. Plann. Inference},
  volume={142},
  number={7},
  pages={1644--1658},
  year={2012},
  publisher={Elsevier},
doi = {10.1016/j.jspi.2012.02.004}
}

@article{hall2005testing,
  title={Testing for monotone increasing hazard rate},
  author={Hall, Peter and Van Keilegom, Ingrid},
  journal={Ann. Statist.},
  volume={33},
  number={3},
  pages={1109--1137},
  year={2005},
  publisher={Institute of Mathematical Statistics},
doi = {10.1214/009053605000000039}
}

@article{dumbgen2009maximum,
  title={Maximum likelihood estimation of a log-concave density and its distribution function: Basic properties and uniform consistency},
  author={D{\"u}mbgen, Lutz and Rufibach, Kaspar},
  journal={Bernoulli},
  volume={15},
  number={1},
  pages={40--68},
  year={2009},
  publisher={Bernoulli Society for Mathematical Statistics and Probability},
doi = {10.3150/08-BEJ141}
}

@book{van1996weak,
  author={van der Vaart, Aad W and Wellner, Jon A},
  title={Weak convergence and empirical processes},
  year={1996},
  publisher={Springer}
}

@book{van2000empirical,
  author    = {van de Geer, Sara},
  title     = {Empirical Processes in M-estimation},
  publisher = {Cambridge University Press},
  year      = {2000}
}

@article{cule2010MLE_log_concave,
  title={Maximum likelihood estimation of a multi-dimensional log-concave density},
  author={Cule, Madeleine and Samworth, Richard and Stewart, Michael},
  journal={J. R. Stat. Soc. Ser. B.  Stat. Methodol. },
  volume={72},
  number={5},
  pages={545--607},
  year={2010},
  publisher={Wiley Online Library}
}

@article{samworth2018recent,
  title={Recent progress in log-concave density estimation},
  author={Samworth, Richard J},
  journal={Statist. Sci.},
  volume={33},
  number={4},
  pages={493--509},
  year={2018},
  publisher={Institute of Mathematical Statistics},
doi = {10.1214/18-STS666}
}

@article{balabdaoui2009limit,
  title={Limit distribution theory for maximum likelihood estimation of a log-concave density},
  author={Balabdaoui, Fadoua and Rufibach, Kaspar and Wellner, Jon A},
  journal={Ann. Statist.},
  volume={37},
  number={3},
  pages={1299--1331},
  year={2009},
  publisher={NIH Public Access},
  doi       = {10.1214/08-AOS609},
  mrnumber  = {2509075}
}

@article{salomond2014concentration,
  title={Concentration rate and consistency of the posterior distribution for selected priors under monotonicity constraints},
  author={Salomond, Jean-Bernard},
  journal={Electron. J. Stat.},
  volume={8},
  pages={1380--1404},
  year={2014},
doi = {10.1214/14-EJS929}
}

@article{jongbloed2021bayesian,
  title={Bayesian estimation of a decreasing density},
  author={Jongbloed, Geurt and van der Meulen, Frank and Pang, Lixue},
  journal={Braz. J. Probab. Stat.},
  volume={35},
  number={2},
  pages={392--420},
  year={2021},
  publisher={JSTOR},
doi = {10.1214/20-BJPS482}
}

@article{wang2025bayesian,
  title={Bayesian inference for k-monotone densities with applications to multiple testing},
  author={Wang, Kang and Ghosal, Subhashis},
  journal={Bernoulli},
  volume={31},
  number={2},
  pages={1475--1501},
  year={2025},
  doi = {10.3150/24-BEJ1778},
  publisher={Bernoulli Society for Mathematical Statistics and Probability}
}

@article{mariucci2020bayesian,
  title={A Bayesian nonparametric approach to log-concave density estimation},
  author={Mariucci, Ester and Ray, Kolyan and Szab{\'o}, Botond},
  year={2020},
  journal={Bernoulli},
  volume={26},
  number={2},
  pages={1070--1097},
  doi = {10.3150/19-BEJ1139}
}

@article{chakraborty2021convergence,
  title={Convergence rates for Bayesian estimation and testing in monotone regression},
  author={Chakraborty, Moumita and Ghosal, Subhashis},
  journal={Electron. J. Stat.},
  volume={15},
  number={1},
  pages={3478--3503},
  year={2021},
  publisher={The Institute of Mathematical Statistics and the Bernoulli Society},
doi = {10.1214/21-EJS1861}
}

@article{salomond2018testing,
  title={Testing un-separated hypotheses by estimating a distance},
  author={Salomond, Jean-Bernard},
  year={2018},
volume = {13},
journal = {Bayesian Anal.},
pages = {461--484},
doi = {10.1214/17-BA1059}
}

@article{cui2024martingale,
  title={Martingale Posterior Distributions for Log-concave Density Functions},
  author={Cui, Fuheng and Walker, Stephen G},
  journal={arXiv preprint arXiv:2401.14515},
  year={2024}
}
	
\end{document}